\numberwithin{equation}{section}
\numberwithin{table}{section}
\newcolumntype{I}{!{\vrule width 1.2pt}}
\newlength\savedwidth
\newtheorem{thm}{Theorem}[section]
\newtheorem{lem}[thm]{Lemma}
\newtheorem{cor}[thm]{Corollary}
\newtheorem{prop}[thm]{Proposition}
\newtheorem{defi}[thm]{Definition}
\theoremstyle{definition}
\newtheorem{remark}[thm]{Remark}
\newtheorem{exam}[thm]{Example}
\newcommand{\ls}[2]{{\vphantom{#2}}^{#1\!}{#2}} 	
\newcommand{\lsb}[2]{{\vphantom{#2}}^{#1\!\!}{#2}}
\newcommand{\dmod}{\mod \;}
\newcommand{\supp}{\operatorname{supp}}
\newcommand{\ra}{\rightarrow}
\newcommand{\mZ}{\mathbb{Z}}
\newcommand{\mC}{\mathbb{C}}
\newcommand{\mN}{\mathbb{N}}
\newcommand{\Stab}{\operatorname{Stab}}
\renewcommand{\rm}{\mathrm}
\newcommand{\mbf}{\mathbf}
\newcommand{\Cal}{\mathcal}
\newcommand{\N}{\operatorname{N}}
\newcommand{\lda}{\lambda}
\newcommand{\fr}{\mathfrak}
\newcommand{\scr}{\mathscr}
\newcommand{\ol}{\overline}
\newcommand{\Irr}{\operatorname{Irr}}
\renewcommand{\a}{\alpha}
\renewcommand{\b}{\beta}
\newcommand{\e}{\epsilon}
\newcommand{\g}{\gamma}
\renewcommand{\d}{\delta}
\renewcommand{\k}{\kappa}
\renewcommand{\th}{\theta}
\newcommand{\s}{\sigma}
\newcommand{\om}{\omega}
\newcommand{\Om}{\Omega}
\newcommand{\Ind}{\operatorname{Ind}}
\newcommand{\Res}{\operatorname{Res}}
\newcommand{\Syl}{\operatorname{Syl}}
\renewcommand{\fr}{\mathfrak}
\newcommand{\Inf}{\operatorname{Inf}}
\newcommand{\Bl}{\operatorname{Bl}}
\newcommand{\Proj}{\operatorname{Proj}}
\newcommand{\D}{\Delta}
\renewcommand{\O}{\mathcal O}
\newcommand{\lan}{\langle}
\newcommand{\ran}{\rangle}
\newcommand{\Supp}{\operatorname{supp}}
\newcommand{\Lda}{\Lambda}
\newcommand{\shr}{\operatorname{shr}}
\newcommand{\sgn}{\operatorname{sgn}}
\newcommand{\Ht}{\operatorname{ht}}
\newcommand{\witi}{\widetilde}
\newcommand{\cl}{\mathcal}
\newcommand{\Sym}{\mathrm{Sym}}
\newcommand{\ch}{\operatorname{ch}}
\newcommand{\PMap}{\operatorname{PMap}}
\newcommand{\GIBr}{\operatorname{GIBr}}
\newcommand{\wh}{\widehat}
\newcommand{\IBr}{\operatorname{IBr}}
\newcommand{\tp}{\operatorname{tp}}
\newcommand{\CF}{\operatorname{CF}}
\newcommand{\Tup}{\operatorname{Tup}}
\newcommand{\wre}{\mathrm{wr}}
\newcommand{\ups}{\upsilon}
\newcommand{\npa}{\medskip\noindent}
\newcommand{\Th}{\Theta}
\newcommand{\pri}{\mathrm{pri}}
\renewcommand\footnotemark{}
\title{Character correspondences for symmetric groups and wreath products}
\author{Anton Evseev
\footnote{
The author was supported by the EPSRC Postdoctoral Fellowship EP/G050244. 
} 
}
\date{} 
\begin{document}
\maketitle
 \begin{abstract} 
The Alperin--McKay conjecture relates irreducible characters of a block of an arbitrary finite group to those of its $p$-local subgroups.
A refinement of this conjecture was stated by the author in a previous paper. We prove that this refinement holds for all blocks of symmetric groups. 
Along the way we identify a ``canonical'' isometry between the principal block of $S_{pw}$ and that of $S_p\wr S_w$. We also prove a general theorem
on expressing virtual characters of wreath products in terms of certain induced characters. Much of the paper generalises character-theoretic results on
blocks of symmetric groups with abelian defect and related wreath products to the case of arbitrary defect. 
 \end{abstract}

\section{Introduction}

\subsection{A refinement of the Alperin--McKay conjecture}\label{sub:11}

Let $G$ be a finite group and $p$ be a prime. If $b$ is a $p$-block of $G$, denote by $\Irr(G,b)$ 
the set of ordinary irreducible characters of $G$ belonging to $b$ and by 
$\Irr_0 (G,b)$ the set of characters of height $0$ in $\Irr(G,b)$ 
(cf.\ Section~\ref{sec:main}). 
Let $P$ be a defect group of $b$ and $H$ be a subgroup of $G$ containing $N_G (P)$. Let the $p$-block $c$ of $H$ be the Brauer correspondent of $b$. 
The Alperin--McKay conjecture, which is one of the most important open problems in 
representation theory of finite groups, asserts that $|\Irr_0 (G,b)| = |\Irr_0 (H,c)|$ (see e.g.\ \cite{Alperin1976}). 
A property (IRC) that refines this conjecture is stated in~\cite{Evseev2010m}. This property holds in many cases but fails for some finite groups; 
however, a somewhat weaker property is conjectured in~\cite{Evseev2010m} to hold in all cases. 

In the case when $G$ is a symmetric group $S_m$ and $H=N_G (P)$, the Alperin--McKay conjecture was proved by Olsson~\cite{Olsson1976}. Moreover, Fong~\cite{Fong2003} showed that
a refinement of that conjecture due to Isaacs and Navarro~\cite{IsaacsNavarro2002} (which is implied by (IRC)) also holds in this situation. 
The main aim of this paper is to show that the property (IRC) is true in this case as well, thus strengthening the results of Olsson and Fong. 

It is convenient to work with the ``blockwise'' version (IRC-Bl) of (IRC), which we now recall (for an arbitrary $G$). 
 We write $\Cal C(G)$ for the abelian group 
$\mZ[\Irr(G)]$ of all virtual characters of $G$. A set $\cl S$ of subgroups of $P$ is said to be \emph{downwards closed} if for every $Q\in \cl S$ and $T\le Q$ we have $T\in \cl S$.

\begin{defi}\label{def:I}
 Let $\cl S$ be a downwards closed set of subgroups of $P$. Then $\Cal I(G,P,\cl S)$ is the subgroup of $\cl C(G)$ generated by all virtual characters of the form
$\Ind_L^G \phi$ where $\phi\in\Cal C(L)$ and $L\le G$ satisfies
\begin{enumerate}[(i)]
 \item $L\cap P$ is a Sylow $p$-subgroup of $L$; 
 \item $L\cap P\in \cl S$.
\end{enumerate}
\end{defi}
The map $\Proj_P\colon \Cal C(H)\to \Cal C(H)$ is defined on irreducible characters $\chi$ of $H$ by
\[
 \Proj_P (\chi) = 
\begin{cases}
 \chi & \text{if } P \text{ contains a defect group of the block containing } \chi, \\
 0 & \text{otherwise,}
\end{cases}
\]
 and is extended linearly to the whole of $\Cal C(H)$. If $P$ is a Sylow $p$-subgroup of $H$ then $\Proj_P$ is simply the identity map. 

Define 
$\cl S(G,P,H)$ to be the set of all subgroups of $P$ which are contained in a subgroup of the form $\ls{g}P\cap P$ for some $g\in G-H$.
Let $\cl S=\cl S(G,P,H)$, $\cl I_G = \cl I(G,P,\cl S)$ and $\cl I_H=\cl I(H,P,\cl S)$. 
By Theorem 2.6 and Proposition 2.11 of~\cite{Evseev2010m}, there is a canonical isomorphism 
\[
 \frac{\mZ[\Irr(G,b)]}{\mZ[\Irr(G,b)]\cap \cl I_G} \simeq \frac{\mZ[\Irr(H,c)]}{\mZ[\Irr(H,c)] \cap \cl I_H}.
\]
This isomorphism is given by the map $\Proj_P \Res^G_H$, while its inverse is induced by the map $\Ind_H^G$ composed with the standard identification 
between $(\mZ[\Irr(G,b)] + \cl I_G)/ \cl I_G$ and $\mZ[\Irr(G,b)]/(\mZ[\Irr(G,b)]\cap \cl I_G)$. 
We may then ask whether a bijection between $\Irr_0 (G,b)$ and $\Irr_0 (H,c)$ can be 
chosen to be compatible with this natural isomorphism. 
If $S_1$ and $S_2$ are subsets of some abelian groups, we say that a map $F\colon \pm S_1 \to \pm S_2$ is \emph{signed} if 
$F(-\chi) = -F(\chi)$ for all $\chi\in S_1$. 

\begin{defi}[\protect{\cite[Section 2]{Evseev2010m}}]\label{def:ircbl}
 Let $G$, $P$, $H$, $b$, $c$ be as above and $\cl S= \cl S(G,P,H)$. We say that the quadruple $(G,b,P,H)$ satisfies the property (IRC-Bl) if there exists a
signed bijection $F\colon \pm\Irr_{0}(G,b) \to \pm\Irr_{0}(H,c)$ such that
\[
F(\chi) \equiv \Proj_P \Res_H^G \chi \dmod  \Cal I(H,P,\cl S) \quad \text{ for all } \chi\in \pm \Irr_{0}(G,b).
\]
\end{defi}

\begin{thm}\label{thm:symirc}
 Let $b$ be a $p$-block  of a symmetric group $S_m$ and $P$ be a defect group of $b$. 
Then (IRC-Bl) holds for the quadruple $(S_m,b, P, N_{S_m} (P))$. 
\end{thm}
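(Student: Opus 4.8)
The plan is to reduce the statement for an arbitrary block $b$ of $S_m$ to the case of a single principal block of a smaller symmetric group, and then verify (IRC-Bl) there by exhibiting an explicit signed bijection. First I would invoke the Nakayama conjecture: a $p$-block $b$ of $S_m$ is parametrised by a $p$-core $\mu$ of size $m - pw$, where $w$ is the weight, and its defect group $P$ is (conjugate to) a Sylow $p$-subgroup of $S_{pw}$. The characters in $b$ are labelled by partitions of $m$ with $p$-core $\mu$, equivalently (via the $p$-quotient) by $p$-tuples of partitions with total size $w$; those of height $0$ correspond to the $p$-tuples whose associated $p$-quotient is, in the appropriate sense, as ``spread out'' as possible. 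Since the combinatorics of $\Irr_0(S_m,b)$, of $N_{S_m}(P)$, and of the set $\cl S(S_m,P,N_{S_m}(P))$ all depend only on $w$ and not on $\mu$, I would set up a Morita-type or perfect-isometry reduction — in the spirit of the known results on abelian defect (Chuang--Kessar, Brou\'e) extended here to arbitrary $w$ — identifying the quadruple $(S_m,b,P,N_{S_m}(P))$ with $(S_{pw},b_0,P,N_{S_{pw}}(P))$, where $b_0$ is the principal block of $S_{pw}$, in a way compatible with $\Proj_P\Res$, with induction from $p$-local subgroups, and hence with the ideals $\cl I_G$, $\cl I_H$. This is where the ``canonical isometry between the principal block of $S_{pw}$ and that of $S_p\wr S_w$'' advertised in the abstract, together with the general wreath-product character theory, is meant to do the work.

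Having reduced to the principal block $b_0$ of $G = S_{pw}$ with $H = N_G(P)$, the next step is to pass through the wreath product $S_p \wr S_w$. A Sylow $p$-subgroup $P$ of $S_{pw}$ can be taken inside $S_p \wr S_w$ (indeed inside $C_p \wr \Syl_p(S_w)$), and one has the chain $N_G(P) \le S_p \wr S_w \le G$ with the Brauer correspondent of $b_0$ along each stage being the principal block. I would use the canonical isometry $\mZ[\Irr(S_{pw}, b_0)] \to \mZ[\Irr(S_p \wr S_w, \text{principal})]$ to transport the problem onto the wreath product, where the height-$0$ characters and the relevant $p$-local structure are transparent: $S_p$ has a cyclic Sylow $p$-subgroup $C_p$, its principal block has $p-1$ height-$0$ characters (the non-trivial part being a single exceptional-type family), and height-$0$ characters of the principal block of $S_p\wr S_w$ are built by the usual Clifford-theoretic / wreath-product recipe from these together with characters of $S_w$-stabilisers. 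The general theorem on expressing virtual characters of wreath products via induced characters (again, assumed from earlier in the paper) lets me rewrite everything modulo $\cl I_H$ in terms of a controlled basis.

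Now inside the wreath product the signed bijection should be essentially forced. On the $S_p$-factor level, (IRC-Bl) for $(S_p, \text{principal block}, C_p, N_{S_p}(C_p))$ is the abelian (indeed cyclic) defect case — here $N_{S_p}(C_p) = C_p \rtimes C_{p-1}$, the Brauer tree is a line/star, and the required signed bijection matching $\Proj_P\Res$ modulo $\cl I$ is classical (it is part of the reason (IRC) was checked for such small cases). Then I would promote this factorwise correspondence to $S_p \wr S_w$: write $P = Q \wr R$ with $Q = C_p^w$ and $R \in \Syl_p(S_w)$, decompose $N_{S_p\wr S_w}(P)$ accordingly, and use the Clifford theory of the base group together with the behaviour of $\Proj_P$, $\Res$, and $\Ind$ under wreathing (this is precisely what the wreath-product machinery is for) to assemble the signed bijection compatibly with the ideals on both sides. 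Finally, transport back along the canonical isometry and the reduction from the first paragraph to land on $(S_m, b, P, N_{S_m}(P))$.

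The main obstacle, I expect, is the bookkeeping at the wreath-product stage: one must check that the natural isomorphism of Theorem 2.6 / Proposition 2.11 of \cite{Evseev2010m} — i.e.\ compatibility of the candidate bijection with $\Proj_P\Res^G_H$ modulo $\cl I(H,P,\cl S)$ — survives the passage through $S_p\wr S_w$ and the Clifford-theoretic assembly, since $\cl S(G,P,H)$ and the ideals $\cl I_G$, $\cl I_H$ are defined via \emph{all} subgroups $L$ with $L\cap P$ Sylow in $L$ and $L\cap P\in\cl S$, and it is not a priori obvious that the wreath decomposition respects this family. Making the canonical isometry genuinely compatible with $\Proj_P\Res$ and with induction from every relevant $p$-local subgroup (not merely an abstract perfect isometry) is the technical heart; once that is in place, the abelian-defect template for $S_p$ and the wreath-product induction theorem combine to give the result, with the $p$-core $\mu$ playing no role beyond fixing labels.
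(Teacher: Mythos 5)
Your outline correctly spots the passage through $S_p\wr S_w$, the subgroup chain down to $N_{S_{pw}}(P)$, and the role of Clifford theory, but two of the reduction steps you lean on are gaps.

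The proposed ``Morita-type or perfect-isometry reduction'' from the block $b$ with core $\rho$ to the \emph{principal} block of $S_{pw}$ is not available and is not what the paper does. The property (IRC-Bl) is not a statement about the abstract block algebras: it refers to the embedded defect group $P$, the subgroup $H\le G$, and the ideal $\cl I(H,P,\cl S)$ generated by inductions $\Ind_L^H\phi$ from subgroups $L\le H$ with $L\cap P\in\Syl_p(L)\cap\cl S(G,P,H)$. None of this is transported by a Morita equivalence or a perfect isometry between blocks of \emph{different} symmetric groups, and no such invariance is proved in the paper. Instead the paper builds, for each core $\rho$, a direct signed bijection $F_{p,w,\rho}\colon\pm\Irr(S_{pw+e},\rho)\to\pm\Irr_{\pri}(S_p\wr S_w)$ (Rouquier's map, adjusted by the permutation $i\mapsto\gamma^{-1}(p-i-1)$), and proves combinatorially (Theorem~\ref{thm:main}) that it agrees with $\Res^{S_{pw}}_{S_p\wr S_w}\tilde\pi_\rho\Res^{S_{pw+e}}_{S_{pw}\times S_e}$ modulo $\cl K_w$, resp.\ $\cl K_{w-1}$. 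The core enters the construction directly; the principal block of $S_{pw}$ plays no distinguished role.

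More fundamentally, you flag ``compatibility with the ideals $\cl I_G,\cl I_H$'' as the technical heart but then leave it to the assumed apparatus. That compatibility is exactly what Sections~\ref{sec:I}--\ref{sec:indarg} are for: one has to prove the concrete containment $\cl K_{w-\d_{e0}}\subset\cl I(S_p\wr S_w,P,\cl S_e)$ (Theorem~\ref{thm:Iset}), and that proof depends essentially on the wreath-product induction theorem (Theorem~\ref{thm:wrind}) — first establishing $\cl K_1\subset\cl I(S_p\wr S_w,Q,\cl A(Q))$ via Brauer's theorem at the $S_p$-level, and then bootstrapping up the filtration $\cl K_1\supset\cdots\supset\cl K_w$ against a chain of $p$-subgroups $P_i\times Q_i$ lying in $\cl S_e$. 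Without this mechanism, agreement ``on elements of large $p$-type'' does not translate into agreement modulo $\cl I$. You also omit the induction on $m$: the Clifford-theoretic assembly inside $N_{S_p}(C_p)\wr S_w$ (Proposition~\ref{prop:cliff}) requires (IRC-Syl) for pairs $(S_{w_i},N_{S_{w_i}}(Q_i))$ with $w_i\le w<m$ — i.e.\ the inductive hypothesis of the very theorem — and without that assumption the step from $N_{S_p}(C_p)\wr S_w$ down to $N_{S_p}(C_p)\wr N_{S_w}(Q)$ does not close.
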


Most of the paper is devoted to a proof of Theorem~\ref{thm:symirc}.

\subsection{A simpler statement}\label{sub:12}


An important part of the proof may be represented by the following less technical result, which may be of independent interest. 
If $h$ is an element of a finite group, let $h_p$ be the $p$-part of $h$.
Let $w$ be any positive integer. 
Consider the wreath product $S_p\wr S_w$ as a subgroup of $S_{pw}$ in the obvious way. Define 
\[
 \Cal W = \{ h\in S_p\wr S_w \mid C_{S_{pw}} (h_p) \le S_p\wr S_w \}.
\]

\begin{thm}\label{thm:val}
Let $b_0$ and $c_0$ be the principal $p$-blocks of $S_{pw}$ and $S_p\wr S_w$ respectively. Then 
 there exists a unique signed bijection $F\colon \pm\Irr(S_{pw}, b_0) \to \pm\Irr(S_p \wr S_w, c_0)$ such that $F(\chi)(h)=\chi(h)$ for all $h\in \cl W$.
Moreover, $F$ preserves heights of characters. 
\end{thm}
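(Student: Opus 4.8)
The plan is to construct $F$ explicitly from the natural combinatorial parametrizations of the two sets of irreducible characters, and then to establish two facts: that $F(\chi)$ and $\chi$ agree on $\cl W$, and that distinct elements of $\pm\Irr(S_p\wr S_w,c_0)$ have distinct restrictions to $\cl W$. The second fact forces uniqueness (any two maps with the stated property agree after restriction to $\cl W$, hence agree), so the whole statement reduces to these two verifications together with the equality $|\Irr(b_0)|=|\Irr(c_0)|$; the height assertion will come for free from the parametrizations.

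First I would fix notation. Since $|S_{pw}|_p=|S_p\wr S_w|_p$, the two groups share a Sylow $p$-subgroup $P$, which is a defect group of both $b_0$ and $c_0$. The block $b_0$ consists of the $\chi^\lambda$ with $\lambda\vdash pw$ of empty $p$-core, and via the $p$-quotient these are indexed by the $p$-tuples $\boldsymbol\lambda=(\lambda^{(0)},\dots,\lambda^{(p-1)})$ of partitions with $\sum_i|\lambda^{(i)}|=w$. On the other side, Clifford theory for $S_p\wr S_w$ applied to the $p$ hook characters $\chi^{(p-i,1^i)}$ of $S_p$ (which are exactly the irreducible characters of $S_p$ in its principal block) indexes $\Irr(c_0)$ by the same set of $p$-tuples; write $\psi_{\boldsymbol\mu}$ for the character so labelled. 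In particular $|\Irr(b_0)|=|\Irr(c_0)|$, and the heights of $\chi^\lambda$ and of $\psi_{\boldsymbol\mu}$ are given by one and the same explicit expression in the parts $|\lambda^{(i)}|$, $|\mu^{(i)}|$ and the internal data of the components, so any index-matching bijection preserves heights. I would also analyse $\cl W$: for $h\in S_p\wr S_w$ the condition $C_{S_{pw}}(h_p)\le S_p\wr S_w$ amounts to a condition on the cycle structure of $h_p$ relative to the system $\cl B$ of $w$ blocks of size $p$ stabilised by $S_p\wr S_w$ (each $\langle h_p\rangle$-orbit must be a union of blocks of $\cl B$ on which $h_p$ acts either trivially or as a single cycle, with at most one exceptional orbit). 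The consequences I need are that $\cl W$ is a union of $S_p\wr S_w$-classes, that it is stable under $h\mapsto h_p$ and is in fact a union of $p$-sections of $S_p\wr S_w$, and that the $S_{pw}$-class of any $p$-element of $\cl W$ meets $S_p\wr S_w$ in a single $S_p\wr S_w$-class.

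The technical core is the computation of $\chi^\lambda|_{\cl W}$ and of $\psi_{\boldsymbol\mu}|_{\cl W}$. Applying the Murnaghan--Nakayama rule on $S_{pw}$ — in the form that, for an element whose $p$-part is built of cycles of $p$-power length subordinate to $\cl B$, evaluates $\chi^\lambda$ by repeated removal of rim $p^k$-hooks and passage to $p$-core and $p$-quotient — and the standard character formula for wreath products for $\psi_{\boldsymbol\mu}$, I would establish an identity
\[
\chi^\lambda(h)=\varepsilon(\boldsymbol\lambda)\,\psi_{\boldsymbol\mu(\boldsymbol\lambda)}(h)\qquad(h\in\cl W),
\]
where $\boldsymbol\mu(\boldsymbol\lambda)$ is obtained from the $p$-quotient $\boldsymbol\lambda$ of $\lambda$ by an explicit relabelling (and conjugation) of the components, the precise shape being dictated by how the sign of a removed rim hook depends on its leg length, and $\varepsilon(\boldsymbol\lambda)\in\{\pm1\}$ is an explicit sign of the same origin. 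Rather than comparing the two closed formulas term by term, it should be cleaner to route this through the paper's general theorem expressing virtual characters of a wreath product as $\mathbb Z$-combinations of characters induced from subgroups of the type appearing in Definition~\ref{def:I}: both $\chi^\lambda$ (via the block-$b_0$ analogue of that description on $S_{pw}$) and $\varepsilon(\boldsymbol\lambda)\psi_{\boldsymbol\mu(\boldsymbol\lambda)}$ should differ, after the appropriate identification, from the same sum of induced characters, each of which vanishes on $\cl W$ because its inducing subgroup cannot contain a full Sylow $p$-subgroup's worth of structure subordinate to $\cl B$. Granting the identity, I would set $F(\chi^\lambda)=\varepsilon(\boldsymbol\lambda)\,\psi_{\boldsymbol\mu(\boldsymbol\lambda)}$ and extend by $F(-\chi)=-F(\chi)$; by the previous paragraph this is a height-preserving bijection that agrees with the identity on $\cl W$. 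It remains only to see that $\psi\mapsto\psi|_{\cl W}$ is injective on $\pm\Irr(c_0)$, i.e.\ that the functions $\psi_{\boldsymbol\mu}|_{\cl W}$ are linearly independent; translating via the identity to the $S_{pw}$ side, this follows from the Murnaghan--Nakayama computation: taking, for each $p$-tuple, the elements of $\cl W$ whose $p$-part is a product of $w$ disjoint $p$-cycles distributed among the blocks of $\cl B$ according to that tuple makes the resulting matrix of character values triangular for a dominance order, hence invertible.

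The real obstacle is the character identity above for \emph{arbitrary} $w$. When $w\ge p$ the $p$-part of an element of $\cl W$ need not be a product of $p$-cycles but can involve longer $p^k$-cycles spanning several blocks, so the interplay of the Murnaghan--Nakayama recursion on $S_{pw}$ with the iterated-wreath-product character formula is considerably more delicate than in the abelian-defect case $w<p$ treated previously; keeping track of the accumulated signs — leg lengths of rim $p^k$-hooks versus the signs built into the wreath formula — and pinning down the correct relabelling $\boldsymbol\lambda\mapsto\boldsymbol\mu(\boldsymbol\lambda)$ of the $p$ quotient components is where the substance lies. Once that identity is available, the injectivity needed for uniqueness and the matching of heights are comparatively routine.
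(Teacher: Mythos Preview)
Your overall strategy is sound and matches the paper's: define $F$ explicitly via $p$-quotients (this is the map $F_{p,w,\varnothing}$ of~\eqref{eq:defF}), show that $F(\chi)$ and $\chi$ agree on $\cl W$, and then argue uniqueness by an injectivity statement. The height claim is correct and is Proposition~\ref{prop:heights}. The character identity on $\cl W$ is indeed the heart of the existence proof; the paper establishes it via the commutative diagram~\eqref{eq:main3}, which packages Rouquier's Murnaghan--Nakayama computations into the maps $d^\alpha$ and $\delta^\alpha$ and reduces everything (for $\rho=\varnothing$) to checking $F_{p,1,\varnothing}=\id$ on $\cl C(S_p)$. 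Your suggestion to route the identity through the induction theorem of \S\ref{sub:41} is misplaced, however: Theorem~\ref{thm:wrind} is used only to prove $\cl K_{w-1}\subset\cl I(S_p\wr S_w,P,\cl S_0)$ for (IRC-Bl), and plays no role in Theorem~\ref{thm:val} itself.

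The genuine gap is in your uniqueness argument. You write that it ``remains only to see that $\psi\mapsto\psi|_{\cl W}$ is injective on $\pm\Irr(c_0)$, i.e.\ that the functions $\psi_{\boldsymbol\mu}|_{\cl W}$ are linearly independent'', and propose a triangularity argument. But the ``i.e.'' is wrong: injectivity on the \emph{set} $\pm\Irr(c_0)$ is strictly weaker than linear independence, and linear independence in fact fails already for $p=w=2$. There $|\Irr(c_0)|=5$, while $\cl W=\cl U_1$ meets only three conjugacy classes of $S_2\wr S_2$ (those of $(u,1;1)$, $(u,u;1)$ and $(u,1;\sigma)$ with $u=(12)$), so the five restrictions cannot be independent; for instance $(1^{\witi\times 2}\chi^{(2)}+\sgn^{\witi\times 2}\chi^{(1,1)})|_{\cl W}=(1^{\witi\times 2}\chi^{(1,1)}+\sgn^{\witi\times 2}\chi^{(2)})|_{\cl W}$. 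Your triangularity sketch (``elements whose $p$-part is a product of $w$ disjoint $p$-cycles distributed among the blocks according to that tuple'') is also not well-posed: the indexing $p$-tuple $\boldsymbol\mu$ consists of \emph{partitions}, not a distribution of cycles among blocks, and the number of conjugacy classes in $\cl U_{w-1}$ is in general much smaller than $|\Irr(c_0)|$.

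The paper proves uniqueness quite differently (Proposition~\ref{prop:unique} and Corollary~\ref{cor:unique}), on the $S_{pw}$ side, using a genuinely algebraic input: the irreducibility of the Schur functions $s_\lambda$ as polynomials in $h_0,h_1,\ldots$ (Lemma~\ref{lem:Schirr}). From $\Res(\chi^\lambda\pm\chi^\mu)\in\cl K_w$ one deduces via $\shr_p$ and Theorem~\ref{thm:shrfar} that $\prod_i s_{\lambda(i)}=\pm\prod_i s_{\mu(i)}$, so the $p$-quotients of $\lambda$ and $\mu$ are permutations of each other; then the additional information from $\cl U_{w-1}\setminus\cl U_w$ (encoded by the map $f$ of Lemma~\ref{lem:f}) forces the permutation to be trivial via a multiplicity count. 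No triangularity is available here, and the Schur-irreducibility step, or something of comparable strength, appears to be essential.
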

In fact, $F$ is one of a family of signed bijections constructed by Rouquier~\cite[{\S}2.3]{Rouquier1994}.

Returning to the notation of Theorem~\ref{thm:symirc}, let $w$ be the weight of the block $b$ (cf.\ \S\ref{sub:22}) and $e=m-pw$.
Then $P$ may be taken to be a Sylow $p$-subgroup of $S_p\wr S_w\le S_{pw}$, and
$H=(S_p\wr S_w)\times S_e$ may be viewed as a subgroup of $S_{pw}\times S_e$ and hence of $S_m$. 
Let $c_0$ denote the principal $p$-block of $S_p\wr S_w$. 

One of the most important steps in the proof of Theorem~\ref{thm:symirc} is to show that (IRC-Bl) holds for the quadruple $(S_m, b, H, P)$.
In the case when $e=0$, this turns out to be a consequence of Theorem~\ref{thm:val} (together with some other results of Sections~\ref{sec:main}--\ref{sec:I}). 
If $e>0$, the existence part of an analogue of Theorem~\ref{thm:val} still holds (cf.\ Theorem~\ref{thm:main}) and still leads to a proof of (IRC-Bl) 
for $(S_m,b,H,P)$, but the uniqueness part is no longer true. 

\begin{remark} Fix a $p$-element $x$ of $S_p\wr S_w$ such that $C = C_{S_{pw}} (x) \le S_p\wr S_w$. 
Then the condition in Theorem~\ref{thm:val} that $F(\chi)(h)=h$ for all $h\in S_p\wr S_w$ such that $h_p = x$ 
may be expressed by the following commutative diagram:
\[
 \xymatrix{
  \pm \Irr(S_{pw}, b_0) \ar[d]_{d^x} \ar[r]^-F & \pm \Irr(S_p\wr S_w, c_0) \ar[d]_{d^x} \\
 \CF(C) \ar@{=}[r] & \CF(C)
 }
\]
Here $\CF(C)$ is the set of $\mC$-valued class functions on $C$, and $d^x$ is the generalised decomposition map defined by
\[
 d^x(\chi) (z) = 
\begin{cases}
  \chi(xz) & \text{if } z \text{ is } p\text{-regular}, \\
 0 & \text{otherwise.}
\end{cases}
\]
This is an instance of the isotypy condition due to Brou{\'e} (see e.g.\ \cite[D{\'e}finition 4.6]{Broue1990}), which is usually considered
in cases where $P$ is abelian. Here we have added an extra requirement that the bottom map of the diagram be the identity one, and as a result only one of $p!$ isotypies
constructed by Rouquier~\cite{Rouquier1994} (when $w<p$) satisfies the requirements of Theorem~\ref{thm:val}.
\end{remark}

\begin{remark}
 For each $h\in \Cal W$, the second orthogonality relations for characters of $S_{pw}$ and $S_p\wr S_w$ yield
\begin{equation}\label{eq:secorth}
 \sum_{\chi\in \Irr(S_{pw})} |\chi(h)|^2 = |C_{S_{pw}} (h)| = |C_{S_p\wr S_w}(h)| = \sum_{\phi\in\Irr(S_p\wr S_w)} |\phi(h)|^2.
\end{equation}
Thus, Theorem~\ref{thm:val} says that, at least for characters lying in the principal blocks of $S_{pw}$ and $S_p\wr S_w$, we have not only an equality between the sums on the two sides
of~\eqref{eq:secorth}, but also a bijective correspondence between the summands such that the corresponding summands are equal. 
Moreover, this correspondence works simultaneously for all $h\in \cl W$.
\end{remark}

Due to all these observations, it seems reasonable to suggest that the map $F$ of Theorem~\ref{thm:val} yields a \emph{natural} isometry between $\mZ[\Irr(S_{pw}, b_0)]$ and 
$\mZ[\Irr(S_p\wr S_w, c_0)]$. The uniqueness part of that theorem is noteworthy: when $G$ is an arbitrary finite group, it is usually impossible to distinguish a particular signed 
bijection between $\pm \Irr(G, b)$ and $\pm \Irr(H,c)$ among other such bijections. Thus, it seems worthwhile to investigate the isometry $F$ in more detail: 
see \S\ref{sub:13} and Section~\ref{sec:prop}.

\subsection{Outline of the paper}\label{sub:13}

In Section~\ref{sec:prelim} we fix notation and establish some standard results or easy consequences thereof, in particular, concerning class functions on wreath products. 
Sections~\ref{sec:main}--\ref{sec:indarg} are devoted to a proof of Theorem~\ref{thm:symirc}. 
In Section~\ref{sec:main} we prove the existence part of Theorem~\ref{thm:val}. Indeed, Theorem~\ref{thm:main} gives a more precise and general statement. 
(Uniqueness in Theorem~\ref{thm:val} is proved in Section~\ref{sec:un}.)

In \S\ref{sub:41} we prove a general result on expressing virtual characters of wreath products in terms of certain induced characters (Theorem~\ref{thm:wrind}), which in some sense is a generalisation of a 
case of Brauer's induction theorem (see Example~\ref{exam:Br}). This result may be of independent interest. Using it and the results of Section~\ref{sec:main}, 
we show in \S\ref{sub:42} that the quadruple $(S_m, b, P, (S_p\wr S_w)\times S_e)$ (cf.\ Theorem~\ref{thm:symirc}) satisfies (IRC-Bl).

In Section~\ref{sec:indarg} we construct an appropriate 
signed bijection between $\pm\Irr_0 (S_p\wr S_w, c_0)$ and $\pm \Irr_0 (\N_{S_{pw}} (P))$. Composing it with a signed bijection
from $\pm \Irr_0(S_m,b)$ to $\pm \Irr_0(S_p\wr S_w, c_0)$ obtained in Section~\ref{sec:main} allows us to complete the proof of Theorem~\ref{thm:symirc}. Along the way, in 
\S\ref{sub:51}, we refine a general result due to Marcus~\cite{Marcus1996} on construction of double complexes that give derived equivalences between blocks of wreath products. 
We remark that many of the results of Sections~\ref{sec:I}--\ref{sec:indarg} are easy or known when $w<p$, but the general case appears to require much more technical arguments.

If $w<p$, then $P$ is abelian and the bijection $F$ of Theorem~\ref{thm:val} induces a perfect isometry in the sense of Brou{\'e}~\cite{Broue1990}, due to~\cite{Rouquier1994}. 
It is not known how one should generalise the perfectness property to blocks with non-abelian defect groups. Due to the uniqueness of the isometry $F$ in Theorem~\ref{thm:val}, 
it appears to be plausible that $F$ should satisfy 
whatever conditions one may wish to impose to generalise perfectness. This motivates a more detailed study of the properties of $F$, which we perform in Section~\ref{sec:prop}. 
In \S\ref{sub:71} we develop a modified version of modular character theory for groups of the form $L\wr S_w$
when $w\ge p$: roughly speaking, instead of looking at the set of $p$-regular elements, as one does in the theory of Brauer characters, we consider a somewhat larger subset of $L\wr S_w$.
After specialising to $L=S_p$, this leads to a modification of some of the conditions required for an isometry to be perfect. In \S\ref{sub:72} 
we show that $F$ satisfies these adjusted conditions for all $w$.

\begin{remark}
  A generalisation of the perfectness property to the non-abelian case has been proposed by
Narasaki and Uno~\cite{NarasakiUno2009}. However, it 
does not appear to be directly relevant to the situation of Theorem~\ref{thm:val} because
the condition on control of fusion 
(see~\cite[Definition 25]{NarasakiUno2009}) is not satisfied in that case. Indeed, specialising to $w=p=2$ in Theorem~\ref{thm:val}, 
one can check that there is no isometry between the principal blocks of $S_4$ and $D_8$ satisfying condition (RP1) of~\cite[Definition 20]{NarasakiUno2009} (with $Q=[P,P]$). 
\end{remark}

\bigskip
\noindent
\textbf{Acknowledgments.} The author is grateful to Joseph Chuang for several helpful discussions. He would also like to thank the referee for suggesting a considerable simplification of the proof of Theorem~\ref{thm:wrind}.

\section{Notation and preliminary results}\label{sec:prelim}

\subsection{General notation and conventions}\label{subsec:gennot}

We fix some notation, which is mostly standard. 
Throughout this subsection, we assume that $p$ is a fixed prime, $G$ is a finite group, and $R$ an arbitrary unital commutative ring.

\npa
\textbf{Integers.} We write $\mN$ for the set of positive integers and $\mZ_{\ge 0}$ for that of nonnegative integers. 
If $i,j\in \mZ$, we write $[i,j] = \{ l\in \mZ \mid i\le l\le j \}$. The $p$-adic valuation of an integer $n$ is denoted by $v_p (n)$.

\npa
\textbf{Groups.} If $g\in G$, we denote by $g_p$ and $g_{p'}$ the $p$-part and the $p'$-part of $g$ respectively. We write $G_{p'} = \{ g\in G \mid g_{p'} =g\}$. 
 The trivial group will be denoted by
$\mbf 1$. The set of Sylow $p$-subgroups of $G$ is $\Syl_p (G)$. If $G$ acts on a set $X$, we will denote the stabiliser in $G$ of an element $x\in X$ by $G_x$ or $\Stab_G (x)$.
By $\cl A(G)$ we denote the set of all subgroups of $G$.  

\npa
\textbf{Rings.} We assume that $(K,\cl O,k)$ is a fixed $p$-modular system with the usual properties. 
That is, $\cl O$ is a discrete valuation ring with field of fractions $K$ which has characteristic $0$ and is ``large enough'', i.e.\ is a splitting field for 
all finite groups in question; and the quotient $k$ of $\cl O$ by its maximal ideal is an algebraically closed field of characteristic $p$.

\npa
\textbf{Class functions.}
In general, $\CF(G;R)$ is the ring of $R$-valued functions on $G$ that are constant on conjugacy classes.
However, unless otherwise specified, we assume class functions to be $K$-valued. In particular, $\Irr(G)$ denotes the set of irreducible characters of $G$ with values in $K$. 
The trivial character of $G$ is denoted by $1_G$. 
We write $\cl C(G)=\mZ[\Irr(G)]$. The set of $\chi\in\Irr(G)$ such that $p$ does not divide $\chi(1)$ is denoted by $\Irr_{p'}(G)$. 
The inner product of two class functions $\xi$ and $\th$ is denoted by
$\lan \xi, \th\ran$. Whenever the context does not specify a particular prime $p$, this prime may be chosen arbitrarily: in such situations (e.g.\ in Section~\ref{sec:main}), one may as well assume that $K$ is replaced by $\mC$. 
For $\chi\in \cl C(G)$, we define $\ol\chi \in \cl C(G)$ by $\ol\chi(g) = \chi(g^{-1})$ for $g\in G$. 

Suppose that $N$ is a normal subgroup of $G$ and $\phi\in \Irr(N)$. We denote by $\Irr(G|\phi)$ the set of $\chi\in \Irr(G)$ such that $\phi$ is a summand of $\Res^G_N \chi$ and write
$\cl C(G|\phi)= \mZ[\Irr(G|\phi)]$. 
If $\xi\in \CF(G/N; K)$, then $\Inf_{G/N}^G \xi$ is the inflation of $\xi$ to $G$. It is defined by $(\Inf_{G/N}^G \xi) (g) = \xi(gN)$, $g\in G$. 

\npa
\textbf{Blocks.} If $f\in Z(K G)$ is an idempotent, we define the projection map $\Proj_f \colon \CF(G; K) \to \CF(G; K)$ by 
$\Proj_f (\xi) (g) = \xi(gf)$ for $g\in G$ and $\xi\in \CF(G;K)$. The image of $\Proj_f$ is denoted by $\CF(G, f; K)$ (as in~\cite{Broue1990}).

By a \emph{$p$-block} we mean a primitive idempotent $b$ of $Z(\cl O G)$. As usual, we write $\Irr(G,b) = \Irr(G) \cap \CF(G,b; K)$: this is the set of irreducible characters
belonging to $b$. Further, $\cl C(G, b) = \mZ[\Irr(G,b)]$. The set of all blocks of $G$ will be denoted by $\Bl(G)$. 

\npa
\textbf{Modules.} Modules are assumed to be left ones unless we specify otherwise. 
All $\cl OG$-modules are assumed to be $\cl O$-free of finite rank. 

\npa
\textbf{Products.} If $\xi\in \CF(G; K)$ and $\th\in \CF(H;K)$, we write $\xi\times \th\in \CF(G\times H; K)$ for the Kronecker product of $\xi$ and $\th$, defined by 
$(\xi\times \th)(g,h) = \xi(g)\th(h)$. We will write $\prod_{i=1}^r a_i = a_1 \times \cdots \times a_r$ and $a^{\times r} = \prod_{i=1}^r a = a\times \cdots \times a$ for any objects
$a,a_1,\ldots,a_r$ for which the products on the right-hand sides make sense. Similarly, 
$a^{\otimes r} = a\otimes \cdots \otimes a$ ($r$ factors) whenever the last
tensor product is defined. 

If $w_1,\ldots,w_r \in \mZ_{\ge 0}$ and $w\ge w_1+\cdots +w_r$, then 
$\prod_{i=1}^r S_{w_i}$ will be viewed as a Young subgroup of $S_w$, so that the factor $S_{w_i}$ is the subgroup of $S_w$ consisting of all the elements that fix all points in $[1,w]-I_i$
where $I_i$ is a subset of $[1,w]$ of size $w_i$ and $I_1,\ldots, I_r$ are disjoint. 

\npa
\textbf{Partitions.} 
A \emph{partition} is a finite non-increasing sequence of positive integers. The set of all partitions will be denoted by $\cl P$ and the empty partition by $\varnothing$. Suppose that $\lda =(\lda_1,\ldots,\lda_r)$ and $\mu=(\mu_1,\ldots,\mu_s)$ are partitions. We will write $|\lda| = \lda_1+\ldots+\lda_r$ (the \emph{size} of $\lda$) and $l(\lda) = r$ (the \emph{length} of $\lda$). The conjugate partition of $\lda$ is denoted by $\lda'$.
For $i\in \mN$, we write $m_{i}(\lda) = \{ s\in [1,l(\lda)] \mid \lda_s=i\}$.
By $\lda \sqcup \mu$ we mean the partition obtained by ordering the sequence $(\lda_1,\ldots,\lda_r,\mu_1,\ldots,\mu_s)$ in the non-increasing order.
The set of all partitions of size $n$ will be denoted by $\Cal P(n)$.
If $m\in \mN$, we define $m\lda = (m\lda_1,\ldots,m\lda_r)$. We write $(m^d)$ for the partition $(m,\ldots, m)$ ($d$ parts). 

%
We say that $\lda$ \emph{contains} $\mu$ and write $\lda\supset \mu$ if $s\le r$ and $\mu_i\le \lda_i$ for all $i\in [1,s]$. In this case, $\lda$ and $\mu$ define a \emph{skew partition}, denoted by $\lda/\mu$, and we write $|\lda/\mu| = |\lda|-|\mu|$. 
Each partition $\lda$ will be identified with the skew partition $\lda/\varnothing$. 


\subsection{Characters of symmetric groups and abacuses}\label{sub:22}

We introduce notation and recall certain well-known facts on characters of symmetric groups and related combinatorics. 
The details may be found e.g.\ in~\cite{JamesKerber1981}.

To each skew partition $\lda/\mu$ of size $m$ one attaches a character $\chi^{\lda/\mu}$ of $S_m$ in a standard way (see~\cite[\S 2.3]{JamesKerber1981}).
If $\mu=\varnothing$ (the empty partition), this character is irreducible, and in fact $\Irr(S_m) = \{ \chi^{\lda} \mid \lda\in \Cal P(m) \}$. 

Let $p\in \mN$ and let $N$ be a fixed integer divisible by $p$ such that $N-p$ is greater than the number of parts in every partition considered. Let $\lda\in \cl P$.
We will denote by $\b(\lda, p)$ the $p$-abacus with $N$ beads representing $\lda$: see~\cite[\S 2.7]{JamesKerber1981}. 
By convention, the runners are numbered $0,1,\ldots, p-1$ from left to right, and slots on each runner by $0,1,2,\ldots$ from the top down.
We say that the slot in column $i$ and row $r$ \emph{represents} the number $rp+i$. Note that, by the choice of $N$, each runner of $\b(\lda,p)$ contains at least one bead. 
If $\lda, \mu\in \Cal P$, we will write $\lda\supset_p \mu$ if $\b(\mu,p)$ can be obtained from
$\b(\lda,p)$ by moving beads one step up (to a free space) several times (or, equivalently, if $\mu$ can be obtained from $\lda$ by removing several rim $p$-hooks). 

A partition $\rho$ is a \emph{$p$-core} if there is no partition $\kappa$ such that $\rho/\kappa$ is a rim $p$-hook. Equivalently, $\rho$ is a $p$-core if and only if 
within each runner of $\b(\rho,p)$
there are no gaps between beads. The \emph{$p$-core of a partition} $\lda$ is the partition $\rho$ such that $\b (\rho,p)$ is obtained from $\b(\lda,p)$ by moving all the beads up along each runner as far as possible. 
By $(\lda(0),\ldots,\lda(p-1))$ we denote the 
\emph{$p$-quotient} of $\lda$, which is defined as follows: $\lda(i)$ is the unique partition
whose $1$-abacus is column $i$ of $\b(\lda, p)$ (up to adding a number of beads to the top of the column).

\begin{thm}[See~\protect{\cite[Theorem 2.7.30]{JamesKerber1981}}]\label{thm:corequot}
 Let $\rho$ be a $p$-core. Then $\lda\mapsto (\lda(0),\ldots,\lda(p-1))$ is a bijection from the set of partitions with $p$-core $\rho$ onto
$\cl P^{\times p}$.
\end{thm}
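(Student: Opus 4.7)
The plan is to prove the bijection directly from the abacus representation. Given a partition $\lda$ with fixed bead count $N$, the $p$-abacus $\b(\lda,p)$ is obtained by writing the beta-numbers of $\lda$ (i.e.\ first-column hook lengths, shifted appropriately so that exactly $N$ of them are nonnegative) and placing a bead at each such integer; the integer $rp+i$ sits in row $r$ of runner $i$. The first step I would establish, and the technical heart of the matter, is that sliding a single bead one slot upward within its runner corresponds to removing a rim $p$-hook from $\lda$. This is standard but needs a careful combinatorial verification, identifying the $p$-hook stripped off with the difference of the two beta-numbers involved.

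From this, two immediate consequences follow. First, the operation ``slide up'' preserves the per-runner bead count, so the vector $(n_0,\ldots,n_{p-1})$ of bead counts on runners of $\b(\lda,p)$ depends only on $\supset_p$-equivalence of $\lda$, and in particular is equal to the corresponding vector for $\b(\rho,p)$ whenever $\rho$ is the $p$-core of $\lda$. Second, $\b(\rho,p)$ is obtained from $\b(\lda,p)$ by pushing every bead as far up its runner as possible. Hence, having fixed the $p$-core $\rho$, a partition $\lda$ with $p$-core $\rho$ is determined by, and may be any, configuration that places $n_i$ beads on runner $i$ for each $i$ (where $n_i$ depends only on $\rho$, with $n_0+\cdots+n_{p-1}=N$).

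Now the bijection is essentially forced. Runner $i$ of $\b(\lda,p)$, viewed on its own as a column of bead positions $0,1,2,\ldots$, is nothing other than a $1$-abacus with $n_i$ beads, and hence encodes a unique partition $\lda(i)$; this is the definition of the $p$-quotient. Conversely, given $(\mu_0,\ldots,\mu_{p-1})\in\cl P^{\times p}$, I would build the inverse by forming, for each $i$, the $1$-abacus of $\mu_i$ with $n_i$ beads, placing it as runner $i$ of a $p$-abacus, and reading off the partition $\lda$ encoded by the resulting $N$ beta-numbers. By construction the per-runner bead counts match those of $\b(\rho,p)$, so $\lda$ has $p$-core $\rho$, and the two constructions are visibly mutually inverse. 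The only remaining checks are that the resulting map is independent of the choice of $N$ (raising $N$ by $p$ adds one bead to the top of every runner, changing neither $\lda$ nor any $\lda(i)$) and is well-defined regardless of how the beta-numbers are enumerated; both are straightforward once the bead/partition dictionary above is in place.

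The main obstacle is therefore concentrated in the first step: the combinatorial identification of an upward bead slide with the removal of a rim $p$-hook. Everything else is bookkeeping on columns of the abacus, and the bijection statement then reduces to the tautology that a $p$-abacus with prescribed per-runner bead counts is the same data as an ordered $p$-tuple of $1$-abacuses with those bead counts.
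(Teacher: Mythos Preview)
Your proposal is correct and is essentially the standard abacus argument; note that the paper does not give its own proof of this theorem but simply cites \cite[Theorem~2.7.30]{JamesKerber1981}, where precisely this reasoning is carried out.
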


By a famous result (known as Nakayama's Conjecture), if $p$ is a prime, then the $p$-blocks of a symmetric group $S_m$ are in a bijective correspondence with the $p$-core partitions $\rho$ such that $\rho$ is the $p$-core of at least one element of $\cl P(m)$. The \emph{weight} of such a block is the integer $w$ such that $m = |\rho| + pw$. 

Suppose that $\lda$ and $\mu$ are partitions such that $\lda\supset_p \mu$. 
In the \emph{natural numbering} of $\lda$, the beads of $\b(\lda, p)$ are marked with numbers $1,\ldots, N$ in the increasing order 
of numbers represented by the slots occupied by those beads. If we start with this numbering and move the beads of $\b(\lda,p)$ along the runners 
one step up at a time  (keeping the numbers) in such a way that we eventually get $\b(\mu,p)$, then we obtain
a numbering of $\b(\mu,p)$. The \emph{$p$-sign} of $\lda/\mu$, denoted by $\e_p (\lda/\mu)$, is defined as the sign of the permutation mapping this numbering of $\b(\mu,p)$ to the 
natural numbering of $\b(\mu,p)$ (see~\cite[2.7.18--2.7.26]{JamesKerber1981}).

\subsection{Wreath products}\label{sub:wr}

Let $L$ be a fixed finite group and $w\in \mN$. Following~\cite[\S 4.1]{JamesKerber1981}, we represent elements of the wreath product $L\wr S_w$ in the form
$(x_1,\ldots,x_w; \sigma)$ ($x_i\in L$, $\sigma\in S_w$) where multiplication is given by
\[
 (x_1,\ldots, x_w; \sigma) (y_1,\ldots,y_w; \tau)=(x_1 y_{\s^{-1}(1)}, \ldots, x_w y_{\s^{-1}(w)} ; \s\tau).
\]
By convention, both $S_0$ and $L\wr S_0$ will be identified with the trivial group. 
If $A\le L$ and $B\le S_w$, then $A\wr B$ is viewed as a subgroup of $L\wr S_w$ in the obvious way; 
in particular, $B$ becomes a subgroup of $L\wr S_w$ after it is identified with $\mbf 1 \wr B$.

Let us view $S_w$ as the group of all permutations of the set $[1,w]$. 
We define a \emph{marked cycle} in $S_w$ as either a non-identity cyclic permutation $\sigma \in S_w$ or an element of $[1,w]$. 
The \emph{product} $\sigma_1 \cdots \sigma_r \in S_w$ 
of several marked cycles is defined in the usual way after all multiples $\s_j$ which are elements of $[1,w]$ are replaced with the identity element of $S_w$. 
We define the \emph{support} $\Supp(\s)$ of a marked cycle $\s$ as follows: if $\s$ is a non-identity cycle, then $\Supp(\s)$ is the set of points in $[1,w]$ moved by $\s$; if $\s=j\in [1,w]$, then
$\Supp(\s)=\{ j \}$. The \emph{order} of a marked cycle $\s$ is defined by $o(\s) =|\Supp(\s)|$.
Less formally, a marked cycle is either a non-identity cycle or the identity element with an assigned singleton support set. 

The above definition ensures that every element $\s\in S_w$ decomposes as a product $\s=\s_1\cdots \s_r$ where $\s_1,\ldots,\s_r$ are marked cycles in $S_w$ and
$[1,w] = \sqcup_{i} \Supp(\s_i)$. 
Moreover, this decomposition is unique up to permutation of factors. 
The tuple $(o(\s_1),\ldots, o(\s_r))$ (in an arbitrary order) is said to be the \emph{cycle structure} of $\s$.

Let $\sigma$ be a marked cycle in $S_w$ and $i$ be the smallest element of $\Supp(\sigma)$. For $x\in L$ we define
\[
 y_{\sigma} (x) = (1,\ldots,1,x,1,\ldots,1; \sigma) \in L\wr S_w
\]
where $x$ occurs in the $i$-th position. The following lemma is standard and easy to prove.

\begin{lem}\label{lem:cyctype}
 Suppose that $L=S_n$ for some $n\in \mN$. 
 Let $\sigma\in S_w$ decompose into a product of marked cycles as $\sigma=\sigma_1\cdots \sigma_r$, with $[1,w]=\sqcup_i \supp(\sigma_i)$.
If $x\in S_n$ is an $n$-cycle, then the cycle structure of $y_{\s_1} (x) \cdots y_{\s_r} (x)$, viewed as an element of $S_{nw}$ via the natural inclusion $S_n \wr S_w \le S_{nw}$, is 
$(no(\s_1),\ldots,n o(\s_l))$.
\end{lem}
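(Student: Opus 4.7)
The plan is to decompose $\sigma$ into marked cycles and show that $y_\sigma(x)$ factors correspondingly, reducing the lemma to a single explicit orbit count in $[1,nw]$. Write $\sigma=\sigma_1\cdots\sigma_l$ as a product of marked cycles with $o(\sigma_k)=j_k$ and $[1,w]=\bigsqcup_k\Supp(\sigma_k)$, as guaranteed in \S\ref{sub:wr}. Extending the definition of $y_\sigma(x)$ to general $\sigma$ in the natural way---place $x$ in each position that is the smallest element of its cycle, and $1$ elsewhere---one has $y_\sigma(x)=\prod_k y_{\sigma_k}(x)$, and the factors commute in $S_n\wr S_w$ because their $S_w$-supports are disjoint and the nontrivial base-group entries sit in different slots.

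Next, fix the identification $[1,nw]\cong[1,w]\times[1,n]$ implicit in $S_n\wr S_w\hookrightarrow S_{nw}$. A short check against the multiplication rule of \S\ref{sub:wr} shows that an element $(y_1,\dots,y_w;\pi)$ acts by $(a,b)\mapsto(\pi(a),y_{\pi(a)}(b))$. Under this action each $y_{\sigma_k}(x)$ fixes $([1,w]\sm\Supp(\sigma_k))\times[1,n]$ pointwise, so the factors in $y_\sigma(x)=\prod_k y_{\sigma_k}(x)$ act on pairwise disjoint subsets of $[1,nw]$. It therefore suffices to show that each $y_{\sigma_k}(x)$ induces a single $nj_k$-cycle on $\Supp(\sigma_k)\times[1,n]$.

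For this, write $\sigma_k=(i_1,i_2,\dots,i_{j_k})$ with $i_1=\min\Supp(\sigma_k)$ and trace the orbit of $(i_1,b)$: the second coordinate is preserved except when the point lands at position $i_1$, at which moment $x$ is applied. Hence the orbit runs $(i_1,b)\to(i_2,b)\to\cdots\to(i_{j_k},b)\to(i_1,x(b))\to\cdots$ and closes after precisely $nj_k$ steps, since $x$ has order $n$. Assembling the cycle data over $k$ gives the stated cycle structure $(nj_1,\dots,nj_l)$.

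The only real obstacle is bookkeeping: matching the multiplication rule of \S\ref{sub:wr} to the correct side convention for the action on $[1,w]\times[1,n]$. After that the remainder is a direct combinatorial orbit computation with no representation-theoretic input.
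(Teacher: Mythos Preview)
Your argument is correct. The paper does not give a proof of this lemma, merely calling it ``standard and easy to prove'', so there is nothing to compare against; your direct orbit computation is exactly the kind of verification intended. Your explicit extension of $y_\sigma(x)$ to arbitrary $\sigma\in S_w$ (placing $x$ at the minimum of each cycle's support so that $y_\sigma(x)=\prod_k y_{\sigma_k}(x)$) fills a small gap in the paper's statement, since $y_\sigma(x)$ is only defined for marked cycles yet the lemma is phrased for general $\sigma$; this is the reading under which the lemma is both true and consistent with its later uses.
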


We describe the conjugacy classes of $L\wr S_w$. The proofs may be found e.g.\ in~\cite[\S 4.2]{JamesKerber1981}. 
Let $\s,\s'\in S_w$ and consider  decompositions $\sigma=\s_1\cdots \s_r$ and
$\s'=\s'_1\cdots \s'_l$
 into marked cycles with orders summing to $w$ in each case.
Then, for any $x_1,\ldots,x_w\in L$, there exist $z_1,\ldots,z_r\in L$ such that  $(x_1,\ldots,x_w; \s)$ is $(L\wr S_w)$-conjugate to $y_{\s_1} (z_1) \cdots y_{\s_r}(z_r)$.
Moreover, two elements $y_{\s_1}(z_1) \cdots y_{\s_r} (z_r)$ and $y_{\s'_1}(z'_1)\cdots y_{\s'_l}(z'_l)$ are $(L\wr S_w)$-conjugate if and only if $l=r$ and 
there exists a permutation $\tau$ of $[1,r]$ such that $o(\s_{j})=o(\s'_{\tau j})$ and $z_j$ is $L$-conjugate to $z'_{\tau j}$ for every $j\in [1,r]$.


We now construct some class functions on $L\wr S_w$ and, in particular, describe $\Irr(L\wr S_w)$.
If $R$ is any unital commutative ring and $M$ is an $RL$-module, then $M^{\otimes w}$ has an $R(L\wr S_w)$-module structure given by the action
\[
(x_1,\ldots,x_w; \s) (v_1\otimes \cdots \otimes v_w) = x_1v_{\s^{-1} 1} \otimes \cdots \otimes x_w v_{\s^{-1} w}
\]
(see~\cite{JamesKerber1981}, Eq.\ 4.3.7). This  $R(L\wr S_w)$-module will be denoted by $M^{\witi{\otimes} w}$.
Let $\phi$ be a character of $L$, and let $M$ be a $KL$-module affording it.
We write $\phi^{\witi{\times} w}$ for the character of $L\wr S_w$ afforded by $M^{\widetilde{\otimes} w}$. 
The values of this character are given by the following lemma.

\begin{lem}[\protect{\cite[Lemma 4.3.9]{JamesKerber1981}}]\label{lem:wrcharval}
 Let $\s_1, \ldots, \s_r$ be disjoint marked cycles in $S_w$ with orders summing to $w$. 
For any $x_1,\ldots,x_r\in L$ we have
\begin{equation}\label{eq:wrcharval}
 \phi^{\witi{\times} w} (y_{\s_1}(x_1) \cdots y_{\s_r} (x_r)) 
 = \phi(x_1) \cdots \phi(x_r).
\end{equation}
\end{lem}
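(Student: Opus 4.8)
The plan is to compute $\phi^{\witi{\times} w}$ directly as the character of the module $M^{\witi{\otimes} w}$, reducing a general product of $y_{\s_j}(x_j)$'s to the case of a single marked cycle by exploiting the disjointness of the supports $\Supp(\s_1),\ldots,\Supp(\s_r)$. First I would fix a $KL$-module $M$ affording $\phi$, choose a $K$-basis $e_1,\ldots,e_d$ of $M$, and write $X(\ell)=(X_{ab}(\ell))$ for the matrix of $\ell\in L$ acting on $M$ in this basis, so that $\phi(\ell)=\sum_a X_{aa}(\ell)$. Then $\{\,e_{i_1}\otimes\cdots\otimes e_{i_w}\mid 1\le i_j\le d\,\}$ is a basis of $M^{\otimes w}$, and the action of an element $(x_1,\ldots,x_w;\s)\in L\wr S_w$ on it is the explicit one given by the module structure on $M^{\witi{\otimes} w}$ recalled above; the quantity we want is the trace of the operator attached to $g=y_{\s_1}(x_1)\cdots y_{\s_r}(x_r)$.

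Next I would reduce to a single marked cycle. Since the supports $\Supp(\s_1),\ldots,\Supp(\s_r)$ are pairwise disjoint with union $[1,w]$, the operator attached to $y_{\s_j}(x_j)$ acts as the identity on every tensor factor indexed by a point outside $\Supp(\s_j)$ (one checks at the group level that $g$ equals the element whose $S_w$-part is $\s_1\cdots\s_r$ and whose $L$-component at position $\min\Supp(\s_j)$ is $x_j$, all other components being trivial). Hence the operators attached to $y_{\s_1}(x_1),\ldots,y_{\s_r}(x_r)$ pairwise commute, and after regrouping the tensor factors according to $[1,w]=\sqcup_j\Supp(\s_j)$ the operator attached to $g$ becomes the tensor product of the operators induced by the individual $y_{\s_j}(x_j)$ on $\bigotimes_{i\in\Supp(\s_j)}M$. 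Because the trace is multiplicative on tensor products, it suffices to prove \eqref{eq:wrcharval} when $r=1$, i.e.\ for a single marked cycle $\s$ of order $k=o(\s)$.

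For that case I would argue by an explicit matrix computation. If $k=1$, then $y_\s(x)=(1,\ldots,1,x,1,\ldots,1;\id)$ and the induced operator on the single relevant tensor factor $M$ is just the action of $x$, of trace $\phi(x)$. If $k\ge 2$, relabel so that $\Supp(\s)=\{1,\ldots,k\}$ and $\s$ is the corresponding $k$-cycle with $\min\Supp(\s)=1$, so that $y_\s(x)=(x,1,\ldots,1;\s)$; inspecting the action on the basis vector $e_{i_1}\otimes\cdots\otimes e_{i_k}$ one finds that the coefficient of that same vector in its image vanishes unless $i_1=\cdots=i_k$, in which case it equals $X_{i_1 i_1}(x)$, so the trace is again $\sum_a X_{aa}(x)=\phi(x)$. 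Combining this with the previous paragraph yields $\phi^{\witi{\times} w}(g)=\prod_{j=1}^r\phi(x_j)$. (Equivalently, one may invoke the standard fact that the trace of a ``twisted'' tensor operator $v_1\otimes\cdots\otimes v_w\mapsto A_1 v_{\s^{-1}1}\otimes\cdots\otimes A_w v_{\s^{-1}w}$ factors as a product, over the cycles of $\s$, of traces of the appropriate products of the $A_i$; here all but one of the $A_i$ in each cycle is the identity, so each factor is $\tr(X(x_j))=\phi(x_j)$.)

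I do not expect a genuine obstacle: the argument is elementary linear algebra. The only points that require care are the bookkeeping with the inverse and the permutation in the definition of $M^{\witi{\otimes} w}$ — in particular verifying that placing $x$ at the position $\min\Supp(\s)$ yields a single un-conjugated product $\tr(X(x))$ along the cycle rather than a conjugate of it — and making the ``regroup the tensor factors'' step precise as a $K$-linear isomorphism that intertwines the two $L\wr S_w$-actions.
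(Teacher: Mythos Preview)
The paper does not give its own proof of this lemma: it simply cites \cite[Lemma~4.3.9]{JamesKerber1981}. Your argument is correct and is essentially the standard trace computation found there --- reduce to a single cycle by the tensor factorisation over the disjoint supports, then observe that for a $k$-cycle the only diagonal contributions to the trace come from the constant index strings $i_1=\cdots=i_k$, each contributing $X_{i_1 i_1}(x)$. Nothing further is needed.
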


For an arbitrary class function $\phi$ on $L$, the formula~\eqref{eq:wrcharval} defines a class function on $L\wr S_w$, 
which will also be denoted by $\phi^{\witi{\times} w}$. 
Let $\Tup_w (L)$ be the set of tuples of the form $\Theta=((\phi_1,\chi_1),\ldots,(\phi_s,\chi_s))$ such that 
\begin{enumerate}[(i)]
 \item $\phi_i\in \CF(L; K)$ for each $i$;
 \item $\chi_i\in \cl C(S_{w_i})$ for each $i$, where $w_1,\ldots, w_s\in \mZ_{\ge 0}$ and $w_1+\cdots +w_s =w$.
\end{enumerate}
For such a tuple $\Theta\in \Tup_w (L)$ define a class function $\zeta_{\Theta}$ on $L\wr S_w$ by 
\begin{equation}\label{eq:defzeta}
 \zeta_{\Theta} = \Ind_{\prod_{i} (L\wr S_{w_i})}^{L\wr S_w} \prod_{i=1}^s (\phi_i^{\witi{\times} w_i} \Inf_{S_{w_i}}^{L\wr S_{w_i}} \chi_i).
\end{equation}

Suppose that $\Phi\colon \Irr(L)\to \cl{P}$ is a map satisfying 
$\sum_{\phi\in\Irr(L)} |\Phi(\phi)|= w$. Define the character $\zeta_{\Phi}$ of $L\wr S_w$ by setting $\zeta_{\Phi} = \zeta_{\Theta}$ where
\[
 \Theta = ( (\phi, \chi^{\Phi(\phi)} ) \mid \phi\in \Irr(L) ).
\]
For any finite set $X$ we define $\PMap_w (X)$ to be the set of maps 
$\Phi\colon X\to \cl P$ such that $\sum_{a\in X} |\Phi(a)|=w$.

\begin{thm}[\protect{\cite[Theorem 4.3.34]{JamesKerber1981}}]\label{thm:wrirr}
The map $\Phi\mapsto \zeta_{\Phi}$ is a bijection from $\PMap_w (\Irr(L))$ onto $\Irr(L\wr S_w)$. 
\end{thm}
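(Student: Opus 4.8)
The plan is to deduce the statement from Clifford theory applied to the base group $N:=L^w\trianglelefteq G:=L\wr S_w$, noting $G/N\cong S_w$, and to use the ``canonical'' characters $\phi^{\witi{\times} u}$ as the extensions of invariant base-group characters that Clifford theory requires. First I would record the $G$-action on $\Irr(N)$. Since $K$ is a splitting field, $\Irr(N)=\{\phi_1\times\cdots\times\phi_w\mid\phi_i\in\Irr(L)\}$, and because $N$ acts trivially on $\Irr(N)$ by conjugation, the $G$-action factors through $G/N\cong S_w$, which permutes the tensor factors. Hence the $G$-orbits on $\Irr(N)$ correspond to functions $\nu\colon\Irr(L)\to\mZ_{\ge 0}$ with $\sum_{\phi}\nu(\phi)=w$, a system of orbit representatives being
$\psi_\nu=\prod_{\phi}\phi^{\times\nu(\phi)}$ (product over the $\phi$ with $\nu(\phi)>0$, arranged in consecutive blocks of coordinates). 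A direct computation with the multiplication rule of $L\wr S_w$ then identifies the inertia group $T_\nu:=G_{\psi_\nu}$ with the Young-type subgroup $\prod_{\phi}(L\wr S_{\nu(\phi)})\le L\wr S_w$, and $T_\nu/N\cong\prod_{\phi}S_{\nu(\phi)}$.

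Next I would show that $\psi_\nu$ extends to $T_\nu$, with an explicit irreducible extension. It suffices to treat one factor: if $\phi\in\Irr(L)$ is afforded by the $KL$-module $M$, then $\Res^{L\wr S_u}_{L^u}M^{\witi{\otimes} u}=M^{\otimes u}$ (as $KL^u$-module) affords $\phi^{\times u}$, so $\phi^{\witi{\times} u}$ is a character of $L\wr S_u$ extending $\phi^{\times u}$; moreover $M^{\witi{\otimes} u}$ is irreducible, since its endomorphism ring embeds into $\End_{KL^u}(M^{\otimes u})\cong\End_{KL}(M)^{\otimes u}=K$ (using that $K$ splits $L$), hence is scalar, and $K(L\wr S_u)$ is semisimple. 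Taking the external product over $\phi$, the character $\wh\psi_\nu:=\prod_{\phi}\phi^{\witi{\times}\nu(\phi)}$ of $T_\nu$ is irreducible and extends $\psi_\nu$; its values are those given by Lemma~\ref{lem:wrcharval}.

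Finally I would invoke the standard Clifford-theoretic package: for $\psi\in\Irr(N)$ with inertia group $T=G_\psi$, induction $\Ind_T^G$ is a bijection $\Irr(T\mid\psi)\to\Irr(G\mid\psi)$, and if $\psi$ extends to $\wh\psi\in\Irr(T)$ then $\eta\mapsto\wh\psi\cdot\Inf_{T/N}^T\eta$ is a bijection $\Irr(T/N)\to\Irr(T\mid\psi)$. Applying this with $\psi=\psi_\nu$ and $\wh\psi=\wh\psi_\nu$, and using $\Irr(T_\nu/N)=\prod_{\phi}\Irr(S_{\nu(\phi)})$ together with $\Irr(S_u)=\{\chi^{\lda}\mid\lda\in\cl P(u)\}$, we obtain that the characters $\Ind_{T_\nu}^G\bigl(\wh\psi_\nu\cdot\Inf\bigl(\prod_{\phi}\chi^{\lda_\phi}\bigr)\bigr)$, as the $\lda_\phi\in\cl P(\nu(\phi))$ vary, are exactly the distinct members of $\Irr(G\mid\psi_\nu)$. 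By the definition of $\zeta_\Theta$, this induced character is precisely $\zeta_\Phi$ for the map $\Phi\in\PMap_w(\Irr(L))$ with $\Phi(\phi)=\lda_\phi$ (so $\nu(\phi)=|\Phi(\phi)|$). As $\Phi$ ranges over $\PMap_w(\Irr(L))$, the data $(\nu,(\lda_\phi))$ ranges over all orbit representatives together with all choices of labelling partitions; since $\Irr(G)=\bigsqcup_{\nu}\Irr(G\mid\psi_\nu)$, the map $\Phi\mapsto\zeta_\Phi$ is a bijection $\PMap_w(\Irr(L))\to\Irr(L\wr S_w)$.

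I expect the main obstacle to be the middle step: checking that $\phi^{\witi{\times} u}$ is a genuine \emph{irreducible} extension (the endomorphism-ring argument, relying on $K$ being a splitting field) and, alongside it, the bookkeeping that identifies $T_\nu$ with $\prod_{\phi}(L\wr S_{\nu(\phi)})$ and the restriction $\Res^{L\wr S_u}_{L^u}\phi^{\witi{\times} u}$ with $\phi^{\times u}$. Once these are settled, everything else is a formal application of Clifford theory and the known parametrization of $\Irr(S_n)$; a count of conjugacy classes of $L\wr S_w$ provides a consistency check.
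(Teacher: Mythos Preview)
The paper does not prove this theorem at all: it is quoted as \cite[Theorem 4.3.34]{JamesKerber1981} and used as a black box. Your argument is correct and is essentially the standard Clifford-theoretic proof given in James--Kerber (identify the $G$-orbits on $\Irr(L^{\times w})$, compute inertia groups as Young-type subgroups $\prod_\phi L\wr S_{\nu(\phi)}$, extend $\psi_\nu$ via the tensor-power construction, then apply Gallagher and Clifford correspondence). The only point worth tightening is the irreducibility of $\phi^{\witi{\times} u}$: your endomorphism-ring argument is fine, but you should note that the action of $S_u$ on $M^{\otimes u}$ commutes with the $L^{\times u}$-action, so any $K(L\wr S_u)$-endomorphism is in particular a $KL^{\times u}$-endomorphism, hence scalar.
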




We will need some further results concerning class functions on wreath products. 
 If $\lda$ is a partition, let $g_{\lda}$ denote an (arbitrary) element of $S_{|\lda|}$ of cycle type $\lda$.
For $n\in \mZ_{\ge 0}$ and $a\in \mZ$, let $\k_{n,a}\in \CF(S_n; K)$ be the class function defined by $\k_{n,a} (g_{\lda}) = a^{l(\lda)}$ for all $\lda\in \cl P(n)$. 

\begin{lem}\label{lem:lincomb}
 Let $\phi_1,\phi_2\in \CF(L;K)$ and $a_1,a_2\in K$. Then 
\begin{align}
 (a_1\phi_1 + a_2\phi_2)^{\witi{\times} w} & \notag \\
 &\!\!\!\!\!\!\!\!\!\!\!\!\!\!\!=
 \sum_{j=0}^{w} \Ind_{(L\wr S_j) \times (L\wr S_{w-j})}^{L\wr S_w} 
\left(\phi_1^{\witi{\times} j}\Inf_{S_j}^{L\wr S_j} \k_{j,a_1} 
\times \phi_2^{\witi{\times} (w-j)} \Inf_{S_{w-j}}^{L\wr S_{w-j}} \k_{w-j,a_2}\right).
\label{eq:lincomb1}
\end{align}
\end{lem}

\begin{proof}
  Consider an element 
$g=y_{\s_1}(x_1) \cdots y_{\s_r}(x_r)\in L\wr S_w$, where $\s_1,\ldots,\s_r$ are disjoint marked cycles in $S_w$ with orders summing to $w$ and $x_1,\ldots,x_r\in L$.  Let $\Cal F$ be the set of all maps $f\colon [1,r] \to \{1,2\}$. 
For each $f\in\Cal F$ let $j(f)=\sum_{i\in f^{-1}(1)} o(\s_i)$. Then for $j\in [0,w]$ the left $(S_j\times S_{w-j})$-cosets $h(S_j \times S_{w-j})$ in $S_w$ 
satisfying $\ls{h^{-1}}g \in (L\wr S_j)\times (L\wr S_{w-j})$
are parameterised by the set $\{ f\in \cl F \mid j(f)=j\}$. 
By Eq.\ \eqref{eq:wrcharval} and
the definition of induced class function, the value of the right-hand side of~\eqref{eq:lincomb1} on $g$ is 
\[
\begin{split}
\sum_{f\in \cl F} \left(
\prod_{i\in f^{-1}(1)} (a_1 \phi_1 (x_i)) \prod_{i\in f^{-1}(2)} (a_2 \phi_2 (x_i)) \right)
= \prod_{i=1}^r (a_1\phi_1 (x_i) + a_2\phi_2(x_i)) 
= (a_1\phi_1 +a_2\phi_2)^{\witi{\times} w}(g).
\end{split}
\]
The result follows. 
\end{proof}

\begin{lem}\label{lem:xina}
For any $n\ge 0$ and $a\in \mZ$, we have $\k_{n,a} \in \cl C(S_n)$.  
\end{lem}

\begin{proof}
For $a\ge 0$, we obtain the result by applying Lemma~\ref{lem:lincomb} with $L=\mbf 1$, $a_1=a$, $a_2=0$, $\phi_1 = 1_{\mbf 1}$ and $w=n$. 
 For $a<0$, the result follows from the identity $\k_{n,a} = (-1)^n \k_{n,-a}\sgn$, where $\sgn$ is the sign character of $S_n$. 
\end{proof}

\begin{cor}\label{cor:wrextgen}
For every $\phi\in \Cal C(L)$ we have $\phi^{\witi{\times} w} \in \Cal C(L\wr S_w)$. 
\end{cor}

\begin{proof}
Since $\phi$ can be written as a difference of two characters of $L$, the result follows from
Lemmas~\ref{lem:lincomb} and~\ref{lem:xina}. 
\end{proof}

\begin{cor}\label{cor:otherbasis}
Suppose that for each $n\in [0,w]$ we have 
$\mZ$-bases $\{ \a_{\lda} \mid \lda\in \cl P(n)\}$
and $\{ \a'_{\lda} \mid \lda\in \cl P(n)\}$ of $\cl C(S_n)$. 
\begin{enumerate}[(i)]
\item\label{ob1}
Let $B$ and $B'$ be subsets of $\cl C(L)$ with equal $\mZ$-spans. 
Then the $\mZ$-spans of the sets 
$\{ \zeta_{((\phi, \a_{\Phi(\phi)} ) \mid \phi \in B )} \mid \Phi\in \PMap_w (B)\}$ and 
$\{ \zeta_{((\phi', \a'_{\Phi'(\phi')}) \mid \phi' \in B')} \mid \Phi'\in \PMap_w (B')\}$
are equal.
\item\label{ob2}
If $B$ is a $\mZ$-basis of $\cl C(L)$, then 
$\{ \zeta_{((\phi, \a_\Phi(\phi)) \mid \phi \in B )} \mid \Phi\in \PMap_w (B)\}$
is a $\mZ$-basis of $\cl C(L\wr S_w)$. 
\end{enumerate}
\end{cor}

\begin{proof}
\eqref{ob1}
It follows from the hypothesis and Lemmas~\ref{lem:lincomb} and~\ref{lem:xina} that for every $n\in \mZ_{\ge 0}$, $\phi'\in B'$ and $\lda\in \cl P(n)$, the character 
$(\phi')^{\witi\times n} \Inf_{S_n}^{L\wr S_n}\a'_{\lda}$ belongs to the $\mZ$-span of class functions
of the form 
\[
\Ind_{\prod_i (L\wr S_{n_i})}^{L\wr S_n} \left(\phi_1^{\witi\times n_1}\Inf_{S_{n_1}}^{L\wr S_{n_1}}\a_{\lda^{(1)}} \times \cdots \times \phi_{s}^{\witi\times n_s} 
\Inf_{S_{n_s}}^{L\wr S_{n_s}}\a_{\lda^{(s)}} \right)
\] 
where
$\phi_1,\ldots, \phi_s \in B$,  $\sum_{i=1}^s n_i=n$ and $\lda^{(i)} \in \cl P(n_i)$ for all $i\in [1,s]$.
Now the assertion of~\eqref{ob1} becomes a consequence of the general identity
\begin{equation}\label{inftens}
\Ind_{\prod_i (L\wr S_{n_i})}^{L\wr S_n} \left(
\prod_{i=1}^s \psi^{\witi\times n_i} \Inf_{S_{n_i}}^{L\wr S_{n_i}} \g_i \right)
=
\psi^{\witi\times n} \Inf_{S_n}^{L\wr S_n} \Ind_{\prod_i S_{n_i}}^{S_n} \prod_{i=1}^s \g_i 
\end{equation}
for $\g_i\in \CF(S_{n_i};K)$ and $\psi\in \CF(L;K)$. 

\eqref{ob2} 
By~\eqref{ob1}, the $\mZ$-span of the set 
$\{ \zeta_{((\phi, \a_\Phi(\phi)) \mid \phi \in B )} \mid \Phi\in \PMap_w (B)\}$
is equal to $\cl C(L\wr S_w)$. Since $|\Irr(L\wr S_w)|=|\PMap_w (B)|$ by Theorem~\ref{thm:wrirr}, the result follows. 
\end{proof}

 

\begin{lem}\label{lem:wrmodind}
 Let $U$ be a subgroup of $L$. Let $M$ be an $RU$-module, where $R$ is any unital commutative ring. Then
\[
 \Ind_{U\wr S_w}^{L\wr S_w} M^{\witi{\otimes} w} \simeq \left( \Ind_U^L M \right)^{\widetilde{\otimes} w}.
\]
\end{lem}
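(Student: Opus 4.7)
The plan is to construct an explicit $R(L\wr S_w)$-module isomorphism. Writing elements of $\Ind_U^L M = RL\otimes_{RU} M$ in the form $x\otimes m$, I will define
\[
 \Phi\colon R(L\wr S_w)\otimes_{R(U\wr S_w)} M^{\otimes w} \lra (\Ind_U^L M)^{\witi{\otimes} w}
\]
on generators by
\[
 (x_1,\ldots,x_w;\s)\otimes(m_1\otimes\cdots\otimes m_w) \longmapsto (x_1\otimes m_{\s^{-1}(1)})\otimes\cdots\otimes(x_w\otimes m_{\s^{-1}(w)}).
\]

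First, I will check that $\Phi$ descends from the outer tensor product to the balanced tensor product over $R(U\wr S_w)$, i.e.\ that acting on the right by any $(u_1,\ldots,u_w;\pi)\in U\wr S_w$ before applying $\Phi$ yields the same output as first acting on $m_1\otimes\cdots\otimes m_w$. Expanding the wreath-product multiplication rule and using the defining relation $x\otimes um = xu\otimes m$ of $RL\otimes_{RU} M$, both sides simplify to the common tensor $(x_1 u_{\s^{-1}(1)}\otimes m_{\pi^{-1}\s^{-1}(1)})\otimes\cdots\otimes(x_w u_{\s^{-1}(w)}\otimes m_{\pi^{-1}\s^{-1}(w)})$. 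Second, I will verify $R(L\wr S_w)$-equivariance of $\Phi$ by a similar direct computation, using the module structure on $(\Ind_U^L M)^{\witi{\otimes} w}$ defined in~\S\ref{sub:wr}.

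Finally, to show that $\Phi$ is bijective, I will match up natural direct-sum decompositions on either side. Fixing a system $g_1,\ldots,g_n$ of left coset representatives for $U$ in $L$, one has $\Ind_U^L M = \bigoplus_i (g_i\otimes M)$ and $R(L\wr S_w) = \bigoplus_{(i_1,\ldots,i_w)\in[1,n]^w} (g_{i_1},\ldots,g_{i_w};1)\,R(U\wr S_w)$, the latter being a decomposition of $L\wr S_w$ into left $U\wr S_w$-cosets (as is easy to check using that the $g_iU$ partition $L$). Under $\Phi$, the summand $(g_{i_1},\ldots,g_{i_w};1)\otimes M^{\otimes w}$ of the source is carried $R$-linearly and bijectively onto the summand $(g_{i_1}\otimes M)\otimes\cdots\otimes(g_{i_w}\otimes M)$ of the target by $m_1\otimes\cdots\otimes m_w\mapsto (g_{i_1}\otimes m_1)\otimes\cdots\otimes(g_{i_w}\otimes m_w)$, and assembling these componentwise bijections proves that $\Phi$ is an isomorphism.

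The only real obstacle is notational: one must keep careful track of the positions permuted by $\s^{-1}$ and $\pi^{-1}$ through the various wreath-product operations. No additional ideas beyond the definitions of~\S\ref{sub:wr} and the construction of induced modules are needed.
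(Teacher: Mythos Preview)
Your proposal is correct and follows essentially the same approach as the paper: both construct the explicit map $(t_1,\ldots,t_w;1)\otimes(m_1\otimes\cdots\otimes m_w)\mapsto (t_1\otimes m_1)\otimes\cdots\otimes(t_w\otimes m_w)$ on coset representatives and use the decomposition of $L\wr S_w$ into left $U\wr S_w$-cosets indexed by $T^{\times w}$ to see it is an $R$-module isomorphism. You are simply more explicit than the paper, which defines the map only on the coset representatives and leaves the verification that it is an $R(L\wr S_w)$-module isomorphism to the reader, whereas you extend the formula to all of $L\wr S_w$ and check well-definedness and equivariance directly.
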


\begin{proof}
 Let $T$ be a set of representatives of left $U$-cosets in $L$. Then $T^{\times w}$ is a set of representatives of left $U\wr S_w$-cosets in $L\wr S_w$, as one can readily check.
Therefore, we have the following equality of free $R$-modules:
\[
 \Ind_{U\wr S_w}^{L\wr S_w} M^{\witi{\otimes} w} = \bigotimes_{t_1,\ldots,t_w\in T} (t_1,\ldots, t_w; 1) \otimes M^{\witi{\otimes} w}.
\]
It is easy to see that the $R$-linear extension of the map 
\[
 (t_1,\ldots,t_w;1)\otimes (m_1\otimes \cdots \otimes m_w) \mapsto (t_1 \otimes m_1) \otimes \cdots \otimes (t_w\otimes m_w), \quad t_i\in T, \; m_i \in M,
\]
is an isomorphism from 
$\Ind_{U\wr S_w}^{L\wr S_w} M^{\witi{\otimes} w}$ onto $\left( \Ind_U^L M \right)^{\widetilde{\otimes} w}$.
\end{proof}

\begin{lem}\label{lem:blwr}
Let $b$ be a block of $L$ with positive defect. Then $b^{\otimes w}$ is a block of $L\wr S_w$.
\end{lem}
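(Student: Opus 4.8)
The plan is to show that the central idempotent $b^{\otimes w}$ of $\cl O (L\wr S_w)$ is primitive in its centre. First I would recall that $L\wr S_w$ contains the base group $B=L^w$ as a normal subgroup with quotient $S_w$, and that $b^{\otimes w}=b\otimes\cdots\otimes b$ is an idempotent of $Z(\cl O B)$ which is fixed by the conjugation action of $S_w$ (since all $w$ tensor factors are the same block $b$). Hence $b^{\otimes w}$ lies in $Z(\cl O(L\wr S_w))$, and it is the sum of all $S_w$-conjugates of the block $b\otimes\cdots\otimes b$ of $B$ (there is only one such conjugate). So $b^{\otimes w}$ is a block of $L\wr S_w$ if and only if it is \emph{primitive} in $Z(\cl O(L\wr S_w))$, equivalently if and only if $b^{\otimes w}$ is indecomposable as a two-sided ideal, equivalently if and only if the algebra $b^{\otimes w}\cl O(L\wr S_w)$ is indecomposable as an algebra.

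Next I would pass to the stabiliser/extension picture: writing $A=b\,\cl O L$ for the block algebra, we have $b^{\otimes w}\cl O B \cong A^{\otimes w}$, and $b^{\otimes w}\cl O(L\wr S_w)\cong A^{\otimes w}\rtimes S_w$, a ``twisted'' wreath-product algebra (in fact the ordinary skew group algebra $A^{\wr} S_w := A^{\otimes w}* S_w$, with $S_w$ permuting the factors). The block $b^{\otimes w}$ is primitive iff the only central idempotents of $A^{\otimes w}* S_w$ are $0$ and $1$. By general theory of skew group algebras over a complete local ring, the primitive central idempotents of $A^{\otimes w}* S_w$ correspond to the $S_w$-orbits of primitive central idempotents of $A^{\otimes w}$ \emph{together with} a block of a suitable twisted group algebra of the stabiliser; the crucial point is therefore that $A^{\otimes w}$ has a \emph{unique} block, namely $b^{\otimes w}$ itself, so there is a single $S_w$-orbit and the full stabiliser is $S_w$. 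The hypothesis that $b$ has positive defect is exactly what is needed here: one uses that a block of positive defect is \emph{not} split, i.e. its block algebra is a non-split (connected, non-semisimple) $\cl O$-algebra, so that over $k$ the algebra $\bar A=b\,kL$ is a connected non-semisimple algebra, and then a tensor power $\bar A^{\otimes w}$ is again connected. (If $b$ had defect zero, $b\,\cl O L$ would be a matrix algebra $\M_n(\cl O)$, and then $b^{\otimes w}\cl O(L\wr S_w)\cong \M_{n^w}(\cl O)\rtimes S_w\cong \M_{n^w}(\cl O S_w)$, whose centre is $Z(\cl O S_w)$, which is not local — so the statement genuinely uses positive defect.)

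Concretely, I would organise the argument as follows. Step 1: reduce to showing $Z(kL\cdot b)$ is local implies $Z\big((kL\cdot b)^{\otimes w}* S_w\big)$ is local, using that reduction mod the maximal ideal of $\cl O$ induces a bijection on primitive central idempotents (standard lifting of idempotents, since $\cl O$ is complete). Step 2: observe $(kL\cdot b)^{\otimes w}$ has a unique block. For this, recall that for a block $b$ of positive defect the block algebra $kL\cdot b$ has a connected (local) centre, hence $kL\cdot b$ is a connected algebra (its identity is a primitive idempotent of its centre); a finite tensor product of connected finite-dimensional $k$-algebras is connected — equivalently $Z(A_1\otimes A_2)=Z(A_1)\otimes Z(A_2)$ and a tensor product of local commutative $k$-algebras (with $k$ algebraically closed) is local. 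Step 3: since $S_w$ permutes transitively the identical tensor factors, it fixes the unique block of $(kL\cdot b)^{\otimes w}$, and one invokes the description of blocks of a skew group algebra $C* S_w$ when $C$ has a unique $S_w$-stable block: the blocks of $C* S_w$ are then indexed by the blocks of the twisted group algebra $k_\alpha S_w$ of a (here trivial, or at worst a $2$-cocycle that is a coboundary) — but in fact, more simply, $Z(C*S_w)$ contains $1$ as a primitive idempotent because $C*S_w$ is connected whenever $C$ is connected and $S_w$ acts by algebra automorphisms (a connected algebra stays connected after forming a crossed product, as its centre's only idempotents are $0,1$ and the crossed product's centre lies in $C^{S_w}$ whose idempotents are among those of $C$). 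I expect the main obstacle to be Step 3 — making precise and citing correctly the statement that a crossed product of a connected algebra with a finite group is connected, equivalently that a block stable under the group action gives rise to a \emph{single} block of the crossed product; one must be a little careful because in general a crossed product of a connected algebra with a finite group can split (e.g. $k * (\mZ/2)$ in characteristic $\ne 2$), so the argument has to use that we are in characteristic $p$ and $S_w$ is a $p$-group on the relevant fibre, or alternatively route through the fact (which is really what is used elsewhere in the literature, e.g. in Külshammer's work on wreath products) that $kL\cdot b$ being non-semisimple forces $(kL\cdot b)^{\otimes w}* S_w$ to be non-semisimple and, being a crossed product of a \emph{local-centre} algebra, to have local centre. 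I would likely just cite the relevant lemma on blocks of wreath products (Külshammer) rather than reprove it.
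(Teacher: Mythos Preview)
Your proposal takes a much longer and more fragile route than necessary, and Step~3 contains a genuine gap. The claim that ``the crossed product's centre lies in $C^{S_w}$'' is simply false: already for $C=k$ with trivial $S_w$-action the crossed product is $kS_w$, whose centre is $Z(kS_w)$, not $k$. You notice this yourself, but none of your proposed fixes actually works. Saying ``$S_w$ is a $p$-group on the relevant fibre'' is wrong ($S_w$ is not a $p$-group for $w\ge p$); and the alternative route via ``non-semisimple with local centre forces the crossed product to have local centre'' is also false for the same reason --- the example $k*S_w$ has local-centre $C$ and non-local-centre crossed product (take $p\nmid w!$ if you want a cleaner counterexample, though of course that's characteristic $\ne p$; the point is that your stated ring-theoretic principle is not a theorem). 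So in the end you are reduced to ``cite K\"ulshammer'', which is essentially conceding that the argument is missing its key step.

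The paper's proof bypasses all of this with a one-line group-theoretic observation: if $D$ is a defect group of $b$ (so $D\ne\mbf 1$ by hypothesis), then $D^{\times w}$ is a defect group of $b^{\otimes w}$ as a block of $L^{\times w}$, and one checks immediately that $C_{L\wr S_w}(D^{\times w})\le L^{\times w}$ (any $(x_1,\dots,x_w;\sigma)$ centralising $D^{\times w}$ must have $\sigma=1$, since a nontrivial $\sigma$ would fail to commute with some $(1,\dots,d,\dots,1)$ for $d\in D\setminus\{1\}$). One then invokes the standard result (e.g.\ \cite[Theorem~(61.2)(iv)]{CRII}) that if $N\trianglelefteq G$, $e\in\Bl(N)$ has defect group $D$, and $C_G(D)\le N$, then there is a unique block of $G$ covering $e$. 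That is exactly the statement that $b^{\otimes w}$ is primitive in $Z(\cl O(L\wr S_w))$. This is where the positive-defect hypothesis is used, and it replaces your entire Step~3 with a two-line argument.
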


\begin{proof}
Let $D$ be a defect group of $b$, so that $D^{\times w}$ is a defect group of $b^{\otimes w}$ as a block of $L^{\times w}$. Since $D\ne \mbf 1$, it is easy to see 
that $C_{L\wr S_w} (D^{\times w}) \le L^{\times w}$. By~\cite[Theorem (61.2)(iv)]{CRII}, this implies that $L\wr S_w$ has only one block covering $b^{\otimes w}$, 
which means precisely that  $b^{\otimes w}$ is a primitive idempotent of $Z(\cl O(L\wr S_w))$. 
\end{proof}

\section{A signed bijection between characters of a symmetric group and a wreath product}\label{sec:main}

Fix $p\in \mN$ and $w,e\in \mZ_{\ge 0}$. Note that here and in some other parts of the paper we do not assume that $p$ is prime, as the arguments are purely combinatorial.

 Consider $S_{pw+e}$, the group of permutations of the set $[1,pw+e]$. 
Let us view $S_p \wr S_w$ as the subgroup consisting of the permutations that fix the elements $pw+1,\ldots,pw+e$ and stabilise the family 
\begin{equation}\label{eq:sets}
 \{ [1,p], [p+1,2p], \ldots, [(w-1)p+1,wp] \}
\end{equation}
of subsets of $[1,pw]$. Also, $(S_p\wr S_w) \times S_e$ may be viewed as a subgroup of $S_{pw}\times S_e$, and hence of $S_{pw+e}$. 

We define the \emph{$p$-type} $\tp_p (g)$ (respectively $\tp_p^{\wre}(g)$) of an element of $S_{pw+e}$ (respectively, $S_p\wr S_w$), as follows (cf.~\cite[Section 2]{Rouquier1994}). 
Suppose that $g\in S_{pw+e}$, and let $(i_1,\ldots,i_r)$ be the cycle type of $g$. Then $\tp_p (g)$ is the partition consisting  
of the numbers $i_l/p$ where $l$ runs over the indices in $[1,r]$ such that $p\mid i_l$. 
Now suppose that $h\in S_p\wr S_w$, and let $y_{\s_1} (x_1)\cdots y_{\s_r} (x_r)$ be an $S_p\wr S_w$-conjugate of $h$ where $\s_1,\ldots,\s_r$ are disjoint marked cycles with 
orders summing to $w$. Then we set $\tp_p^{\wre} (h)$ to be the partition consisting of the numbers $o(\s_l)$ where $l$ runs over the indices in $[1,r]$ such that $x_l$ is a $p$-cycle. 
We remark that for $h\in S_p\wr S_w$ we have $\tp_p^{\wre}(h) =\tp_p (h)$ when $w<p$ (by Lemma~\ref{lem:cyctype}), but this is not the case in general.  

For $s\in \mZ$ define a subset $\cl U_s$ of $S_p\wr S_w$ as follows: 
\begin{equation}\label{eq:defU}
 \Cal U_s = \{ h\in S_p\wr S_w \mid |\tp_p^{\wre}(h)| \ge s \}.
\end{equation}

The following lemma is not required for the proof of Theorem~\ref{thm:symirc}: 
it is needed only to derive Theorem~\ref{thm:val} from the more precise Theorem~\ref{thm:main} below.

\begin{lem}\label{lem:centp}
Assume that $p$ is a prime. 
 Let $\Cal W = \{ g\in S_p\wr S_w \mid C_{S_{pw+e}} (g_p) \le (S_p\wr S_w) \times S_e \}$ (cf.~\S\ref{sub:12}). 
If $e=0$, then $\Cal W = \Cal U_{w-1}$. If $e>0$, then $\Cal W = \Cal U_w$. 
\end{lem}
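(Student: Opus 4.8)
The plan is to analyse $C_{S_{pw+e}}(g_p)$ directly via the cycle structure of $g$. Write a conjugate of $g\in S_p\wr S_w$ in the form $y_{\s_1}(x_1)\cdots y_{\s_r}(x_r)$ with the $\s_j$ disjoint marked cycles of orders $j_l=o(\s_l)$ summing to $w$. Using Lemma~\ref{lem:cyctype} and its obvious generalisation to arbitrary $x_l\in S_p$ (decompose $x_l$ into cycles first), one sees that the $p$-part $g_p$ of $g$, as an element of $S_{pw}\le S_{pw+e}$, fixes each of the $e$ extra points and has cycles of lengths that are all powers of $p$; the number of cycles of $g_p$ of length divisible by $p$ that "come from" the block $\s_l$ depends only on whether $x_l$ is a $p$-element of $S_p$ and, when it is, whether $x_l=1$ or $x_l$ is a $p$-cycle. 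In particular the total number of points of $[1,pw]$ lying in cycles of $g_p$ of length $\ge p$ equals $p\cdot|\tp_p^{\wre}(g)|$ — this is the key numerical identity, and it is where Lemma~\ref{lem:cyctype} does the work.

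Next I would recall the standard description of centralisers in symmetric groups: if a permutation $\pi$ of a set $X$ has $a_k$ cycles of length $k$, then $C_{\Sym(X)}(\pi)\cong \prod_k (C_k \wr S_{a_k})$, and this centraliser is contained in a Young-type subgroup if and only if the point-set is the corresponding disjoint union of $\pi$-invariant blocks. Apply this to $\pi=g_p$ acting on $[1,pw+e]$. The centraliser $C_{S_{pw+e}}(g_p)$ permutes among themselves the fixed points of $g_p$, which form the set $Y$ consisting of the $e$ extra points together with all points of $[1,pw]$ in $p'$-cycles of $g_p$ (i.e.\ cycles of length $1$, since the only $p'$-cycle length available is $1$). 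So $C_{S_{pw+e}}(g_p)$ contains the full symmetric group on $Y$. Therefore $C_{S_{pw+e}}(g_p)\le (S_p\wr S_w)\times S_e$ forces $Y$ to be contained in the "$S_e$-part" plus possibly a union of the blocks $[ (i-1)p+1, ip]$ — one has to be slightly careful here because $(S_p\wr S_w)\times S_e$ is not just a Young subgroup, but its intersection with $\Sym(Y)$ is $\Sym(Y\cap[1,pw])^{\text{(block-respecting part)}}\times \Sym(\{pw+1,\dots,pw+e\})$, which is a proper subgroup of $\Sym(Y)$ as soon as $Y$ meets the interior of some block, or as soon as $|Y\cap[1,pw]|\ge p$ and those points are not exactly a union of blocks. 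The upshot is: $C_{S_{pw+e}}(g_p)\le(S_p\wr S_w)\times S_e$ if and only if $Y\cap[1,pw]$ is empty or — when $e=0$ — possibly fills exactly one block (this last exceptional case is what produces the "$w-1$" rather than "$w$" when $e=0$).

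Finally I would convert this back to $\tp_p^{\wre}$. Since $|Y\cap[1,pw]| = pw - p|\tp_p^{\wre}(g)| = p(w-|\tp_p^{\wre}(g)|)$, the condition "$Y\cap[1,pw]=\varnothing$" is exactly $|\tp_p^{\wre}(g)|=w$, i.e.\ $g\in\cl U_w$; and when $e=0$ the additional allowed case "$Y\cap[1,pw]$ is a single block" is $|\tp_p^{\wre}(g)|=w-1$ together with the points in $p'$-cycles forming one block, which one checks is automatically consistent, giving $g\in\cl U_{w-1}$. Conversely, if $g\in\cl U_w$ (resp.\ $\cl U_{w-1}$ when $e=0$) one reads off that $Y$ is contained in the $S_e$-part (resp.\ in the $S_e$-part together with one block), whence $\Sym(Y)\le (S_p\wr S_w)\times S_e$, and since $C_{S_{pw+e}}(g_p)$ is generated by $\Sym(Y)$ together with the block-preserving part acting on the non-fixed points — which lies in $S_p\wr S_w$ by construction — we get $C_{S_{pw+e}}(g_p)\le (S_p\wr S_w)\times S_e$. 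I expect the main obstacle to be the bookkeeping in the "only if" direction when $e=0$: one must verify that a single completely-fixed block really does keep the centraliser inside $S_p\wr S_w$ (the symmetric group on those $p$ points, being one of the wreathed factors, is fine, and its interaction with the $S_w$ on top is the content of the wreath structure), and that two or more fixed blocks, or a partially-fixed block, do not — this requires the precise description of which permutations of the fixed-point set lie in $S_p\wr S_w$.
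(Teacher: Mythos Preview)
Your approach has a genuine gap. The key numerical identity you rely on --- that the fixed-point set $Y$ of $g_p$ inside $[1,pw]$ has size $p(w-|\tp_p^{\wre}(g)|)$ --- is false. Take $p=2$, $w=2$, $e=0$, and $g=y_{\s}(1)$ with $\s=(12)\in S_2$. Then $x_1=1$ is not a $2$-cycle, so $|\tp_2^{\wre}(g)|=0$; your formula predicts four fixed points. But $g=(13)(24)\in S_4$ is already a $2$-element, so $g_p=g$ has no fixed points at all. The source of the error is that when $x_l$ is not a $p$-cycle but $p\mid o(\s_l)$, the factor $y_{\s_l}(x_l)$ still contributes cycles of length divisible by $p$ to $g_p$: the ``obvious generalisation'' of Lemma~\ref{lem:cyctype} gives $y_{\s_l}(x_l)$ cycles of lengths $o(\s_l)c_i$ with each $c_i<p$, so the $p$-part of each such cycle has length equal to the $p$-part of $o(\s_l)$, not $1$.

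More seriously, even after correcting the fixed-point count, your claim that the part of $C_{S_{pw+e}}(g_p)$ acting on the non-fixed points is ``block-preserving by construction'' is unjustified and in fact wrong. In the same example, $C_{S_4}((13)(24))$ contains the transposition $(13)$, which does not preserve the blocks $\{1,2\},\{3,4\}$; so $g\notin\cl W$, as the lemma predicts (since $g\notin\cl U_1$), but your fixed-point analysis cannot detect this because $Y\cap[1,4]=\varnothing$. The underlying issue is that $\tp_p^{\wre}(g)$ is \emph{not} a function of the cycle type of $g_p$ in $S_{pw}$, and the centraliser can escape $S_p\wr S_w$ via permutations among equal-length cycles that cut across blocks. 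The paper sidesteps this entirely: for the inclusion $\cl U_{w-\d_{e0}}\subseteq\cl W$ it shows that any element centralising $g_p$ must also centralise $(u,\ldots,u;1)$ (or $(u,\ldots,u,1;1)$), whose centraliser is visibly contained in $(S_p\wr S_w)\times S_e$; for the reverse inclusion it exhibits, case by case, explicit elements of $C_{S_{pw+e}}(g_p)$ lying outside $(S_p\wr S_w)\times S_e$, including (in the case $o(\tau_1)>1$) elements that are not permutations of fixed points.
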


\begin{proof}
 Let $g\in S_p\wr S_w$. Replacing $g$ with an $S_p\wr S_w$-conjugate, we may assume that 
$g = y_{\s_1} (x_1) \cdots y_{\s_r} (x_r)$ where $\s_1,\ldots,\s_r$ are disjoint marked cycles. 
Moreover, we may assume that all elements $x_i$ that are $p$-cycles are equal to a fixed $p$-cycle $u$. 
Let $z_i = (x_i)_p$ for each $i\in [1,r]$, so that each $z_i$ is either $1$ or $u$. 
Let $h=y_{\s_1} (z_1) \cdots y_{\s_r} (z_r)$, and observe that $h_p = g_p$. Without loss of generality, we have 
$h= y_{\s_1}(u) \cdots y_{\s_s} (u) y_{\tau_1} (1) \cdots y_{\tau_t} (1)$ where $\s_1,\ldots,\s_s,\tau_1,\ldots,\tau_t$ are disjoint marked cycles with 
orders summing to $w$. 

First, suppose that $g\in \cl U_{w}$, so that $t=0$. 
It is easy to see that $h_p$ is $S_p\wr S_w$-conjugate to an element of the form $y_{\ups_1} (u) \cdots y_{\ups_m} (u)$ where $\ups_1,\ldots,\ups_m$ are disjoint marked cycles
of $p$-power order with $\sum_{i} o(\ups_i) =w$. 
Due to Lemma~\ref{lem:cyctype}, one deduces that every element of $C_{S_{pw+e}}(h_p)$ centralises the element 
\[
a_l=\prod_{\substack{i\in [1,m] \\ o(\ups_i)=p^l}} y_{\ups_i} (u) \quad \text{ for each }l\ge 0,
\]
and therefore centralises $\prod_{l} a_l^{p^l} = (u,\ldots, u;1)\in S_p \wr S_w$. 
But
\[
 C_{S_{pw+e}} ((u,\ldots, u;1)) =(C_p\wr S_w)\times S_e \le (S_p\wr S_w) \times S_e,
\]
whence $C_{S_{pw+e}}(h_p) \le (S_p\wr S_w) \times S_e$.
Therefore, $g\in \cl W$.

Now suppose that $g\in \cl U_{w-1} - \cl U_w$ and $e=0$. Then $t=1$ and $o(\tau_1)=1$. We may assume that $\tau_1=w$ (as a marked cycle). By an argument similar to that of the preceding
paragraph, we see that $C_{S_{pw}}(h_p)$ must centralise $(u,u,\ldots,u, 1;1)$, and therefore
$
 C_{S_{pw}}(g_p) \le (C_p\wr S_{w-1}) \times S_p \le S_p \wr S_w
$.
 Hence, $g\in \cl W$ in this case too. 

Conversely, suppose that $g\notin \cl U_{w-\d_{e0}}$. Then one of the following holds (after reordering $\tau_1,\ldots,\tau_t$ if necessary):
\begin{enumerate}[(i)]
 \item\label{case1} $o(\tau_1)>1$;
 \item\label{case2} $t\ge 2$ and $o(\tau_1)=o(\tau_2)=1$;
 \item\label{case3} $t=1$, $o(\tau_1) =1$, and $e>0$.
\end{enumerate}

 In case~\eqref{case1}, let $X_1,\ldots, X_{o(\tau_1)}$ be the sets of the family~\eqref{eq:sets} that correspond to $\Supp(\tau_1)$, ordered so that $X_i$ is sent to $X_{i+1}$ by 
$y_{\tau_1} (1)$. Let $j_1,\ldots, j_{o(\tau_1)}$ be the smallest elements of $X_1,\ldots, X_{o(\tau_1)}$ respectively. Then the decomposition of $y_{\tau_1} (1)$ into a product of 
disjoint cycles (as an element of $S_{pw+e}$) includes 
the cycle $(j_1,\ldots,j_{o(\tau_1)})$, which therefore belongs to $C_{S_{pw+e}}(g_p)$ but does not lie in $(S_p\wr S_w)\times S_e$. Hence, $g\notin \cl W$. 

In case~\eqref{case2}, let $X$ and $Y$ be the sets from the family~\eqref{eq:sets} corresponding to $\tau_1$ and $\tau_2$ respectively, and let $j$ and $l$ be the smallest elements of $X$ and $Y$. Then $g_p=h_p$ fixes both $j$ and $l$, so the transposition $(jl)$ centralises $g_p$ but does not lie in $(S_p\wr S_w) \times S_e$. Hence, $g\notin \cl W$.

In case~\eqref{case3}, we may assume that $\tau_1$ corresponds to the set $[p(w-1)+1, pw]$, whence $g_p=h_p$ fixes $pw$ and therefore 
is centralised by the transposition $(pw, pw+1)\in S_{pw+e}$,
which does not belong to $(S_p\wr S_w)\times S_e$. Hence, $g\notin \cl W$. 
\end{proof}

For each $s\in \mZ$ we define
\begin{equation}\label{eq:defK}
 \Cal K_s = \{ \xi \in \Cal C(S_p\wr S_w) \mid \xi(h)=0 \text{ for all } h\in \cl U_s \}.
\end{equation} 

Let $\rho$ be a fixed $p$-core partition of $e$. 
Denote by $\Irr(S_{pw+e}, \rho)$ the set of all $\chi^{\lda} \in \Irr(S_{pw+e})$ such that $\rho$ is the $p$-core of $\lda$. Note that if $p$ is a prime then 
this is precisely the set of irreducible characters belonging to the block corresponding to $\rho$ (see e.g.\ \cite[Statement 6.1.21]{JamesKerber1981}.) 
Also, let $\cl C(S_{pw+e}, \rho) = \mZ[\Irr(S_{pw+e}, \rho)]$.

\begin{remark}
 A theory of generalised $p$-blocks of symmetric groups, where $p$ is not necessarily a prime, is developed in~\cite{KuelshammerOlssonRobinson2003}.
\end{remark}

\begin{defi}\label{def:wrprbl}
The subset $\Irr_{\pri} (S_p\wr S_w)$ of $\Irr(S_p\wr S_w)$ is defined as follows: if 
$\Phi\in \PMap_w (\Irr(S_p))$, then
$\zeta_{\Phi}\in \cl C_{\pri} (S_p\wr S_w)$ if and only if 
 $\Phi(\chi^{\lda})=\varnothing$ for every $\lda\in \Cal P(p)$ that is not a hook partition. 
We write $\Cal C_{\pri}(S_p\wr S_w) = \mZ[\Irr_{\pri}(S_p\wr S_w)]$.
\end{defi}

\begin{remark}
If $p$ is a prime, then $\Irr_{\pri} (S_p\wr S_w)$ is the set of irreducible characters that lie in the principal block of $S_p\wr S_w$ (by Lemma~\ref{lem:blwr}).
\end{remark}


For any map $\Psi\colon [0,p-1] \to \cl{P}$ define $\Lda\Psi\colon \Irr(S_p) \to \cl{P}$ by
\[
 \Lda\Psi(\chi^{\kappa}) = \begin{cases}
	\Psi(i) & \text{if } \kappa = (p-i, 1^i), \; i\in [0,p-1], \\
	\varnothing & \text{if } \kappa \text{ is not a hook partition.}
\end{cases} 
\]
It follows from Definition~\ref{def:wrprbl} that 
\[
 \Irr_{\pri}(S_p\wr S_w) = \{ \zeta_{\Lda\Psi} \mid \Psi\in \PMap_w ([0,p-1])  \}.
\]

For each $i\in [0,p-1]$
let $b_i$ be the number of beads on runner $i$ of the abacus $\b(\rho,p)$, and let $s_i = p(b_i-1)+ i$, 
so that $s_i$ is the number represented by the bottom bead on runner $i$.
(Since $\rho$ is a $p$-core, the beads on each runner $i$ occupy the top $b_i$ positions of that runner.)
For $i\in [0,p-1]$ let  
\[
 \g_{\rho} (i) = |\{ j\in [0,p-1] \mid s_j <s_i \} |.
\]
This defines a bijection $\g=\g_{\rho}\colon [0,p-1] \to [0,p-1]$, which rearranges the runners according to the number of beads in them, in increasing order.
Recall that $(\lda(0),\ldots,\lda(p-1))$ is the $p$-quotient of $\lda\in \cl P$ (see~\S\ref{sub:22}). 
Let $\lda$ be a partition with $p$-core $\rho$. Define
a map $\Psi_p (\lda)\colon [0,p-1] \to \cl{P}$ by 
\begin{equation}\label{eq:defPsi}
 \Psi_p (\lda) (i) =
\begin{cases}
 \lda(\g^{-1}(p-i-1)) & \text{if } i \text{ is even,} \\
 \lda(\g^{-1}(p-i-1))' & \text{if } i \text{ is odd}
\end{cases}
\end{equation}
and a sign $\e^{(p)} (\lda)$ by 
\begin{equation}\label{eq:sign}
 \e^{(p)}(\lda) = \e_{p} (\lda/\rho) \prod_{\substack{i\in [0,p-1] \\ i \text{ odd}}} (-1)^{|\lda(\g^{-1}(p-i-1))|}. 
\end{equation}
Define a signed map $F_{p,w,\rho}\colon \pm \Irr(S_{pw+e}, \rho) \to \pm \Irr_{\pri}(S_p \wr S_w)$ by setting
\begin{equation}\label{eq:defF}
 F_{p,w,\rho}(\chi^{\lda}) =  \e^{(p)} (\lda) \zeta_{\Lda\Psi_p (\lda)}
\end{equation}
and $F_{p,w,\rho}(-\chi^{\lda}) = -F(\chi^{\lda})$ for all $\chi^{\lda} \in \Irr(S_{pw+e}, \rho)$.

\begin{remark}
The map $F_{p,w,\rho}$ is one of a family of maps constructed in~\cite[\S 2.3]{Rouquier1994}, where instead of the bijection $i\mapsto \g^{-1} (p-i-1)$ one is allowed to use 
any permutation of $[0,p-1]$. 
Conjugating partitions labelled by odd $i$ (cf.\ \eqref{eq:defPsi}) takes place also in a combinatorial description~\cite[Proposition 4.4]{ChuangTan2003}
 of the Morita equivalence~\cite{ChuangKessar2002} between
between the principal block of $S_p\wr S_w$ and a RoCK block of $S_{pw+e}$ when $w<p$ and $p$ is prime, 
both for ordinary and modular irreducible characters. 
Further, if $p$ is prime and $\s$ is another $p$-core, then $F_{p,w,\s}^{-1} F_{p,w,\rho}$ 
yields an isometry between the two blocks of symmetric groups of weight $w$ corresponding to $\rho$ and $\s$, which is one of the perfect isometries constructed by Enguehard~\cite{Enguehard1990}. 
\end{remark}

For each $m\in \mZ_{\ge 0}$
let $\tilde{\pi}_{\rho} = \tilde{\pi}_{\rho}^{(m)}\colon \Cal C(S_{pm} \times S_e) \to \Cal C(S_{pm})$ be the homomorphism obtained by extending $\mZ$-linearly 
the following map defined on the set of irreducible characters $\chi^{\lda}\times \chi^{\s}$ of $S_{pm}\times S_e$:
\begin{equation}\label{eq:defpit}
 \tilde{\pi}_{\rho} (\chi^{\lda} \times \chi^{\s}) = 
\begin{cases}
 \chi^{\lda} & \text{if } \s=\rho, \\
 0 & \text{if } \s\ne \rho.
\end{cases}
\end{equation}

\begin{defi}
 Let $\rho$ be a $p$-core. We say that $\rho$ is \emph{circularly non-decreasing} (with respect to $p$) if there is $j\in [0,p-1]$ such that 
$\g_{\rho} (i) = i-j$ for all $i\in [0,p-1]$, where $i-j$ is understood modulo $p$. In this case we say that $j$ is the \emph{starting point} of $\rho$. 
\end{defi}

\begin{thm}\label{thm:main}
Let $p\in \mN$ and $w,e\in \mZ_{\ge 0}$.
 Let $\rho$ be a $p$-core partition of $e$. Then $F_{p,w,\rho}$ is a signed bijection between $\pm \Irr(S_{pw+e}, \rho)$ and 
$\pm \Irr_{\pri}(S_p \wr S_w)$ that satisfies
\begin{equation}\label{eq:stmain1}
\phantom{a}\quad\quad\: F_{p,w,\rho} (\chi) \equiv \Res^{S_{pw}}_{S_p \wr S_w} \tilde{\pi}_{\rho} \Res^{S_{pw+e}}_{S_{pw} \times S_e} \chi \dmod \Cal K_w \qquad\quad \forall
 \chi\in\pm \Irr(S_{pw+e},\rho).
\end{equation}
Moreover, if $\rho$ is circularly non-decreasing, then 
\begin{equation}\label{eq:stmain2}
\qquad \qquad \; F_{p,w,\rho} (\chi) \equiv \Res^{S_{pw}}_{S_p \wr S_w} \tilde{\pi}_{\rho} \Res^{S_{pw+e}}_{S_{pw} \times S_e} \chi \dmod \Cal K_{w-1} \qquad \forall
 \chi\in\pm \Irr(S_{pw+e},\rho). 
\end{equation}
\end{thm}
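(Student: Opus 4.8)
The plan is to work entirely in terms of the explicit combinatorial data, with the central tool being the Murnaghan--Nakayama-type formula relating character values on elements of $S_p\wr S_w$ with all "non-$p$" marked cycles of order $1$ to the values of $\zeta$-type class functions. First I would observe that $F_{p,w,\rho}$ is a signed bijection essentially by construction: by Theorem~\ref{thm:corequot} the map $\lda\mapsto(\lda(0),\ldots,\lda(p-1))$ is a bijection from partitions with $p$-core $\rho$ onto $\cl P^{\times p}$; composing with the runner-permutation $\g^{-1}(p-i-1)$, the conjugations on odd runners, and the identification $\Lda$ of Definition~\ref{def:wrprbl}, one gets a bijection from $\Irr(S_{pw+e},\rho)$ onto $\{\zeta_{\Lda\Psi}\mid \Psi\in\PMap_w([0,p-1])\}=\Irr_{\pri}(S_p\wr S_w)$ (Theorem~\ref{thm:wrirr}); the signs $\e^{(p)}(\lda)=\pm1$ only affect the $\pm$-decoration. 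So the content is the congruence.

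Next I would reduce the congruence~\eqref{eq:stmain1} to an identity of character \emph{values} on the set $\cl U_w$ (and~\eqref{eq:stmain2} on $\cl U_{w-1}$): by definition of $\cl K_s$ in~\eqref{eq:defK}, two virtual characters in $\cl C(S_p\wr S_w)$ are congruent mod $\cl K_s$ iff they agree on $\cl U_s$. Every element of $\cl U_w$ is, up to $S_p\wr S_w$-conjugacy, of the form $y_{\s_1}(x_1)\cdots y_{\s_r}(x_r)$ where all $x_l$ with $o(\s_l)$ contributing to $\tp_p^{\wre}$ are $p$-cycles and the marked cycles with $x_l$ a non-$p$-cycle have orders summing to $<p$ (so there are none when we are on $\cl U_w$, a few when on $\cl U_{w-1}$); writing each $x_l=(x_l)_p\cdot(x_l)_{p'}$, the key point is that on such elements the value of $\zeta_{\Lda\Psi}$ factors through the cycle structure of the underlying $S_{pw}$-element. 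I would then evaluate both sides: on the right, $\Res^{S_{pw+e}}_{S_{pw}\times S_e}\chi^\lda$ is given by the Littlewood--Richardson / skew-character rule, $\tilde\pi_\rho$ picks out the $\chi^{\lda/\rho}$-part (using that $\rho$ is the $p$-core so the relevant skew shape is exactly $\lda/\rho$ after the block decomposition), and then restricting to $S_p\wr S_w$ and evaluating on $y_{\s_1}(x_1)\cdots$ is computed via Lemma~\ref{lem:cyctype} and the Murnaghan--Nakayama rule applied to the rim $p$-hooks recorded by the abacus moves $\b(\lda,p)\to\b(\rho,p)$; on the left, $\zeta_{\Lda\Psi_p(\lda)}$ evaluated on the same element is computed from~\eqref{eq:defzeta}, Lemma~\ref{lem:wrcharval} and Lemma~\ref{lem:wrcharval}'s extension, giving a product of ordinary $S_p$-character values $\chi^\kappa(\text{$p$-cycle}\cdot(\text{$p'$-part}))$ over the pieces. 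The equality of the two sides is then a bookkeeping identity: the abacus bijection of Theorem~\ref{thm:corequot} translates rim-$p$-hook removals into the $p$-quotient components $\lda(i)$, the runner relabelling by $\g$ matches runner $i$ of $\b(\lda,p)$ to the $\g_\rho(i)$-th smallest bottom bead $s_{\g(i)}$, and the $p$-sign definition $\e_p(\lda/\rho)$ together with the factor $\prod_{i\text{ odd}}(-1)^{|\lda(\g^{-1}(p-i-1))|}$ in~\eqref{eq:sign} exactly accounts for the sign discrepancy between the Murnaghan--Nakayama signs of the hook removals and the conjugation of odd-runner components (using that conjugating a partition multiplies the relevant $S_p$-hook character by a known sign and reverses the role of the hook leg length).

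For~\eqref{eq:stmain2}, under the extra hypothesis that $\rho$ is circularly non-decreasing I would push the same computation one step further: the point is that when we evaluate on an element of $\cl U_{w-1}\setminus\cl U_w$ there is exactly one extra non-$p$ marked cycle of order $1$, i.e.\ one "free" position, and the circularly-non-decreasing condition $\g_\rho(i)=i-j \pmod p$ means the runner relabelling is cyclic, so the abacus analysis of which single extra rim $p$-hook can be placed goes through without the ambiguity that arises for a general $p$-core; concretely, cyclicity of $\g$ ensures that the "starting point" $j$ gives a canonical runner on which the lone additional bead-move happens, so the Murnaghan--Nakayama contribution of that last hook, including its sign, is again matched by the $\Psi_p(\lda)$-side. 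I expect the main obstacle to be precisely this sign bookkeeping — verifying that $\e^{(p)}(\lda)$ as defined in~\eqref{eq:sign} is the \emph{correct} global sign, i.e.\ that the product of Murnaghan--Nakayama leg-length signs over a sequence of rim $p$-hook removals taking $\lda$ to $\rho$ equals $\e_p(\lda/\rho)$ (this is essentially~\cite[2.7.18--2.7.26]{JamesKerber1981}, so it can be cited) times the correction coming from transposing the odd-indexed quotient components. Getting the evenness/oddness indexing in~\eqref{eq:defPsi} to line up consistently with the hook-conjugation sign on $S_p$ and with $\e_p$ is the delicate bit; everything else is an application of the Murnaghan--Nakayama rule, the core/quotient bijection, and the wreath-product character formulas already recalled in Section~\ref{sec:prelim}.
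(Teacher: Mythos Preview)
Your plan is correct in outline and would work, but it takes a more laborious route than the paper. You propose to evaluate both sides of~\eqref{eq:stmain1} and~\eqref{eq:stmain2} directly on each element of $\cl U_w$ (resp.\ $\cl U_{w-1}$) via the Murnaghan--Nakayama rule and abacus combinatorics, and to match the signs by hand. That computation is essentially the content of Rouquier's \cite[Th{\'e}or{\`e}me 2.11]{Rouquier1994}, which the paper simply cites. The paper instead sets up a commutative diagram
\[
\xymatrix{
\Cal C(S_{pw}) \ar[d]_{d^{\a}} & \Cal C(S_{pw+e},\rho) \ar[l] \ar[r]^-{F_{p,w,\rho}} \ar[d]_{d^{\a}} & \Cal C_{\pri}(S_{p} \wr S_w) \ar[d]^{\d^{\a}} \\
\Cal C(S_{p(w-m)}) & \Cal C(S_{p(w-m)+e}, \rho) \ar[l] \ar[r]^-{F_{p,w-m,\rho}} & \Cal C_{\pri}(S_p \wr S_{w-m})
}
\]
where $d^{\a}$ and $\d^{\a}$ are ``strip off a product of $p\alpha$-cycles'' maps; Rouquier's theorem gives commutativity of the right square, and the left square is a short direct check. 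This reduces \eqref{eq:stmain1} to the case $w-m=0$, i.e.\ to the tautology $F_{p,0,\rho}=\tilde\pi_\rho^{(0)}$, and \eqref{eq:stmain2} to the case $w-m=1$, i.e.\ to verifying $F_{p,1,\rho}=\tilde\pi_\rho^{(1)}\Res$ on the $p$ irreducible characters in $\Irr(S_{p+e},\rho)$, which is a short explicit abacus computation (and is exactly where the circularly-non-decreasing hypothesis is used). So where you anticipate delicate sign bookkeeping over arbitrarily long sequences of rim-hook removals, the paper only has to check signs on a single hook; the hard induction is absorbed into Rouquier's cited result. Your direct approach would reprove that result rather than invoke it.

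One small imprecision in your write-up: on $\cl U_w$ the non-$p$-cycle part has total order $0$, not ``$<p$''; and on $\cl U_{w-1}\setminus\cl U_w$ it has total order exactly $1$. You do say this correctly later, but the earlier phrasing is confusing.
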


\begin{remark}
The core $\rho$ is circularly non-decreasing with starting point $j$ if and only if either $j=0$ and $b_0\le \cdots \le b_{p-1}$ or $j>0$ and 
$b_j \le \cdots \le b_{p-1}<b_0\le b_1\le \cdots \le b_{j-1}$. 
In particular, both the empty partition and the core corresponding to a RoCK block (as defined in the beginning of~\cite[Section 4]{ChuangKessar2002}) 
are circularly non-decreasing. For the proof of Theorem~\ref{thm:symirc}, the congruence \eqref{eq:stmain2} will be needed only when $e=0$, the weaker
congruence \eqref{eq:stmain1} sufficing in the other cases.
\end{remark}

\begin{proof}[Proof of Theorem~\ref{thm:main}]
It follows from Theorem~\ref{thm:corequot} and Definition~\ref{def:wrprbl} that $F_{p,w,\rho}$ is a bijection. The result is clear for $w=0$, so we assume that $w\ge 1$. 
The rest of the proof is based on a combinatorial argument due to 
Rouquier~\cite{Rouquier1994} establishing the commutativity of a part of the diagram~\eqref{eq:main3} below. First, we define the vertical maps of that diagram. 

Let $m,n\in \mZ_{\ge 0}$ be arbitrary integers such that $pm\le n$. Let $\a=(\a_1,\ldots,\a_r)\in \Cal P(m)$. 
The map $d^{\a}_n\colon \Cal C(S_n) \to \CF(S_{n-pm}; K)$ is defined as follows. 
Let $g\in S_{pm}$ be an element of cycle type $p\a$. For every $h\in S_{n-pm}$, view $gh$ as an element of $S_n \ge S_{pm} \times S_{n-pm}$ 
(so that $g$ and $h$ have disjoint supports), and set $(d_n^{\a}\chi)(h) = \chi(gh)$ for every $\chi\in \Cal C(S_n)$.
By~\cite[Th{\'e}or{\`e}me 2.6]{Rouquier1994}, the map $d^{\a}_n$ is the same as the map defined 
in \emph{loc.\ cit.}\ in terms of consecutive removing of rim hooks  (where it is denoted by $r^{\a}$). (The statement of the quoted theorem asserts only that both definitions lead to the 
same value of $(d^{\a}_n \chi)(h)$ for $p$-regular elements $h\in S_{n-pm}$. However, the proof, which is an application of the Murnaghan--Nakayama rule, 
works for $p$-singular elements $h$ just as well.)
The definition in~\cite{Rouquier1994} implies that the image of $d^{\a}_n$ is contained in $\Cal C(S_{n-pm})$.

From now on assume that $m\le w$. 
The map $\d^{\a}\colon \Cal C_{\pri}(S_p\wr S_w) \to \CF(S_p\wr S_{w-m}; K)$ is defined as follows. 
Let $u\in S_p$ be a $p$-cycle. Let $\s_1,\ldots,\s_r$ be disjoint marked cycles in $S_m$ of orders 
$\a_1,\ldots,\a_r$ respectively, so that $y_{\s_1}(u)\cdots y_{\s_r}(u)$ is an element of $S_p\wr S_{m}$.
 For each $z\in S_p\wr S_{w-m}$, consider the element $(y_{\s_1}(u)\cdots y_{\s_r}(u)) z \in (S_p\wr S_{m})\times (S_p\wr S_{w-m}) \le S_p\wr S_w$ and 
set $(\d^{\a}\phi)( z) = \phi((y_{\s_1}(u)\cdots y_{\s_r}(u)) z)$ for every $\phi\in \Cal C_{\pri}(S_p\wr S_w)$.
As is proved in~\cite[Corollaire 2.10]{Rouquier1994}, this definition is equivalent to that given in \emph{loc.\ cit.}; moreover, the image of $\d^{\a}$ is contained 
in $\Cal C_{\pri}(S_p\wr S_{w-m})$.

For each $v\in \mZ_{\ge 0}$, we extend linearly the signed bijection $F_{p,v,\rho}$ to an isomorphism $F_{p,v,\rho} \colon \Cal C(S_{pv+e}, \rho)\to \Cal C_{\pri}(S_p\wr S_v)$ of abelian groups.
By~\cite[Th{\'e}or{\`e}me 2.11]{Rouquier1994}, the right-hand square of the following diagram is commutative:
\begin{equation}\label{eq:main3}
\begin{array}{c}
 \xymatrix{
  \Cal C(S_{pw}) \ar[d]_{d^{\a}_{pw}} &&& \Cal C(S_{pw+e},\rho) \ar[lll]_-{\tilde{\pi}_{\rho} \Res^{S_{pw+e}}_{S_{pw}\times S_e}} 
\ar[rr]^-{F_{p,w,\rho}} \ar[d]_{d^{\a}_{pw+e}} && \Cal C_{\pri}(S_{p} \wr S_w) \ar[d]^{\d^{\a}} \\
\Cal C(S_{p(w-m)}) &&& \Cal C(S_{p(w-m)+e}, \rho) \ar[lll]_-{\tilde{\pi}_{\rho} \Res^{S_{p(w-m)+e}}_{S_{p(w-m)}\times S_e}} 
\ar[rr]^-{F_{p,w-m,\rho}} && \Cal C_{\pri}(S_p \wr S_{w-m})
 }
\end{array}
\end{equation}
We claim that the left-hand square is commutative too. Let $\xi\in\Cal C(S_{pw+e},\rho)$ and $x\in S_{p(w-m)}$. Let $g\in S_{pm}$ be an element of cycle type $p\a$. For each 
$z\in S_e$, view $zxg$ as an element of $S_e\times S_{p(w-m)}\times S_{pm} \le S_{pw+e}$. Then we have
\[
 \begin{split}
 (\tilde{\pi}_{\rho} \Res^{S_{p(w-m)+e}}_{S_{p(w-m)}\times S_e} d^{\a}_{pw+e} \xi) (x) & = \frac{1}{e!} \sum_{z\in S_e} (d^{\a}_{pw+e} \xi) (zx) \chi^{\rho}(z^{-1}) 
= \frac{1}{e!} \sum_{z\in S_e} \xi(zxg)\chi^{\rho}(z^{-1})\\
& = (\tilde{\pi}_{\rho} \Res^{S_{pw+e}}_{S_{pw}\times S_e} \xi) (xg) = (d^{\a}_{pw} \tilde{\pi}_{\rho} \Res^{S_{pw+e}}_{S_{pw}\times S_e} \xi) (x),
 \end{split}
\]
proving our claim. 

If $m=w$ or $m=w-1$, then $S_{p(w-m)}$ is identified with $S_p\wr S_{w-m}$ in the obvious way. 
By definition~\eqref{eq:defK} of $\cl K_w$, to check the congruence~\eqref{eq:stmain1} one needs just to check that the two sides of~\eqref{eq:stmain1} take the same 
values on the elements of the form $h=y_{\s_1} (u) \cdots y_{\s_r} (u)$ where $u$ is a $p$-cycle and $\s_1,\ldots,\s_r$ are disjoint marked cycles in $S_w$ with orders summing to $w$.
By Lemma~\ref{lem:cyctype}, the cycle type of such an element $h$ is $(po(\s_1),\ldots, po(\s_r))$. 
It follows that~\eqref{eq:stmain1} holds if and only if
\begin{equation}\label{eq:main4}
 d^{\a}_{pw} \tilde{\pi}_{\rho} \Res^{S_{pw+e}}_{S_{pw}\times S_e}  =  \d^{\a} F_{p,w,\rho}
\end{equation}
for all $\a\in \Cal P(w)$. 
Similarly, the congruence~\eqref{eq:stmain2} is true if and only if~\eqref{eq:main4} holds for all $\a\in \Cal P(w) \cup \Cal P(w-1)$.

Using the commutativity of the diagram~\eqref{eq:main3} for $m=w$ and the fact that $F_{p,0,\rho} = \tilde{\pi}_{\rho}^{(0)}|_{\cl C(S_{e}, \rho)}$ 
(which is clear from the definitions), 
we see that~\eqref{eq:main4} holds for $\a \in \cl P(w)$. 

Therefore, it remains to show only that~\eqref{eq:main4} holds for all $\a\in \cl P(w-1)$ whenever $\rho$ is circularly non-decreasing. 
In fact, due to the commutativity of the diagram~\eqref{eq:main3} for $m=p-1$, it is enough to prove that 
$F_{p,1,\rho} = \tilde{\pi}_{\rho}^{(1)} \Res^{S_{p+e}}_{S_p\times S_e} |_{\cl C(S_{p+e}, \rho)}$. 

The following argument is similar to the proof of~\cite[Lemma 4(2)]{ChuangKessar2002}.
Let 
\[
 \cl N = \{ \nu \supset_p \rho \mid |\nu/\rho| = p \},
\]
so that $\Irr(S_{p+e},\rho) = \{\chi^{\nu} \mid \nu \in \cl N\}$. 
Then $\Cal N=\{ \nu^0,\ldots,\nu^{p-1} \}$ where 
$\nu^t$ is represented by the abacus with $p$ runners obtained from $\b(\rho, p)$ by moving the bottom bead on runner $t$ one step down (for each $t\in [0,p-1]$). 

Let $j$ be the starting point of $\rho$, so that $\g i = j-i$ for all $i\in [0,p-1]$ (where $j-i$ is interpreted modulo $p$).
Let $l\in [0,p-1]$ and $s_l$ be the number represented by the bottom bead on runner $l$ in $\b(\rho, p)$. 
Then the bottom bead on runner $l$ in $\b(\nu^l, p)$ represents $s_l+p$. 
The hypothesis that $\rho$ is circularly non-decreasing with starting point $j$ implies that the part of $\b(\nu^l, 1)$ 
between positions $s_l$ and $s_l+p$ looks as follows: there are beads in positions $s_l +1,s_l+2,\ldots, s_l+(p-\g l-1)$ 
and no beads in positions $s_l+(p-\g l),\ldots,s_l+p-1$ (also, there is a bead in position $s_l+p$ and no bead in position $s_l$). 
That is, the interval in $\b(\nu^l,1)$ between $s_l$ and $s_l+p$ (inclusive) 
is exactly the abacus representing the hook partition $(\g l+1, 1^{p-\g l -1})$. 
As one can easily see, this means that the skew diagram representing $\nu^l/\rho$ is a translate of the Young diagram of $(\g l+1,1^{p-\g l-1})$. 
Therefore, 
\begin{equation*}
\tilde{\pi}_{\rho} \Res^{S_{p+e}}_{S_p\times S_e} \chi^{\nu^l} = \chi^{\nu^l/\rho}=\chi^{(\g l +1, 1^{p-\g l-1})}.   
\end{equation*}
(The first equality is due to~\cite[Eq.~2.4.16]{JamesKerber1981}.)
By~\eqref{eq:defPsi}, $\Psi_p (\nu^l)(p-\g l-1)=(1)$ and $\Psi_p (\nu^l)(i)=\varnothing$ for $i\ne p-\g l-1$. Hence,
$\zeta_{\Lda\Psi_p (\nu^l)} = \chi^{(\g l+1,1^{p-\g l-1})}$. Also, $\e_p (\nu^l/\rho)=(-1)^{p-\g l -1}$ because there are precisely $p-\g l-1$ beads between 
$s_l$ and $s_l+p$ in $\b(\nu^l,1)$. By~\eqref{eq:sign}, it follows that $\e^{(p)}(\nu^l)=1$. Therefore,
\[
 F_{p,1,\rho} (\chi^{\nu^l}) = \chi^{(\g l +1, 1^{p-\g l-1})} = \tilde{\pi}_{\rho} \Res^{S_{p+e}}_{S_p \times S_e} \chi^{\nu^l},
\]
which is what we require.
\end{proof}

Suppose now that $p$ is a prime. 
Recall that the \emph{height} $\Ht (\chi)$ of a character $\chi\in\Irr(G,b)$, where $G$ is a finite group and $b\in \Bl(G)$, is defined by 
$\Ht(\chi) = v_p (\chi(1))-d$ where $d$ is the defect of the block $b$. 
We also set $\Ht(-\chi)=\Ht(\chi)$. 
Using the following well-known lemma, we will show that the bijection of Theorem~\ref{thm:main} preserves heights of characters.
If $u\in \mZ_{\ge 0}$, the \emph{$p$-adic expansion} of $u$ is defined as the unique expression $u = \sum_{j=0}^{\infty} a_j p^j$ with $0\le a_j<p$ for all $j$. (All but finitely many
terms of this sum are zero.)

\begin{lem}\label{lem:binco}
Let $p$ be a prime. Let $w, u_1,\ldots, u_r$ be nonnegative integers such that $w=u_1+\cdots +u_r$. Let $w=\sum_{j=0}^{\infty} a_j p^j$ and 
$u_i = \sum_{j=0}^{\infty} b_{ij} p^j$ be the $p$-adic expansions of $w,u_1,\ldots, u_r$. Then
\[
 v_p  \binom{w}{u_1,\ldots,u_r}  = \frac{1}{p-1} \left(\sum_{i,j} b_{ij} - \sum_{j} a_j \right).
\]
\end{lem}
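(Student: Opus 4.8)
The plan is to use Legendre's formula for the $p$-adic valuation of a factorial, namely $v_p(n!) = \sum_{i\ge 1} \lfloor n/p^i\rfloor$, combined with the classical identity $v_p(n!) = (n - s_p(n))/(p-1)$, where $s_p(n) = \sum_j a_j$ denotes the digit sum of the $p$-adic expansion of $n$. First I would recall (or quickly re-derive) the latter identity: writing $n = \sum_{j\ge 0} a_j p^j$, one computes $\lfloor n/p^i\rfloor = \sum_{j\ge i} a_j p^{j-i}$, so summing over $i\ge 1$ and swapping the order of summation gives $v_p(n!) = \sum_{j\ge 1} a_j (p^{j-1} + p^{j-2} + \cdots + 1) = \sum_{j\ge 1} a_j \frac{p^j-1}{p-1} = \frac{1}{p-1}\bigl(\sum_{j\ge 0} a_j p^j - \sum_{j\ge 0} a_j\bigr) = \frac{n - s_p(n)}{p-1}$.

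Next, since the multinomial coefficient satisfies $\binom{w}{u_1,\ldots,u_r} = \frac{w!}{u_1!\cdots u_r!}$, I would simply take $p$-adic valuations:
\[
v_p\binom{w}{u_1,\ldots,u_r} = v_p(w!) - \sum_{i=1}^r v_p(u_i!) = \frac{w - s_p(w)}{p-1} - \sum_{i=1}^r \frac{u_i - s_p(u_i)}{p-1}.
\]
Now I would use the hypothesis $w = u_1 + \cdots + u_r$ to cancel the "non-digit" parts: the numerator becomes $\bigl(w - \sum_i u_i\bigr) - s_p(w) + \sum_i s_p(u_i) = \sum_i s_p(u_i) - s_p(w)$. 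In the notation of the lemma, $s_p(w) = \sum_j a_j$ and $s_p(u_i) = \sum_j b_{ij}$, so this equals $\sum_{i,j} b_{ij} - \sum_j a_j$, which gives exactly the claimed formula $v_p\binom{w}{u_1,\ldots,u_r} = \frac{1}{p-1}\bigl(\sum_{i,j} b_{ij} - \sum_j a_j\bigr)$.

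There is essentially no obstacle here; the only point requiring a line of care is the derivation of $v_p(n!) = (n - s_p(n))/(p-1)$ from Legendre's formula, i.e. justifying the interchange of the two (finite) sums and the geometric-series evaluation. One could alternatively cite this as a standard fact. If brevity is desired, the entire proof reduces to invoking the known identity $v_p(n!) = (n - s_p(n))/(p-1)$ and performing the one-line cancellation above using $w = \sum_i u_i$.
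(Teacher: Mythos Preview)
Your proof is correct and follows essentially the same approach as the paper: both invoke the identity $v_p(n!) = (n - s_p(n))/(p-1)$ and apply it to each factorial in $\binom{w}{u_1,\ldots,u_r} = w!/(u_1!\cdots u_r!)$, using $w=\sum_i u_i$ to cancel. The only difference is that the paper simply cites this identity from Macdonald~\cite[Eq.~(3.3)]{Macdonald1971}, whereas you re-derive it from Legendre's formula.
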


\begin{proof} 
 If $v=\sum_{j=0}^{\infty} c_j p^j$ is a $p$-adic expansion, then 
$v_p (v!) = (v - \sum_j c_j)/(p-1)$ (see~\cite[Eq.~(3.3)]{Macdonald1971}). Applying this to all terms of
\[
 \binom{w}{u_1,\ldots,u_r} = \frac{w!}{u_1! \cdots u_r!},
\]
we obtain the result. 
\end{proof}

\begin{prop}\label{prop:heights}
 With notation as in Theorem~\ref{thm:main}, assume that $p$ is prime. Then the bijection $F=F_{p,w,\rho}$  satisfies $\Ht(F(\chi)) = \Ht(\chi)$
for all $\chi\in\Irr(S_{pw+e}, \rho)$. 
\end{prop}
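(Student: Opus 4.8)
The plan is to compare heights on both sides of the bijection by expressing each as a $p$-adic digit sum, using Lemma~\ref{lem:binco}. Fix $\chi = \chi^{\lda} \in \Irr(S_{pw+e},\rho)$, with $p$-quotient $(\lda(0),\ldots,\lda(p-1))$ and $w = \sum_i |\lda(i)|$. On the symmetric group side, the defect of the block labelled by $\rho$ is $d = v_p((pw)!) = v_p(w!) + w$ (using $v_p(p^a) = 1$ repeatedly, or directly Lemma~\ref{lem:binco}-type reasoning), and the degree $\chi^{\lda}(1)$ is given by the hook length formula; the classical result (James--Kerber, Macdonald) is that $v_p(\chi^{\lda}(1)) - d = w - \sum_i w_i$ where, writing $w_i = |\lda(i)|$ and $w_i = \sum_j b_{ij} p^j$, one has $\Ht(\chi^{\lda}) = \frac{1}{p-1}\bigl(\sum_{i,j} b_{ij} - \sum_j a_j\bigr)$ with $w = \sum_j a_j p^j$. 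Concretely, I would derive this from the fact that $v_p$ of a hook length product over the $p$-core $\rho$-part is unchanged (the $p$-core contributes a factor coprime to... no: rather, the standard statement is that $\Ht(\chi^\lda)$ depends only on the $p$-quotient sizes $w_0,\ldots,w_{p-1}$ via the displayed formula). So the first step is: quote or re-derive $\Ht(\chi^{\lda}) = v_p\binom{w}{w_0,\ldots,w_{p-1}}$, which is exactly Lemma~\ref{lem:binco} applied with $u_i = w_i$.

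The second step is to compute $\Ht(F(\chi)) = \Ht(\zeta_{\Lda\Psi_p(\lda)})$ in $\Irr_{\pri}(S_p\wr S_w)$. By Lemma~\ref{lem:blwr} the principal block of $S_p\wr S_w$ is $b_0^{\otimes w}$ where $b_0$ is the principal block of $S_p$, whose defect group is a Sylow $p$-subgroup of $S_p$, of order $p$; hence the defect of $b_0^{\otimes w}$ in $S_p\wr S_w$ is $w + v_p(w!)$, the same as $d$ above. Next I need the degree of $\zeta_{\Lda\Psi_p(\lda)}$: from the definition~\eqref{eq:defzeta} of $\zeta_\Theta$ as an induced character, $\zeta_\Phi(1) = \binom{w}{w_0,\ldots,w_{p-1}} \prod_i \bigl(\phi_i^{\widetilde\times w_i}\bigr)(1) \cdot \chi^{\Phi(\phi_i)}(1)$ where the multinomial comes from the index $[S_p\wr S_w : \prod_i S_p\wr S_{w_i}]$, the factor $\phi_i^{\widetilde\times w_i}(1) = \phi_i(1)^{w_i}$, and $\chi^{\Phi(\phi_i)}(1)$ is a symmetric-group degree for a partition of $w_i$. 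For $\Phi = \Lda\Psi_p(\lda)$ the only nonzero $\Phi(\chi^\kappa)$ occur at hook partitions $\kappa = (p-i,1^i)$, whose degrees $\chi^{(p-i,1^i)}(1) = \binom{p-1}{i}$ are coprime to $p$; and the relevant $\chi^{\Psi_p(\lda)(i)}(1)$ for a partition of $w_i$ has $p$-adic valuation $\Ht$ of the corresponding character plus $v_p(w_i!)$ after accounting for the defect-$0$ reduction... more cleanly: $v_p\bigl(\chi^{\mu}(1)\bigr)$ for $|\mu| = w_i$ need not vanish, so I should instead peel off all the multinomial factors. The cleanest route: $v_p(\zeta_\Phi(1)) = v_p\binom{w}{w_0,\ldots,w_{p-1}} + \sum_i v_p\bigl(\chi^{\Psi_p(\lda)(i)}(1)\bigr)$ since the hook degrees $\binom{p-1}{i}$ and the factors $\phi_i(1)^{w_i} \in \{1, \binom{p-1}{i}^{w_i}\}$ are all $p'$. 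Recursively/iteratively, or noting that a $\mu$-symmetric-group character of a set of size $w_i$ with $i < p$ lies in... hmm — one then needs $v_p(\chi^{\mu}(1))$ for arbitrary $\mu \vdash w_i$.

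To avoid an infinite regress I would instead argue as follows, which I expect to be the technical heart: apply Lemma~\ref{lem:binco} directly. We have $\Ht(F(\chi)) = v_p(\zeta_\Phi(1)) - d$, and I will show $v_p(\zeta_\Phi(1)) = v_p(w!) + \sum_i v_p(w_i!)^{\text{-corrected}}$... The right formulation: since $\Psi_p(\lda)(i)$ is $\lda(\g^{-1}(p-i-1))$ or its conjugate (same degree), $\chi^{\Psi_p(\lda)(i)}(1)$ is the degree of an irreducible character of $S_{w_i}$ whose partition has empty $p$-core-... no, it is a genuine partition of $w_i$, and here the key simplification is that $\binom{p-1}{i}$ is the degree of the defining character of $S_p$ whose $p$-quotient is trivial, so in the block $b_0^{\otimes w}$ the full degree formula telescopes to $\zeta_\Phi(1) = \binom{w}{w_0,\ldots,w_{p-1}}\prod_i \binom{p-1}{i}^{w_i} \chi^{\Psi_p(\lda)(i)}(1)$, giving $v_p(\zeta_\Phi(1)) = v_p\binom{w}{w_0,\ldots,w_{p-1}} + \sum_i v_p\bigl(\chi^{\Psi_p(\lda)(i)}(1)\bigr)$. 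Now iterate: each $S_{w_i}$-character is itself handled by the same formula if $w_i$ is re-expanded $p$-adically — but more simply, the standard fact $v_p(\chi^\mu(1)) = v_p\bigl(|\mu|!\bigr) - \sum (\text{$p$-core hook lengths valuations})$, and for the bijection we only need that $v_p(\chi^{\Psi_p(\lda)(i)}(1))$ equals the matching quantity on the $S_{pw+e}$ side computed via the $p$-quotient. I would therefore close the argument by combining: (a) on the $S_{pw+e}$ side, the hook length formula over $\b(\lda,p)$ splits as hooks entirely within runners (valuation matching the quotient parts) plus hooks crossing runners (contributing the multinomial $\binom{w}{w_0,\ldots,w_{p-1}}$), giving $v_p(\chi^\lda(1)) = d + v_p\binom{w}{w_0,\ldots,w_{p-1}}$ — this is the core computation, the hard step, essentially reproving the James--Kerber height formula via abacus bookkeeping — and (b) on the wreath side the analogous split gives $v_p(\zeta_\Phi(1)) = d + v_p\binom{w}{w_0,\ldots,w_{p-1}}$ by the degree factorisation above plus induction on $w$ applied to the $S_{w_i}$ factors, the base $p$-quotient parts being literally the same partitions $\lda(\cdot)$. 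Hence both heights equal $v_p\binom{w}{w_0,\ldots,w_{p-1}}$ and coincide. The main obstacle is organising the abacus hook-length bookkeeping in step (a) cleanly; once that is in place, step (b) follows by the degree formula for $\zeta_\Phi$ and Lemma~\ref{lem:binco}, and height preservation is immediate.
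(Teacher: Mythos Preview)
Your computation on the wreath product side is correct and matches the paper: from the degree formula for $\zeta_{\Lda\Psi_p(\lda)}$ one gets
\[
\Ht(F(\chi^\lda)) \;=\; v_p\binom{w}{w_0,\ldots,w_{p-1}} \;+\; \sum_{l=0}^{p-1} v_p\bigl(\chi^{\lda(l)}(1)\bigr),
\]
since the hook characters $\chi^{(p-i,1^i)}$ have $p'$-degree and $\Psi_p(\lda)(i)$ is $\lda(\g^{-1}(p-i-1))$ or its conjugate. The gap is on the symmetric-group side: your asserted formula $\Ht(\chi^\lda)=v_p\binom{w}{w_0,\ldots,w_{p-1}}$ is false, and so is the conclusion that ``both heights equal $v_p\binom{w}{w_0,\ldots,w_{p-1}}$.'' A small counterexample: for $p=2$, $\lda=(4,3,1)\vdash 8$ has $2$-quotient $(\varnothing,(2,2))$, so $(w_0,w_1)=(0,4)$ and $v_2\binom{4}{0,4}=0$; but $\chi^{(4,3,1)}(1)=70$ has $2$-valuation $1$, so $\Ht(\chi^\lda)=1\ne 0$. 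The height genuinely depends on the shapes $\lda(l)$, not just their sizes.

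What you are missing is precisely the same extra term $\sum_l v_p(\chi^{\lda(l)}(1))$ on the symmetric-group side; once you know
\[
\Ht(\chi^\lda) \;=\; v_p\binom{w}{w_0,\ldots,w_{p-1}} \;+\; \sum_{l=0}^{p-1} v_p\bigl(\chi^{\lda(l)}(1)\bigr),
\]
the two sides match immediately. The paper obtains this identity by quoting Olsson's $p$-core tower formulas for $v_p(\chi^{\lda(l)}(1))$ and for $\Ht(\chi^\lda)$, subtracting, and recognising the result as $v_p\binom{w}{w_0,\ldots,w_{p-1}}$ via Lemma~\ref{lem:binco}. Your sketched abacus route (splitting hook-length valuations into within-runner and cross-runner contributions) can also be made to work, but the within-runner contribution is exactly $\sum_l v_p(\chi^{\lda(l)}(1))$ --- it does not vanish and must be carried through on both sides rather than absorbed into~$d$.
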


\begin{proof}
 Consider a character $\chi^{\lda} \in \Irr(S_{pw+e}, \rho)$. It follows from~\eqref{eq:defzeta} and~\eqref{eq:defF} that 
\begin{equation}\label{eq:htFchi}
 \Ht(F(\chi))=v_p (F(\chi)(1)) = v_p \binom{w}{|\lda(0)|, \ldots,|\lda(p-1)|} + \sum_{l=0}^{p-1} v_p (\chi^{\lda(l)}(1)).
\end{equation}
For any partition $\mu$ and $i\ge 0$, let $c_i (\mu)$ be the sum of the sizes of the partitions in the $i$-th row of the $p$-core tower of $\mu$ (as defined in~\cite[Section 2]{Olsson1976}).
It follows from the definition that for $i>0$
\[
 c_i (\lda) = \sum_{l=0}^{p-1} c_{i-1}(\lda(l)).
\]
 Let $\sum_{i=0}^{\infty} a_i p^i$ be the $p$-adic expansion of $w$ and $\sum_{i=0}^{\infty} b_{li} p^i$ be the $p$-adic expansion of $|\lda(l)|$ for $l=0,\ldots p-1$. 
By~\cite[Proposition (2.3)]{Olsson1976} we have
\[
 v_p (\chi^{\lda(l)}(1)) = \frac{1}{p-1}\left( \sum_{i=0}^{\infty} c_i (\lda(l)) - \sum_{i=0}^{\infty} b_{li} \right) \quad \text{for all } l\in [0,p-1],
\]
and, by~\cite[Lemma (3.1)]{Olsson1976},
\begin{equation}\label{eq:htchi}
 \Ht (\chi^{\lda}) = \frac{1}{p-1}\left( \sum_{i=1}^{\infty} c_i(\lda) - \sum_{i=0}^{\infty} a_i \right).
\end{equation}
Combining the preceding three displayed equalities, we obtain
\begin{equation}\label{eq:ht2}
 \Ht(\chi^{\lda}) - \sum_{l=0}^{p-1} v_p(\chi^{\lda(l)} (1) ) = \frac{1}{p-1} \left( \sum_{\substack{l\in [0,p-1] \\ i\ge 0}} b_{li} - \sum_{i\ge 0} a_i \right)
= v_p \binom{w}{|\lda(0)|, \ldots,|\lda(p-1)|},
\end{equation}
where the second equality follows from Lemma~\ref{lem:binco}. (We remark that this identity is seen more naturally as a step in an alternative proof of~\eqref{eq:htchi} by induction 
than as a consequence of~\eqref{eq:htchi}.)
The result follows immediately from~\eqref{eq:ht2} and~\eqref{eq:htFchi}.
\end{proof}

When $e=0$, Eq.~\eqref{eq:stmain2} becomes
\[
 F_{p,w,\varnothing} (\chi) \equiv \Res^{S_{pw}}_{S_p\wr S_w} \chi \dmod \cl K_{w-1}.
\]

Together with Lemma~\ref{lem:centp} 
and Proposition~\ref{prop:heights}, this means that Theorem~\ref{thm:val} has been proved, with the exception of the uniqueness statement. 
 Moreover, these results together with Eq.\ \eqref{eq:stmain1} show that an analogue of Theorem~\ref{thm:val}, without uniqueness, holds for $e>0$.

\section{On the span of certain induced characters}\label{sec:I}

Throughout this section, we assume that $p$ is a prime. As in Section~\ref{sec:main}, let $w,e\in \mZ_{\ge 0}$ and $\rho$ be a $p$-core partition of $e$.
Fix $Q\in \Syl_p (S_w)$, so that $P=C_p\wr Q$ is a Sylow $p$-subgroup of $S_{pw}$, where $C_p$ is a fixed subgroup of order $p$ in $S_p$. 
Define $\Cal S_e = \Cal S(S_{pw+e}, P, (S_p\wr S_w) \times S_e)$ (see \S\ref{sub:11}). Recall the subgroups $\Cal K_{s}\subset \cl C(S_p\wr S_w)$ defined by~\eqref{eq:defK}.
The main aim of this section is to prove the following result.

\begin{thm}\label{thm:Iset} 
\begin{enumerate}[(i)]
 \item We have $\Cal K_{w-1}\subset \Cal I(S_p \wr S_w, P, \Cal S_0)$.
\item If $e>0$, then $\Cal K_{w}\subset   \Cal I(S_p\wr S_w, P,\Cal S_e)$.
\end{enumerate}
\end{thm}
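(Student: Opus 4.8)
Since $\cl K_{w-1}$ and $\cl K_w$ are defined by vanishing on a union of conjugacy classes, the plan is to apply an integral form of Brauer's induction theorem and reduce the statement to a combinatorial assertion about $p$-subgroups of $S_p\wr S_w$.

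\emph{Step 1: geometry of $\cl S_0$ and $\cl S_e$.} Let $\Omega_0$ be the partition of $[1,pw]$ into the $w$ blocks of~\eqref{eq:sets}; then $S_p\wr S_w$ is exactly the stabiliser of $\Omega_0$ in $S_{pw}$. Hence a subgroup $R\le P=C_p\wr Q$ lies in $\cl S_0$ as soon as $R$ also stabilises some partition $\Omega'\ne\Omega_0$ of $[1,pw]$ into $w$ blocks of size $p$ in a way compatible with lying in a conjugate of $P$ (take for the element $g$ in the definition of $\cl S(S_{pw},P,S_p\wr S_w)$ one with $g\Omega_0=\Omega'$ and matching block orderings). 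When $e>0$ one has $\cl S_0\subseteq\cl S_e$, and in addition any $R\le P$ fixing a point $j\in[1,pw]$ lies in $\cl S_e$, since then $g=(j,\ pw+1)$ satisfies $R\le\ls gP\cap P$ and $g\notin(S_p\wr S_w)\times S_e$. I would first make these sufficient conditions precise.

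\emph{Step 2: reduction to $p$-subgroups.} Put $s=w-1$ for~(i) and $s=w$ for~(ii). The set $\cl U_s$ is conjugation-closed, and its complement in $S_p\wr S_w$ is closed under taking powers: by Lemma~\ref{lem:cyctype}, and since (as $p$ is prime) the $p$-elements of $S_p$ are exactly $1$ and the $p$-cycles, passing from $h$ to $h^k$ replaces each marked cycle $\sigma_l$ of $h$ by $\gcd(k,o(\sigma_l))$ marked cycles whose twists are powers of the original twist, and no power of a non-$p$-cycle is a $p$-cycle, so $|\tp_p^{\wre}(h^k)|\le|\tp_p^{\wre}(h)|$. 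Thus, by the integral form of Brauer's induction theorem applied to virtual characters vanishing off the conjugation- and power-closed set $(S_p\wr S_w)\setminus\cl U_s$, the group $\cl K_s$ is generated over $\mZ$ by the characters $\Ind_E^{S_p\wr S_w}\lambda$, where $E$ is an elementary subgroup all of whose elements lie outside $\cl U_s$ and $\lambda$ is a linear character of $E$. For such an $E$, after replacing it by a suitable $G$-conjugate, $E\cap P$ is a Sylow $p$-subgroup of $E$ (equality holds because $P$ is a $p$-group, so any $p$-subgroup of $G$ containing a Sylow of $E$ equals it), and this Sylow can be taken to lie in $\cl S$ exactly when the Sylow $p$-subgroup $S$ of $E$ is $G$-conjugate into $\cl S$; in that case $\Ind_E^{S_p\wr S_w}\lambda$ satisfies the conditions of Definition~\ref{def:I} and lies in $\cl I(S_p\wr S_w,P,\cl S)$. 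So everything reduces to: \emph{a $p$-subgroup $R\le S_p\wr S_w$ with $|\tp_p^{\wre}(x)|\le s-1$ for all $x\in R$ is conjugate into $\cl S_0$ when $e=0$, and into $\cl S_e$ when $e>0$.}

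\emph{Step 3: the combinatorial core, and the expected obstacle.} After conjugating, take $R\le P=C_p\wr Q$ and let $V=R\cap C_p^w$, regarded as an $\mF_p$-subspace of $\mF_p^w$; the hypothesis forces every vector of $V$ to have at most $s-1$ nonzero coordinates, so in particular $V$ omits the all-ones vector, which corresponds to the diagonal element $(u,\dots,u;1)$ with $u$ a $p$-cycle. Using this together with the image of $R$ in $Q$ and the twists occurring in the elements of $R$, I would either exhibit a point of $[1,pw]$ fixed by all of $R$ (which, when $e>0$, puts $R$ in $\cl S_e$ by Step~1) or construct a second $R$-invariant partition of $[1,pw]$ into $w$ blocks of size $p$ (putting $R$ in $\cl S_0$), using the combinatorial slack to perform the necessary surgery on $\Omega_0$; I expect to run this as an induction on $w$, handling the $Q$-orbits of blocks one at a time, and in case~(ii) to arrange the argument so that the weaker bound $s-1=w-1$ still suffices, the freedom of moving a point into $[pw+1,pw+e]$ compensating for the missing unit of slack. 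The hard part is exactly this last step: one has to extract a single $R$-invariant substructure — not just one invariant under each cyclic subgroup of $R$ — from a numerical constraint, in a situation where $R$ may act without fixed points and transitively permute several blocks, so that the construction of $\Omega'$ must be carried out globally and compatibly with the wreath-product structure. Pinning down the precise membership criterion for $\cl S_0$ in Step~1 (the correct choice of $g$ and the block-ordering compatibility) is the other technical point needing care.
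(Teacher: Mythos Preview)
Your Step~2 contains a genuine gap: there is no ``integral form of Brauer's induction theorem'' of the kind you invoke. The claim that a virtual character vanishing off a conjugation- and power-closed subset $Y$ is a $\mZ$-combination of characters induced from elementary subgroups contained in $Y$ is false in general. Take $G=Q_8$ and $Y=\{\pm1\}$; this $Y$ is conjugation-closed and power-closed. The $2$-dimensional irreducible character $\chi_5$ vanishes on $G\setminus Y$, but every character induced from a subgroup of $Y$ has $\chi_5$-multiplicity in $2\mZ$ (compute $\langle\chi_5,\Ind_{\{\pm1\}}^{Q_8}1\rangle=0$ and $\langle\chi_5,\Ind_{\{\pm1\}}^{Q_8}\epsilon\rangle=2$), so $\chi_5$ itself is not in the $\mZ$-span. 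Brauer's theorem guarantees induction from elementary subgroups, but gives no control over \emph{which} elementary subgroups appear; for that one needs every elementary subgroup to be subconjugate to one in your family, not merely every element. Your verification that the complement of $\cl U_s$ is power-closed is correct (and your argument that the twists of $h^k$ lie in $\langle x_l\rangle$ is fine), but it does not buy you the induction step.

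The paper's proof proceeds quite differently and never attempts such a reduction. It first proves a bespoke induction theorem for wreath products (Theorem~\ref{thm:wrind}): if $\cl X\le\cl C(L)$ is rationally closed and $\om_\lambda(\xi)\in\cl X^{\otimes l(\lambda)}$ for every $\lambda$, then $\xi\in\cl X\wr S_w$. Applying this with $L=S_p$ and $\cl X=\scr P(S_p)$ yields $\cl K_1\subset\cl I(S_p\wr S_w,Q,\cl A(Q))$ (Proposition~\ref{prop:wrKzero}); here the classical Brauer theorem is used only for $S_p$ itself, where it is unproblematic. Then Proposition~\ref{prop:Isetprecise} climbs from $\cl K_1$ up to $\cl K_i$ by an induction on~$i$: one restricts $\xi\in\cl K_i$ to $(S_p\wr A_{i-1})\times(S_p\wr B_{i-1})$, peels off a correction term lying in $\cl I(S_p\wr S_w,P_{i-1}\times Q_{i-1},\cl A(P_{i-1}\times Q_{i-1}))$ using Proposition~\ref{prop:wrKzero} on the $B_{i-1}$ factor, and shows the remainder lies in $\cl K_{i-1}$. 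The combinatorics you anticipate in your Step~1 appears only in the easy Lemma~\ref{lem:S}, which checks that the specific subgroups $P_i\times Q_i$ lie in $\cl S_0$ (for $i\le w-2$) or $\cl S_e$ (for $i\le w-1$ when $e>0$). Your Step~3, even granting Step~2, is left as a sketch; the paper's route bypasses it entirely.
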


Together with Theorem~\ref{thm:main} and Proposition~\ref{prop:heights}, this theorem immediately implies 

\begin{cor}
 Let $b$ be a $p$-block of weight $w$ of a symmetric group $S_{pw+e}$ and $P$ be as above. 
Then the property (IRC-Bl) holds for the quadruple $(S_{pw+e}, b, P, (S_p\wr S_w) \times S_e)$.
\end{cor}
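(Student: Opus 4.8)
The substantive inputs are already available: the explicit signed bijection $F_{p,w,\rho}$ and the fact that it preserves heights (Theorem~\ref{thm:main}, Proposition~\ref{prop:heights}), together with the containment of $\cl K_{w-1}$ and $\cl K_w$ in the appropriate ideals $\cl I$ (Theorem~\ref{thm:Iset}); the plan is to assemble these while carrying the core $\rho$ along through a direct product. First I would fix the bijection. The block $b$ has weight $w$, so its defect group is a Sylow $p$-subgroup of $S_{pw}$; thus $P$ is a defect group of $b$, it is a Sylow $p$-subgroup of $S_p\wr S_w$, and $\N_{S_{pw+e}}(P)\le H:=(S_p\wr S_w)\times S_e$ (this inclusion was recorded after Theorem~\ref{thm:val}). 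The Brauer correspondent $c$ of $b$ in $H$ is $c_0\otimes d$, where $c_0$ is the principal block of $S_p\wr S_w$ (of defect group $P$, by Lemma~\ref{lem:blwr}) and $d$ is the block of $S_e$ containing $\chi^{\rho}$, which has defect zero because $\rho$ is a $p$-core. Hence $\Irr(H,c)=\{\phi\times\chi^{\rho}\mid\phi\in\Irr(S_p\wr S_w,c_0)\}$, and since $\chi^{\rho}$ has defect zero this parametrisation preserves heights, so $\Irr_0(H,c)=\{\phi\times\chi^{\rho}\mid\phi\in\Irr_0(S_p\wr S_w,c_0)\}$. As $p$ is prime, $\Irr(S_p\wr S_w,c_0)=\Irr_{\pri}(S_p\wr S_w)$, so Theorem~\ref{thm:main} makes $F_{p,w,\rho}$ a signed bijection $\pm\Irr(S_{pw+e},b)\to\pm\Irr(S_p\wr S_w,c_0)$, height-preserving by Proposition~\ref{prop:heights}. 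Therefore $F'\colon\chi\mapsto F_{p,w,\rho}(\chi)\times\chi^{\rho}$ is a signed bijection $\pm\Irr_0(S_{pw+e},b)\to\pm\Irr_0(H,c)$, and only the congruence of Definition~\ref{def:ircbl} remains to be checked.

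For the congruence, fix $\chi\in\pm\Irr_0(S_{pw+e},b)$ and write $G=S_{pw+e}$. Theorem~\ref{thm:main} gives $F_{p,w,\rho}(\chi)\equiv\Res^{S_{pw}}_{S_p\wr S_w}\tilde{\pi}_{\rho}\Res^{G}_{S_{pw}\times S_e}\chi$, modulo $\cl K_{w-1}$ if $e=0$ (using~\eqref{eq:stmain2}, as $\varnothing$ is circularly non-decreasing) and modulo $\cl K_w$ if $e>0$ (using~\eqref{eq:stmain1}). I would multiply through by $\chi^{\rho}$ and observe that the right-hand side, tensored with $\chi^{\rho}$, is exactly $\pi_{\chi^{\rho}}\!\big(\Res^{G}_H\chi\big)$, the $\chi^{\rho}$-isotypic component over the normal subgroup $S_e$ of $H$. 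The proof is then finished by two claims. \emph{(a) The error term, tensored with $\chi^{\rho}$, lies in $\cl I(H,P,\cl S_e)$.} By Theorem~\ref{thm:Iset} the error term lies in $\cl I(S_p\wr S_w,P,\cl S_e)$ (via part~(i) if $e=0$, where $\cl S_e=\cl S_0$, and via part~(ii) if $e>0$); moreover, since $\chi^{\rho}$ has defect zero it vanishes on $p$-singular elements, hence — a standard consequence of Brauer's induction theorem — is a $\mZ$-linear combination $\sum_j n_j\Ind_{M_j}^{S_e}\theta_j$ of characters induced from $p'$-subgroups $M_j\le S_e$; thus a generator $\Ind_{L_1}^{S_p\wr S_w}\psi_1$ of $\cl I(S_p\wr S_w,P,\cl S_e)$ produces $(\Ind_{L_1}^{S_p\wr S_w}\psi_1)\times\chi^{\rho}=\sum_j n_j\Ind_{L_1\times M_j}^{H}(\psi_1\times\theta_j)$, where $(L_1\times M_j)\cap P=L_1\cap P$ is a Sylow $p$-subgroup of $L_1\times M_j$ (as $M_j$ is a $p'$-group) belonging to $\cl S_e$, so each summand generates $\cl I(H,P,\cl S_e)$. \emph{(b) $\pi_{\chi^{\rho}}\!\big(\Res^G_H\chi\big)\equiv\Proj_P\Res^G_H\chi$ modulo $\cl I(H,P,\cl S_e)$.} Expanding $\Res^G_H\chi$ by the branching rule, the $\chi^{\rho}$-component survives $\Proj_P$ (the defect group of its $S_e$-block is trivial), so the difference $\Proj_P\Res^G_H\chi-\pi_{\chi^{\rho}}(\Res^G_H\chi)$ is a $\mZ$-linear combination of constituents $\phi\times\psi$ of $\Res^G_H\chi$ with $\psi$ lying in an $S_e$-block whose $p$-core is not $\rho$, and one checks — using Nakayama's conjecture and the explicit description of $\cl S_e=\cl S(G,P,H)$ — that every such ``wrong-core'' term lies in $\cl I(H,P,\cl S_e)$. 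Claims (a) and (b) give $F'(\chi)\equiv\Proj_P\Res^G_H\chi\pmod{\cl I(H,P,\cl S_e)}$, as required.

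The work sits in claim (b). When $e<p$ it reduces to a defect-group count: a constituent $\phi\times\psi$ surviving $\Proj_P$ with $\psi$ in a positive-defect block of $S_e$ would have defect group too large to be $G$-conjugate into $P$, so only the $\chi^{\rho}$-component survives. For $e\ge p$ this order comparison is inconclusive, and one has to use the concrete structure of $\cl S_e$ (subgroups of $P$ having a $G$-conjugate contained in $P$ but not in $H$) together with branching for symmetric groups; I expect it will be cleanest to isolate (b) as a separate lemma about the interaction of $\Proj_P\Res$ with $\cl I$ along the chain $(S_p\wr S_w)\times S_e\le S_{pw}\times S_e\le S_{pw+e}$, after which the rest of the deduction is formal.
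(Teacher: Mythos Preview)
Your overall architecture is correct and matches the paper's: set $F'(\chi)=F_{p,w,\rho}(\chi)\times\chi^{\rho}$, check it is a height-preserving signed bijection onto $\pm\Irr_0(H,c)$, and split the congruence into your claims (a) and (b). Claim (a) is right, and your argument---writing the defect-zero character $\chi^{\rho}$ as an integer combination of characters induced from $p'$-subgroups of $S_e$ via Theorem~\ref{thm:brproj}, so that each resulting $L_1\times M_j$ still has $L_1\cap P$ as a Sylow $p$-subgroup lying in $\cl S_e$---is precisely how the paper handles the analogous step later (see the final paragraph of the proof of Theorem~\ref{thm:symirc}).

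The gap is claim (b). The paper does not attack this by analysing ``wrong-core'' constituents at all; it simply invokes \cite[Proposition~2.11(i)]{Evseev2010m}, a general result from the companion paper already built into the framework of \S\ref{sub:11}, which says that in the (IRC-Bl) congruence $\Proj_P\Res^G_H$ may be replaced, modulo $\cl I(H,P,\cl S_e)$, by $\Proj_f\Res^G_H$ for a suitable central idempotent $f$ of $\cl O H$. Taking $f$ to be the idempotent of the defect-zero block of $S_e$ containing $\chi^{\rho}$, regarded inside $\cl O H$, one has $\Proj_f=\pi_{\chi^{\rho}}$, and (b) follows at once. This is why the paper can say the corollary follows ``immediately'' from Theorems~\ref{thm:main},~\ref{thm:Iset} and Proposition~\ref{prop:heights}. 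Your proposed direct attack on (b) is unnecessary and, as sketched, does not work even in the ``easy'' case $e<p$: there \emph{every} partition of $e$ is a $p$-core, so every $\psi\in\Irr(S_e)$ has defect zero and $\Proj_P$ keeps all constituents $\phi\times\psi$, not just the $\chi^{\rho}$-component; the defect-group count you describe therefore does not show that the difference lies in $\cl I(H,P,\cl S_e)$. The correct fix is simply to cite the general proposition rather than to attempt a bespoke argument.
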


The following corollary is not needed for the proof of Theorem~\ref{thm:symirc} but 
makes Theorem~\ref{thm:Iset} more precise when $w<p$. Together with Corollary~\ref{cor:unique} below (which is essentially the uniqueness part of
Theorem~\ref{thm:val}), it shows, in particular, that the signed bijection witnessing (IRC-Bl) for $e=0$ in the previous corollary is unique for $w<p$.

\begin{cor}
 Assume that $w<p$. We have $\Cal I(S_p\wr S_w, P,\Cal S_0) = \Cal K_{w-1}$. 
Also, if $e>0$, then $\Cal I(S_p \wr S_w, P, \Cal S_e) = \Cal K_w$.
\end{cor}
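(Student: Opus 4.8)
By Theorem~\ref{thm:Iset} it remains only to prove the reverse inclusions $\cl I(S_p\wr S_w,P,\cl S_0)\subseteq\cl K_{w-1}$ and, when $e>0$, $\cl I(S_p\wr S_w,P,\cl S_e)\subseteq\cl K_w$; throughout, $w<p$. Write $\Delta_i=[(i-1)p+1,ip]$ for the sets in the family~\eqref{eq:sets}. Since $w<p$ we have $p\nmid w!$, so $Q=\mbf 1$ and $P=C_p^{\times w}$ is the base group, with $i$-th direct factor generated by a fixed $p$-cycle $u_i$ supported on $\Delta_i$. By Definition~\ref{def:I}, $\cl I(S_p\wr S_w,P,\cl S)$ is spanned by the virtual characters $\Ind_L^{S_p\wr S_w}\phi$ with $L\cap P\in\Syl_p(L)$ and $L\cap P\in\cl S$, and since $\cl K_s$ is a subgroup of $\cl C(S_p\wr S_w)$ it is enough to place each such generator in $\cl K_s$, where $s=w-1$ if $\cl S=\cl S_0$ and $s=w$ if $\cl S=\cl S_e$ with $e>0$. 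As $\cl U_s$ is a union of conjugacy classes of $S_p\wr S_w$ (it is defined via the class invariant $\tp_p^{\wre}$; see~\eqref{eq:defU}) and $\Ind_L^{S_p\wr S_w}\phi$ vanishes at any element with no $S_p\wr S_w$-conjugate in $L$, everything reduces to showing $L\cap\cl U_s=\varnothing$.

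So suppose $h\in L\cap\cl U_s$. Then $h_p\in L$, and since $h_p$ is a $p$-element while $L\cap P\in\Syl_p(L)$, Sylow's theorem together with the conjugation-invariance of $\cl U_s$ allows us to replace $h$ by an $L$-conjugate and assume $h_p\in T:=L\cap P$. For $X\le P$ write $\mathcal B(X)$ for the set of $i\in[1,w]$ such that some element of $X$ moves a point of $\Delta_i$. I would first show $|\mathcal B(T)|\ge s$: passing to a conjugate of $h$ of the form $y_{\s_1}(x_1)\cdots y_{\s_r}(x_r)$ with $\s_1,\dots,\s_r$ disjoint marked cycles (see~\S\ref{sub:wr}) merely permutes the sets $\Delta_i$, and using $w<p$ one checks that $h_p$ then moves points of exactly $|\tp_p^{\wre}(h)|\ge s$ of them; since $h_p\in T$, the corresponding indices lie in $\mathcal B(T)$.

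The crux is the matching upper bound on $|\mathcal B(T)|$, which is where the cases $\cl S_0$ and $\cl S_e$ part ways. By definition of $\cl S=\cl S(S_{pw+e},P,(S_p\wr S_w)\times S_e)$, there is $g\in S_{pw+e}-(S_p\wr S_w)\times S_e$ with $T\le\ls{g}P\cap P$. Every $v\in T\le P$ is a product of disjoint $p$-cycles, one on each $\Delta_i$ on which $v$ acts nontrivially; but $v\in\ls{g}P$ as well, so $v$ is also a product of disjoint $p$-cycles supported on the sets $g(\Delta_1),\dots,g(\Delta_w)$. Comparing the two cycle decompositions forces every $\Delta_i$ with $i\in\mathcal B(T)$ to equal some $g(\Delta_j)$, i.e.\ $\{\Delta_i:i\in\mathcal B(T)\}\subseteq\{g(\Delta_1),\dots,g(\Delta_w)\}$. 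Now if $e=0$ and $|\mathcal B(T)|\ge w-1$, then, since $\{\Delta_1,\dots,\Delta_w\}$ and $\{g(\Delta_1),\dots,g(\Delta_w)\}$ are both partitions of $[1,pw]$ into $w$ sets of size $p$, this inclusion forces $\{g(\Delta_j)\}=\{\Delta_1,\dots,\Delta_w\}$, so $g$ permutes the sets $\Delta_i$ and thus $g\in S_p\wr S_w$, a contradiction; hence $|\mathcal B(T)|\le w-2$. If instead $e>0$ and $|\mathcal B(T)|=w$, the inclusion becomes an equality of $w$-element sets, so again $\{g(\Delta_j)\}=\{\Delta_1,\dots,\Delta_w\}$, whence $g$ stabilises both $[1,pw]$ and $[pw+1,pw+e]$ setwise and permutes the $\Delta_i$, i.e.\ $g\in(S_p\wr S_w)\times S_e$, a contradiction; hence $|\mathcal B(T)|\le w-1$. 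Either bound contradicts $|\mathcal B(T)|\ge s$, so $L\cap\cl U_s=\varnothing$, completing the proof. The main obstacle is getting this block-support analysis exactly right, and in particular noticing that when $e>0$ one only obtains $|\mathcal B(T)|\le w-1$ — the "leftover" set $g(\Delta_{j_0})$ is free to use the extra points $pw+1,\dots,pw+e$ — which is precisely why $\cl K_w$, rather than $\cl K_{w-1}$, is the correct right-hand side in that case.
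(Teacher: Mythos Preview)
Your proof is correct and follows essentially the same line as the paper's: both reduce to showing that no element of $\cl U_s$ can have its $p$-part inside $\ls{g}P\cap P$ for $g\notin H$, and both derive the contradiction from the block partition $\{\Delta_i\}$ of $[1,pw]$. The only cosmetic difference is that the paper phrases the argument directly in terms of the cycle count of $h_p$ (at least $w-1$, resp.\ $w$, disjoint $p$-cycles force $\ls{g}P\le S_p^{\times w}$, resp.\ $g\in N_G(P)$), while you repackage the same constraint via the block-support invariant $\mathcal B(T)$.
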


\begin{proof}
Let $G=S_{pw+e}$ and $H=(S_p\wr S_w)\times S_e$.
Since $w<p$, we have $P=C_p^{\times w}$.
By Theorem~\ref{thm:Iset}, we have $\Cal K_{w-\d_{e0}} \subset \cl I(S_p\wr S_w, P,\cl S_e)$. Conversely, let $\xi\in \cl I(S_p \wr S_w, P, \cl S_e)$,
and suppose for contradiction that $\xi\notin \cl K_{w-\d_{e0}}$, that is, $\xi (h) \ne 0$ for some $h\in S_p\wr S_w$ such that $|\tp_p^{\wre} (h)| \ge w-\d_{e0}$. 
By Definition~\ref{def:I}, after replacing $h$ with an $S_p \wr S_w$-conjugate, we have $h\in L$ for some $L\le S_p\wr S_w$ such that $L\cap P \in \Syl_p (L) \cap \cl S_e$. 
Therefore, replacing $h$ by an $L$-conjugate if necessary, we may assume that $h_p\in T$ for some subgroup $T\le P$ such that $T\in \cl S_e$. 
Hence, $h_p \in P\cap \ls{g}P$ for some $g\in G - H$.

First, consider the case when $e=0$. Since $|\tp_p^{\wre} (h)| \ge w-1$, the cycle decomposition of $h_p$ contains at least $w-1$ $p$-cycles. 
Since $h_p\in \ls{g}P$, this forces $\ls{g}P\le (S_p)^{\times w}$, whence $g\in N_G (S_p^{\times w})=H$, a contradiction.

Now suppose that $e>0$. Then $|\tp_p^{\wre} (h)|\ge w$, so the cycle decomposition of $h_p$ contains at least $w$ $p$-cycles. This means that only one $G$-conjugate of $P$ contains $h_p$, 
whence $g\in N_G(P) \le H$, a contradiction.
\end{proof}

\subsection{An induction theorem for wreath products}\label{sub:41}

Our first objective is to show that $\Cal K_1 \subset \Cal I(S_p\wr S_w,Q, \Cal A(Q))$. In this subsection we state and prove a more general result, Theorem~\ref{thm:wrind}, from which 
that containment is derived below (see Proposition~\ref{prop:wrKzero}).
Let $L$ be a fixed finite group. 
By a \emph{composition} of $w$ we understand a finite sequence $\lda = (\lda_1,\ldots,\lda_r)$ of positive integers 
such that $\lda_1+\cdots+\lda_r = w$, and we write $l(\lda) = r$. 

\begin{defi}\label{def:om}
Let $\xi\in \CF(L\wr S_w; K)$ and 
 $\lda=(\lda_1,\ldots,\lda_r)$ be a composition of $w$. The class function $\om_{\lda} (\xi) \in \CF(L^{\times r} ; K)$ is defined by
\[
\om_{\lda} (\xi) (x_1,\ldots,x_r)= \xi(y_{\s_1} (x_1) \cdots y_{\s_r} (x_r) ), \quad x_1,\ldots,x_r\in L, 
\]
where $\s_1,\ldots,\s_r$ are disjoint marked cycles in $S_w$ of orders $\lda_1,\ldots,\lda_r$ respectively. 
\end{defi}

We will often view $\om_{\lda}$ as an element of $\CF(L; K)^{\otimes r}$. 
We write $\om_m$ instead of $\om_{(m)}$ for $m\in \mN$. 
Note that for any $\xi\in \CF(L\wr S_w;K)$, any composition $\lda=(\lda_1,\ldots,\lda_r)$ of $w$ and 
any $g\in S_r$, we have $\om_{g(\lda)} (\xi) = \ls{g}(\om_{\lda} (\xi))$, where
$g(\lda) = (\lda_{g^{-1} (1)}, \ldots, \lda_{g^{-1} (r)})$ and 
 the action of $S_r$ on $\CF(L;K)^{\otimes r}$ is that induced by the natural action on $L^{\times r}$. 
In particular, the values $\om_{\lda} (\xi)$ for all compositions $\lda$ of $w$ are determined by the values of $\om_{\mu} (\xi)$ for all $\mu\in \cl P(w)$. 


\begin{defi}\label{defi:wrset}
 Let $\cl X$ be a subgroup of $\CF(L; K)$. We define the subgroup $\cl X \wr S_w$ of $\CF(L\wr S_w; K)$ to be the $\mZ$-span of the set of class functions of the form 
$\zeta_{\Th}$ where $\Th = ((\phi_1,\chi_1),\ldots, (\phi_n,\chi_n))\in \Tup_w (L)$ is such that $\phi_i\in \cl X$ for each $i\in [1,n]$.  
\end{defi}

For example, $\cl C(L)\wr S_w = \cl C(L\wr S_w)$ by 
Corollary~\ref{cor:wrextgen} and its proof.

\begin{lem}\label{lem:wrindconv}
 Suppose that $\cl X\le \CF(L;K)$ and $\xi\in \cl X\wr S_w$. Then $\om_{\lda} (\xi) \in \cl X^{\otimes l(\lda)}$ for every $\lda\in \cl P(w)$. 
\end{lem}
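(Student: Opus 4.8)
The plan is to reduce to the generators of $\cl X\wr S_w$ and then to evaluate an induced class function explicitly. Since $\om_\lda$ is $\mZ$-linear in $\xi$ and $\cl X^{\otimes l(\lda)}$ is a subgroup of $\CF(L;K)^{\otimes l(\lda)}$, it suffices to prove the claim when $\xi=\zeta_\Th$ for a tuple $\Th=((\phi_1,\chi_1),\ldots,(\phi_n,\chi_n))\in\Tup_w(L)$ with $\phi_i\in\cl X$ for all $i$; here $\chi_i\in\cl C(S_{w_i})$ and $w_1+\cdots+w_n=w$. Writing $\lda=(\lda_1,\ldots,\lda_r)$ (so $r=l(\lda)$), I would fix disjoint marked cycles $\s_1,\ldots,\s_r$ in $S_w$ of orders $\lda_1,\ldots,\lda_r$, put $M=(L\wr S_{w_1})\times\cdots\times(L\wr S_{w_n})\le L\wr S_w$ and $\psi=\prod_{i=1}^n(\phi_i^{\witi{\times} w_i}\,\Inf_{S_{w_i}}^{L\wr S_{w_i}}\chi_i)$, so that $\zeta_\Th=\Ind_M^{L\wr S_w}\psi$. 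It then remains to compute $\zeta_\Th(g)$ for $g=y_{\s_1}(x_1)\cdots y_{\s_r}(x_r)$ and arbitrary $x_1,\ldots,x_r\in L$.

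First I would apply the standard transversal formula $\zeta_\Th(g)=\sum_t\psi(t^{-1}gt)$, the sum running over those $t$ in a fixed left transversal $T$ of $M$ in $L\wr S_w$ for which $t^{-1}gt\in M$. The first key step is to show that, for such a $t$, the requirement $t^{-1}gt\in M$ forces the image under the permutation part of $t^{-1}$ of each support $\Supp(\s_j)$ to lie inside one of the blocks $I_1,\ldots,I_n$ defining the Young subgroup $S_{w_1}\times\cdots\times S_{w_n}$; this assigns a ``packing'' map $f=f_t\colon[1,r]\to[1,n]$ with $\sum_{j\in f^{-1}(i)}\lda_j=w_i$ for each $i$, and, by the description of conjugacy classes in wreath products recalled in \S\ref{sub:wr}, the $(L\wr S_{w_i})$-component of $t^{-1}gt$ is conjugate in $L\wr S_{w_i}$ to $\prod_{j\in f^{-1}(i)}y_{\tau_j}(x_j)$ for suitable disjoint marked cycles $\tau_j$ of orders $\lda_j$. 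The second key step is to evaluate $\psi$ on this element: by Lemma~\ref{lem:wrcharval} the factor $\phi_i^{\witi{\times} w_i}$ contributes $\prod_{j\in f^{-1}(i)}\phi_i(x_j)$, while $\Inf_{S_{w_i}}^{L\wr S_{w_i}}\chi_i$ contributes $\chi_i(\pi_i)$ for any $\pi_i\in S_{w_i}$ of cycle type $(\lda_j)_{j\in f^{-1}(i)}$. Thus $\psi(t^{-1}gt)=\left(\prod_{i=1}^n\chi_i(\pi_i)\right)\prod_{j=1}^r\phi_{f(j)}(x_j)$, which depends only on $f$.

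Grouping the sum by $f$ then yields $\om_\lda(\zeta_\Th)=\sum_f m_f\,(\phi_{f(1)}\otimes\cdots\otimes\phi_{f(r)})$, where $m_f$ equals the number of $t\in T$ with $t^{-1}gt\in M$ and $f_t=f$ (a nonnegative integer depending only on $\lda$ and the $w_i$) times $\prod_{i=1}^n\chi_i(\pi_i)$ (an integer, since a virtual character of a symmetric group takes integer values). As $\cl X$ is a subgroup of $\CF(L;K)$ and every $\phi_{f(j)}$ lies in $\cl X$, each pure tensor $\phi_{f(1)}\otimes\cdots\otimes\phi_{f(r)}$ lies in $\cl X^{\otimes r}$, and hence so does the $\mZ$-linear combination $\om_\lda(\zeta_\Th)$, which gives the claim since $r=l(\lda)$.

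I expect the only real obstacle to be the combinatorial bookkeeping in the middle step: showing cleanly that the condition $t^{-1}gt\in M$ amounts to a packing of the marked cycles of $g$ into the blocks $I_1,\ldots,I_n$, and that $\psi(t^{-1}gt)$ is then determined by that packing alone (so that the multiplicities $m_f$ are genuinely well-defined and independent of $t$, of the $x_j$, and of the $\phi_i$). The remaining ingredients — the reduction to generators, the integrality of the coefficients, and membership in $\cl X^{\otimes r}$ — are routine; in particular one need not compute $m_f$ exactly, only observe that it is an integer.
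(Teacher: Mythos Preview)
Your proposal is correct and follows essentially the same approach as the paper. The paper's proof reduces to generators $\zeta_\Th$ just as you do, then treats only the case $n=1$ explicitly (where the computation is one line: $\om_\lda(\zeta_{(\phi,\chi)})=\chi(\s_1\cdots\s_r)\,\phi^{\otimes r}$) and remarks that the general case is easy to deduce from this using the definition of induced class function --- which is exactly the packing-and-summing argument you spell out in detail.
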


\begin{proof}
Let $\Th$, $\lda$ and $\s_1,\ldots,\s_r$ be as in Definition~\ref{defi:wrset}. We may assume that $\xi=\zeta_{\Th}$. 
We will give a proof only in the case when $n=1$, i.e.\ $\Th = ((\phi, \chi))$, 
for it is easy to deduce the general result from this case using the definition of induced class function. By~\eqref{eq:wrcharval} and~\eqref{eq:defzeta}, 
\[
\begin{split}
 \om_{\lda} (\xi) (x_1,\ldots,x_r) &= (\Inf_{S_w}^{L\wr S_w} \chi\cdot \phi^{\witi{\times} w})(y_{\s_1} (x_1) \cdots y_{\s_r} (x_r)) \\
&= \chi(\s_1 \cdots \s_r) \phi(x_1) \phi(x_2) \cdots \phi(x_r) \quad \text{ for all } x_1,\ldots,x_r\in L,
\end{split}
\]
whence $\om_{\lda} (\xi) = \chi(\s_1\cdots \s_r) \phi^{\times r}\in \cl X^{\otimes r}$ because $\chi(\s_1\cdots \s_r) \in \mZ$. 
\end{proof}

\begin{defi}
A subgroup $B$ of a torsion-free abelian group $A$ (written additively) is said to be 
 \emph{pure} in $A$ if for every $a\in A$ such that $na\in B$ for some $n\in \mZ-\{0\}$ we have $a\in B$.
\end{defi}

\begin{thm}\label{thm:wrind}
 Let $\cl X$ be a pure subgroup of $\cl C(L)$. Suppose that $\xi\in \cl C(L\wr S_w)$ and $\om_{\lda} (\xi) \in \cl X^{\otimes l(\lda)}$ for 
every $\lda\in \cl P(w)$. Then $\xi \in \cl X\wr S_w$. 
\end{thm}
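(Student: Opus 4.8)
The plan is to argue by induction on $w$, the case $w=0$ being trivial. So assume $w\ge 1$ and that the statement holds for all smaller values. The key structural fact we will exploit is Theorem~\ref{thm:wrirr}, which says $\Irr(L\wr S_w)$ is parametrised by $\PMap_w(\Irr(L))$ via $\Phi\mapsto\zeta_\Phi$, together with the fact (Theorem~\ref{thm:wrirr} again, plus the induction product formula behind~\eqref{eq:defzeta}) that the $\zeta_\Theta$ for $\Theta\in\Tup_w(L)$ span $\cl C(L\wr S_w)$ after extension of scalars. First I would reduce to a ``leading term'' analysis: write $\xi=\sum_{\Phi}a_\Phi\zeta_\Phi$ with $a_\Phi\in\mZ$, and order the index set $\PMap_w(\Irr(L))$ so that one can peel off the contributions one class function $\phi\in\Irr(L)$ at a time. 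The natural quantity to induct on within fixed $w$ is something like the number of distinct $\phi\in\Irr(L)$ for which $\Phi(\phi)\ne\varnothing$, or a lexicographic order on the multiset of sizes $|\Phi(\phi)|$; the goal is to show that if $\xi\notin\cl X\wr S_w$ then some $\om_\lda(\xi)$ escapes $\cl X^{\otimes l(\lda)}$.

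The main computational input is a formula for $\om_\lda(\zeta_\Theta)$: evaluating $\zeta_\Theta$ as in~\eqref{eq:defzeta} on an element $y_{\s_1}(x_1)\cdots y_{\s_r}(x_r)$ with $\s_i$ of order $\lda_i$ requires distributing the $r$ marked cycles among the blocks $L\wr S_{w_i}$ of the Young subgroup, summing over cosets exactly as in the proof of Lemma~\ref{lem:lincomb} and Lemma~\ref{lem:wrindconv}. This will express $\om_\lda(\zeta_\Theta)$ as an explicit $\mZ$-combination of tensor products $\phi_{i_1}^{?}\otimes\cdots$ of the class functions $\phi_i$ appearing in $\Theta$, with combinatorial (integer) coefficients coming from the $\chi_i$'s evaluated on appropriate cycle types. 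Specialising to $\lda=(1^w)$, i.e.\ $\om_{(1^w)}(\xi)$, records the ``base-point'' restriction and should already show that $\xi$, viewed through all its $\om_\lda$, determines a system of symmetric-function-type identities in the $\phi\in\Irr(L)$. The crucial point is an \emph{inversion/linear-independence} statement: if we know $\om_\lda(\xi)\in\cl X^{\otimes l(\lda)}$ for \emph{all} $\lda\in\cl P(w)$ simultaneously, then the coefficients $a_\Phi$ for those $\Phi$ supported outside $\cl X$ must vanish. This is where rational closedness of $\cl X$ enters: solving the linear system for the $a_\Phi$ produces a priori only \emph{rational} combinations lying in $\cl X$, and rational closedness upgrades these to genuine membership, letting us subtract off a summand in $\cl X\wr S_w$ and reduce $\xi$ to a smaller case (fewer non-$\cl X$ constituents, or smaller $w$ after stripping a $\phi^{\witi\times m}$ factor via the induction product decomposition).

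Concretely, here is the inductive step I would carry out. Pick $\phi\in\Irr(L)$ maximal in some fixed order among those with $\phi\notin\cl X$; if there is none, Theorem~\ref{thm:wrirr} and the observation $\cl C(L)\wr S_w=\cl C(L\wr S_w)$ combined with the fact that every $\zeta_\Phi$ with all constituents in $\cl X$ lies in $\cl X\wr S_w$ would finish it, so suppose $\phi$ exists. Using the product structure~\eqref{eq:defzeta}, decompose $\cl C(L\wr S_w)=\bigoplus_{m=0}^w \big(\text{stuff built from }\phi^{\witi\times m}\big)\otimes\cl C(L'\wr S_{w-m})$-type pieces where $L'$-characters avoid $\phi$; evaluating $\om_\lda$ on such a decomposition, and using that $\phi^{\witi\times m}$ contributes a factor $\prod\phi(x_{i})$ on the cycles assigned to it (Lemma~\ref{lem:wrcharval}), isolates the top $\phi$-degree part. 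Requiring $\om_\lda(\xi)\in\cl X^{\otimes l(\lda)}$ and projecting onto the component where the maximal number of tensor slots carry $\phi$ forces, after the rational-closedness argument, the corresponding coefficient combination into $\cl X$; subtracting the resulting element of $\cl X\wr S_w$ lowers the $\phi$-degree and we recurse. The main obstacle I anticipate is precisely this inversion step: verifying that the matrix relating the $a_\Phi$ to the family $\{\om_\lda(\xi)\}_{\lda\in\cl P(w)}$ is, in the appropriate triangular sense, invertible over $\mQ$ — this amounts to a (known) nondegeneracy of power-sum-versus-Schur type bases for symmetric functions, transplanted to the $\Irr(L)$-coloured setting, and one must set up the bookkeeping of $\Tup_w(L)$ carefully so that the triangularity is genuine. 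Everything else — the evaluation formulas, the reduction to maximal $\phi$, the use of rational closedness — is routine once that linear-algebraic core is in place.
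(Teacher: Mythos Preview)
Your proposal has a genuine conceptual gap at its core. You organise the argument around picking an irreducible $\phi\in\Irr(L)$ with $\phi\notin\cl X$ and filtering by ``$\phi$-degree''. But $\cl X$ is an \emph{arbitrary} rationally closed subgroup of $\cl C(L)$; it need not be spanned by a subset of $\Irr(L)$, and elements of $\cl X\wr S_w$ can have full support on $\PMap_w(\Irr(L))$. For a concrete failure: take $\Irr(L)=\{\chi_1,\chi_2\}$ and $\cl X=\mZ(\chi_1-\chi_2)$. Then $(\chi_1-\chi_2)^{\witi{\times}w}\in\cl X\wr S_w$, yet by Lemma~\ref{lem:lincomb} its expansion in the basis $\{\zeta_\Phi\}$ has a nonzero $\chi_1^{\witi\times w}$-component. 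So ``the top $\phi$-degree part of $\xi$ must vanish modulo $\cl X\wr S_w$'' is simply false, and your proposed subtraction step does not reduce anything. The same issue undermines the claim that ``if no $\phi\notin\cl X$ exists we are done'': that case is $\cl X=\cl C(L)$, which is trivial, but the interesting situations all have $\phi\notin\cl X$ for \emph{every} irreducible $\phi$, and your filtration gives no traction there.

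The paper's proof is structurally quite different. It never decomposes along $\Irr(L)$; instead it runs a double induction, the outer one on $w$ and the inner one on the lexicographically maximal partition $\lda$ with $\om_\lda(\xi)\ne 0$. The base case $\lda=(w)$ is handled by subtracting $\om_w(\xi)^{\witi\times w}$, which lies in $\cl X\wr S_w$ because $\om_w(\xi)\in\cl X$. For $\lda\ne(w)$, the paper proves two technical lemmas: one (Lemma~\ref{lem:splprod}) uses the dominance order and tensor-product duality to split $\om_\lda(\xi)$ into a sum $\sum_j\prod_{i\in I}\om_{(i^{d_i})}(\xi_{ji})$ with each $\xi_{ji}\in\cl C(L\wr S_{id_i})$ satisfying the same $\om$-hypotheses and an extra vanishing condition; the other (Lemma~\ref{lem:indexp2}) uses the ``shrinking'' map $\shr_m$ to rewrite each such $\xi_{ji}$, modulo something with $\om_{(i^{d_i})}=0$, as an induction of products of characters of $L\wr S_i$ with $i<w$, to which the outer induction applies. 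The linear-algebraic core you correctly anticipated is there, but it lives in the proof of Lemma~\ref{lem:splprod} and works with a basis of $\cl C(L)$ \emph{adapted to $\cl X$} rather than with $\Irr(L)$---that choice is exactly what makes the argument go through.
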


For a finite group $G$ let $\scr P(G)$ be the set of all virtual characters $\xi\in \cl C(G)$ such that $\xi$ vanishes on all $p$-singular elements of $G$. 
Recall the following classical result: see (the proof of) \cite[Theorem 5]{Brauer1947}.

\begin{thm}[Brauer]\label{thm:brproj}
For any finite group $G$, the set
 $\scr P(G)$ is the $\mZ$-span of virtual characters of the form 
$\Ind_H^G \th$ where $H$ is a $p'$-subgroup of $G$ and $\th\in \cl C(H)$.
\end{thm}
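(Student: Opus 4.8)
The plan is to establish the two inclusions defining the equality separately. The inclusion of the $\mZ$-span into $\scr P(G)$ is immediate: if $H\le G$ is a $p'$-subgroup, $\th\in\cl C(H)$, and $g\in G$ is $p$-singular, then $g$ has no $G$-conjugate inside $H$ (every element of $H$ is $p$-regular), so the value $(\Ind_H^G\th)(g)$, being a sum over such conjugates, is $0$; hence $\Ind_H^G\th\in\scr P(G)$, and so is every $\mZ$-combination of such characters.

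For the reverse inclusion I would fix $\xi\in\scr P(G)$ and invoke Brauer's induction theorem to write $\triv_G=\sum_i a_i\,\Ind_{E_i}^G\lda_i$ with $a_i\in\mZ$, each $E_i\le G$ an elementary subgroup and each $\lda_i$ a linear character of $E_i$. Multiplying through by $\xi$ and applying the projection formula gives
\[
\xi=\sum_i a_i\,\Ind_{E_i}^G\!\left(\lda_i\cdot\Res^G_{E_i}\xi\right).
\]
Since restricting to a subgroup preserves the property of vanishing on $p$-singular elements, and multiplying by the character $\lda_i$ does not destroy it, each $\lda_i\cdot\Res^G_{E_i}\xi$ lies in $\scr P(E_i)$. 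By transitivity of induction it therefore suffices to prove the theorem with an elementary group $E$ in the role of $G$: a $p'$-subgroup of $E_i$ is a $p'$-subgroup of $G$, and inducing the corresponding characters up to $G$ via $\Ind_{E_i}^G$ keeps us within the desired $\mZ$-span.

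It remains to treat an elementary group $E$. Such a group is nilpotent, so $E=E_p\times E_{p'}$ where $E_p\in\Syl_p(E)$ and $E_{p'}$ is the Hall $p'$-subgroup, and $\Irr(E)=\{\,\a\times\b:\a\in\Irr(E_p),\ \b\in\Irr(E_{p'})\,\}$. Given $\eta=\sum_{\a,\b}c_{\a\b}\,\a\times\b\in\scr P(E)$, I would observe that an element of $E$ is $p$-singular exactly when its $E_p$-component is nontrivial; fixing $\b$ and letting the $E_{p'}$-component vary, this forces the virtual character $\eta_\b:=\sum_\a c_{\a\b}\,\a$ of $E_p$ to vanish off the identity. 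Comparing values with the regular character $\rho$ of $E_p$ then shows $\eta_\b=\frac{\eta_\b(1)}{|E_p|}\rho$ as class functions, and $\frac{\eta_\b(1)}{|E_p|}=\langle\eta_\b,\triv_{E_p}\rangle\in\mZ$, so $\eta_\b=c_\b\rho$ for some $c_\b\in\mZ$. Since $\rho=\Ind_{\mbf 1}^{E_p}\triv$ one has $\rho\times\b=\Ind_{E_{p'}}^E\b$, whence $\eta=\sum_\b c_\b\,\Ind_{E_{p'}}^E\b$, a $\mZ$-combination of characters induced from the $p'$-subgroup $E_{p'}$.

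The part requiring care is the reduction: one must apply Brauer's induction theorem, then correctly chain the projection formula with transitivity of induction, and verify that ``vanishing on $p$-singular elements'' is genuinely preserved under restriction and under multiplication by characters. Once everything is transported to an elementary (hence nilpotent) group, the remaining argument is the short direct computation above, so I do not expect any serious obstacle beyond getting these bookkeeping steps right.
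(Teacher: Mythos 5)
Your proof is correct. The paper does not reprove this result but cites Brauer's 1947 paper (the proof of Theorem 5 there), so there is no in-paper argument to compare against; what you give is the standard modern derivation. The easy direction is right, and the reduction works: Brauer's induction theorem writes $1_G=\sum_i a_i\,\Ind_{E_i}^G\lda_i$ with $E_i$ elementary, the projection formula gives $\xi=\sum_i a_i\,\Ind_{E_i}^G(\lda_i\cdot\Res^G_{E_i}\xi)$, and each $\lda_i\cdot\Res^G_{E_i}\xi$ vanishes on $p$-singular elements of $E_i$ because $p$-singular elements of $E_i$ are $p$-singular in $G$ and multiplication by a character preserves vanishing. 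Elementary groups are nilpotent, so $E=E_p\times E_{p'}$, and your direct computation there is sound: for $x\in E_p-\{1\}$ the function $y\mapsto\eta(x,y)$ on $E_{p'}$ is identically zero, so linear independence of $\Irr(E_{p'})$ forces $\eta_\b(x)=0$ for each $\b$; a virtual character of $E_p$ vanishing off the identity is $\langle\eta_\b,1_{E_p}\rangle\cdot\rho_{E_p}$ with $\langle\eta_\b,1_{E_p}\rangle\in\mZ$, and $\rho_{E_p}\times\b=\Ind_{E_{p'}}^E\b$. Transitivity of induction completes the argument. One small cosmetic point: the paper's convention is to write $1_G$ rather than $\triv_G$ for the trivial character.
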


\begin{exam}\label{exam:Br}
 Suppose that $\cl X = \scr P(L)$. Let $\xi\in \cl C(L\wr S_w)$ be such that $\om_{\lda} (\xi) \in \scr P(L)^{\otimes l(\lda)}$ for all $\lda\in \cl P(w)$. 
By Theorem~\ref{thm:wrind}, $\xi\in \scr P(L) \wr S_w$. 
Using Theorem~\ref{thm:brproj} applied to $L$,
it is not very difficult to deduce that 
$\xi$ must be an integer linear combination of virtual characters of the form $\Ind_{\prod_i U_i\wr S_{w_i}}^{L\wr S_w} \th$ where 
$\sum_i w_i =w$, each $U_i$ is a $p'$-subgroup of $L$ and $\th\in\cl C(\prod_i (U_i\wr S_{w_i}) )$ (cf.\ the proof of Proposition~\ref{prop:wrKzero} below). 
When $w<p$, the above condition on $\xi$ means precisely that $\xi\in \scr P(L\wr S_w)$; moreover, each $\prod_i U_i\wr S_{w_i}$ is a $p'$-subgroup of $L\wr S_w$. 
So Theorem~\ref{thm:wrind} generalises a statement closely related to Brauer's Theorem~\ref{thm:brproj} for $L\wr S_w$ with $w<p$ to the case of arbitrary $w$ (indeed,
if $w<p$, we do not need Theorem~\ref{thm:wrind} to prove Theorem~\ref{thm:Iset}, as Proposition~\ref{prop:wrKzero} follows directly from Theorem~\ref{thm:brproj}).
\end{exam}


We will use the usual \emph{dominance} order on $\cl P(w)$: 
for $\lda,\mu\in \cl P(w)$, we say that $\lda$ \emph{dominates} $\mu$ and write 
$\lda\trianglerighteq \mu$ if $\sum_{i=1}^j \lda_i \ge \sum_{i=1}^j \mu_i$ for all 
$i\in [1,\max(l(\lda),l(\mu))]$, where $\lda_i$ is interpreted as $0$ if $i>l(\lda)$ and a similar convention applies to $\mu$ (see e.g.~\cite[Eq.~1.4.6]{JamesKerber1981}).  
The following proof of Theorem~\ref{thm:wrind} was outlined by the referee and replaces a longer proof in a previous version of the paper. 
We begin with a lemma.

\begin{lem}\label{lem:value}
Let $\lda,\mu\in \cl P(w)$, and for each $i\in \mN$, 
let $\phi_{i,1},\ldots,\phi_{i,m_i(\lda)} \in \CF(L; K)$. Then 
$\om_{\mu} (\zeta_{((\phi_{i,j}, 1_{S_i}) \mid i\in\mN,\, j\in [1,m_i (\lda)])})=0$  
unless $\lda \trianglerighteq \mu$, and 
\[
\om_{\lda} (\zeta_{((\phi_{i,j}, 1_{S_i}) \mid i\in \mN,\, j\in [1,m_i (\lda)])}) = 
\prod_{i\ge 1}\left(
\sum_{g\in  S_{m_i(\lda)}} \phi_{i,g^{-1}(1)} \times \cdots \times \phi_{i,g^{-1} (m_i (\lda))}
\right)
\] 
where 
the product over $i$ is taken in the decreasing order.
\end{lem}

\begin{proof}
Let $\s_1,\ldots,\s_{l(\mu)}$ be disjoint marked cycles in $S_w$ of orders $\mu_1,\ldots,\mu_{l(\mu)}$ respectively, 
and consider $\s=\s_1\ldots \s_{l(\mu)} \in S_w$. 
The first statement of the lemma holds because 
$\s$ has no $S_w$-conjugate in $\prod_{j=1}^{l(\lda)} S_{\lda_j}$ if $\lda$ does not dominate $\mu$. 

For the second statement, define $\s$ as above with $\mu=\lda$. 
We may assume that the Young subgroup $\prod_{j=1}^{l(\lda)} S_{\lda_j}$ is chosen to contain $\s$ and identify $\prod_{i\in \mN} (S_i \wr S_{m_i (\lda)})$ with the normaliser of this Young subgroup in $S_w$. 
If $i\in \mN$ and 
$\{ j\in [1,l(\lda)] \mid \lda_j = i\} = \{ t,t+1,\ldots,t+m_i (\lda)-1\}$, 
set $\s_{i,u} = \s_{t+u-1}$ for each $u\in [1, m_i(\lda)]$. 
Observe that, for $g\in S_w$, we have 
$\ls{g}{\s} \in \prod_{j=1}^{l(\lda)} S_{\lda_j}$ if and only if 
$g\in \prod_{i\in \mN} (S_{i} \wr S_{m_i (\lda)})$. 
Therefore, by definition of induced class function,
for any tuple $x= (x_{i,j} \mid i\in\mN, j\in [1,m_i (\lda)])\in L^{\times l(\lda)}$, 
\begin{align*}
& \om_{\lda} (\zeta_{((\phi_{i,j}, 1_{S_j}) \mid i\in\mN,\, j\in [1,m_i (\lda)])}) (x) = && \\
&\qquad =\prod_{i\in \mN} \left( \sum_{g\in S_{m_i (\lda)}} \phi_{i,1}^{\witi\times i} 
(y_{\s_{i,1}}(x_{i,g(1)})) \cdots \phi_{i,m_i(\lda)}^{\witi\times i} (y_{\s_{i,m_i (\lda)}} (x_{i,g(m_i (\lda))}))
\right)\\
&\qquad = 
\prod_{i\in \mN} \left( \sum_{g\in S_{m_i (\lda)}} 
\phi_{i,1} (x_{i,g(1)})
 \cdots \phi_{i,m_i(\lda)} (x_{i,g(m_i (\lda))}) \right) && 
 \text{ by~\eqref{eq:wrcharval} } \\
 &\qquad = 
 \left( \prod_{i\in \mN}  \sum_{g\in S_{m_i (\lda)}} 
 \phi_{i, g^{-1} (1)} \times \cdots \times \phi_{i,g^{-1} (m_i (\lda))} \right) (x), &&
\end{align*}
where $i$ runs in the decreasing order in all cases. 
The result follows. 
\end{proof}

\begin{proof}[Proof of Theorem~\ref{thm:wrind}]
Since $\cl X$ is pure in $\cl C(L)$, there exists a $\mZ$-basis $B$ of $\cl C(L)$ such that a subset $B_{\cl X}$ of $B$ is a $\mZ$-basis of $\cl X$. 

For every $\lda\in \cl P$, let $\pi^{\lda} = \Ind_{\prod_{i=1}^{l(\lda)} S_{\lda_i}}^{S_{|\lda|}} 1_{S_{\lda_1}} \times \cdots \times 1_{S_{\lda_{l(\lda)}}}$ be the corresponding permutation character of $S_{|\lda|}$. 
For each $\Phi\in \PMap_w (B)$, set 
$\eta_{\Phi} = \zeta_{((\phi,\pi^{\Phi(\phi)}) \mid \phi\in B)} \in \cl C(L\wr S_w)$. 
It is well known that $\{ \pi^{\lda} \mid \lda \in \cl P(n) \}$ is a basis of $\cl C(S_n)$ for all $n\in \mZ_{\ge 0}$; see, for example,~\cite[Theorem 2.2.10]{JamesKerber1981}.
Hence, by Corollary~\ref{cor:otherbasis}\eqref{ob2},
$\{ \eta_{\Phi} \mid \Phi\in \PMap_w (B) \}$ is a $\mZ$-basis of $\cl C(L\wr S_w)$. 
Using the identity~\eqref{inftens}, we obtain
$\zeta_{(\phi,\pi^{\lda})}
= \zeta_{((\phi, \lda_1),\ldots, (\phi,\lda_{l(\lda)}))}$
for all $\phi\in \CF(L;K)$ and $\lda\in\cl P$.
Hence,
\begin{equation}\label{etaPhi}
\eta_{\Phi} = \zeta_{\left( \big(\phi, 1_{S_{\Phi(\phi)_j}} \big) \, \mid \, \phi\in B,\, j\in [1,l(\Phi(\phi))]\right)}
\end{equation}
for all $\Phi\in \PMap_w (B)$.

Let 
$\cl N= \{ \Phi\in \PMap_w (B) \mid \Phi (\psi) \ne \varnothing \text{ for some }\psi\in 
B-B_{\cl X}\}$. 
Let $\xi\in \cl C(L\wr S_w)$ be such that $\om_{\lda} (\xi) \in \cl X^{\otimes l(\lda)}$ for all $\lda\in \cl P(w)$. 
We have $\xi = \sum_{\Phi\in \PMap_w (B)} a_{\Phi} \eta_{\Phi}$
for some integers $a_{\Phi}$. 
For every $\Phi\in \PMap_w (B)$, define 
$\lda (\Phi) = \bigsqcup_{\phi\in B} \Phi(\phi) \in \cl P(w)$. 
Assuming for contradiction that $\xi\notin \cl X\wr S_w$, we see that there exists 
$\Phi\in \cl N$ such that $a_{\Phi}\ne 0$. 
Let $\lda$ be maximal with respect to the dominance order among the partitions $\mu\in \cl P(w)$ such that $\mu=\lda(\Phi)$ for some $\Phi\in \cl N$ with $a_{\Phi}\ne 0$. 
Let $\cl M= \{ \Phi\in \cl N \mid \lda(\Phi) = \lda\}$. 
We have $\om_{\lda} (\eta_{\Phi}) \in \cl X^{\otimes l(\lda)}$ for all $\Phi\in \PMap_w (B) - \cl N$ by Lemma~\ref{lem:wrindconv}, and $\om_{\lda} (\eta_\Phi)=0$ for any $\Phi\in \cl N - \cl M$ by Eq.~\eqref{etaPhi} and Lemma~\ref{lem:value}. Since $\om_{\lda} (\xi) \in \cl X^{\otimes l(\lda)}$, it follows that 
$\sum_{\Phi \in \cl M} a_{\Phi} \om_{\lda} (\eta_{\Phi}) \in \cl X^{\otimes l(\lda)}$. 

For every $\Phi\in \cl M$, let $\b_\Phi = \prod_{i\in \mN} \prod_{\phi\in B} \phi^{\times m_i (\Phi(\phi))}\in B^{\times l(\lda)}$; here, and in what follows, the product over $i$ is taken in the decreasing order and the product over $\phi$ is taken in an arbitrary but fixed order. 
By Eq.~\eqref{etaPhi} and Lemma~\ref{lem:value}, 
$\om_{\lda} (\eta_\Phi) = \sum_{g\in \prod_{i\in \mN} S_{m_i (\lda)}} \ls{g}\b_{\Phi}$ for any $\Phi\in \cl M$, where the action of $S_{|\lda|}$ on $\CF(L;K)^{\otimes |\lda|}$ is as described after Definition~\ref{def:om}
and we view $\prod_{i\in \mN} S_{m_i(\lda)}$ as a subgroup of $S_{l(\lda)}$ 
by identifying each factor $S_{m_i (\lda)}$ with the set of elements of 
$S_{l(\lda)}$ that fix all the elements of $[1,l(\lda)]$ lying outside the subset 
$[1+\sum_{t>i} m_t (\lda), \sum_{t\ge i} m_t (\lda)]$. 
 Observing that class functions $\b_{\Phi}$ and $\b_{\Psi}$ are not $\prod_{i\in \mN} S_{m_i (\lda)}$-conjugate if $\Phi,\Psi\in \cl M$ are distinct and that 
the stabiliser in $\prod_{i\in \mN} S_{m_i (\lda)}$ of $\b_{\Phi}$ is 
isomorphic to $\prod_{i\in \mN} \prod_{\phi\in B} S_{m_i (\Phi(\phi))}$ for 
$\Phi\in \cl M$, we 
deduce that any such $\b_{\Phi}$ appears with coefficient 
$a_{\Phi} \prod_{i\in \mN} \prod_{\phi\in B} m_i (\Phi (\phi))!$
in the expansion of $\sum_{\Psi\in \cl M} a_{\Psi} \om_{\lda} (\eta_{\Psi})$ with respect to the $\mZ$-basis $B^{\times l(\lda)}$ of $\cl C(L)^{\otimes l(\lda)}$. 
We also have $\b_{\Phi} \notin B_{\cl X}^{\times l(\lda)}$ (as $\Phi(\psi)\ne \varnothing$ for some $\psi\in B-B_{\cl X}$), so 
the above coefficient must be 0, whence $a_{\Phi} =0$ for all $\Phi\in \cl M$. This is a contradiction. 
\end{proof}

\subsection{Proof of Theorem~\ref{thm:Iset}}\label{sub:42}

Recall that $Q\in \Syl_p (S_w)$ and $\cl A(Q)$ is the set of all subgroups of $Q$. As in \S\ref{sub:41}, $L$ is an arbitrary finite group. 

\begin{lem}\label{lem:wrKzero_ind}
 Suppose that $w=w_1+\cdots +w_r$ where $w_1,\ldots,w_r\in \mN$. Let us view $S_{w_1},\ldots S_{w_r}$ as subgroups of $S_w$ acting on disjoint subsets of $S_w$, so that $\prod_i S_{w_i}$ is a Young subgroup. Let $Q^{(i)}\in \Syl_p (S_{w_i})$ and 
$\xi_i\in \cl I(L\wr S_{w_i}, Q^{(i)}, \cl A(Q^{(i)}) )$. 
Then 
\[
 \Ind_{\prod_{i} L\wr S_{w_i}}^{L\wr S_w} \prod_{i=1}^r \xi_i \in \cl I(L\wr S_w, Q, \cl A(Q)).
\]
\end{lem}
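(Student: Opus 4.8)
The plan is to reduce the statement to a single induced virtual character and then recognise it as a generator of $\cl I(L\wr S_w,Q,\cl A(Q))$, after replacing $Q$ by a suitable conjugate.

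First I would use that induction and the Kronecker product $\times$ are additive in order to write each $\xi_i$ as a $\mZ$-linear combination of class functions of the form $\Ind_{M_i}^{L\wr S_{w_i}}\phi_i$ with $M_i\le L\wr S_{w_i}$, $\phi_i\in\cl C(M_i)$ and $M_i\cap Q^{(i)}\in\Syl_p(M_i)$. Condition (ii) of Definition~\ref{def:I} is automatic here, because $\cl A(Q^{(i)})$ consists of \emph{all} subgroups of $Q^{(i)}$ and $M_i\cap Q^{(i)}$ is one of them. Multiplying out and using linearity of $\Ind_{\prod_i L\wr S_{w_i}}^{L\wr S_w}$, it suffices to treat a single product $\prod_{i=1}^r\Ind_{M_i}^{L\wr S_{w_i}}\phi_i$. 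Setting $M=\prod_i M_i$, a subgroup of $\prod_i L\wr S_{w_i}\le L\wr S_w$, and $\phi=\prod_i\phi_i\in\cl C(M)$, the standard compatibility of induction with external (Kronecker) products together with transitivity of induction gives
\[ \Ind_{\prod_i L\wr S_{w_i}}^{L\wr S_w}\prod_{i=1}^r\Ind_{M_i}^{L\wr S_{w_i}}\phi_i=\Ind_{\prod_i L\wr S_{w_i}}^{L\wr S_w}\Ind_M^{\prod_i L\wr S_{w_i}}\phi=\Ind_M^{L\wr S_w}\phi. \]
So it remains to prove $\Ind_M^{L\wr S_w}\phi\in\cl I(L\wr S_w,Q,\cl A(Q))$.

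The next step is to place a Sylow $p$-subgroup of $M$ inside a conjugate of $Q$. Since $p$-parts of group orders are multiplicative, $M\cap\prod_i Q^{(i)}=\prod_i(M_i\cap Q^{(i)})$ is a Sylow $p$-subgroup of $M$. Now $\prod_i Q^{(i)}$ is a $p$-subgroup of the Young subgroup $\prod_i S_{w_i}\le S_w$, so by Sylow's theorem $\prod_i Q^{(i)}\le\ls{g}Q$ for some $g\in S_w$. Then $M\cap\ls{g}Q$ is a $p$-subgroup of $M$ containing the Sylow $p$-subgroup $M\cap\prod_i Q^{(i)}$, hence $M\cap\ls{g}Q\in\Syl_p(M)$; and $M\cap\ls{g}Q\in\cl A(\ls{g}Q)$ trivially. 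By Definition~\ref{def:I} this exhibits $\Ind_M^{L\wr S_w}\phi$ as a generator of $\cl I(L\wr S_w,\ls{g}Q,\cl A(\ls{g}Q))$.

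Finally I would show $\cl I(L\wr S_w,\ls{g}Q,\cl A(\ls{g}Q))=\cl I(L\wr S_w,Q,\cl A(Q))$, which is the one slightly delicate point, since for $g\in S_w$ the conjugate $\ls{g}Q$ need not be $Q$ itself. The resolution is that $\cl I$ is a subgroup of the space of class functions, on which inner automorphisms of $L\wr S_w$ act trivially: if $\Ind_N^{L\wr S_w}\psi$ is any generator of $\cl I(L\wr S_w,Q,\cl A(Q))$, so $N\le L\wr S_w$ and $N\cap Q\in\Syl_p(N)$, then $\Ind_N^{L\wr S_w}\psi=\Ind_{\ls{g}N}^{L\wr S_w}(\ls{g}\psi)$ as virtual characters (induced characters being invariant under conjugation), while $\ls{g}N\cap\ls{g}Q=\ls{g}{(N\cap Q)}\in\Syl_p(\ls{g}N)$, so the right-hand side is a generator of $\cl I(L\wr S_w,\ls{g}Q,\cl A(\ls{g}Q))$. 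This gives $\cl I(L\wr S_w,Q,\cl A(Q))\subseteq\cl I(L\wr S_w,\ls{g}Q,\cl A(\ls{g}Q))$, and the reverse inclusion follows by applying the same argument with $g^{-1}$ in place of $g$ and $\ls{g}Q$ in place of $Q$. Combining this with the previous paragraph for the particular $g\in S_w$ found there yields $\Ind_M^{L\wr S_w}\phi\in\cl I(L\wr S_w,Q,\cl A(Q))$, completing the argument. The main (mild) obstacle is exactly this conjugation-invariance observation; everything else is formal bookkeeping with induced characters.
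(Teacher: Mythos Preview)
Your proof is correct and follows essentially the same route as the paper. The only cosmetic difference is that the paper disposes of the conjugation issue at the outset by writing ``We may assume that $Q\ge Q^{(1)}\times\cdots\times Q^{(r)}$'' (which is justified precisely by the conjugation-invariance observation you spell out), whereas you carry the arbitrary $Q^{(i)}$ through and invoke $\cl I(L\wr S_w,\ls{g}Q,\cl A(\ls{g}Q))=\cl I(L\wr S_w,Q,\cl A(Q))$ at the end; the content is identical.
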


\begin{proof}
We may assume that $Q\ge Q^{(1)} \times \cdots \times Q^{(r)}$. 
For each $i\in [1,r]$ let $\cl T^{(i)}$ be the set of subgroups $M\le L\wr S_{w_i}$ such that 
$M\cap Q^{(i)} \in \Syl_p (M)$. Let $i\in [1,r]$. Since  $\xi_i\in \cl I(L\wr S_{w_i}, Q^{(i)}, \cl A(Q^{(i)}) )$, we may 
write $\xi_i$ as a sum of virtual characters of the form $\Ind_M^{L\wr S_{w_i}} \a$ where $M\in \cl T^{(i)}$ and $\a \in \cl C(M)$. 
Therefore, $\prod_i \xi_i$ expands as a sum of terms of the form 
\[
 \eta = \prod_{i=1}^r \Ind_{M_i}^{L\wr S_{w_i}} \a_i = \Ind_{\prod_i M_i}^{\prod_i (L\wr S_{w_i})} \prod_{i=1}^r \a_i
\]
where 
$M_i\in \cl T^{(i)}$ and $\a_i \in \cl C(M_i)$ for each $i$. 

Let us fix such a term $\eta$. Since $M_i\cap Q^{(i)}\in \Syl_p (M_i)$ for all $i$, we have $(\prod_{i} M_i) \cap (\prod_i Q^{(i)}) \in \Syl_p (\prod_i M_i)$,
and hence $(\prod_i M_i) \cap Q \in \Syl_p (\prod_i M_i)$ (as $Q$ contains $\prod_i Q^{(i)}$). 
Therefore, $\Ind_{\prod_i (L\wr S_{w_i})}^{L\wr S_w} \eta \in \cl I(L\wr S_w, Q, \cl A(Q))$, and the result follows. 
\end{proof}

\begin{prop}\label{prop:wrKzero}
We have $\cl K_1 \subset \cl I(S_p\wr S_w, Q, \cl A(Q))$. 
\end{prop}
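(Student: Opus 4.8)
The plan is to establish the two inclusions
\[
 \cl K_1 \ \subseteq\ \scr P(S_p)\wr S_w\ \subseteq\ \cl I(S_p\wr S_w, Q, \cl A(Q)),
\]
where $\scr P(S_p)\wr S_w\le \CF(S_p\wr S_w; K)$ is as in Definition~\ref{defi:wrset} with $\cl X=\scr P(S_p)$. The key preliminary observation is that $\scr P(S_p)$ is rationally closed in $\cl C(S_p)$ (indeed a direct summand): it is the kernel of evaluation at the class of a $p$-cycle $u$, which is the unique $p$-singular class of $S_p$, and since $1_{S_p}(u)=1$ we have $\cl C(S_p)=\scr P(S_p)\oplus\mZ\,1_{S_p}$. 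Hence Theorem~\ref{thm:wrind} is available with $L=S_p$ and $\cl X=\scr P(S_p)$.

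For the first inclusion, let $\xi\in\cl K_1$ and fix $\lda\in\cl P(w)$ with $r=l(\lda)$. By Lemma~\ref{lem:ominC}, $\om_{\lda}(\xi)\in\cl C(S_p^{\times r})$, and the description of conjugacy classes of $S_p\wr S_w$ together with the definition of $\tp_p^{\wre}$ shows that $\xi\in\cl K_1$ is equivalent to: for every $\lda$ and every $(x_1,\dots,x_r)\in S_p^{\times r}$ with at least one $x_i$ a $p$-cycle, $\om_{\lda}(\xi)(x_1,\dots,x_r)=0$. I would then prove the auxiliary fact that any $g\in\cl C(S_p^{\times r})$ vanishing on all tuples having at least one $p$-singular coordinate lies in $\scr P(S_p)^{\otimes r}$, by induction on $r$. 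Write $g=\sum_{\chi\in\Irr(S_p)}\chi\otimes g_{\chi}$ with $g_{\chi}\in\cl C(S_p^{\times(r-1)})$; fixing $(x_2,\dots,x_r)$ with a $p$-singular coordinate and using that $\Irr(S_p)$ is linearly independent as a set of functions on $S_p$ forces each $g_{\chi}$ to vanish on tuples with a $p$-singular coordinate, so $g_{\chi}\in\scr P(S_p)^{\otimes(r-1)}$ by induction; fixing $x_1=u$ forces $\sum_{\chi}\chi(u)\,g_{\chi}=0$; writing $\chi=\psi_{\chi}+\chi(u)\,1_{S_p}$ with $\psi_{\chi}:=\chi-\chi(u)1_{S_p}\in\scr P(S_p)$ and substituting gives $g=\sum_{\chi}\psi_{\chi}\otimes g_{\chi}\in\scr P(S_p)^{\otimes r}$. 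Applying this to $g=\om_{\lda}(\xi)$ for all $\lda$ and invoking Theorem~\ref{thm:wrind} yields $\xi\in\scr P(S_p)\wr S_w$.

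For the second inclusion it suffices to treat a single generator $\zeta_{\Th}$ with $\Th=((\phi_i,\chi_i))_{i=1}^n\in\Tup_w(S_p)$ and $\phi_i\in\scr P(S_p)$ for all $i$. By Brauer's Theorem~\ref{thm:brproj} applied to $S_p$, each $\phi_i$ is a $\mZ$-linear combination of induced characters $\Ind_U^{S_p}\th$ with $U$ a $p'$-subgroup of $S_p$ and $\th\in\cl C(U)$. Substituting these expressions into~\eqref{eq:defzeta} and expanding the powers $(\,\cdot\,)^{\witi\times w_i}$ by means of Lemmas~\ref{lem:lincomb} and~\ref{lem:wrmodind} (as indicated in Example~\ref{exam:Br}), one rewrites $\zeta_{\Th}$ as a $\mZ$-linear combination of virtual characters $\Ind_M^{S_p\wr S_w}\eta$, where $M=\prod_i (U_i\wr S_{v_i})$ for various $p'$-subgroups $U_i\le S_p$ and compositions $(v_i)$ of $w$, and $\eta\in\cl C(M)$. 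For such $M$, since $[M:\prod_i S_{v_i}]=\prod_i|U_i|^{v_i}$ is prime to $p$, a Sylow $p$-subgroup of $M$ is a Sylow $p$-subgroup of the Young subgroup $\prod_i S_{v_i}$ of $\mbf 1\wr S_w\le S_p\wr S_w$; replacing $M$ by an $S_w$-conjugate (and $\eta$ by the corresponding conjugate, which leaves $\Ind_M^{S_p\wr S_w}\eta$ unchanged) we may arrange that this Sylow subgroup lies inside $Q$, whence $M\cap Q\in\Syl_p(M)$, since $M\cap Q$ is a $p$-group (being contained in $Q$) containing a Sylow $p$-subgroup of $M$. As $\cl A(Q)$ is the set of \emph{all} subgroups of $Q$, condition~(ii) of Definition~\ref{def:I} is automatic, so $\Ind_M^{S_p\wr S_w}\eta\in\cl I(S_p\wr S_w, Q,\cl A(Q))$; summing over the terms gives $\zeta_{\Th}\in\cl I(S_p\wr S_w,Q,\cl A(Q))$, and hence $\scr P(S_p)\wr S_w\subseteq\cl I(S_p\wr S_w, Q,\cl A(Q))$.

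The main obstacle is the auxiliary tensor-factorisation statement used in the second paragraph: the hypothesis $\xi\in\cl K_1$ a priori only says that each $\om_{\lda}(\xi)$ vanishes on the (large) set of tuples with at least one $p$-singular coordinate, which looks weaker than membership in $\scr P(S_p)^{\otimes l(\lda)}$ and so does not immediately feed into Theorem~\ref{thm:wrind}; it is the rigidity of $\cl C(S_p)$ — that the irreducible characters form a $\mZ$-basis, that $S_p$ has a single $p$-singular class, and that the gcd of its character values is $1$ — that forces the genuine tensor factorisation. The remaining steps are routine manipulations with Definition~\ref{def:I} and the induction machinery of \S\ref{sub:41}.
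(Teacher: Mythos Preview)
Your proof is correct and follows the same two-step strategy as the paper: first show $\cl K_1\subseteq\scr P(S_p)\wr S_w$ via Theorem~\ref{thm:wrind}, then show $\scr P(S_p)\wr S_w\subseteq\cl I(S_p\wr S_w,Q,\cl A(Q))$ using Brauer's theorem together with Lemmas~\ref{lem:lincomb} and~\ref{lem:wrmodind}. Your auxiliary induction showing that $\om_\lda(\xi)\in\scr P(S_p)^{\otimes r}$ is a careful spelling-out of what the paper states in one line; the key is exactly your observation that $\cl C(S_p)=\scr P(S_p)\oplus\mZ\,1_{S_p}$, which makes the intersection of the kernels of the coordinate evaluation maps at $u$ equal to $\scr P(S_p)^{\otimes r}$.

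The one point you gloss over in the second inclusion is how the factors $\Inf_{S_{w_i}}^{S_p\wr S_{w_i}}\chi_i$ in the definition~\eqref{eq:defzeta} of $\zeta_\Theta$ are absorbed into the $\eta$ you induce from $M=\prod_i(U_i\wr S_{v_i})$; this requires the projection formula $\Ind_M^{S_p\wr S_{w_i}}(\alpha)\cdot\Inf\chi_i=\Ind_M^{S_p\wr S_{w_i}}(\alpha\cdot\Res_M\Inf\chi_i)$, which you should state explicitly. The paper handles this differently: it argues by induction on $w$, uses Lemma~\ref{lem:wrKzero_ind} to reduce to the case of a single pair $\Theta=(\phi,\chi)$, shows $\phi^{\witi\times w}\in\cl I(S_p\wr S_w,Q,\cl A(Q))$, and then invokes the fact that $\cl I(S_p\wr S_w,Q,\cl A(Q))$ is an \emph{ideal} of $\cl C(S_p\wr S_w)$ (by~\cite[Problem (5.3)]{IsaacsBook}) to multiply through by $\Inf\chi$. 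Your direct multinomial expansion avoids the induction on $w$ and is closer to what is sketched in Example~\ref{exam:Br}; both routes are valid, and once the projection-formula step is made explicit your argument is complete.
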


\begin{proof}
Arguing by induction, we may assume that the result holds for smaller values of $w$. 
 Let $\xi\in \cl K_1$. Let $\lda\in \cl P(w)$ and $l(\lda)=r$. By~\eqref{eq:defK}, we have $\om_{\lda} (\xi) (x_1,\ldots,x_r) =0$ whenever
 $x_1,\ldots,x_r\in S_p$ and at least one $x_i$ is a $p$-cycle. Hence, $\om_{\lda} (\xi)\in \scr P(L)^{\otimes r}$. 
By Theorem~\ref{thm:wrind}, it follows that $\xi\in \scr P(L) \wr S_w$. 

We are to show that $\xi\in \cl I(S_p\wr S_w, Q, \cl A(Q))$.
 So we may assume that $\xi = \zeta_{\Phi}$ for some $\Phi=((\phi_i,\chi_i))_{i=1}^s\in \Tup_w(S_p)$ such that $\phi_i\in \scr P(S_p)$
for all $i\in [1,s]$.
By Lemma~\ref{lem:wrKzero_ind} and the inductive hypothesis, we may in fact assume that $s=1$, i.e.\ $\Phi=((\phi, \chi))$ for some $\phi\in \scr P(S_p)$ and $\chi\in \cl C(S_w)$. 
By Theorem~\ref{thm:brproj}, we may write $\phi = \sum_j n_j \Ind_{M_j}^{S_p} \a_j$ where $j$ runs over some finite set, $n_j\in \mZ$, 
$M_j$ is a $p'$-subgroup of $S_p$, and $\a_j\in \Irr(M_j)$ (for each $j$). 
Hence, Lemma~\ref{lem:lincomb} expresses $\phi^{\witi{\times} w}$ as 
$\sum_j (\Ind_{M_j}^{S_p} \a_j)^{\witi{\times} w} \Inf_{S_w}^{S_p\wr S_w} \k_{w,n_j}$ 
plus a sum of terms that all lie in $\scr P(S_p) \wr S_w$ by
Lemma~\ref{lem:wrKzero_ind} and the inductive hypothesis. By Lemma~\ref{lem:wrmodind}, 
\[
 (\Ind_{M_j}^{S_p} \a_j)^{\witi{\times} w} = \Ind_{M_j\wr S_w}^{S_p\wr S_w} (\a_j^{\witi{\times} w}) \in \cl I(S_p\wr S_w, Q, \cl A(Q)).
\]
It follows from Definition~\ref{def:I} and~\cite[Problem (5.3)]{IsaacsBook} that
$\cl I(S_p\wr S_w, Q, \cl A(Q))$ is an ideal of the ring $\cl C(S_p\wr S_w)$. Hence,
$\zeta_{\Phi} = \phi^{\witi{\times} w}\Inf_{S_w}^{S_p\wr S_w}\chi \in \cl I(S_p\wr S_w, Q, \cl A(Q))$. 
\end{proof}

For each $i\in [0,w]$ let $A_i\simeq S_i$ be the group of permutations of 
$[1,i]$ and $B_i\simeq S_{w-i}$ be the group of permutations of $[i+1, w]$, so that $A_i\times B_i$ is a Young subgroup of $S_w$. 
Let $Q_i\in \Syl_p (B_i)$, chosen so that $Q_0 \ge Q_1 \ge \cdots \ge Q_w = \mbf 1$. We may (and do) assume that $Q=Q_0$. 
Further, let $P_i$ be a Sylow $p$-subgroup of $S_p\wr A_i$ such that $P_i\cap A_i \in \Syl_p (A_i)$. 
We choose $P_0,\ldots, P_w$ so that $P_0\le P_1\le \cdots \le P_w$ and $P_w=P$.

Recall that $\Cal S_e = \Cal S(S_{pw+e}, P, (S_p\wr S_w) \times S_e)$.
The following lemma describes what we need to know about the sets $\Cal S_e$ to prove Theorem~\ref{thm:Iset}.

\begin{lem}\label{lem:S}
\begin{enumerate}[(i)]
\item\label{l1} For each $i\in [0,w-2]$ we have $P_i\times Q_i\in\Cal S_0$. 
\item\label{l2} Assume that $e>0$. Then $P_i \times Q_i\in\Cal S_e$ for each $i\in [0,w-1]$. 
\end{enumerate}
\end{lem}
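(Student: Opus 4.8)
In each case write $G$ for the ambient symmetric group and $H$ for the relevant subgroup (so for part~(i) we have $G=S_{pw}$, $H=S_p\wr S_w$, and for part~(ii), $G=S_{pw+e}$, $H=(S_p\wr S_w)\times S_e$). Since $P_i\times Q_i\le P$ by our choice of these subgroups, it suffices to produce a single element $g\in G-H$ that \emph{centralises} $P_i\times Q_i$: then $P_i\times Q_i=\ls{g}(P_i\times Q_i)\le\ls{g}P$, so $P_i\times Q_i\le P\cap\ls{g}P$ and hence $P_i\times Q_i\in\Cal S(G,P,H)$. Recall the structure: $P_i$ acts only on the first $i$ blocks $B_1,\dots,B_i$ and contains the full base subgroup $C_p^{\times i}$ there (so it has no fixed point in $[1,pi]$), while it fixes $[pi+1,pw]$ pointwise; and $Q_i$ lies in the ``top group'', permuting the blocks $B_{i+1},\dots,B_w$ and fixing $[1,pi]$ pointwise. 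Consequently any $g$ fixing $[1,pi]$ pointwise automatically commutes with $P_i$, so the real task is to make $g$ commute with $Q_i$ as well while escaping $H$.

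\textbf{The case $w-i<p$.} Then $Q_i=\mbf 1$ and $P_i\times Q_i=P_i$, which fixes all of $[pi+1,pw+e]$ pointwise. If there are at least two tail blocks — which always holds in part~(i) since $i\le w-2$, and in part~(ii) unless $i=w-1$ — I would take $g$ to be the transposition swapping one point of $B_{i+1}$ with one point of $B_{i+2}$: it moves only points fixed by $P_i$, hence centralises $P_i$, but it does not preserve the block partition, so $g\notin H$. In the one remaining subcase of part~(ii), namely $i=w-1$ with $e>0$, I would instead use the transposition swapping the last point of $B_w$ with a point of $[pw+1,pw+e]$; this centralises $P_i=P_{w-1}$, and since it moves a point of $[1,pw]$ off $[1,pw]$ it does not lie in $(S_p\wr S_w)\times S_e$.

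\textbf{The case $w-i\ge p$.} Here $Q_i\neq\mbf 1$, so (as $Q_i$ has trivial base group) it has an orbit $\Cal O$ on $\{B_{i+1},\dots,B_w\}$ of size $p^k$ with $k\ge 1$. Set $X=\bigcup_{B\in\Cal O}B$ and coordinatise $X$ by pairs (block in $\Cal O$, position in $[1,p]$); then $Q_i$ acts on $X$ as $\mbf 1\wr T$, where $T\le\Sym(\Cal O)$ is a nontrivial transitive $p$-group. Since a proper subgroup of a finite $p$-group is properly contained in its normaliser, a point stabiliser $T_\omega$ ($\omega\in\Cal O$) satisfies $T_\omega<N_T(T_\omega)$, so the group $C_{\Sym(\Cal O)}(T)\cong N_T(T_\omega)/T_\omega$ of $T$-equivariant permutations of $\Cal O$ is nontrivial; fix $1\neq\sigma$ in it. Let $g\in\Sym(X)\le S_{pw}$ map $(\omega,1)\mapsto(\sigma\omega,1)$ for each $\omega\in\Cal O$, fix every other point of $X$, and act trivially outside $X$. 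A direct check using $\sigma\in C_{\Sym(\Cal O)}(T)$ shows that $g$ commutes with every element of $\mbf 1\wr T$, hence (as $Q_i$ preserves $X$ and its complement and $g$ is supported on $X$) with $Q_i$; it also commutes with $P_i$, so $g$ centralises $P_i\times Q_i$. Choosing $\omega$ with $\sigma\omega\neq\omega$, the block $B_\omega$ is not mapped by $g$ into a single block, so $g$ fails to preserve the block partition and therefore lies outside $H$ in both parts. This finishes the proof.

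\textbf{Main obstacle.} The delicate point is getting $g$ out of $H$. The obvious elements that centralise $Q_i$ — permuting the $p$ positions uniformly across all blocks of an orbit — preserve every block and so lie in $H$. The way around this is to exploit that a transitive $p$-group acting on more than one point is never regular, which yields the nontrivial $T$-equivariant permutation $\sigma$ of the blocks of $\Cal O$; it is exactly this $\sigma$ that lets one perturb $g$ so as to break the block partition while still centralising $Q_i$. The verification that the resulting $g$ commutes with all of $Q_i$ (and not just with its restriction to $X$) is routine.
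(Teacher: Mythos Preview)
Your argument is correct; the construction of $g$ in both cases works as claimed. One slip occurs only in the concluding commentary: the assertion that ``a transitive $p$-group acting on more than one point is never regular'' is false (e.g.\ $C_p$ acting regularly on $p$ points), but your actual proof nowhere uses this --- you deduce $T_\omega < N_T(T_\omega)$ from the normaliser condition for proper subgroups of $p$-groups, and that is valid whether $T$ is regular or not (in the regular case $T_\omega=\mbf 1$ and its normaliser is all of $T$).

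The paper takes the same overall route --- exhibit $g\in G\setminus H$ centralising $P_i\times Q_i$ by lifting a permutation of tail-block indices that commutes with $Q_i$ to a permutation of the ``first points'' of the tail blocks --- but finds the centralising permutation more directly. One simply picks a nontrivial $z\in C_{B_i}(Q_i)$: take $z\in Z(Q_i)\setminus\{1\}$ when $Q_i\neq\mbf 1$, and any $z\in B_i\setminus\{1\}$ otherwise (possible since $i\le w-2$ forces $|B_i|\ge 2$). Then let $g$ act on $\{pi+1,\,p(i+1)+1,\ldots,p(w-1)+1\}$ exactly as $z$ acts on $[i+1,w]$. This single construction covers both of your cases $w-i<p$ and $w-i\ge p$ at once and sidesteps the orbit analysis and the formula $C_{\Sym(\Cal O)}(T)\cong N_T(T_\omega)/T_\omega$. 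For the residual case $i=w-1$ with $e>0$, both you and the paper use the transposition $(pw,\,pw+1)$.
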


\begin{proof}
\eqref{l1} Let $i\in [0,w-2]$. 
Choose a non-identity element $z\in C_{B_i}(Q_i)$. Such an element always exists: if $Q_i$ is non-trivial, 
we can take $z$ to be any element of $Z(Q_i)-\mbf 1$; otherwise, take $z$ to be any non-trivial element of $B_i$. 
Let $g$ be the element of $S_{pw}$ that acts on the set $X=\{ pi+1, p(i+1)+1, p(i+2)+1, \ldots, p(w-1)+1 \}$ in the same way as $z$ does and fixes $[1,pw]-X$ pointwise.  
Then $g\notin S_p\wr S_w$ and $g$ centralises both $S_p \wr A_i$ and $Q_i$. Therefore, $P_i\times Q_i \le \ls{g}P\cap P\in \Cal S_0$. 

\eqref{l2} The preceding argument shows that $P_i\times Q_i\in \Cal S_e$ for $i\in [0,w-2]$. 
Observe that $Q_{w-1}$ is trivial and $P_{w-1}$ is centralised by the involution
$v=(pw,pw+1)\in S_{pw+e} - ((S_p\wr S_w)\times S_e)$. Thus, $P_{w-1}\times Q_{w-1}\le \ls{v}P\cap P \in \Cal S_e$.
\end{proof}

For each $i\in [0,w]$ let 
\[
\Cal S^i = \{ S\le P \mid S\le P_j \times Q_j \text{ for some } j\le i \}.  
\]

\begin{prop}\label{prop:Isetprecise}
For each $i\in [1,w]$ we have $\Cal K_i \subset \Cal I(S_p\wr S_w, P, \Cal S^{i-1})$. 
\end{prop}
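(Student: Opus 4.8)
The plan is to deduce the inclusion from Proposition~\ref{prop:wrKzero} by exploiting the decomposition $\cl C(S_p)=\mZ\,1_{S_p}\oplus\scr P(S_p)$, which is a direct sum because the $p$-singular elements of $S_p$ form a single conjugacy class (the $p$-cycles), on which $1_{S_p}$ is $1$. Fix a $p$-cycle $u\in S_p$, and for $a\in[0,w]$ put $G_a=(S_p\wr S_a)\times(S_p\wr S_{w-a})\le S_p\wr S_w$, with the factor $S_p\wr S_a$ acting on the first $a$ blocks. The first step is to prove
\[
\cl C(S_p\wr S_w)=\bigoplus_{a=0}^{w}\Ind_{G_a}^{S_p\wr S_w}\bigl(\bigl((\mZ\,1_{S_p})\wr S_a\bigr)\times\bigl(\scr P(S_p)\wr S_{w-a}\bigr)\bigr).
\]
That the right-hand side spans follows by writing each generator $\zeta_{\Theta}$ of $\cl C(S_p\wr S_w)$ (Theorem~\ref{thm:wrirr}, Lemma~\ref{lem:wrextgen}) with every entry $\phi$ of $\Theta$ split as $\phi(u)\,1_{S_p}+(\phi-\phi(u)\,1_{S_p})$, expanding each factor $\phi^{\witi{\times}w_i}$ by Lemma~\ref{lem:lincomb}, and regrouping; that the sum is direct follows from a rank count, comparing $\sum_{a}\rank\bigl((\mZ\,1_{S_p})\wr S_a\bigr)\cdot\rank\bigl(\scr P(S_p)\wr S_{w-a}\bigr)$ with $\rank\cl C(S_p\wr S_w)$ by means of Theorem~\ref{thm:wrirr} and $\rank\scr P(S_p)=|\Irr(S_p)|-1$. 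The strategy is then: for $\xi\in\cl K_i$, show that the components $\xi_a$ of $\xi$ in this decomposition vanish for $a\ge i$, and that each $\xi_a$ with $a\le i-1$ lies in $\cl I(S_p\wr S_w,P,\cl S^{i-1})$.

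For the vanishing, fix a composition $\lda=(\lda_1,\dots,\lda_r)$ of $w$; for $T\subseteq[1,r]$ let $(\om_{\lda}(\eta))_T$ be the component of $\om_{\lda}(\eta)\in\cl C(S_p)^{\otimes r}$ in $(\mZ\,1_{S_p})^{\otimes T}\otimes\scr P(S_p)^{\otimes([1,r]\setminus T)}$. Two facts are needed. (a) If $\sum_{l\in T}\lda_l\ge i$ then $(\om_{\lda}(\xi))_T=0$: this is proved by downward induction on $|T|$, evaluating $\om_{\lda}(\xi)$ at a tuple with $x_l=u$ for $l\in T$ and $x_l$ $p$-regular otherwise (the value is $0$ because $\xi\in\cl K_i$ and $y_{\s_1}(x_1)\cdots y_{\s_r}(x_r)$ has $\tp_p^{\wre}$ of size $\sum_{l\in T}\lda_l\ge i$), noting that on such a tuple only the $(\om_{\lda}(\xi))_{T'}$ with $T'\supseteq T$ survive, those with $T'\supsetneq T$ being zero by induction, and that $(\om_{\lda}(\xi))_T$ has the shape $1_{S_p}^{\otimes T}\otimes\psi$ with $\psi\in\scr P(S_p)^{\otimes([1,r]\setminus T)}$; the resulting $\psi$ vanishes on all tuples of $p$-regular elements, hence is $0$ since $\scr P(S_p)^{\otimes m}$ consists exactly of virtual characters vanishing on tuples with a $p$-singular coordinate (seen by decomposing one tensor factor at a time). (b) $\om_{\lda}(\xi_a)$ lies in the span of those $(\mZ\,1_{S_p})^{\otimes T}\otimes\scr P(S_p)^{\otimes([1,r]\setminus T)}$ with $\sum_{l\in T}\lda_l=a$: writing $\xi_a=\Ind_{G_a}^{S_p\wr S_w}(\mu\times\nu)$ with $\mu\in(\mZ\,1_{S_p})\wr S_a$ and $\nu\in\scr P(S_p)\wr S_{w-a}$, this is immediate from the induced--class-function formula and Lemma~\ref{lem:wrindconv} (the contributions of $\mu$ are constants, those of $\nu$ lie in $\scr P(S_p)^{\otimes\bullet}$). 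Combining (a) and (b): if $\xi_a\ne 0$ for some $a\ge i$, then $\mu\ne 0$ and $\nu\ne 0$, so $\om_{\alpha}(\mu)\ne 0$ for some $\alpha\in\cl P(a)$ and $\om_{\beta}(\nu)\ne 0$ for some composition $\beta$ of $w-a$; with $\lda$ the concatenation of $\alpha$ and $\beta$ and $T=[1,l(\alpha)]$, fact (b) identifies $(\om_{\lda}(\xi))_T=(\om_{\lda}(\xi_a))_T$ with a positive multiple of $\om_{\alpha}(\mu)\otimes\om_{\beta}(\nu)\ne 0$, contradicting (a) as $\sum_{l\in T}\lda_l=a\ge i$. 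Hence $\xi=\sum_{a=0}^{i-1}\xi_a$.

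For the second point, write $\xi_a=\Ind_{G_a}^{S_p\wr S_w}(\mu\times\nu)$ as above with $a\le i-1$. By Lemma~\ref{lem:wrcharval}, every element of $\scr P(S_p)\wr S_{w-a}$ vanishes on every element of $S_p\wr S_{w-a}$ with a $p$-cycle among its components, so $\nu$ lies in the analogue of $\cl K_1$ for $S_p\wr S_{w-a}$; Proposition~\ref{prop:wrKzero}, applied to $S_p\wr S_{w-a}$ with $Q_a\in\Syl_p(S_{w-a})$ in the role of $Q$, gives $\nu=\sum_t\Ind_{M_t}^{S_p\wr S_{w-a}}\beta_t$ with $M_t\cap Q_a\in\Syl_p(M_t)$. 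Then $\xi_a=\sum_t\Ind_{(S_p\wr S_a)\times M_t}^{S_p\wr S_w}(\mu\times\beta_t)$, and for $L=(S_p\wr S_a)\times M_t$ we have $L\cap P=P_a\times(M_t\cap Q_a)\in\Syl_p(L)$ with $L\cap P\le P_a\times Q_a\in\cl S^{a}\subseteq\cl S^{i-1}$ (using that $\cl S^{i-1}$ is downwards closed). By Definition~\ref{def:I} each $\Ind_L^{S_p\wr S_w}(\mu\times\beta_t)$ lies in $\cl I(S_p\wr S_w,P,\cl S^{i-1})$, hence so does $\xi_a$; summing over $a\le i-1$ gives the claim.

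The main obstacle I anticipate is fact (a) and, more generally, the combinatorial bookkeeping of the tensor-factor decompositions of the various $\om_{\lda}(\xi)$ — in particular, exhibiting, for a hypothetical nonzero $\xi_a$ with $a\ge i$, the right $\lda$ and $T$. Proving that the displayed decomposition is a genuine direct sum (not merely a spanning sum) is routine but also needs a little care.
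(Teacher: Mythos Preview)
Your approach is correct and genuinely different from the paper's. The paper argues by induction on $i$: for $\xi\in\cl K_i$ it restricts to $(S_p\wr A_{i-1})\times(S_p\wr B_{i-1})$, extracts coefficients $\gamma_j\in\cl C(S_p\wr B_{i-1})$ modulo $\cl K_{i-1}(A_{i-1})\otimes\cl C(S_p\wr B_{i-1})$, shows each $\gamma_j\in\cl K_1(B_{i-1})$, applies Proposition~\ref{prop:wrKzero} on the $B_{i-1}$-side to obtain $\theta\in\cl I(S_p\wr S_w,P,\cl S^{i-1})$, and checks $\xi-\theta\in\cl K_{i-1}$ so that the inductive hypothesis finishes the job. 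Your route avoids the induction on $i$ entirely by setting up the global decomposition $\cl C(S_p\wr S_w)=\bigoplus_a\Ind_{G_a}\bigl((\mZ 1_{S_p})\wr S_a\times\scr P(S_p)\wr S_{w-a}\bigr)$ and handling all $a<i$ simultaneously. Both arguments hinge on the same key input, Proposition~\ref{prop:wrKzero}; yours trades the inductive peeling for a one-shot structural statement, which gives a cleaner conceptual picture (the filtration $\cl K_w\subset\cdots\subset\cl K_1$ is essentially the filtration by $\bigoplus_{a\le\bullet}$) at the cost of more combinatorial bookkeeping up front.

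Two small points to tighten. First, you repeatedly write $\xi_a=\Ind_{G_a}(\mu\times\nu)$ as though the component were a simple tensor; in general it is $\Ind_{G_a}(\eta)$ with $\eta$ a sum $\sum_j\mu_j\times\nu_j$. This affects your contradiction step for fact~(a): rather than producing explicit $\alpha,\beta$ from ``$\mu\ne0$, $\nu\ne0$'', argue instead that if $\xi_a\ne0$ then some $\om_\lda(\xi_a)\ne0$, whence by your fact~(b) some $T$-component with $\sum_{l\in T}\lda_l=a$ is nonzero; since fact~(b) applied to every $a'$ shows distinct $a'$ contribute to disjoint $T$-components, this equals $(\om_\lda(\xi))_T$, contradicting fact~(a). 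The same adjustment (linearity in $\eta$) makes your second step go through unchanged. Second, in that second step your claim $L\cap P=P_a\times(M_t\cap Q_a)$ deserves a one-line justification: $P_a\times(M_t\cap Q_a)\le L\cap P$ since $P\supseteq P_a\times Q_a$, and equality holds because $P_a\times(M_t\cap Q_a)$ already has the full $p$-part of $|L|=|S_p\wr A_a|\cdot|M_t|$, while $L\cap P$ is a $p$-group. (This is the same order argument that makes the paper's inclusion $\cl I(S_p\wr S_w,P_{i-1}\times Q_{i-1},\cl A(P_{i-1}\times Q_{i-1}))\subset\cl I(S_p\wr S_w,P,\cl S^{i-1})$ work.) Finally, note that your fact~(b) already \emph{proves} directness of the decomposition, so the rank count is redundant once (b) is in hand.
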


\begin{proof}
We will prove the proposition by induction on $i$. For $i=1$ the result is true by Proposition~\ref{prop:wrKzero}.
 
Suppose that $1<i\le w$ and the proposition holds for all smaller values of $i$. 
Similarly to~\eqref{eq:defK}, for $j\in [0,w]$ and $s\in \mZ$, set 
\[
\begin{split}
 \Cal K_s (A_j) &= \{ \phi\in \Cal C(S_p\wr A_j) \mid \phi(h)=0 \text{ for all } h\in \Cal U_s \cap (S_p\wr A_j) \} \quad \text{and} \\
 \Cal K_s (B_j) & = \{ \phi \in \Cal C(S_p\wr B_j) \mid \phi(h) =0 \text{ for all } h\in \Cal U_s \cap (S_p\wr B_j) \},
\end{split}
\]
where $\cl U_s$ is given by \eqref{eq:defU}.
Then $\Cal K_{i-1} (A_{i-1})$ 
is pure in $\cl C(S_p\wr A_{i-1})$, so we can choose a subset $\{\b_1,\ldots,\b_r\}$ of $\Cal C(S_p\wr A_{i-1})$ such that 
$\{ \b_j + \Cal K_{i-1}(A_{i-1}) \}_{j=1}^r$ is a basis of 
$\Cal C(S_p\wr A_{i-1})/\Cal K_{i-1}(A_{i-1})$. 
Let $\xi \in \Cal K_i$. Then 
\begin{equation}\label{eq:Isetprecise1}
 \Res^{S_p\wr S_w}_{(S_p\wr A_{i-1})\times (S_p\wr B_{i-1})} \xi  \equiv  \sum_{j=1}^r (\b_j \times \g_j)  \dmod \Cal K_{i-1}(A_{i-1}) \otimes \Cal C(S_p\wr B_{i-1}) 
\end{equation}
for uniquely determined $\g_1,\ldots,\g_r \in \Cal C(S_p\wr B_{i-1})$. We claim that $\g_j\in \Cal K_1 (B_{i-1})$ for each $j\in [1,r]$. 
Indeed, consider $z\in \Cal U_1\cap (S_p\wr B_{i-1})$.
For each $h\in \Cal U_{i-1} \cap (S_p\wr A_{i-1})$, we have $hz\in \Cal U_i$, and therefore, since $\xi\in \cl K_i$, 
\[
 0=\xi (hz) = \sum_{j=1}^r \g_j (z) \b_j (h),
\]
where the second equality is due to~\eqref{eq:Isetprecise1}.
Since all characters of $S_p\wr B_{i-1} \simeq S_p\wr S_{w-i+1}$ are $\mZ$-valued, this 
means that 
\[
 \sum_{j=1}^r \g_j (z) \b_j \in  \Cal K_{i-1} (A_{i-1}), 
\]
whence $\g_j(z)=0$ (by the choice of $\b_1,\ldots,\b_r$). So $\g_j \in \Cal K_1 (B_{i-1})$, as claimed. 

By Proposition~\ref{prop:wrKzero}, 
we have $\g_j \in \cl I(S_p\wr B_{i-1}, Q_{i-1}, \Cal A(Q_{i-1}))$ for each $j$. Hence, the virtual character
\begin{equation}\label{eq:Isetprecise2}
 \th = \Ind_{(S_p\wr A_{i-1})\times (S_p\wr B_{i-1})}^{S_p\wr S_w} \left( \sum_{j=1}^r \b_j \times \g_j \right)
\end{equation}
belongs to $\Cal I(S_p\wr S_w, P_{i-1} \times Q_{i-1}, \Cal A(P_{i-1}\times Q_{i-1}))\subset \Cal I(S_p\wr S_w, P, \Cal S^{i-1})$. 

Let $\xi'= \xi - \th$. We claim that $\xi'\in \cl K_{i-1}$. That is, we will show that $\xi'(h)=0$ for all $h\in \cl U_{i-1}$. 
First, suppose that no $S_p\wr S_w$-conjugate of $h$ lies in $((S_p\wr A_{i-1})\cap \Cal U_{i-1})\times (S_p\wr B_{i-1})$. This implies that $|\tp^{\wre}(h)|\ne i-1$, 
so in fact $h\in \Cal U_{i}$. Then $\xi(h)=0$ because $\xi\in \Cal K_i$, and
 $\th(h)=0$, so $\xi'(h)=0$.

Now assume that some $S_p\wr S_w$-conjugate of $h$ belongs to $((S_p\wr A_{i-1})\cap \Cal U_i)\times (S_p\wr B_{i-1})$. We may replace $h$ with this conjugate, so that $h=lz$ for some 
$l\in (S_p\wr A_{i-1})\cap \Cal U_{i-1}$ and $z\in S_p\wr B_{i-1}$. 
Without loss of generality, $l= y_{\s_1}(u) \cdots y_{\s_t} (u)$ where $u$ is a $p$-cycle in $S_p$ and $\s_1,\ldots,\s_t$ are disjoint marked cycles in $S_{i-1}$ with orders summing to $i-1$. 
By~\eqref{eq:Isetprecise1}, 
\[
 \xi(h) = \sum_{j=1}^r \b_j (l) \g_j(z).
\]
We assert that $\th(h)$ is also equal to $\sum_{j=1}^r \b_j (l) \g_j(z)$.
Due to~\eqref{eq:Isetprecise2}, in order to prove this, it suffices to show that,
whenever $g\in (S_p\wr S_w)-((S_p\wr A_{i-1})\times (S_p\wr B_{i-1}))$ and 
$\ls{g}{h}\in (S_p\wr A_{i-1})\times (S_p\wr B_{i-1})$, we have 
$(\sum_{j=1}^r \b_j\times \g_j) (\ls{g}{h})=0$. But in this situation, writing $\ls{g}{h} =l'z'$ (with $l'\in S_p\wr A_{i-1}$, $z'\in S_p\wr B_{i-1})$, we see that $g$ must send the support of at least one of the cycles $\s_s$ into $[i+1,w]$, whence $z'\in \cl U_1$. Hence $\g_j(z')=0$ for each $j$ (as $\g_j\in \Cal K_1 (B_{i-1})$), and so $(\sum_{j=1}^r \b_j\times \g_j)(\ls{g}h)=0$. 
We have proved that $\th(h) = \xi(h)$, that is, $\xi'(h)=0$, as claimed. 

Since $\xi'\in \cl K_{i-1}$, we have 
 $\xi' \in \Cal I(S_p\wr S_w, P, \Cal S^{i-1})$ by the inductive hypothesis (as $\cl S^{i-2} \subset \cl S^{i-1}$). 
As $\th\in \cl I(S_p\wr S_w, P, \cl S^{i-1})$, we conclude that 
$\xi = \xi' +\th \in \cl I(S_p\wr S_w, P, \cl S^{i-1})$. 
\end{proof}

\begin{proof}[Proof of Theorem~\ref{thm:Iset}]
By Lemma~\ref{lem:S}, $\Cal S^{w-1-\d_{e0}} \subset \Cal S_e$ for all $e\in \mZ_{\ge 0}$. By Proposition~\ref{prop:Isetprecise}, 
$\cl K_{w-\d_{e0}} \subset \cl I(S_p\wr S_w, P, \cl S^{w-\d_{e0} -1})$. Hence, $\cl K_{w-\d_{e0}} \subset \cl I(S_p\wr S_w, P, \cl S_e)$. 
\end{proof}

\section{From $S_p\wr S_w$ to the normaliser of a Sylow subgroup}\label{sec:indarg}

Throughout the section we assume that $p$ is a fixed prime and $w\in \mN$, noting that Theorem~\ref{thm:symirc} is trivial for blocks of weight $0$. 
As before, let $Q\in \Syl_p (S_w)$ and $P=C_p\wr Q$. Consider the chain 
\[
 S_p\wr S_w \ge N_{S_p} (C_p) \wr S_w \ge N_{S_p} (C_p) \wr N_{S_w} (Q) \ge N_{S_{pw}} (P). 
\]
For each pair $(G,H)$ of neighbouring elements of this chain, we will construct a signed bijection from $\pm \Irr_{p'} (G)$ onto $\pm\Irr_{p'} (H)$
that satisfies an appropriate property (cf.\ Definition~\ref{defi:compliant} below). Composing these signed bijections with each other and with the bijection
$F_{p, w, \rho}$ defined by~\eqref{eq:defF}, we will be able to prove Theorem~\ref{thm:symirc}.

\subsection{Derived equivalences between wreath products}\label{sub:51}

In this subsection we will show that there is a signed bijection between principal block characters of $S_p\wr S_w$ and those of $N_{S_p} (C_p) \wr S_w$
witnessing (IRC-Bl) (see Corollary~\ref{cor:wrmarcus}).
It is shown in~\cite[\S 3.1]{Evseev2010m} that such a bijection exists provided there is a derived equivalence between these blocks satisfying certain conditions. 
Thus, our signed bijection will be obtained as a character-theoretic ``shadow'' of a stronger result (Theorem~\ref{thm:wrequiv}) concerning a derived equivalence between blocks of wreath products that was constructed by Marcus~\cite{Marcus1996}.
In this subsection we rely on the definitions and conventions of~\cite[\S 3.1]{Evseev2010m}, some of which we now recall. 

Let $G$ and $H$ be finite groups. If $M$ is an $\cl O G$-$\cl OH$-bimodule, then $M$ is identified with the $\cl O(G\times H)$-module 
defined by $(g,h)\cdot m = gmh^{-1}$ for $g\in G$, $h\in H$ and $m\in M$. Recall that if $\cl S$ is a set of subgroups of $G$, then an
$\cl OG$-module $N$ is said to be \emph{$\cl S$-projective} if for every indecomposable summand $N'$ of $N$ there exists $S\in \cl S$ such that $N'$ is relatively $S$-projective.
If $P$ is a subgroup of both $G$ and $H$, then we write
\[
 \D P = \{ (x,x) \mid x\in P \} \le G\times H.
\]
If $\cl X$ is a set of subgroups of such a subgroup $P$, we define $\D \cl X = \{ \D X \mid X\in \cl X \}$.

Let $C$ be a bounded chain complex of $\Cal OG$-$\Cal OH$-bimodules:
\[
 C = \qquad \cdots \to C^{-1} \to C^0  \to C^{1} \to \cdots.
\]
If $c$ is an element of a term $C^i$, we write $\deg(c)=i$. 
Note that $C^{\otimes w}$ is a complex of $\Cal O(G^{\times w})$-$\Cal O(H^{\times w})$-bimodules. 
By~\cite[Lemma 4.1]{Marcus1996}, there exists a function
$\e\colon S_w \times (\mZ/2)^w \to \mZ/2$ such that, 
writing $\e_\s (i_1,\ldots,i_w)$ instead of $\e(\s, (i_1+2\mZ,\ldots,i_w+2\mZ))$ (for $\s\in S_w$, $i_1,\ldots,i_w\in \mZ$) and 
defining 
\begin{equation}\label{eq:der1}
 \sigma\cdot (c_1\otimes \cdots \otimes c_w) = (-1)^{\e_{\sigma} (\deg(c_1), \ldots, \deg(c_w)) } c_{\sigma^{-1} (1)} \otimes \cdots \otimes c_{\sigma^{-1}(w)},
\end{equation}
where $c_1,\ldots,c_w$ are elements of terms of $C$,
we obtain an action of $S_w$ on $C^{\otimes w}$. Moreover, this map $\e$ may be chosen to satisfy the equations
\begin{align} 
 \e_{\s} (0,\ldots,0) &= 0,  \label{eq:eps1} \\ 
 \e_{\s}(i_1,\ldots,i_l+1,\ldots, i_w) &= \e_{\s} (i_1,\ldots, i_l, \ldots i_w)+i_1+\cdots +i_{l-1} \notag \\
 & + i_{\s^{-1}(1)}+\cdots + i_{\s^{-1}(\s(l)-1)},  \label{eq:eps2} \\
 \e_{\tau\s} (i_1,\ldots, i_w) &= \e_{\tau}(i_1,\ldots, i_w) + \e_{\s}(i_{\tau^{-1} (1)}, \ldots, i_{\tau^{-1}(w)} ), \label{eq:eps3}
\end{align}
for all $\s,\tau\in S_w$, $i_1,\ldots, i_w\in \mZ$ and $l\in [1,w]$: see the proof of Lemma 4.1 in~\cite{Marcus1996}, particularly, Eqs.\ (4.2.1) and (4.2.2).
It is straightforward to deduce that $C^{\otimes w}$ becomes a complex of $\Cal O((G\times H) \wr S_w)$-modules 
when one combines the $\Cal OG$-$\Cal OH$-bimodule structure with~\eqref{eq:der1}.
By $(G\times H) \wr \D S_w$ we understand the subgroup $(G\times H)^{\times w} \rtimes \D S_w$ of $(G\wr S_w) \times (H\wr S_w) = (G\times H)^{\times w} \rtimes (S_w \times S_w)$.
Following~\cite{Marcus1996}, we define
\[
 C\wr S_w = \Ind_{(G\times H)\wr \D S_w}^{(G\wr S_w) \times (H\wr S_w)} C^{\otimes w}. 
\]
(In~\cite{Marcus1996}, the complex $C\wr S_w$ is constructed using somewhat different language, but the outcome may be easily checked to be isomorphic to the one given above.)

Now let $b$ and $e$ be $p$-blocks of $G$ and $H$ respectively. 
Then $b^{\otimes w}$ and $e^{\otimes w}$ are central idempotents of $\Cal O(G\wr S_w)$ 
and $\Cal O(H\wr S_w)$ respectively.

\begin{thm}[\protect{\cite[Theorem 4.3(b)]{Marcus1996}}]\label{thm:marcus}
 If $C$ is a Rickard tilting complex of $\Cal OGb$-$\Cal OHe$-bimodules, 
then $C\wr S_w$ is a Rickard tilting complex of $\Cal O(G\wr S_w)b^{\otimes w}$-$\Cal O(H\wr S_w)e^{\otimes w}$-bimodules.
\end{thm}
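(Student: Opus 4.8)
The plan is to verify for $C\wr S_w$ the defining properties of a Rickard tilting complex, reducing each of them to the corresponding property of $C$ via the compatibility of the operation $(-)\wr S_w$ with restriction, with the $\cl O$-dual $(-)^{*}=\Hom_{\cl O}(-,\cl O)$, and with tensor product over group algebras. Recall that a Rickard tilting complex of $\cl OGb$-$\cl OHe$-bimodules is a bounded complex of bimodules whose terms are finitely generated and projective both as left $\cl OGb$-modules and as right $\cl OHe$-modules, and for which $C\otimes_{\cl OHe}C^{*}\cong \cl OGb$ and $C^{*}\otimes_{\cl OGb}C\cong \cl OHe$ in the relevant bounded derived (equivalently, by Rickard's theorem, homotopy) categories of bimodules. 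The first step is the projectivity of the terms of $C\wr S_w$. There is a single $(G\wr S_w,\,(G\times H)\wr\D S_w)$-double coset in $(G\wr S_w)\times(H\wr S_w)$ (as one sees by projecting to $S_w\times S_w$ and noting that $S_w\times 1$ meets $\D S_w$ in a single double coset there), and the associated intersection is $G^{\times w}$; so Mackey's formula gives $\Res^{(G\wr S_w)\times(H\wr S_w)}_{G\wr S_w}(C\wr S_w)\cong \Ind_{G^{\times w}}^{G\wr S_w}\bigl(\Res^{(G\times H)\wr\D S_w}_{G^{\times w}}C^{\otimes w}\bigr)$. Each term of $C^{\otimes w}$ is an outer tensor product of terms of $C$, hence projective on restriction to $G^{\times w}$; therefore every term of $C\wr S_w$ is projective as a left $\cl O(G\wr S_w)$-module, and $b^{\otimes w}$ acts as the identity on it since $b$ does so on each term of $C$. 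The right-hand statements are symmetric.

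The core of the argument is to establish that $(-)\wr S_w$ behaves as a monoidal operation: for bounded complexes $C$ of $\cl OGb$-$\cl OHe$-bimodules and $D$ of $\cl OHe$-$\cl OKf$-bimodules there are natural isomorphisms
$(C\wr S_w)\otimes_{\cl O(H\wr S_w)e^{\otimes w}}(D\wr S_w)\cong (C\otimes_{\cl OHe}D)\wr S_w$
of complexes of $\cl O(G\wr S_w)b^{\otimes w}$-$\cl O(K\wr S_w)f^{\otimes w}$-bimodules, and $(C\wr S_w)^{*}\cong C^{*}\wr S_w$; in particular, taking $C=\cl OGb$ as a bimodule over itself one obtains $(\cl OGb)\wr S_w\cong \cl O(G\wr S_w)b^{\otimes w}$. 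These isomorphisms are assembled from (i) the adjunction and associativity properties of $\Ind$ and $\otimes$ over group algebras, together with a Mackey analysis of the double cosets of the subgroups $(G\times H)\wr\D S_w$, $(H\times K)\wr\D S_w$ and $(G\times K)\wr\D S_w$ — where, as above, there is essentially a single relevant double coset in each case — and (ii) the elementary identities $C^{\otimes w}\otimes_{\cl OH^{\times w}}D^{\otimes w}\cong(C\otimes_{\cl OH}D)^{\otimes w}$ and $(C^{\otimes w})^{*}\cong(C^{*})^{\otimes w}$ of complexes over $\cl O((G\times H)^{\times w})$, etc. The delicate point, and the main technical obstacle of the whole proof, is that these last identities must be made $S_w$-equivariant and compatible with the differentials: this is exactly the role of the sign function $\e$ of \eqref{eq:der1}, normalised by \eqref{eq:eps1}--\eqref{eq:eps3}, so that the verification reduces to the Koszul-sign bookkeeping that $\e$ is built to streamline.

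Finally I would assemble the pieces. Since $C$ is a Rickard tilting complex, $E:=C\otimes_{\cl OHe}C^{*}$ is homotopy equivalent to $\cl OGb$; in particular $H^{*}(E)\cong\cl OGb$, concentrated in degree $0$. By the monoidality and duality statements above, $(C\wr S_w)\otimes_{\cl O(H\wr S_w)e^{\otimes w}}(C\wr S_w)^{*}\cong E\wr S_w=\Ind_{(G\times G)\wr\D S_w}^{(G\wr S_w)\times(G\wr S_w)}E^{\otimes w}$, and because the terms of $C\wr S_w$ are projective on one side this tensor product already computes the derived tensor product. As $\Ind$ is exact it commutes with cohomology, and since the terms of $E$ and the module $H^{*}(E)$ are $\cl O$-free, the Künneth isomorphism gives $H^{*}(E^{\otimes w})\cong(H^{*}E)^{\otimes w}=(\cl OGb)^{\otimes w}$ concentrated in degree $0$; this isomorphism is $S_w$-equivariant because the $\e$-signs all degenerate in degree $0$ by \eqref{eq:eps1}, so it is an isomorphism of $\cl O((G\times G)\wr S_w)$-modules. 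Hence $H^{*}\bigl((C\wr S_w)\otimes_{\cl O(H\wr S_w)e^{\otimes w}}(C\wr S_w)^{*}\bigr)\cong(\cl OGb)\wr S_w\cong\cl O(G\wr S_w)b^{\otimes w}$, concentrated in degree $0$, so the complex is isomorphic to $\cl O(G\wr S_w)b^{\otimes w}$ in the relevant derived category (any bounded complex with cohomology concentrated in one degree is so isomorphic). The same computation, applied to the inverse complex $C^{*}$ of $\cl OHe$-$\cl OGb$-bimodules and using $C^{*}\wr S_w\cong(C\wr S_w)^{*}$, yields $(C\wr S_w)^{*}\otimes_{\cl O(G\wr S_w)b^{\otimes w}}(C\wr S_w)\cong\cl O(H\wr S_w)e^{\otimes w}$. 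Together with the projectivity of the terms of $C\wr S_w$ from the first paragraph, the standard criterion for two-sided tilting complexes shows that $C\wr S_w$ is a Rickard tilting complex of $\cl O(G\wr S_w)b^{\otimes w}$-$\cl O(H\wr S_w)e^{\otimes w}$-bimodules, which is the assertion of Theorem~\ref{thm:marcus}.
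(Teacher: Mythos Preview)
The paper does not prove this theorem; it is stated with attribution to \cite[Theorem 4.3(b)]{Marcus1996} and used as a black box. So there is no ``paper's own proof'' to compare against.

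Your outline is essentially the argument Marcus gives: check one-sided projectivity of the terms via a Mackey computation (your double-coset count is correct), establish that $(-)\wr S_w$ is compatible with duals and with tensor products over the intermediate group algebra, and then transport the two derived-category isomorphisms from $C$ to $C\wr S_w$. The sign bookkeeping encoded in $\e$ is indeed the main technical point, and you are right to flag it rather than pretend it is routine.

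One small simplification in your final step: rather than invoking K\"unneth to compute $H^{*}(E^{\otimes w})$, it is cleaner to note that a homotopy equivalence $f\colon E\to \cl OGb$ yields directly an $S_w$-equivariant homotopy equivalence $f^{\otimes w}\colon E^{\otimes w}\to (\cl OGb)^{\otimes w}$ (chain maps preserve degree, so $f^{\otimes w}$ commutes with the $\e$-twisted $S_w$-action; and tensor powers of homotopy equivalences are homotopy equivalences). Inducing up then gives $E\wr S_w\simeq (\cl OGb)\wr S_w\cong \cl O(G\wr S_w)b^{\otimes w}$ in the homotopy category, without the detour through cohomology. Incidentally, the identification $(\cl OGb)\wr S_w\cong \cl O(G\wr S_w)b^{\otimes w}$ that you need is precisely the isomorphism~\eqref{eq:gr1} established later in the paper in the proof of Lemma~\ref{lem:gr}.
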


In order to state our theorem, we need to recall some further definitions of~\cite[\S 3.1]{Evseev2010m}.
Let $D$ be a fixed $p$-subgroup of an arbitrary finite group $A$, and 
let $\Cal S$ be a set of subgroups of $D$. Let $X$ be a  complex of $\cl OA$-modules. 
As in~\cite[Definition 3.3]{Evseev2010m}, we say that $X$ is \emph{$\Cal S$-tempered} if at most one term $X^i$ is not $\Cal S$-projective. 
If such a term $X^i$ exists, let $X^i \simeq M\oplus N$ where $N$ is the $\Cal S$-projective summand of $X^i$ of maximal dimension. 
We say that $M$ is the \emph{pivot} (or \emph{$\Cal S$-pivot}) of $X$ and that this pivot is located in degree $i$.
If all terms of $X$ are $\Cal S$-projective, the pivot of $X$ is defined to be $0$.
Further, suppose that $B$ is a subgroup of $A$ containing $N_A (D)$. Let $f\in \Bl(A)$ be such that $D$ is a defect group of $f$. 
The $\cl O(A\times B)$-module that is the Green correspondent of the $\cl O(A\times A)$-module $\cl OAf$ is denoted by $\fr{Gr}(A,f,B)$. 

\begin{thm}\label{thm:wrequiv}
 Let $G$ be a finite group and $b\in \Bl(G)$. Let $D$ be a defect group of $b$, and assume that $D\ne \mbf 1$. 
Suppose that $H$ is a subgroup of $G$ containing $N_G (D)$, and let $e\in \Bl(H)$ be the Brauer correspondent of $b$. 
Let $\cl S = \cl S(G,D,H)$ and $\Cal S' = \Cal S(G\wr S_w, D\wr Q, H\wr S_w)$ (where $Q\in \Syl_p(S_w)$). 
Assume that $C$ is a 
Rickard tilting complex of $\Cal OGb$-$\Cal OHe$-bimodules which is $\Delta \Cal S$-tempered with pivot $\fr{Gr}(G,b, H)$ located in degree $0$. Then
$C\wr S_w$ is $\Delta \cl S'$-tempered with pivot $\fr{Gr}(G\wr S_w, b^{\otimes w}, H\wr S_w)$ located in degree $0$.  
\end{thm}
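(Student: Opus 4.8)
The plan is to work directly with the explicit model $C\wr S_w=\Ind_{(G\times H)\wr\Delta S_w}^{(G\wr S_w)\times(H\wr S_w)}C^{\otimes w}$ and to analyse its terms degree by degree. Since $D\neq\mathbf 1$, the block $b$ has positive defect, so by Lemma~\ref{lem:blwr} $b^{\otimes w}$ is a block of $G\wr S_w$; its defect group is $D\wr Q$, and $e^{\otimes w}$ is its Brauer correspondent because $N_{G\wr S_w}(D\wr Q)\le N_G(D)\wr N_{S_w}(Q)\le H\wr S_w$. By Theorem~\ref{thm:marcus}, $C\wr S_w$ is a Rickard tilting complex of $\cl O(G\wr S_w)b^{\otimes w}$-$\cl O(H\wr S_w)e^{\otimes w}$-bimodules, so only the $\Delta\cl S'$-temperedness and the identification of the pivot have to be proved.

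The first ingredient I would establish is a combinatorial lemma about $\cl S'=\cl S(G\wr S_w,D\wr Q,H\wr S_w)$: if $g_0\in G-H$, $X\le\ls{g_0}{D}\cap D$, and $P'\le Q$ permutes a subset $I\subseteq[1,w]$, then the subgroup of $D\wr Q$ obtained by placing $X$ in the coordinates of $I$, $D$ in the remaining coordinates, and adjoining $P'$ together with any $p$-subgroup of the pointwise stabiliser of $I$ in $Q$, lies in $\cl S'$. This is a direct computation: conjugation by $g=(g_0,\dots,g_0,1,\dots,1;1)$, with $g_0$ in the coordinates of $I$, carries this subgroup into $D\wr Q$, and $g\notin H\wr S_w$. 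The consequence I need is: if an $\cl O((G\times H)^{\times w})$-module $W$ is relatively $\bigl(\prod_{i=1}^w\Delta Y_i\bigr)$-projective with each $Y_i\le D$ and at least one $Y_{i_0}\in\cl S$, and $W$ carries a $\Delta T$-equivariant structure for some $T\le S_w$ fixing $(Y_1,\dots,Y_w)$ up to permutation, then $\Ind_{(G\times H)\wr\Delta T}^{(G\wr S_w)\times(H\wr S_w)}W$ is $\Delta\cl S'$-projective; this is obtained by tracking vertices through the successive inductions (keeping account of which coordinates the $\cl S$-subgroups occupy and how the permutation parts move them) and invoking the lemma. I also record that $D\wr Q\notin\cl S'$, since $\ls{g}{(D\wr Q)}=D\wr Q$ forces $g\in N_{G\wr S_w}(D\wr Q)\le H\wr S_w$.

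Granting this, I would first treat the terms of $C^{\otimes w}$ in nonzero degree $n$: such a term is a sum, over $S_w$-orbits of tuples $(i_1,\dots,i_w)$ with $\sum i_j=n$, of modules $\Ind_{(G\times H)\wr\Delta\Stab}^{(G\times H)\wr\Delta S_w}(C^{i_1}\otimes\dots\otimes C^{i_w})$, where $\Stab$ stabilises a chosen representative. As $n\neq 0$, some $i_{i_0}\neq 0$, so $C^{i_{i_0}}$ is $\Delta\cl S$-projective; decomposing each $C^{i_j}$ into indecomposables with vertices $\Delta Y_j$ (where $Y_j\le D$, and $Y_j\in\cl S$ whenever $i_j\neq 0$), the tensor product is relatively $\bigl(\prod\Delta Y_j\bigr)$-projective, so the consequence above shows the degree-$n$ term of $C\wr S_w$ is $\Delta\cl S'$-projective. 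In degree $0$ the sign twist $\e$ of~\eqref{eq:eps1} is trivial, so the degree-$0$ term of $C^{\otimes w}$ is $(C^0)^{\widetilde\otimes w}$; writing $C^0=\fr{Gr}(G,b,H)\oplus N$ with $N$ the $\Delta\cl S$-projective summand and using the standard decomposition of $(M\oplus M')^{\widetilde\otimes w}$ into the modules $\Ind_{((G\times H)\wr S_j)\times((G\times H)\wr S_{w-j})}^{(G\times H)\wr S_w}\bigl(\fr{Gr}(G,b,H)^{\widetilde\otimes j}\otimes N^{\widetilde\otimes(w-j)}\bigr)$, the summands with $j<w$ contain an $N$-factor and hence induce to $\Delta\cl S'$-projective $\cl O((G\wr S_w)\times(H\wr S_w))$-modules by the lemma. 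Thus the degree-$0$ term of $C\wr S_w$ is $M_0\oplus(\Delta\cl S'\text{-projective})$, where $M_0:=\Ind_{(G\times H)\wr\Delta S_w}^{(G\wr S_w)\times(H\wr S_w)}\fr{Gr}(G,b,H)^{\widetilde\otimes w}$, and all other terms are $\Delta\cl S'$-projective.

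It remains to identify the non-$\Delta\cl S'$-projective part of $M_0$ with $\fr{Gr}(G\wr S_w,b^{\otimes w},H\wr S_w)$. Since $\fr{Gr}(G,b,H)$ divides $(bGb)|_{G\times H}$, the module $\fr{Gr}(G,b,H)^{\widetilde\otimes w}$ divides $(b^{\otimes w})\cl OG^{\times w}(b^{\otimes w})|_{(G\times H)\wr\Delta S_w}$ as $\cl O((G\times H)\wr\Delta S_w)$-modules; combining this with the standard isomorphism $(b^{\otimes w})\cl O(G\wr S_w)(b^{\otimes w})\cong\Ind_{(G\times G)\wr\Delta S_w}^{(G\wr S_w)\times(G\wr S_w)}(bGb)^{\widetilde\otimes w}$ and the Mackey formula, $M_0$ is a direct summand of $(b^{\otimes w})\cl O(G\wr S_w)(b^{\otimes w})|_{(G\wr S_w)\times(H\wr S_w)}$. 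The latter module decomposes, by Green correspondence and $N_{G\wr S_w}(D\wr Q)\le H\wr S_w$, as $\fr{Gr}(G\wr S_w,b^{\otimes w},H\wr S_w)$ plus a $\Delta\cl S'$-projective module, and $\fr{Gr}(G\wr S_w,b^{\otimes w},H\wr S_w)$ is its only non-$\Delta\cl S'$-projective summand (cf.\ the theory of~\cite[\S3.1]{Evseev2010m}). On the other hand, $C\wr S_w$ being a Rickard tilting complex between nonzero block algebras, its pivot is nonzero; by the previous paragraph this pivot is the non-$\Delta\cl S'$-projective part of $M_0$, located in degree $0$, so it must be $\fr{Gr}(G\wr S_w,b^{\otimes w},H\wr S_w)$, and $C\wr S_w$ is $\Delta\cl S'$-tempered with the required pivot. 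The step I expect to be the main obstacle is the combinatorial lemma of the second paragraph together with the attendant vertex bookkeeping — controlling exactly which subgroups of $D\wr Q$ belong to $\cl S'$ and verifying that every vertex occurring in $C\wr S_w$ other than $\Delta(D\wr Q)$ itself lies in $\Delta\cl S'$; this is precisely the point where the general-defect case is genuinely harder than the case $w<p$, in which $P=C_p^{\times w}$ and no wreathing of defect groups occurs.
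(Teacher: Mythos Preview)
Your approach is essentially the paper's: the combinatorial lemma about $\cl S'$ is exactly Lemma~\ref{lem:S'} (with the explicit conjugating element $g=(g_0,\ldots,g_0,1,\ldots,1;1)$), the relative-projectivity statement for induced tensor powers is Lemma~\ref{lem:rp1} (proved there via Lemma~\ref{lem:wrmodind}), and the degree-by-degree analysis together with the splitting of the degree-$0$ term is the content of the proof of Theorem~\ref{thm:wrequiv} and of Lemma~\ref{lem:gr}.

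Two points where your write-up diverges from the paper and would need tightening. First, for terms in nonzero degree you present $C^{i_1}\otimes\cdots\otimes C^{i_w}$ as an $(G\times H)\wr\Delta\Stab$-module without mentioning the sign $\e$; the paper uses~\eqref{eq:eps3} to show that on the stabiliser $\prod_j S_{w_j}$ of a summand $Y_1^{\otimes w_1}\otimes\cdots\otimes Y_m^{\otimes w_m}$ the twist is a genuine homomorphism to $\mZ/2$, hence a product of sign characters, so that $\tilde Y\simeq\bigotimes_j(T_j\otimes Y_j^{\witi\otimes w_j})$ with one-dimensional $T_j$, and only then applies Lemma~\ref{lem:rp1}. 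Your ``consequence'' about a general $\Delta T$-equivariant $W$ would have to absorb this argument. Second, your final step relies on the assertion that a Rickard tilting complex between nonzero blocks has nonzero $\Delta\cl S'$-pivot; the paper avoids this by proving directly (Lemma~\ref{lem:gr}) the bimodule isomorphism $\Ind_{(G\times H)\wr\Delta S_w}^{(G\wr S_w)\times(H\wr S_w)}(\cl OGb)^{\witi\otimes w}\simeq \cl O(G\wr S_w)b^{\otimes w}$ and then observing that every summand of this other than $\Ind\,\fr{Gr}(G,b,H)^{\witi\otimes w}$ is $\Delta\cl S'$-projective --- exactly your $j<w$ argument applied to $\cl OGb$ rather than to $C^0$ --- so the unique non-$\Delta\cl S'$-projective summand $\fr{Gr}(G\wr S_w,b^{\otimes w},H\wr S_w)$ of the restricted block is forced to sit inside $M_0$. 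You already have the ingredients for this direct argument; invoking a nonvanishing-pivot principle is unnecessary.
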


In the remainder of this subsection we will use the notation specified by the preceding hypothesis. 
We note that, by Lemma~\ref{lem:blwr}, $b^{\otimes w}$ and $e^{\otimes w}$ are blocks
of $G\wr S_w$ and $H\wr S_w$ respectively. 

Let $\Cal X$ be a set of subgroups of $D$. 
We define $\tilde{\Cal X} = \tilde{\Cal X}^{(w)}$ to be the set of subgroups $S$ of $D\wr Q$ such that $S$ is contained in a subgroup of the form 
$(D_1\wr Q_1) \times \cdots \times (D_r\wr Q_r)$ where 
\begin{enumerate}[(i)]
 \item for some $w_1,\ldots,w_r$ such that $\sum_j w_j=w$, there are subgroups $S_{w_1},\ldots, S_{w_s}$ of $S_w$ acting on disjoint subsets of $[1,w]$, 
and $Q_j \in \Syl_p(S_{w_j})$ for all $j$; 
 \item $Q_j\le Q$ for each $j$;
 \item $D_j\le D$ for each $j\in [1,r]$ and there is at least one $t\in [1,r]$ such that $D_t \in \Cal X$.
\end{enumerate}

\begin{lem}\label{lem:S'}
 We have $\tilde{\Cal S}\subset \Cal S'$. 
\end{lem}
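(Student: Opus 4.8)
The plan is to unravel the two definitions and, given an arbitrary $S\in\tilde{\Cal S}$, to exhibit an explicit element $g\in(G\wr S_w)-(H\wr S_w)$ with $S\le \ls{g}{(D\wr Q)}\cap(D\wr Q)$; this is exactly what membership of $S$ in $\Cal S'=\Cal S(G\wr S_w, D\wr Q, H\wr S_w)$ requires.

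So let $S\in\tilde{\Cal S}$. By the definition of $\tilde{\Cal S}=\tilde{\Cal S}^{(w)}$ (with $\Cal X=\Cal S(G,D,H)$) we have $S\le D\wr Q$ and $S\le P_0:=(D_1\wr Q_1)\times\cdots\times(D_r\wr Q_r)$, where the subgroups $S_{w_1},\ldots,S_{w_r}$ act on pairwise disjoint subsets $I_1,\ldots,I_r$ of $[1,w]$ of sizes $w_1,\ldots,w_r$ (so $\sqcup_j I_j=[1,w]$ since $\sum_j w_j=w$), $Q_j\in\Syl_p(S_{w_j})$ with $Q_j\le Q$, $D_j\le D$ for all $j$, and $D_t\in\Cal S(G,D,H)$ for some index $t$. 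We may assume $w_t\ge 1$, since a factor $D_j\wr Q_j$ with $w_j=0$ is trivial. As $D_t\in\Cal S(G,D,H)$, choose $h\in G-H$ with $D_t\le \ls{h}{D}\cap D$, and set
\[
 g=(g_1,\ldots,g_w;1)\in G\wr S_w,\qquad\text{where } g_i=h \text{ for } i\in I_t \text{ and } g_i=1 \text{ for } i\notin I_t.
\]
First, $g\notin H\wr S_w$: as $I_t\ne\varnothing$, the component of $g$ at any $i\in I_t$ equals $h\notin H$, so $g$ is not in the base subgroup $H^{\times w}$ of $H\wr S_w$, whereas $g$ does lie in $G^{\times w}$; hence $g\notin H\wr S_w$. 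It remains to check $S\le\ls{g}{(D\wr Q)}$, for then $S\le\ls{g}{(D\wr Q)}\cap(D\wr Q)$, i.e.\ $S\in\Cal S'$. Let $x=(x_1,\ldots,x_w;\sigma)\in S$. From $x\in D\wr Q$ we get $\sigma\in Q$ and $x_i\in D$ for all $i$; from $x\in P_0$ we get that $\sigma$ preserves each $I_j$ (in particular $\sigma^{-1}(I_t)=I_t$ and $\sigma^{-1}([1,w]-I_t)=[1,w]-I_t$) and that $x_i\in D_t$ for every $i\in I_t$. The multiplication rule of \S\ref{sub:wr} gives $g^{-1}=(g_1^{-1},\ldots,g_w^{-1};1)$ and
\[
 \ls{g^{-1}}{x}=g^{-1}xg=\bigl(g_1^{-1}x_1g_{\sigma^{-1}(1)},\ldots,g_w^{-1}x_wg_{\sigma^{-1}(w)};\sigma\bigr).
\]
For $i\in I_t$ we have $\sigma^{-1}(i)\in I_t$, so $g_i=g_{\sigma^{-1}(i)}=h$ and the $i$-th component is $h^{-1}x_ih\in h^{-1}D_th\le D$, since $D_t\le\ls{h}{D}=hDh^{-1}$. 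For $i\notin I_t$ we have $\sigma^{-1}(i)\notin I_t$, so $g_i=g_{\sigma^{-1}(i)}=1$ and the $i$-th component is $x_i\in D$. Together with $\sigma\in Q$ this shows $\ls{g^{-1}}{x}\in D\wr Q$; as $x\in S$ was arbitrary, $S\le\ls{g}{(D\wr Q)}$, and the lemma follows.

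The one delicate point is the choice of $g$: it must be supported on the \emph{whole} of $I_t$, so that it commutes with the $Q_t$-component of every $\sigma$ arising in $S$. Spreading $h$ over a single coordinate of $I_t$ would fail as soon as $Q_t$ acts on $I_t$ without a fixed point (for instance when $w_t=p$), because then the offending component would be $h^{-1}x_i$ rather than the conjugate $h^{-1}x_ih$. Apart from this, and from keeping the direction of conjugation straight (so that $D_t\le\ls{h}{D}$ yields $h^{-1}D_th\le D$, which is what is needed), the verification is a routine computation with the wreath-product multiplication formula.
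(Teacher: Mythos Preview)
Your proof is correct and follows essentially the same approach as the paper: you construct the same element $g=(g_1,\ldots,g_w;1)$ with $g_i=h$ on $I_t$ and $g_i=1$ elsewhere, and verify that $S\le\ls{g}{(D\wr Q)}\cap(D\wr Q)$ with $g\notin H\wr S_w$. The paper argues more tersely by noting that $g$ centralises $D_s\wr Q_s$ for $s\ne t$ and centralises $Q_t$ (the remaining check, that $h^{-1}D_th\le D$, being implicit from $D_t\le\ls{h}D$), whereas you carry out the coordinate-by-coordinate computation explicitly; the substance is the same.
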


\begin{proof}
Let $S=(D_1\wr Q_1) \times \cdots \times (D_r\wr Q_r)$ be a maximal element of $\tilde{\Cal S}$. 
 With notation as above, choose $t$ such that $D_t\in \Cal S$. Then $D_t \le \ls{x}D\cap D$ for some $x\in G-H$. The element $g=(1,\ldots,1,x,\ldots,x,1,\ldots,1; 1)\in G\wr S_w$, where the symbols $x$ occupy the chosen subset of size $w_t$, centralises the subgroup $D_s\wr Q_s$ for each $s\ne t$ as well as the subgroup $Q_t$. It follows that 
$S\le \ls{g}{(D\wr Q)} \cap (D\wr Q)$. Also, $g\notin H\wr S_w$, and hence $S\in \Cal S'$.  
\end{proof}

\begin{lem}\label{lem:rp1}
  Let $\Cal X$ be a set of subgroups of $D$.
 Suppose that $Y$ is a $\D \Cal X$-projective indecomposable $\Cal O(G\times H)$-module. Then
\[
 W = \Ind_{(G\times H)\wr \D S_w}^{(G\wr S_w)\times (H\wr S_w)} Y^{\widetilde{\otimes} w}
\]
is $\D \tilde{\cl X}$-projective. 
\end{lem}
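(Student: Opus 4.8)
The plan is to pull back relative projectivity of $Y$ to a single subgroup $\D X$ with $X\in\cl X$, feed this through the identity $\Ind_{U\wr S_w}^{L\wr S_w}M^{\witi{\otimes} w}\simeq(\Ind_U^L M)^{\witi{\otimes} w}$ of Lemma~\ref{lem:wrmodind}, and then track relative projectivity along the two remaining operations: restriction to a Sylow subgroup of $S_w$, and induction up to $(G\wr S_w)\times(H\wr S_w)$.

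First I would note that, as $Y$ is indecomposable and $\D\cl X$-projective, there is $X\in\cl X$ such that $Y$ is relatively $\D X$-projective; here $X\le D\le H$ (using $H\supseteq N_G(D)$) and $X\le D\le G$, so $\D X\le G\times H$ makes sense and $Y\mid\Ind_{\D X}^{G\times H}\Res^{G\times H}_{\D X}Y$. Applying the functor $M\mapsto M^{\witi{\otimes} w}$ and observing that $M\mid N$ forces $M^{\witi{\otimes} w}\mid N^{\witi{\otimes} w}$ — the submodule $M^{\otimes w}$ inside $N^{\otimes w}$ is stable under $(G\times H)^{\times w}$ and under the $S_w$-action, hence an $\cl O((G\times H)\wr S_w)$-direct summand equal to $M^{\witi{\otimes} w}$ — together with Lemma~\ref{lem:wrmodind}, I obtain that $Y^{\witi{\otimes} w}$ is a summand of $\Ind_{(\D X)\wr S_w}^{(G\times H)\wr S_w}(\Res^{G\times H}_{\D X}Y)^{\witi{\otimes} w}$, i.e.\ $Y^{\witi{\otimes} w}$ is relatively $(\D X)\wr S_w$-projective. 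I would then identify $(G\times H)\wr S_w$ with the subgroup $(G\times H)\wr\D S_w$ of $(G\wr S_w)\times(H\wr S_w)$ in the obvious way; under this identification $(\D X)\wr S_w$ becomes $\D(X\wr S_w)$, with $X\wr S_w$ viewed inside both $G\wr S_w$ and $H\wr S_w$. Since $X$ is a $p$-group, $[X\wr S_w:X\wr Q]=[S_w:Q]$ is prime to $p$, so every $\cl O(X\wr S_w)$-module — and hence $Y^{\witi{\otimes} w}$ — is already relatively $\D(X\wr Q)$-projective over $\cl O((G\times H)\wr\D S_w)$. Inducing to $(G\wr S_w)\times(H\wr S_w)$ and using transitivity of relative projectivity, $W$ is relatively $\D(X\wr Q)$-projective. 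It remains to check that $X\wr Q\in\tilde{\cl X}$: this holds by taking, in the definition of $\tilde{\cl X}$, $r=1$, $w_1=w$, $S_{w_1}=S_w$, $Q_1=Q\in\Syl_p(S_w)$ and $D_1=X\in\cl X$, so that $(D_1\wr Q_1)=X\wr Q\le D\wr Q$. Therefore $W$ is $\D\tilde{\cl X}$-projective, as required.

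The only genuinely delicate point is the bookkeeping of subgroup identifications inside $(G\wr S_w)\times(H\wr S_w)$ — namely that $(\D X)\wr S_w$ corresponds to $\D(X\wr S_w)$ under $(G\times H)\wr S_w\cong(G\times H)\wr\D S_w$, and that the single group $X\wr Q$ does satisfy the defining requirements of $\tilde{\cl X}$. Everything else reduces to standard properties of relative projectivity (stability under induction, and the fact that every module is relatively projective with respect to a subgroup of index prime to $p$) together with the already-established Lemma~\ref{lem:wrmodind}.
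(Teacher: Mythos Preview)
Your proof is correct and follows essentially the same route as the paper's: pick $X\in\cl X$ with $Y\mid\Ind_{\D X}^{G\times H}U$, apply $(\,\cdot\,)^{\witi{\otimes} w}$ together with Lemma~\ref{lem:wrmodind} to get $Y^{\witi{\otimes} w}\mid\Ind_{\D(X\wr S_w)}^{(G\times H)\wr\D S_w}U^{\witi{\otimes} w}$, and then induce up. You spell out two points the paper leaves implicit---the identification $(\D X)\wr S_w\cong\D(X\wr S_w)$ and the passage from $X\wr S_w$ to $X\wr Q\in\tilde{\cl X}$ via the prime-to-$p$ index---both of which are needed and correctly handled.
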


\begin{proof}
 Let $T\in \Cal X$ and an $\Cal OS$-module $U$ be such that $Y$ is a summand of $\Ind_{\Delta T}^{G\times H} U$. 
Then $Y^{\witi{\otimes} w}$ is a summand of $(\Ind_{\D T}^{G\times H} U)^{\witi{\otimes} w}$, which is isomorphic to
$\Ind_{\D (T\wr S_w)}^{(G\times H) \wr \D S_w} U^{\witi{\otimes} w}$ by Lemma~\ref{lem:wrmodind}. 
Hence, $W$ is a summand of 
$\Ind_{\D(T\wr S_w)}^{(G\wr S_w)\times (H\wr S_w)} U^{\witi{\otimes} w}$. 
\end{proof}

\begin{lem}\label{lem:gr}
 We have 
\[
 \Ind_{(G\times H)\wr \D S_w}^{(G\wr S_w) \times (H\wr S_w)} \fr{Gr}(G,b,H)^{\witi{\otimes} w} \simeq \fr{Gr}(G\wr S_w, b^{\otimes w}, H\wr S_w) \oplus Z
\]
for some  $\Delta \Cal S'$-projective module $Z$.
\end{lem}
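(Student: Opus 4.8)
The goal is to identify the $\cl O(G\wr S_w)b^{\otimes w}$-$\cl O(H\wr S_w)e^{\otimes w}$-bimodule $M := \Ind_{(G\times H)\wr \D S_w}^{(G\wr S_w)\times(H\wr S_w)} \fr{Gr}(G,b,H)^{\witi\otimes w}$ with the Green correspondent $\fr{Gr}(G\wr S_w,b^{\otimes w},H\wr S_w)$, up to a $\D\cl S'$-projective summand. Recall that $\fr{Gr}(G\wr S_w,b^{\otimes w},H\wr S_w)$ is by definition the Green correspondent (with respect to the subgroup $(G\wr S_w)\times(H\wr S_w)$ of $(G\wr S_w)\times(G\wr S_w)$, relative to the defect group $\D(D\wr Q)$ of $b^{\otimes w}$ viewed diagonally) of the bimodule $(G\wr S_w)b^{\otimes w}(G\wr S_w)$. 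So I would proceed as follows.

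\emph{Step 1: Locate the vertex.} First I would recall that the indecomposable bimodule $\fr{Gr}(G,b,H)$ has vertex $\D D$ (this is the content of the relevant part of~\cite[\S 3.1]{Evseev2010m}: it is the Green correspondent of $GbG$, whose vertex is $\D D$). Hence $\fr{Gr}(G,b,H)^{\otimes w}$ is an $\cl O\big((G\times H)^{\times w}\big)$-module with vertex $\D D^{\times w} = (\D D)^{\times w}$, and $\fr{Gr}(G,b,H)^{\witi\otimes w}$ is an indecomposable $\cl O\big((G\times H)\wr S_w\big)$-module — here indecomposability over the wreath product needs a short argument using that the endomorphism algebra of $\fr{Gr}(G,b,H)$ is local, or alternatively one can keep track of summands and discard the $\D\cl S'$-projective ones at the end. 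Restricting to $(G\times H)\wr\D S_w$ and checking vertices, one sees that $\fr{Gr}(G,b,H)^{\witi\otimes w}$ (as a module for $(G\times H)\wr\D S_w$) has vertex $\D(D\wr Q)$ up to a summand of strictly smaller vertex, i.e.\ a $\D\tilde{\cl S}_0$-projective summand where $\tilde{\cl S}_0$ is the set of subgroups of $D\wr Q$ not containing a $(G\wr S_w)$-conjugate of $\D(D\wr Q)$; and by Lemma~\ref{lem:S'} (applied with $\cl X$ the set of proper-intersection subgroups, so that $\tilde{\cl S}_0\subseteq \tilde{\cl S}\subseteq\cl S'$) this summand is $\D\cl S'$-projective.

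\emph{Step 2: Apply Green correspondence.} Since $M = \Ind_{(G\times H)\wr\D S_w}^{(G\wr S_w)\times(H\wr S_w)} \fr{Gr}(G,b,H)^{\witi\otimes w}$ is induced from a module with vertex $\D(D\wr Q)$ and source that of $\fr{Gr}(G,b,H)^{\witi\otimes w}$, the Green correspondent (with respect to $(G\wr S_w)\times(H\wr S_w)\supseteq N_{(G\wr S_w)^{\times 2}}(\D(D\wr Q))$ — note $(H\wr S_w)\supseteq N_{G\wr S_w}(D\wr Q)$ by hypothesis on $H$) of the unique indecomposable summand of $M$ with vertex $\D(D\wr Q)$ is an indecomposable $\cl O\big((G\wr S_w)\times(G\wr S_w)\big)$-module $V$ with vertex $\D(D\wr Q)$, and all other summands of $M$, as well as $M$ minus its $\D\cl S'$-projective part $Z$, are governed by the Green correspondence. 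The key point is then to show $V \simeq (G\wr S_w)b^{\otimes w}(G\wr S_w)$ as $\cl O\big((G\wr S_w)^{\times 2}\big)$-modules, which forces the Green correspondent of $M$ to be $\fr{Gr}(G\wr S_w,b^{\otimes w},H\wr S_w)$ by definition.

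\emph{Step 3: Identify $V$ with the block bimodule.} To identify $V$ with $(G\wr S_w)b^{\otimes w}(G\wr S_w)$, I would use that both are summands of the same induced module. One clean route: consider the bimodule $\cl O G b$ as a summand of $\cl O G$; inducing $\big(\Ind_{\D D}^{G\times H}(\text{source})\big)$ up and tensoring, one gets that $GbG$ is, up to $\cl S\times(\text{stuff})$-projective summands, $\Ind_{G\times H}^{G\times G}\fr{Gr}(G,b,H)$. Wreathing and using Theorem~\ref{thm:marcus} (so that $C\wr S_w$ has nonzero homology concentrated so that its image under the tilting functor is $\cl O(G\wr S_w)b^{\otimes w}$) together with the multiplicativity $\big(\Ind_{G\times H}^{G\times G}(-)\big)^{\witi\otimes w}\cong\Ind_{(G\times H)\wr S_w}^{(G\times G)\wr S_w}(-)^{\witi\otimes w}$ from Lemma~\ref{lem:wrmodind}, and then inducing the diagonal $S_w$ up to $S_w\times S_w$, I would match $\Ind_{(G\wr S_w)\times(H\wr S_w)}^{(G\wr S_w)^{\times 2}} M$ with $\Ind_{(G\times H)\wr S_w}^{(G\times G)\wr S_w}\big(\text{suitable correspondent}\big)$, and compare this with $(G\wr S_w)b^{\otimes w}(G\wr S_w)$ viewed as a summand of $\cl O\big((G\wr S_w)^{\times 2}\big)$. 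Since both candidate summands have vertex $\D(D\wr Q)$ and a common "source" coming from the source of $\fr{Gr}(G,b,H)$ wreathed, and both lie in the same block pair, uniqueness in Green correspondence (there is a \emph{unique} indecomposable summand of a module induced from a vertex subgroup which itself has that vertex, up to the Green correspondence bijection) pins them down as isomorphic.

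\emph{Main obstacle.} The delicate part is Step 3: carefully tracking which summands are $\D\cl S'$-projective versus which carry the full vertex $\D(D\wr Q)$, and making the identification of $V$ with $(G\wr S_w)b^{\otimes w}(G\wr S_w)$ rigorous rather than heuristic. The sources of $\fr{Gr}(G,b,H)$ over $\D D$ and of $GbG$ over $\D D$ agree (both are the Scott-module-type source / the $\cl O$-trivial source in the principal-block situation, but in general one needs the fact that $GbG$ and $\fr{Gr}(G,b,H)$ are Green correspondents of each other and hence have the \emph{same} source), so the wreathed sources agree, and uniqueness of the Green correspondent over a fixed vertex-source pair closes the argument — but spelling out the bookkeeping of the $(G\times H)\wr\D S_w$ versus $(G\times H)\wr S_w$ versus $(G\times G)\wr S_w$ inductions, and confirming that the non-pivot summands land in $\D\cl S'$ via Lemma~\ref{lem:S'}, is where the real work lies.
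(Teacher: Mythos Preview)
Your approach via vertex-tracking and Green correspondence is quite different from the paper's, and it carries a real gap. The paper does not chase vertices at all: instead it first proves the clean bimodule isomorphism
\[
\Ind_{(G\times H)\wr \D S_w}^{(G\wr S_w)\times(H\wr S_w)} (\cl OGb)^{\witi{\otimes} w}\;\simeq\;\cl O(G\wr S_w)b^{\otimes w}
\]
by writing down an explicit map (using that $\{(1,\s)\mid\s\in S_w\}$ represents the relevant cosets). It then expands $\cl OGb=\fr{Gr}(G,b,H)\oplus Y_1\oplus\cdots\oplus Y_m$ with each $Y_i$ being $\D\cl S$-projective, notes that $(\cl OGb)^{\witi{\otimes} w}$ decomposes as $\fr{Gr}(G,b,H)^{\witi{\otimes} w}$ plus induced mixed tensor products, and shows via Lemmas~\ref{lem:rp1} and~\ref{lem:S'} that every mixed term (which necessarily contains some $Y_t^{\witi{\otimes} w_t}$ with $w_t>0$) becomes $\D\cl S'$-projective after induction. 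Since by definition $\fr{Gr}(G\wr S_w,b^{\otimes w},H\wr S_w)$ is the unique summand of $\cl O(G\wr S_w)b^{\otimes w}$ that is not $\D\cl S'$-projective, the lemma follows immediately---no source comparison, no uniqueness-of-Green-correspondent argument is needed.

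Your route, by contrast, never establishes that the induced module you study is (a summand of) the wreath block bimodule, so your Step~3 is forced to argue indirectly that two modules with the same vertex and source must coincide. That step is only a sketch, and more seriously your Step~1 contains an unjustified claim: you assert that any summand of $\fr{Gr}(G,b,H)^{\witi{\otimes} w}$ over $(G\times H)\wr\D S_w$ with vertex strictly smaller than $\D(D\wr Q)$ is automatically $\D\cl S'$-projective. But $\cl S'=\cl S(G\wr S_w,D\wr Q,H\wr S_w)$ consists only of subgroups lying in some $^{g}(D\wr Q)\cap(D\wr Q)$ with $g\notin H\wr S_w$; a proper subgroup of $D\wr Q$ such as $D\wr Q'$ for $Q'<Q$ need not lie in $\cl S'$, and it certainly does not lie in $\tilde{\cl S}$ (which requires one tensor factor to come from $\cl S$). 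So the bookkeeping you flag as ``the main obstacle'' is indeed an obstacle, and the paper's direct computation of the induced block bimodule is precisely what lets one avoid it.
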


\begin{proof}
 We claim that 
\begin{equation}\label{eq:gr1}
 \Ind_{(G\times H) \wr \D S_w}^{(G\wr S_w)\times (H\wr S_w)} (\Cal OGb)^{\witi{\otimes} w} \simeq \Cal O(G\wr S_w)b^{\otimes w}
\end{equation}
as $\cl O(G\wr S_w)$-$\cl O(H\wr S_w)$-bimodules.
Since $\{ (1,\s) \mid \s\in S_w \}$ is a set of representatives of left $(G\times H)\wr \D S_w$-cosets in $(G\wr S_w) \times (H\wr S_w)=(G\times H) \wr (S_w\times S_w)$, we 
have the following equality (of $\cl O$-modules):
\begin{equation}\label{eq:gr2}
\begin{split}
 \Ind_{(G\times H) \wr \D S_w}^{(G\wr S_w)\times (H\wr S_w)} (\Cal OGb)^{\witi{\otimes} w} &=   
\O ((G\wr S_w)\times (H\wr S_w)) \otimes_{\cl O((G\times H) \wr \D S_w)} (\cl OGb)^{\witi{\otimes} w} \\
&= \bigoplus_{\s\in S_w} (1,\s) \otimes (\cl OGb)^{\otimes w}. 
\end{split}
\end{equation}
Also (again, as $\cl O$-modules), 
\begin{equation}\label{eq:gr3}
 \cl O(G\wr S_w) b^{\otimes w} = \bigoplus_{\s\in S_w} (\cl OGb)^{\otimes w} \cdot \s
\end{equation}
With the identifications~\eqref{eq:gr2} and~\eqref{eq:gr3} in mind, we see that the map
\[
 (1,\s) \otimes (a_1\otimes \cdots \otimes a_w) \mapsto (a_1\otimes \cdots \otimes a_w) \cdot \s^{-1}, \quad \s\in S_w, \; a_1,\ldots,a_w\in \cl OGb,
\]
defines an $\cl O$-module isomorphism between the two sides of~\eqref{eq:gr1}. It is straightforward (but tedious) to verify that this map is, in fact, an 
isomorphism of $\cl O(G\wr S_w)$-$\cl O(H\wr S_w)$-bimodules, proving the claim. 

By definition of $\fr{Gr}(G,b,H)$, we have 
\[
\Cal OGb = \fr{Gr}(G,b,H) \oplus Y_1\oplus \cdots \oplus Y_m
\]
 for some indecomposable $\Delta \Cal S$-projective $\cl O(G\times H)$-modules $Y_1,\ldots,Y_m$. 
Thus, it is easy to see that $(\cl OGb)^{\witi{\otimes} w}$ is a direct sum of $\fr{Gr}(G,b,H)^{\witi{\otimes} w}$ and summands of the form
\begin{equation}\label{eq:gr3_5}
\Ind_{\prod_{i=0}^m ((G\times H) \wr S_{w_i})}^{(G\times H) \wr S_w} (\fr{Gr}(G,b,H)^{\witi{\otimes} w_0}\otimes Y_1^{\witi{\otimes} w_1}\otimes \cdots \otimes Y_m^{\witi{\otimes} w_m})
\end{equation}
where $w_0+w_1+\cdots+w_m = w$ and $w_0<w$
(cf.\ the proof of Lemma~\ref{lem:lincomb}). 
Therefore, the left-hand side of~\eqref{eq:gr1} is a direct sum of $\Ind_{(G\times H)\wr \D S_w}^{(G\wr S_w)\times (H\wr S_w)} (\fr{Gr}(G,b,H)^{\witi{\otimes} w})$
and modules of the form
\begin{equation}\label{eq:gr4}
  \Ind_{(G\times H)\wr \D S_{w_0}\times \cdots \times (G\times H)\wr \D S_{w_m}}^{(G\wr S_w)\times (H\wr S_w)} (\fr{Gr}(G,b,H)^{\witi{\otimes} w_0}\otimes Y_1^{\witi{\otimes} w_1}
\otimes \cdots \otimes Y_m^{\witi{\otimes} w_m})
\end{equation}
 where not all of $w_1,\ldots, w_m$ are zero. 

Consider a fixed summand~\eqref{eq:gr4}. Let $t>0$ be such that $w_t>0$. By Lemma~\ref{lem:rp1}, 
\[
\begin{split}
 \Ind_{(G\times H)\wr \D S_{w_t} }^{(G\wr S_{w_t})\times (H\wr S_{w_t}) } Y_t^{\witi{\otimes} w_t} & \text{ is } \D \tilde{\Cal S}^{(w_t)} \text{-projective and } \\
 \Ind_{(G\times H)\wr \D S_{w_s} }^{(G\wr S_{w_s})\times (H\wr S_{w_s}) } Y_s^{\witi{\otimes} w_s} & \text{ is } \D (D\wr S_{w_s}) \text{-projective for } s\in [0,w]-\{ t \},
\end{split}
\]
where we write $Y_0 = \fr{Gr}(G,b,H)$. It follows that the summand~\eqref{eq:gr4} is $\D\tilde{\cl S}$-projective; hence, by Lemma~\ref{lem:S'}, it is
$\cl S'$-projective. 

We have proved that the $\cl O((G\wr S_w)\times (H\wr S_w))$-module $\cl O(G\wr S_w) b^{\otimes w}$ is a direct sum of 
$\Ind_{(G\times H)\wr \D S_w}^{(G\wr S_w)\times (H\wr S_w)} \fr{Gr}(G,b,H)^{\witi{\otimes} w}$ and a $\D \cl S'$-projective module. 
Now, by definition, $\fr{Gr}(G\wr S_w, b^{\otimes w}, H\wr S_w)$ is the only indecomposable summand of $\Cal O(G\wr S_w)b^{\otimes w}$ that is not $\Cal S'$-projective. 
The lemma follows from these two facts. 
\end{proof}

\begin{proof}[Proof of Theorem~\ref{thm:wrequiv}]
 Fix a decomposition of each term $C^i$ into indecomposable summands, so that $\fr{Gr}(G,b,H)$ is one of the summands of $C^0$ and all the other summands of the terms of $C$ are $\D\Cal S$-projective. 
Due to~\eqref{eq:der1}, 
this induces a decomposition of each term of $C^{\otimes w}$ (as an $\cl O((G\times H)^{\times w})$-module) into certain summands. The group $S_w$ acts on the set of these summands. 
One of the summands is 
$\fr{Gr}(G,b,H)^{\otimes w}$. It is stabilised by $S_w$ and becomes $\fr{Gr}(G,b,H)^{\witi{\otimes} w}$ as an $\cl O((G\times H)\wr S_w)$-module by~\eqref{eq:eps1}. Write $(C\wr S_w)^i$ for the $i$-th term of $C\wr S_w$. 
Due to Lemma~\ref{lem:gr}, it suffices to show that 
$(C\wr S_w)^i$ is $\D\cl S'$-projective for all $i\ne 0$ and 
that the direct complement of
$\Ind_{(G\times H)\wr \D S_w}^{(G\wr S_w) \times (H\wr S_w)} \fr{Gr}(G,b,H)$ 
in $(C\wr S_w)^0$ is $\D\cl S'$-projective. 

Let $Y=Y_1^{\otimes w_1} \otimes \cdots \otimes Y_m^{\otimes w_m}$ be a summand in the above decomposition of 
$C^{\otimes w}$ (with $w=\sum_i w_i$), where each $Y_j$ is a summand of the term $C^{d_j}$ (for some 
$d_j\in \mZ$) and there exists $t\in [1,w]$ such that $Y_t$ is not isomorphic to $\fr{Gr}(G,b,H)$. Then $Y_t$ is $\D \cl S$-projective.
The $S_w$-stabiliser of this summand $Y$ is $\prod_j S_{w_j}$, so $Y$ is endowed with a structure of a $(\prod_{j} S_{w_j})$-module, denoted by $\tilde{Y}$. 
Then the (direct) sum of elements of the $S_w$-orbit of $Y$ is a summand of the $(\sum_j d_j)$-term of $C^{\otimes w}$ as an 
$\cl O((G\times H)\wr S_w)$-module and is
isomorphic to $\Ind_{(G\times H)\wr (\prod_j S_{w_j})}^{(G\times H) \wr S_w} \tilde{Y}$.
 By the argument above, it is enough to show that the module 
\[
 Z = \Ind_{(G\times H)\wr \D (S_{w_1} \times \cdots \times S_{w_m})}^{(G\wr S_w)\times (H\wr S_w)} \tilde{Y},
\]
is $\D \Cal S'$-projective for each such $Y$.

It follows from Eq.~\eqref{eq:eps3} that the map $(\s_1,\ldots, \s_m) \mapsto \e_{\s} (d_1,\ldots,d_1,\ldots, d_m,\ldots, d_m)$ (where $d_j$ occurs $w_j$ times) 
is a homomorphism from 
$\prod_j S_{w_j}$ to $\mZ/2\mZ$. 
So this map may be written as $(\s_1,\ldots,\s_m)\mapsto \sgn(\s_1)^{s_1} \cdots \sgn(\s_m)^{s_m}$ for some $s_1,\ldots, s_m\in \{0,1\}$ (where $\sgn$ is the sign character). 
Hence, 
\[
 \tilde{Y} \simeq (T_1 \otimes Y_1^{\witi{\otimes} w_1}) \otimes \cdots \otimes (T_m \otimes Y_m^{\witi{\otimes} w_m})
\]
 (as $\Cal O((G\times H)\wr (\prod_j S_{w_j}))$-modules) 
where $T_j$ is the inflation from $\Cal OS_{w_j}$ to $\Cal O((G\times H)\wr S_{w_j})$ of the $1$-dimensional module affording the character $\sgn^{s_j}$. 
By Lemma~\ref{lem:rp1}, the module $\Ind_{(G\times H)\wr \D S_{w_t}}^{(G\wr S_{w_t}) \times (H\wr S_{w_t})} (T_t\otimes Y_t^{\witi{\otimes} w_t})$
is $\D(\tilde{\Cal S}^{(w_t)})$-projective. Further, for each $j\in [1,m]$, the module 
$\Ind_{(G\times H)\wr \D S_{w_j}}^{(G\wr S_{w_j}) \times (H\wr S_{w_j})} (T_j\otimes Y_j^{\witi{\otimes} w_j})$
is $\D (D\wr Q_j)$-projective, where $Q_j$ is a Sylow $p$-subgroup of $S_{w_j}$ (as $Y_j$ is $\D D$-projective). From these two facts and the definition of $\tilde{\cl S}^{(w)}$,
one deduces that   
\[
 Z \simeq \Ind_{\prod_j (G\wr S_{w_j}) \times (H\wr S_{w_j})}^{(G\wr S_w) \times (H\wr S_w)} 
\prod_{j=1}^m \Ind_{(G\times H)\wr \D S_{w_j}}^{(G\wr S_{w_j}) \times (H\wr S_{w_j})} (T_j\otimes Y_j^{\witi{\otimes} w_j})
\]
is $\D (\tilde{\cl S}^{(w)} )$-projective. By Lemma~\ref{lem:S'}, $\tilde{\cl S}^{(w)} \subset \cl S'$, so $Z$ is $\D \cl S'$-projective.
\end{proof}

\begin{cor}\label{cor:wrmarcus}
Let $b_0$ be the principal $p$-block of $S_p\wr S_w$. 
Then (IRC-Bl) holds for the quadruple $(S_p\wr S_w, b_0, P, N_{S_p}(C_p) \wr S_w)$.
\end{cor}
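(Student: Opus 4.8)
The plan is to deduce (IRC-Bl) from a derived equivalence, namely from the instance of Marcus' construction studied in Theorem~\ref{thm:wrequiv}, together with the criterion of~\cite[\S 3.1]{Evseev2010m} which manufactures a signed bijection witnessing (IRC-Bl) out of a suitably normalised Rickard tilting complex. First I would fix the data. Write $G = S_p\wr S_w$ and $H = N_{S_p}(C_p)\wr S_w$, and let $b$, $e$ be the principal $p$-blocks of $S_p$, $N_{S_p}(C_p)$ respectively. Since $p$ is prime, $C_p\ne\mbf 1$, so Lemma~\ref{lem:blwr} shows that $b^{\otimes w}$ and $e^{\otimes w}$ are blocks of $G$ and $H$; as they contain the trivial character, $b^{\otimes w} = b_0$ and $e^{\otimes w}$ is the principal block of $H$. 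The Sylow $p$-subgroup $P = C_p\wr Q$ is a defect group of $b_0$; moreover $C_p^{\times w} = P\cap S_p^{\times w}$ is normalised by $N_G(P)$ because the base group $S_p^{\times w}$ is normal in $G$, and a short computation with the coordinates then forces $N_G(P)\le N_{S_p}(C_p)\wr S_w = H$. Consequently $e^{\otimes w}$ is the Brauer correspondent of $b_0$ in $H$, and $(G,b_0,P,H)$ is a quadruple of the kind appearing in Definition~\ref{def:ircbl}, with $\cl S(G,P,H)$ equal to the set $\cl S'$ of Theorem~\ref{thm:wrequiv}.

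The crucial input is the case $w=1$: I need a Rickard tilting complex $C$ of $\cl OS_pb$-$\cl ON_{S_p}(C_p)e$-bimodules that is $\D\cl S(S_p,C_p,N_{S_p}(C_p))$-tempered with pivot $\fr{Gr}(S_p,b,N_{S_p}(C_p))$ located in degree $0$. Here $C_p$ has order $p$ and distinct Sylow $p$-subgroups of $S_p$ intersect trivially, so $\cl S(S_p,C_p,N_{S_p}(C_p)) = \{\mbf 1\}$ and being $\D\cl S$-projective simply means being projective as an $\cl O(S_p\times N_{S_p}(C_p))$-module; thus the requirement is that all but one term of $C$ be projective while the non-projective summand of the remaining term equals the Green correspondent of the block bimodule $\cl OS_pb$. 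Since $b$ has cyclic defect group $C_p$, the algebra $\cl OS_pb$ is a Brauer tree algebra, and by the theory of blocks with cyclic defect group (Rickard, Rouquier, Linckelmann) there is a splendid Rickard equivalence between $\cl OS_pb$ and its Brauer correspondent; the standard tilting complex realising it can be put in the normalised form just described, which is precisely the form used in~\cite[\S 3.1]{Evseev2010m}. Establishing (or citing) this is the step I expect to require the most care.

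Granting the base case, the rest is formal. I would apply Theorem~\ref{thm:wrequiv} with $S_p$, $N_{S_p}(C_p)$, $C_p$, $b$, $e$ playing the roles of $G$, $H$, $D$, $b$, $e$; this is legitimate because $C_p\ne\mbf 1$ and $C$ has the required properties, and the conclusion is that $C\wr S_w$ is $\D\cl S'$-tempered with pivot $\fr{Gr}(G,b^{\otimes w},H) = \fr{Gr}(G,b_0,H)$ located in degree $0$, where $\cl S' = \cl S(G,P,H)$. By Theorem~\ref{thm:marcus}, $C\wr S_w$ is a Rickard tilting complex of $\cl OGb_0$-$\cl OHe^{\otimes w}$-bimodules. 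Hence $C\wr S_w$ satisfies the hypotheses of the criterion of~\cite[\S 3.1]{Evseev2010m}, which therefore yields a signed bijection $\pm\Irr_0(G,b_0)\to\pm\Irr_0(H,e^{\otimes w})$ satisfying the congruence of Definition~\ref{def:ircbl}. This is exactly (IRC-Bl) for $(S_p\wr S_w, b_0, P, N_{S_p}(C_p)\wr S_w)$, completing the argument.
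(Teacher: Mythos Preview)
Your proposal is correct and follows essentially the same approach as the paper: construct the base complex $C$ for $S_p$ versus $N_{S_p}(C_p)$, apply Theorems~\ref{thm:marcus} and~\ref{thm:wrequiv} to obtain the corresponding complex $C\wr S_w$ with the required tempered/pivot properties, and then invoke the criterion of~\cite[\S 3.1]{Evseev2010m} to extract (IRC-Bl). Two small points: the paper obtains the base complex directly from~\cite[Theorem 3.10]{Evseev2010m} rather than by appealing to the general cyclic-defect literature, which makes the step you flagged as ``most care'' a clean citation; and your claim that $\cl S(S_p,C_p,N_{S_p}(C_p))=\{\mbf 1\}$ is only correct for $p>3$ (for $p\le 3$ one has $N_{S_p}(C_p)=S_p$, so the set is empty and the corollary is trivial), though this has no effect on the argument.
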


\begin{proof}
Let $c_0$ be the principal block of $S_p$, so that $b_0 = c_0^{\otimes w}$, and $b'_0$ be the principal block of $N_{S_p}(C_p)\wr S_w$. 
Let  $\cl S = \cl S(S_p, C_p, N_{S_p}(C_p))$ (so that $\cl S=\{ \mbf 1 \}$ for $p>3$). 
 By~\cite[Theorem 3.10]{Evseev2010m}, there exists a Rickard tilting complex $C$ of $\Cal O S_p c_0$-$\Cal O N_{S_p}(C_p)$-bimodules that is $\cl S$-tempered with pivot 
$\fr{Gr}(S_p, c_0, N_{S_p}(C_p))$. We may assume that this pivot is located in degree $0$ of $C$ (for we can shift $C$ as necessary). 
By Theorems~\ref{thm:marcus} and~\ref{thm:wrequiv}, $C\wr S_w$ is a Rickard tilting complex of 
$\Cal O (S_p\wr S_w)b_0$-$\Cal O(N_{S_p}(C_p) \wr S_w, b'_0)$-bimodules  
and is $\cl S(S_p\wr S_w, P, N_{S_p}(C_p)\wr S_w)$-tempered with pivot $\fr{Gr}(S_p\wr S_w, b_0, N_{S_p}(C_p)\wr S_w)$. By (the proof of) \cite[Proposition 3.8]{Evseev2010m}, 
the existence of such a 
complex implies that (IRC-Bl) holds for the quadruple $(S_p\wr S_w, b_0, P, N_{S_p}(C_p) \wr S_w)$. 
\end{proof}

\subsection{Preliminary Lemmas}

\begin{defi}[Cf.\ Definition~\ref{def:ircbl}]\label{defi:compliant}
 Let $H$ be a subgroup of a finite group $G$, and let $X_G$ and $X_H$ be subsets of $\Cal C(G)$ and $\cl C(H)$ respectively, both
closed under multiplication by $-1$. Let $D$ be a $p$-subgroup of $H$. 
We say that a map $F\colon X_G \to X_H$ is \emph{$D$-IRC-compliant} if $F$ is signed and 
\[
 F(\chi) \equiv \Proj_D \Res^G_H \chi \dmod \Cal I(H,D,\cl S(G,D,H))
\]
for all $\chi\in X_G$.
\end{defi}

In the case when, in addition, $|G:H|$ is prime to $p$ and $D\in \Syl_p (H)$, we will say that the map $F$ as above is \emph{IRC-compliant} if it is 
$D$-IRC-compliant. (In this situation, $\Proj_D$ is the identity map, and the definition does not depend on the particular choice of $D$.)

For blocks of maximal defect one can replace the property (IRC-Bl) with a slightly simpler one. 
Suppose that $G$ is a finite group, $D\in \Syl_p (G)$, and $N_G(D) \le H\le G$. We write
\[
 \Irr_{p'} (G) = \{ \chi\in \Irr(G) \mid \chi(1) \not\equiv 0 \mod p \}.
\]
The pair $(G,H)$ is said to satisfy the property (IRC-Syl) if there exists an IRC-compliant signed bijection $F\colon \pm\Irr_{p'}(G) \ra \pm\Irr_{p'}(H)$ 
(see~\cite[Section 1]{Evseev2010m}).
Note that (IRC-Syl) holds if and only if (IRC-Bl) holds for each quadruple $(G,b,D,H)$ where $b$ runs through the blocks 
of $G$ with defect group $D$ (see~\cite[Section 2]{Evseev2010m}).

We now prove several technical lemmas concerning (IRC-Syl).

\begin{lem}\label{lem:irccliff}
 Suppose that $N\le H\le G$ are finite groups and $N$ is normal in $G$. Let $\phi\in\Irr(N)$ and  $T=G_{\phi}$, $L=H_{\phi}$ be its inertia groups. 
Assume that $|G:L|$ is prime to $p$.
Let $X_T$ and $X_L$ be subsets of $\pm \Irr(T|\phi)$ and $\pm \Irr(H|\phi)$ respectively, closed under multiplication by $-1$. Let
\[
 X_G  =  \{ \Ind_T^G \chi \mid \chi\in X_T \} \quad \text{and} \quad
 X_H  =  \{ \Ind_L^H \th \mid \th\in X_L \}.
\]
Suppose that $F\colon X_T \to X_L$ is an IRC-compliant signed bijection. 
 Then the map $\tilde{F}\colon X_G \to X_H$ defined by $\tilde{F}(\Ind_T^G \chi)=\Ind_L^H F(\chi)$ for $\chi\in X_T$ is also an IRC-compliant signed bijection. 
\end{lem}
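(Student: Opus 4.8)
The plan is to reduce the statement to Clifford theory together with the behaviour of the sets $\cl S(\cdot,\cdot,\cdot)$ and the ideals $\cl I(\cdot,\cdot,\cdot)$ under induction. First I would dispose of the routine parts. Since $N$ is normal in $G$ (hence in $H$), the Clifford correspondence shows that $\Ind_T^G$ restricts to a bijection $\pm\Irr(T|\phi)\to\pm\Irr(G|\phi)$ and $\Ind_L^H$ restricts to a bijection $\pm\Irr(L|\phi)\to\pm\Irr(H|\phi)$; thus $\tilde F$ is well defined on $X_G$, has image $X_H$, and is a composition $X_G\to X_T\to X_L\to X_H$ of three bijections, hence bijective. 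It is signed because $X_T$ is closed under negation while $\Ind_T^G$ and $\Ind_L^H$ are $\mZ$-linear and $F$ is signed. So the only real content is the congruence in Definition~\ref{defi:compliant}. Throughout I would fix $D\in\Syl_p(L)$; since $L=H\cap T$ and $|G:L|$ is prime to $p$, also $|G:H|$, $|H:L|$ and $|T:L|$ are prime to $p$, so $D$ is simultaneously a Sylow $p$-subgroup of $L$, $H$, $T$ and $G$. In particular $\Proj_D$ is the identity on $\cl C(H)$, and the hypothesis on $F$ reads $F(\chi)\equiv\Res^T_L\chi\pmod{\cl I(L,D,\cl S(T,D,L))}$ for all $\chi\in X_T$.

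Next I would expand $\Res^G_H\Ind_T^G\chi$ by Mackey's formula over $(H,T)$-double cosets. The elementary observation here is that $HT$ is a \emph{single} $(H,T)$-double coset, represented by $1$, and it contributes exactly $\Ind_L^H\Res^T_L\chi$; hence
\[
 \Res^G_H\Ind_T^G\chi \;=\; \Ind_L^H\Res^T_L\chi \;+\; \sum_{g}\Ind_{H\cap\,\ls{g}T}^H\Res^{\ls{g}T}_{H\cap\,\ls{g}T}(\ls{g}\chi),
\]
where $g$ runs over representatives of the remaining double cosets, so in particular $g\notin HT$.

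The heart of the argument is to show that each summand of the sum over $g$ lies in $\cl I(H,D,\cl S(G,D,H))$. Write $L'=H\cap\ls{g}T$. After replacing $g$ by a suitable $H$-translate (which changes neither the double coset nor the condition $g\notin HT$) I may assume $L'\cap D\in\Syl_p(L')$. Since $D\le T$, we have $\ls{g}D\in\Syl_p(\ls{g}T)$, so the $p$-subgroup $L'\cap D$ of $\ls{g}T$ lies in some Sylow $p$-subgroup $\ls{g'}D$ of $\ls{g}T$, where $g'=yg$ for some $y\in\ls{g}T$; then $L'\cap D\le\ls{g'}D\cap D$, and a short computation shows $g'\notin H$ (otherwise $g\in\ls{g}T\cdot H$, which forces $g\in HT$). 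Hence $L'\cap D\in\cl S(G,D,H)$, and since $L'\le H$ and $L'\cap D\in\Syl_p(L')$, Definition~\ref{def:I} places $\Ind_{L'}^H\Res^{\ls{g}T}_{L'}(\ls{g}\chi)$ in $\cl I(H,D,\cl S(G,D,H))$. This gives
\[
 \Res^G_H\Ind_T^G\chi\equiv\Ind_L^H\Res^T_L\chi\pmod{\cl I(H,D,\cl S(G,D,H))}.
\]

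Finally I would record the companion inclusion $\Ind_L^H\,\cl I(L,D,\cl S(T,D,L))\subseteq\cl I(H,D,\cl S(G,D,H))$: a generator $\Ind_M^L\psi$ of the left-hand ideal has $M\le L$, $M\cap D\in\Syl_p(M)$ and $M\cap D\le\ls{t}D\cap D$ for some $t\in T-L$, and since $T-L\subseteq G-H$ (any element of $T\cap H$ lies in $H\cap T=L$), the character $\Ind_L^H\Ind_M^L\psi=\Ind_M^H\psi$ is a generator of the right-hand ideal. Applying $\Ind_L^H$ to the hypothesis on $F$ and combining with the previous congruence yields, for $\chi\in X_T$,
\[
 \tilde F(\Ind_T^G\chi)=\Ind_L^H F(\chi)\equiv\Ind_L^H\Res^T_L\chi\equiv\Res^G_H\Ind_T^G\chi=\Proj_D\Res^G_H\Ind_T^G\chi\pmod{\cl I(H,D,\cl S(G,D,H))},
\]
which is precisely the congruence of Definition~\ref{defi:compliant}. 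I expect the main obstacle to be the third paragraph: handling the non-diagonal Mackey terms, in particular the reduction to the case $L'\cap D\in\Syl_p(L')$ and the verification that the conjugating element $g'$ lies outside $H$. The remaining ingredients — the Clifford-theoretic well-definedness and bijectivity of $\tilde F$, and the transitivity of the ideals $\cl I(\cdot,\cdot,\cdot)$ under induction — are routine.
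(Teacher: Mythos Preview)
Your proof is correct and takes a genuinely different route from the paper. The paper invokes \cite[Theorem~1.4]{Evseev2010m} twice to pass between the ``Res'' and ``Ind'' formulations of IRC-compliance: it rewrites the hypothesis on $F$ as $\chi\equiv\Ind_L^T F(\chi)\pmod{\cl I(T,D,\cl S(T,D,L))}$, applies $\Ind_T^G$ (using only that $\cl S(T,D,L)\subset\cl S(G,D,H)$ and hence $\Ind_T^G\cl I(T,D,\cl S(T,D,L))\subset\cl I(G,D,\cl S(G,D,H))$), and then applies the same theorem once more to return to the ``Res'' congruence for $\tilde F$. This avoids any Mackey decomposition. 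Your approach is instead direct: you expand $\Res^G_H\Ind_T^G\chi$ by Mackey and show each off-diagonal summand lies in $\cl I(H,D,\cl S(G,D,H))$ via the Sylow argument in your third paragraph. The advantage of the paper's method is brevity and uniformity with the rest of Section~5 (which uses \cite[Theorem~1.4]{Evseev2010m} repeatedly); the advantage of yours is that it is self-contained and makes the geometry of the Mackey pieces explicit. Your verification that $g'=yg\notin H$ is correct (writing $y=gtg^{-1}$ gives $g'=gt$, so $g'\in H$ would force $g\in HT$), as is the adjustment of $g$ within its double coset to arrange $L'\cap D\in\Syl_p(L')$.
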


\begin{proof}
By Clifford theory, $\tilde{F}$ is a well-defined bijection. Let $D\in \Syl_p (L)$. 
By~\cite[Theorem 1.4]{Evseev2010m}, the fact that $F$ is IRC-compliant implies that
\[
 \chi \equiv \Ind_L^T F(\chi) \dmod \Cal I(T,D,\Cal S(T,D,L)) \qquad \text{for all } \chi\in X_T.
\]
 Applying $\Ind_T^G$ to both sides of this congruence and using the definition of $\tilde{F}$, we obtain
\[
 \eta \equiv \Ind_H^G \tilde{F}(\eta) \dmod \Ind_T^G (\Cal I(T,P,\Cal S(T,D,L)))  \qquad \text{for all } \eta\in X_G.
\]
 Since $H=T\cap L$, it follows from the definitions that $\Cal S(T,D,L)\subset \Cal S(G,D,H)$ and hence that 
$\Ind_T^G (\Cal I(T,D,\Cal S(T,D,L))) \subset \Cal I(G,D,\Cal S(G,D,H))$. Therefore,
\[
 \eta \equiv \Ind_T^G \tilde{F}(\eta) \dmod \Cal I(G,D,\Cal S(G,D,H)) \qquad \text{for all } \eta \in X_G.
\]
 The lemma follows from this after another application of~\cite[Theorem~1.4]{Evseev2010m}.
\end{proof}

\begin{lem}\label{lem:normirc}
Let $N$ be a normal subgroup of a finite group $G$ and $\bar{\phantom{g}}\colon G\to G/N = \bar{G}$ be the projection map.
Let $H$ be a subgroup of $G$ containing $N$ such that $|G:H|$ is prime to $p$. 
 Suppose that $\phi\in\Irr(N)$ is a character that
extends to $\hat{\phi}\in \Irr(G)$, and write $\hat{\phi}_H = \Res^G_H \hat{\phi}$. 
Let $X_{\bar{G}}$ and $X_{\bar{H}}$ be subsets of $\pm \Irr(\bar{G})$ and $\pm\Irr(\bar{H})$ respectively that are closed under multiplication by $-1$,
and suppose that there is an IRC-compliant signed bijection $\bar{F}\colon X_{\bar{G}} \to X_{\bar{H}}$. 
Let $X_G = \{\hat{\phi}\Inf_{\bar G}^G \chi \mid \chi\in X_G \}$ and $X_H= \{\hat{\phi}_H \Inf_{\bar H}^H \th \mid \th \in X_H \}$. 
Then the bijection $F\colon X_G \to X_H$ defined by $F(\hat{\phi} \Inf_{\bar G}^G\chi) = \hat{\phi}_H \Inf_{\bar H}^H \bar{F}(\chi)$ for $\chi\in X_{\bar G}$
is also IRC-compliant. 
\end{lem}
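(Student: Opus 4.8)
The plan is to reduce the whole statement to the single assertion that the additive map $\Xi\mapsto\hat\phi_H\,\Inf_{\bar H}^H\Xi$ from $\cl C(\bar H)$ to $\cl C(H)$ carries $\cl I\bigl(\bar H,\bar D,\cl S(\bar G,\bar D,\bar H)\bigr)$ into $\cl I\bigl(H,D,\cl S(G,D,H)\bigr)$, where $D\in\Syl_p(H)$ is fixed and $\bar D=DN/N\in\Syl_p(\bar H)$. First I would record that $\hat\phi_H=\Res^G_H\hat\phi$ is an \emph{irreducible} extension of $\phi$: writing $\Res^G_H\hat\phi=\sum_i a_i\psi_i$ with $\psi_i\in\Irr(H)$, each $\Res^H_N\psi_i$ is a positive integer multiple $e_i\phi$ of the ($H$-invariant, irreducible) character $\phi$, so $\sum_i a_ie_i=1$ forces a single summand with $a_1=e_1=1$. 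By Gallagher's theorem (cf.~\cite[Corollary~6.17]{IsaacsBook}), $\chi\mapsto\hat\phi\,\Inf_{\bar G}^G\chi$ and $\theta\mapsto\hat\phi_H\,\Inf_{\bar H}^H\theta$ are bijections onto $\Irr(G|\phi)$ and $\Irr(H|\phi)$, whence $F$ is a well-defined signed bijection. Since $|G:H|$ is prime to $p$ we have $D\in\Syl_p(G)$, so $\Proj_D$ is the identity on $\cl C(H)$, and IRC-compliance of $F$ amounts to $F(\Xi)\equiv\Res^G_H\Xi\pmod{\cl I(H,D,\cl S(G,D,H))}$ for all $\Xi\in X_G$. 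For $\Xi=\hat\phi\,\Inf_{\bar G}^G\chi$ one checks directly that $\Res^G_H\Xi=\hat\phi_H\,\Inf_{\bar H}^H\Res^{\bar G}_{\bar H}\chi$, hence $F(\Xi)-\Res^G_H\Xi=\hat\phi_H\,\Inf_{\bar H}^H\bigl(\bar F(\chi)-\Res^{\bar G}_{\bar H}\chi\bigr)$; and $\bar F(\chi)-\Res^{\bar G}_{\bar H}\chi\in\cl I(\bar H,\bar D,\cl S(\bar G,\bar D,\bar H))$ because $\bar F$ is IRC-compliant and $|\bar G:\bar H|=|G:H|$ is prime to $p$. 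So the displayed assertion is exactly what remains.

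To prove it, take a generator $\Ind_{\bar L}^{\bar H}\bar\psi$ of $\cl I(\bar H,\bar D,\cl S(\bar G,\bar D,\bar H))$, with $\bar\psi\in\cl C(\bar L)$, $\bar L\cap\bar D\in\Syl_p(\bar L)$ and $\bar L\cap\bar D\in\cl S(\bar G,\bar D,\bar H)$, and let $L$ be the full preimage of $\bar L$ in $H$, so that $N\le L\le H$ and $L/N=\bar L$. Since inflation commutes with induction along these subgroups, $\Inf_{\bar H}^H\Ind_{\bar L}^{\bar H}\bar\psi=\Ind_L^H\Inf_{\bar L}^L\bar\psi$, and, by the projection formula, $\hat\phi_H\cdot\Ind_L^H(-)=\Ind_L^H\bigl((\Res^G_L\hat\phi)\cdot(-)\bigr)$, so $\hat\phi_H\,\Inf_{\bar H}^H\Ind_{\bar L}^{\bar H}\bar\psi=\Ind_L^H\eta$ with $\eta=(\Res^G_L\hat\phi)\cdot\Inf_{\bar L}^L\bar\psi\in\cl C(L)$. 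It remains to check the two conditions of Definition~\ref{def:I} for $L$ relative to $(H,D,\cl S(G,D,H))$. Here I will use that $D\cap N\in\Syl_p(N)$ (as $N\vtl H$), that $D$ is a Sylow $p$-subgroup of $DN$, and that Dedekind's modular law gives $L\cap DN=(L\cap D)N$, so that $\overline{L\cap D}=(L\cap D)N/N=\bar L\cap\bar D$. Condition~(i) then follows from the order count $|L:L\cap D|=|N:D\cap N|\cdot|\bar L:\bar L\cap\bar D|$, both factors being prime to $p$. For condition~(ii), pick $\bar g\in\bar G-\bar H$ with $\bar L\cap\bar D\le\ls{\bar g}{\bar D}\cap\bar D$ and lift it to $g\in G-H$ (possible since $H$ is the full preimage of $\bar H$); then the image of $\ls{g^{-1}}{(L\cap D)}$ in $\bar G$, namely $\ls{\bar g^{-1}}{(\bar L\cap\bar D)}$, is contained in $\bar D$, so $\ls{g^{-1}}{(L\cap D)}\le DN$; as this is a $p$-subgroup of $DN$ and $D\in\Syl_p(DN)$, Sylow's theorem gives $\ls{g^{-1}}{(L\cap D)}\le\ls{m}D$ for some $m\in DN\le H$, and hence $L\cap D\le\ls{gm}D\cap D$ with $gm\in G-H$, i.e.\ $L\cap D\in\cl S(G,D,H)$.

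Applying $\hat\phi_H\,\Inf_{\bar H}^H$ term by term to the expansion of $\bar F(\chi)-\Res^{\bar G}_{\bar H}\chi$ as an integer combination of such generators now yields $F(\Xi)-\Res^G_H\Xi\in\cl I(H,D,\cl S(G,D,H))$, which is the required congruence. The main obstacle is condition~(ii): one cannot simply take a preimage of $\ls{\bar g}{\bar D}\cap\bar D$, since such a preimage need not lie in any single $G$-conjugate of $D$, so the Sylow-conjugation adjustment (replacing $g$ by $gm$ with $m\in DN$) is what does the work; the remaining steps are routine bookkeeping with Clifford theory, inflation and the projection formula.
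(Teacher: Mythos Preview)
Your argument is correct and follows essentially the same route as the paper's own proof: reduce to analyzing a single generator $\Ind_{\bar L}^{\bar H}\bar\psi$, pass to the preimage $L$ of $\bar L$, rewrite $\hat\phi_H\,\Inf_{\bar H}^H\Ind_{\bar L}^{\bar H}\bar\psi$ as $\Ind_L^H\eta$ via the projection formula, and then verify the two conditions of Definition~\ref{def:I} for $L$ by the index computation and the Sylow-conjugation adjustment inside $DN$. Your explicit check that $\hat\phi_H$ is irreducible is a small addition (the paper simply appeals to \cite[Theorem~6.16]{IsaacsBook}), but otherwise the structure, the use of Dedekind's modular law, and the key step of replacing $g$ by $gm$ with $m\in DN$ all match.
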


\begin{proof}
First, we note that the map $\chi \mapsto \hat{\phi} \Inf_{\bar G}^G \chi$ is a bijection from $X_{\bar G}$ onto $X_G$ by~\cite[Theorem 6.16]{IsaacsBook}, 
and the same holds for $H$. So the map $F$ is well-defined and is a signed bijection. Let $D\in \Syl_p (H)$, so that $D\cap N \in \Syl_p (N)$. 
 Let $\g = \hat{\phi} \Inf_{\bar G}^G \chi\in X_G$, where $\chi\in X_{\bar G}$. Then 
\begin{equation}\label{eq:ni1}
 F(\g) - \Res^G_H \g = \hat{\phi}_H \cdot \Inf_{\bar H}^H (F(\chi) - \Res^{\bar G}_{\bar H} \chi),
\end{equation}
and $F(\chi)-\Res^{\bar G}_{\bar H} (\chi)$ is, by the hypothesis, an element of the $\mZ$-span of virtual characters of the form $\Ind_L^G \xi$ where $\xi\in \Cal \Irr(L)$ and 
 $L$ is a subgroup of $\bar H$ satisfying
 $L\cap \bar{D}\in \Syl_p(L)$ and $L\cap \bar{D} \le \ls{x}{\bar D}$ for some $x\in \bar G - \bar H$. 
In the remainder of the proof $L$ is allowed to run through the set $\cl L$ of subgroups of $\bar H$ with these properties, and we denote by $M$ the preimage 
of $L$ under the projection $\bar{\phantom{g}}$. 

From~\eqref{eq:ni1} we deduce that $F(\g)-\Res^{G}_{H} \g$ belongs to the $\mZ$-span of virtual characters of the form 
\begin{equation}\label{eq:ni2}
\hat{\phi}_H\cdot (\Inf_{\bar H}^{H} \Ind_L^{\bar H} \xi)  =  \Ind_{M}^H ( (\Res^G_M \hat{\phi}) \cdot (\Inf_{L}^M \xi) )
\end{equation}
where $L\in \cl L$, $\xi\in \Irr(L)$, and $M$ is as above. 
It suffices to show that in this situation 
 $M\cap D \in \Syl_p (M)$ and $M\cap D \le \ls{g}D$ for some $g\in G-H$; for then the image under the map 
$\Ind_M^H$ of any virtual character of $M$ belongs to $\Cal I(H,D,\Cal S(G,D,H))$, in particular, the right-hand side of~\eqref{eq:ni2} does. 

The condition that $L\cap \bar D\in \Syl_p (L)$ means that $p$ does not divide $|L:L\cap \bar D|=|M:M\cap DN|$. 
Also, since $N\le M$, 
\[
 |M\cap DN: M\cap D| = |(M\cap D)N : M\cap D| = |N: N\cap D|
\]
is prime to $p$. 
It follows that $|M:M\cap D|$ is prime to $p$, whence $M\cap D\in \Syl_p (M)$. 

Now the fact that $L\cap \bar D\le \ls{x}{\bar D}$ (with $x\in \bar G - \bar H$) implies that $M\cap D \le \ls{g}{(DN)}$, where $g$ is any element of $G$ such that $\bar{g}=x$.
That is, $\ls{g^{-1}}{(M\cap D)} \le DN$.
But since $D\in \Syl_p (DN)$, there exists $z\in DN$ such that $\ls{g^{-1}}{(M\cap D)} \le \ls{z}D$; moreover, we may assume that $z\in N$. Hence, $M\cap D \le \ls{gz}D$. And
$gz\notin H$ because $\bar{g} = x\notin \bar{H}$ and $z\in N$. Thus, $M$ satisfies all the required conditions. 
\end{proof}

\begin{lem}\label{lem:resI}
Let $H\le L$ be finite groups such that $|L:H|$ is prime to $p$. Let $D\in \Syl_p (H)$. Suppose that  
$\Cal S$ is a downward closed set of subgroups of $D$ such that $\Cal S\supset \Cal S(L,D,H)$.
 Then
\[
 \Res^{L}_H (\Cal I (L, D, \Cal S)) \subset \Cal I(H,D,\Cal S).
\]
\end{lem}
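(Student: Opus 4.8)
The plan is to reduce to the generators of $\Cal I(L,D,\Cal S)$ and to analyze the Mackey decomposition of the restriction of such a generator. Since $\Res^L_H$ is a homomorphism of abelian groups, it is enough to prove that $\Res^L_H(\Ind_K^L\phi)\in\Cal I(H,D,\Cal S)$ whenever $K\le L$ satisfies $K\cap D\in\Syl_p(K)$ and $K\cap D\in\Cal S$, and $\phi\in\Cal C(K)$. For this I would invoke the Mackey formula
\[
 \Res^L_H\Ind_K^L\phi=\sum_{g}\Ind_{H\cap\,\ls{g}{K}}^H\Res^{\ls{g}{K}}_{H\cap\,\ls{g}{K}}(\ls{g}{\phi}),
\]
the sum running over a set of representatives $g$ of the $(H,K)$-double cosets in $L$, and show that each summand is a generator of $\Cal I(H,D,\Cal S)$.

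Fix a double coset and write $M_g=H\cap\ls{g}{K}$. I would first replace the representative $g$ by $hg$ for a suitable $h\in H$ so that $M_g\cap D\in\Syl_p(M_g)$: this is possible because $D\in\Syl_p(H)$ and $\ls{h}{M_g}=M_{hg}$, which lies in the same double coset. After this choice, $M_g\cap D=\ls{g}{K}\cap D$, since $\ls{g}{K}\cap D\subseteq D\subseteq H$. Thus the summand has the form $\Ind_{M_g}^H\psi_g$ with $\psi_g\in\Cal C(M_g)$ and $M_g\cap D\in\Syl_p(M_g)$, so it remains only to check that $M_g\cap D\in\Cal S$.

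Here I would distinguish two cases. If the double coset is $HK$, take $g=1$; then $M_1\cap D=K\cap D\in\Cal S$ by hypothesis (and $K\cap D$ is indeed Sylow in $M_1=H\cap K$, being a $p$-subgroup of $M_1\le K$ of order $|K|_p$). If the double coset $HgK$ differs from $HK$, then $g\notin HK$, and in particular $g\notin H$. Since $\ls{g}{K}\cap D$ is a $p$-subgroup of $\ls{g}{K}$ while $\ls{g}{(K\cap D)}\in\Syl_p(\ls{g}{K})$, Sylow's theorem inside $\ls{g}{K}$ yields an element $k_0\in K$ with $\ls{g}{K}\cap D\le\ls{gk_0}{(K\cap D)}\le\ls{gk_0}{D}$; combined with $\ls{g}{K}\cap D\le D$ this gives $M_g\cap D\le D\cap\ls{gk_0}{D}$. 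Since $gk_0\in H$ would force $g\in HK$, we have $gk_0\in L-H$, whence $M_g\cap D\in\Cal S(L,D,H)\subseteq\Cal S$. In either case $\Ind_{M_g}^H\psi_g$ is a generator of $\Cal I(H,D,\Cal S)$, and summing over the double cosets yields $\Res^L_H\Ind_K^L\phi\in\Cal I(H,D,\Cal S)$, as required.

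The step I expect to be the main obstacle is the fusion bookkeeping in the second case: exhibiting an explicit element of $L-H$ (namely $gk_0$) that conjugates a Sylow $p$-subgroup of $\ls{g}{K}$ onto a subgroup of $D$ containing $M_g\cap D$, so as to certify $M_g\cap D\in\Cal S(L,D,H)$. The rest — the reduction to generators, the Mackey formula, and the choice of double coset representatives making $M_g\cap D$ Sylow in $M_g$ — is routine.
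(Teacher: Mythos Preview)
Your proof is correct and follows essentially the same route as the paper: reduce to generators, apply the Mackey formula, adjust each double coset representative by an element of $H$ so that $M_g\cap D\in\Syl_p(M_g)$, treat the identity coset directly, and for the remaining cosets use Sylow's theorem inside $\ls{g}{K}$ to produce an element $gk_0\in L-H$ witnessing $M_g\cap D\in\Cal S(L,D,H)\subseteq\Cal S$. The only cosmetic difference is that the paper first names a Sylow $p$-subgroup $E$ of $\ls{g}{K}\cap H$ and then conjugates, whereas you fold this into the choice of representative; the fusion step is identical.
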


\begin{proof} Let $A$ be a subgroup of $L$ such that $A\cap D\in \Syl_p (A)$ and $A\cap D\in \Cal S$. By Definition~\ref{def:I}, it suffices to show that
$\Res^L_H \Ind^L_A \xi\in \Cal I(H,D,\Cal S)$ for all such $A$ and all $\xi\in \Cal C(A)$. By the Mackey formula,
\[
 \Res^L_H \Ind_A^L \xi = \Ind_{A\cap H}^{H} \Res^A_{A\cap H} \xi + \sum_{g\in T} \Ind_{\lsb{g}{A}\cap H}^H \Res^{\lsb{g}{A}}_{\lsb{g}{A}\cap H} \ls{g}{\xi}
\]
where $T$ is a set of representatives of double $H$-$A$-cosets in $L-HA$. We will show that each summand on the right-hand side belongs to $\Cal I(H,D,\Cal S)$. 
We have $(A\cap H) \cap D=A\cap D\in \Syl_p (A\cap H) \cap \cl S$, so $\Ind_{A\cap H}^H \Res^A_{A\cap H} \xi \in \Cal I(H, D, \Cal S)$.

Now let $g\in T$. Let $E\in \Syl_p (\lsb{g}{A}\cap H)$. Then $\ls{h}{E} \le D$ for some $h\in H$ and, replacing $g$ with $hg$ (and $E$ with $\ls{h}E$), 
we may assume that $E\le D$. 
Also, $E\le \lsb{g}{A}$, whence $\ls{g^{-1}}E \le A$. Since $D$ contains a Sylow $p$-subgroup of $A$, 
we have 
$E\le \ls{ga}{D}$ for some $a\in A$ (by the Sylow theorems). Since $ga\notin H$, we have $E\in \Cal S(L,D,H)\subset \Cal S$, so
\[
 \Ind_{\lsb{g}{A}\cap H}^H \Res^{\lsb{g}A}_{\lsb{g}A \cap H} \ls{g}\xi \in \Cal I(H,D,\Cal S). \qedhere
\]  
\end{proof}

\begin{lem}\label{lem:comp}
 Let $H\le L\le G$ be finite groups such that $|G:H|$ is prime to $p$. Suppose that \emph{(IRC-Syl)} holds for the pairs
$(G,L)$ and $(L,H)$. Then \emph{(IRC-Syl)} holds for $(G,H)$.
\end{lem}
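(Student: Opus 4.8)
The plan is to compose IRC-compliant signed bijections, using the transitivity of the congruence relation modulo the $\cl I$-subgroups together with Lemma~\ref{lem:resI} to control the ambient ideals. Let $D\in\Syl_p(H)$; since $|G:H|$ is prime to $p$ we have $D\in\Syl_p(L)$ and $D\in\Syl_p(G)$ as well. By (IRC-Syl) for $(G,L)$ there is an IRC-compliant signed bijection $F_1\colon\pm\Irr_{p'}(G)\to\pm\Irr_{p'}(L)$, and by (IRC-Syl) for $(L,H)$ an IRC-compliant signed bijection $F_2\colon\pm\Irr_{p'}(L)\to\pm\Irr_{p'}(H)$. I would set $F=F_2\circ F_1$, which is visibly a signed bijection from $\pm\Irr_{p'}(G)$ onto $\pm\Irr_{p'}(H)$; the work is to verify that $F$ is IRC-compliant, i.e.\ that
\[
 F(\chi)\equiv\Res^G_H\chi\pmod{\Cal I(H,D,\Cal S(G,D,H))}\qquad\text{for all }\chi\in\pm\Irr_{p'}(G).
\]

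First I would record the congruences coming from the hypotheses: for $\chi\in\pm\Irr_{p'}(G)$, $F_1(\chi)\equiv\Res^G_L\chi\pmod{\Cal I(L,D,\Cal S(G,D,L))}$, and for any $\th\in\pm\Irr_{p'}(L)$, $F_2(\th)\equiv\Res^L_H\th\pmod{\Cal I(H,D,\Cal S(L,D,H))}$. Applying $\Res^L_H$ to the first congruence and invoking Lemma~\ref{lem:resI} — which applies since $\cl S(L,D,H)$ is downward closed and, directly from the definition of $\cl S(\cdot,\cdot,\cdot)$, $\cl S(L,D,H)\subset\cl S(G,D,H)$ — gives
\[
 \Res^G_H\chi=\Res^L_H\Res^G_L\chi\equiv\Res^L_H F_1(\chi)\pmod{\Cal I(H,D,\Cal S(G,D,H))},
\]
where I also use that $\Res^L_H$ carries $\Cal I(L,D,\Cal S(G,D,L))$ into $\Cal I(L,D,\Cal S(G,D,H))$ and then into $\Cal I(H,D,\Cal S(G,D,H))$ by Lemma~\ref{lem:resI} (here one needs $\Cal S(L,D,H)\subset\Cal S(G,D,H)$ for the hypothesis of that lemma, and $\Cal S(G,D,L)\subset\Cal S(G,D,H)$ which again is immediate from the definition since $H\le L$). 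On the other hand, the congruence for $F_2$ applied with $\th=F_1(\chi)$ reads $F(\chi)=F_2(F_1(\chi))\equiv\Res^L_H F_1(\chi)\pmod{\Cal I(H,D,\Cal S(L,D,H))}$, and $\Cal I(H,D,\Cal S(L,D,H))\subset\Cal I(H,D,\Cal S(G,D,H))$ because $\Cal S(L,D,H)\subset\Cal S(G,D,H)$ and $\Cal I(H,D,-)$ is monotone in its set argument (Definition~\ref{def:I}). Combining the two displayed congruences modulo the common ideal $\Cal I(H,D,\Cal S(G,D,H))$ yields $F(\chi)\equiv\Res^G_H\chi$ modulo that ideal, as required.

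The only genuinely delicate points are the set-theoretic inclusions among the $\cl S$'s and the application of Lemma~\ref{lem:resI}: one must check that $\cl S(G,D,L)$ and $\cl S(L,D,H)$ are both contained in $\cl S(G,D,H)$, that these sets are downward closed, and that the index hypotheses of Lemma~\ref{lem:resI} ($|G:H|$, hence $|L:H|$ and $|G:L|$, prime to $p$) are met — all of which are routine from the definitions in \S\ref{sub:11}. I expect no real obstacle; the main care is bookkeeping to ensure every congruence is reduced modulo the single ideal $\Cal I(H,D,\Cal S(G,D,H))$ before they are added. This completes the proof.
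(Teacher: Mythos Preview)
Your proof is correct and follows essentially the same approach as the paper: compose $F=F_2\circ F_1$, split $F(\chi)-\Res^G_H\chi$ into the piece $F_2(F_1(\chi))-\Res^L_H F_1(\chi)$ (handled via $\cl S(L,D,H)\subset\cl S(G,D,H)$) and the piece $\Res^L_H(F_1(\chi)-\Res^G_L\chi)$ (handled via $\cl S(G,D,L)\subset\cl S(G,D,H)$ and Lemma~\ref{lem:resI}). The bookkeeping and the verification of the hypotheses of Lemma~\ref{lem:resI} are exactly as in the paper.
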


\begin{proof}
Let $D\in \Syl_p (H)$. 
Let $F_1\colon \pm \Irr_{p'}(G)\to \pm \Irr_{p'}(L)$ and $F_2\colon \pm \Irr_{p'}(L)\to \pm \Irr_{p'}(H)$ be IRC-compliant signed bijections. 
We will show that $F=F_2 F_1$ is also IRC-compliant. Let $\chi\in \pm \Irr_{p'} (G)$. Then
\[
 F(\chi) - \Res^G_H \chi = (F_2(F_1(\chi))-\Res^L_H(F_1(\chi)) ) + \Res^L_H (F_1(\chi)-\Res^G_L (\chi)).
\]
Since $F_2$ is IRC-compliant, we have 
\[
F_2 (F_1 (\chi)) - \Res^L_H (F_1(\chi)) \in \Cal I(H,D,\Cal S(L,D,H)) \subset \Cal I(H,D,\cl S(G,D,H)).
\]
(The containment is clear 
from the definitions.) Since $F_1$ is IRC-compliant,  we have
\[
\begin{split}
 \Res^L_H (F_1(\chi) - \Res^G_L (\chi)) \in \Res^L_H (\Cal I(L,D,\Cal S(G,D,L) ) ) & \subset \Res^L_H (\cl I(L,D, \cl S(G,D,H) )\\ 
& \subset \Cal I(H,D,\Cal S(G,D,H) ),
\end{split}
\]
where the first containment holds because $\cl S(G,D,L) \subset \cl S(G,D,H)$ and the second one is due to Lemma~\ref{lem:resI}.
Hence, $F$ is indeed IRC-compliant. 
\end{proof}

\begin{lem}\label{lem:ircprod}
 Let $G_1,\ldots, G_n$ be finite groups. Suppose that for each $i\in [1,n]$:
\begin{enumerate}[(i)]
 \item $D_i \in \Syl_p(G_i)$ and $H_i$ is a subgroup of $G_i$ containing $N_{G_i} (D_i)$;
 \item there are subsets $X_i$ and $Y_i$ of $\Irr(G_i)$ and $\Irr(H_i)$ respectively and an IRC-compliant signed bijection 
$F_i \colon \pm X_i \to \pm Y_i$.
\end{enumerate}
Then the signed bijection $F\colon \pm \prod_i X_i \to \pm \prod_i Y_i$ defined by
\[
 F(\chi_1 \times \cdots \times \chi_n) = F(\chi_1) \times \cdots \times F(\chi_n), \quad \chi_1,\ldots, \chi_n\in \pm \Irr(X_i),
\]
is IRC-compliant.
\end{lem}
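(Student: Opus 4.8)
IRC-compliance is preserved under direct products.

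\medskip

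The plan is to reduce to the case $n=2$ by an obvious induction (the hypotheses and conclusion are stable under grouping $G_1 \times \cdots \times G_{n-1}$ together, using that a Sylow $p$-subgroup of a direct product is the product of Sylow $p$-subgroups and that $N_{G_1 \times G_2}(D_1 \times D_2) = N_{G_1}(D_1) \times N_{G_2}(D_2)$). So assume $n = 2$, write $G = G_1 \times G_2$, $H = H_1 \times H_2$, $D = D_1 \times D_2 \in \Syl_p(H)$. Since each $|G_i : H_i|$ is prime to $p$, so is $|G:H|$, and $\Proj_{D}$ on $\CF(H;K)$ is the identity on the relevant characters; thus what must be shown is that $F$ is signed (immediate from the definition and the fact that each $F_i$ is signed) and that
\[
F(\chi_1 \times \chi_2) \equiv \Res^{G}_{H}(\chi_1 \times \chi_2) \pmod{\Cal I(H, D, \Cal S(G,D,H))}
\]
for all $\chi_1 \in \pm X_1$, $\chi_2 \in \pm X_2$.

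\medskip

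The key step is a telescoping argument combined with an analysis of $\Cal S(G,D,H)$. First I would write the difference as a telescoping sum:
\[
F_1(\chi_1) \times F_2(\chi_2) - \Res^{G_1}_{H_1}\chi_1 \times \Res^{G_2}_{H_2}\chi_2 = \big(F_1(\chi_1) - \Res^{G_1}_{H_1}\chi_1\big) \times F_2(\chi_2) + \Res^{G_1}_{H_1}\chi_1 \times \big(F_2(\chi_2) - \Res^{G_2}_{H_2}\chi_2\big).
\]
By hypothesis $F_i(\chi_i) - \Res^{G_i}_{H_i}\chi_i \in \Cal I(H_i, D_i, \Cal S(G_i,D_i,H_i))$, so each of the two summands above is an integer combination of virtual characters of the form $(\Ind_{A}^{H_1}\phi) \times \psi$ (or $\psi \times \Ind_A^{H_2}\phi$) where $A \le H_1$ satisfies $A \cap D_1 \in \Syl_p(A)$ and $A \cap D_1 \in \Cal S(G_1, D_1, H_1)$, and $\psi \in \cl C(H_2)$. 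Using $(\Ind_A^{H_1}\phi)\times\psi = \Ind_{A \times H_2}^{H}(\phi \times \psi)$, it suffices to check that $A \times H_2 \le H$ satisfies conditions (i) and (ii) of Definition~\ref{def:I} relative to $(H, D, \Cal S(G,D,H))$: namely $(A \times H_2) \cap D = (A \cap D_1) \times D_2 \in \Syl_p(A \times H_2)$, which is clear, and $(A \cap D_1) \times D_2 \in \Cal S(G, D, H)$.

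\medskip

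So the heart of the matter — and the step I expect to require the most care — is the inclusion $\Cal S(G_1, D_1, H_1) \times \{D_2\} \subset \Cal S(G, D, H)$ (together with the downward-closedness of $\Cal S(G,D,H)$, so that subgroups of $(A\cap D_1)\times D_2$ are also in it). This follows because if $A \cap D_1 \le \ls{g_1}{D_1} \cap D_1$ for some $g_1 \in G_1 - H_1$, then $(A\cap D_1)\times D_2 \le \ls{(g_1,1)}{D} \cap D$ with $(g_1,1) \in G - H$ (since the first coordinate already fails to lie in $H_1$). By symmetry the analogous statement holds for the second summand. Therefore both telescoping summands lie in $\Cal I(H, D, \Cal S(G,D,H))$, which gives the required congruence and completes the $n=2$ case; the general case follows by the induction set up at the start. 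I would phrase the whole argument compactly, citing Definition~\ref{def:I} and the elementary group theory above, without belaboring the Sylow-theoretic verifications.
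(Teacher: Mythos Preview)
Your proof is correct but takes a genuinely different route from the paper's. The paper reduces to $n=2$ as you do, but then argues via composition: it first shows the special case where one factor is trivial (say $H_2=G_2$) by invoking Lemma~\ref{lem:normirc} with $N$ equal to one of the direct factors, obtaining an IRC-compliant bijection $\chi_1\times\chi_2\mapsto \chi_1\times F_2(\chi_2)$ from $\pm(X_1\times X_2)$ to $\pm(X_1\times Y_2)$; then it does the same for the other factor and composes, citing the proof of Lemma~\ref{lem:comp} to conclude that the composite is IRC-compliant. Your telescoping argument bypasses both of those lemmas and verifies membership in $\Cal I(H,D,\Cal S(G,D,H))$ directly from Definition~\ref{def:I}, via the elementary observation that $\Cal S(G_i,D_i,H_i)\times\{D_{3-i}\}\subset \Cal S(G,D,H)$. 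This is more self-contained and arguably cleaner; the paper's approach has the (minor) advantage of recycling machinery already in place and making the two-step structure explicit.
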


\begin{proof}
Without loss of generality, $n=2$. 
First, observe that the lemma holds in the special case when $H_2 = G_2$:
this can be seen by 
applying Lemma~\ref{lem:normirc} with $N= G_1$ and $\phi$ running through $X_1$. 
Thus, the map defined by $\chi_1 \times \chi_2\mapsto \chi_1\times F(\chi_2)$ is an IRC-compliant signed bijection from $\pm (X_1 \times X_2)$ onto $\pm (X_1\times Y_2)$. 
We have a similar bijection from $\pm (X_1\times Y_2)$ onto $\pm (Y_1\times Y_2)$. The composition of these two maps is clearly equal to $F$ and is IRC-compliant by the proof of
Lemma~\ref{lem:comp}. 
\end{proof}

\subsection{Proof of Theorem~\ref{thm:symirc}}\label{sub:proofirc}

Recall that $Q\in \Syl_p (S_w)$ and $P=C_p\wr Q$.
The following result will be used in the proof of Theorem~\ref{thm:symirc} 
and relies on the inductive hypothesis that will be available in that proof. 

\begin{prop}\label{prop:cliff}
Assume that the statement of Theorem~\ref{thm:symirc} is true for all blocks of all groups $S_m$ such that $m\le w$. 
Then (IRC-Syl) holds for the pair $(N_{S_p}(C_p)\wr S_w, N_{S_p}(C_p)\wr N_{S_w}(Q) )$.
\end{prop}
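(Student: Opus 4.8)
The plan is to reduce the statement for $N_{S_p}(C_p)\wr S_w$ to known facts about $S_p$, about $S_w$, and about wreath products, by decomposing characters according to how the base group $N:=(N_{S_p}(C_p))^{\times w}$ acts. Write $G=N_{S_p}(C_p)\wr S_w$ and $H=N_{S_p}(C_p)\wr N_{S_w}(Q)$, and note that $N\le H\le G$ with $N\vtl G$, and $|G:H|=|S_w:N_{S_w}(Q)|$ is prime to $p$. Since $N_{S_p}(C_p)$ has a normal Sylow $p$-subgroup $C_p$ with cyclic quotient, every $\phi\in\Irr(N_{S_p}(C_p))$ is determined by its restriction to $C_p$, and in particular $\Irr_{p'}(N_{S_p}(C_p))$ consists precisely of the characters inflated from $N_{S_p}(C_p)/C_p$ (the linear characters trivial on $C_p$); these all extend to $G$ and to $H$ trivially-on-the-$p$-part. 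The characters $\psi\in\Irr_{p'}(N)$ are then exactly the tuples $\phi_1^{\times u_1}\times\cdots\times\phi_t^{\times u_t}$ with $\phi_i\in\Irr_{p'}(N_{S_p}(C_p))$ distinct and $\sum_i u_i=w$.

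First I would set up the Clifford-theoretic reduction. Fix such a $\psi\in\Irr_{p'}(N)$; its inertia group in $G$ is $T=\prod_i (N_{S_p}(C_p)\wr S_{u_i})$ and in $H$ it is $L=T\cap H=\prod_i (N_{S_p}(C_p)\wr N_{S_{u_i}}(Q_i))$ for suitable Sylow subgroups $Q_i\le Q$ of the $S_{u_i}$. Because each $\phi_i$ extends to its full wreath factor (being inflated from a linear character of the abelian quotient), $\Irr(T\mid\psi)$ is in natural bijection with $\Irr(\prod_i S_{u_i})=\prod_i\Irr(S_{u_i})$ via $\chi\mapsto \psi$-part $\otimes\,\Inf(\chi)$, and similarly $\Irr(L\mid\psi)\leftrightarrow\prod_i\Irr(N_{S_{u_i}}(Q_i))$. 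Moreover a character in $\Irr(T\mid\psi)$ has degree prime to $p$ iff the corresponding character of $\prod_i S_{u_i}$ does, and likewise for $L$ (since $|N_{S_{u_i}}(Q_i)|$ contributes only $p'$-index). So, using Lemma~\ref{lem:normirc} (to push the $\hat\psi$-twist through) and Lemma~\ref{lem:ircprod} (to handle the direct product over $i$), it suffices to produce, for each $u\le w$, an IRC-compliant signed bijection $\pm\Irr_{p'}(S_u)\to\pm\Irr_{p'}(N_{S_u}(Q_u))$, and then to assemble the pieces over all $\psi$ via Clifford induction, i.e.\ Lemma~\ref{lem:irccliff}.

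The input for the last step is exactly the inductive hypothesis: by assumption Theorem~\ref{thm:symirc} holds for every block of every $S_u$ with $u\le w$, which is equivalent to (IRC-Bl) for all quadruples $(S_u,b,D,N_{S_u}(D))$ with $D$ a defect group of $b$; taking $b$ to range over the blocks of maximal defect (defect group $Q_u\in\Syl_p(S_u)$) and invoking the equivalence between (IRC-Bl) for all such $b$ and (IRC-Syl) for $(S_u,N_{S_u}(Q_u))$ recalled just before Lemma~\ref{lem:irccliff}, we obtain precisely the desired IRC-compliant signed bijection $\pm\Irr_{p'}(S_u)\to\pm\Irr_{p'}(N_{S_u}(Q_u))$ for each $u\le w$. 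Here one must check the harmless point that the chosen $Q_u$ inside $Q$ are genuine Sylow $p$-subgroups of the $S_u$ and that $N_{S_u}(Q_u)$ sits inside $H$ as the $i$-th factor of $L$; this is the content of the standard description of $N_{S_w}(Q)$ as a direct product of normalisers corresponding to the "levels" of $Q$, combined with the Young-subgroup decomposition attached to $\psi$.

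The main obstacle is bookkeeping rather than conceptual: one has to verify carefully that the IRC-compliance property survives all three manipulations simultaneously — the $\hat\psi$-twist (Lemma~\ref{lem:normirc}), the direct product (Lemma~\ref{lem:ircprod}), and the Clifford induction (Lemma~\ref{lem:irccliff}) — and in particular that the various subgroup-sets $\cl S(\,\cdot\,,\cdot\,,\cdot\,)$ match up, i.e.\ that $\cl S(T,D,L)\subset\cl S(G,D,H)$ for $D\in\Syl_p(L)$, which is the hypothesis needed to invoke Lemma~\ref{lem:irccliff}. This containment follows from $H=T\cap L$ (really $L=T\cap H$) together with the definition of $\cl S$, exactly as in the proof of Lemma~\ref{lem:irccliff} itself; once it is in place the three lemmas chain together and, combined with the inductive hypothesis, yield (IRC-Syl) for $(G,H)$.
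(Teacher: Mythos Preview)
Your approach is essentially the paper's: Clifford reduction over the base group $N=(N_{S_p}(C_p))^{\times w}$, then Lemmas~\ref{lem:normirc}, \ref{lem:ircprod}, \ref{lem:irccliff} together with the inductive hypothesis for each $S_{u_i}$. One factual slip to fix: it is not true that $\Irr_{p'}(N_{S_p}(C_p))$ consists only of the linear characters inflated from $N_{S_p}(C_p)/C_p$. Since $N_{S_p}(C_p)\cong C_p\rtimes C_{p-1}$, its irreducible characters are the $p-1$ linear ones together with one faithful character of degree $p-1$, and \emph{all} of them have $p'$-degree. This does not damage your argument, because the extension of $\phi_i^{\times u_i}$ to $N_{S_p}(C_p)\wr S_{u_i}$ that you need is supplied for every irreducible $\phi_i$ by the standard wreath-product construction $\phi_i^{\witi{\times}u_i}$, not by linearity; just replace your parenthetical justification accordingly. (The related claim that ``every $\phi\in\Irr(N_{S_p}(C_p))$ is determined by its restriction to $C_p$'' is likewise false and should be dropped.) The paper also makes explicit the point, implicit in your write-up, that one may discard those $\psi$ with $p\mid |G:G_\psi|$ at the outset, so that after conjugating one has $Q\le\Stab_{S_w}(\psi)$ and hence $Q=\prod_i Q_i$ with $Q_i\in\Syl_p(S_{u_i})$.
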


\begin{proof}
Let $G=N_{S_p}(C_p)\wr S_w$.
For a fixed character $\phi\in \Irr(N_{S_p}(C_p)^{\times w})$, consider the set $\Irr_{p'}(G|\phi) = \Irr_{p'} (G) \cap \Irr(G|\phi)$.
Let $T=\Stab_{S_w}(\phi)$. 
If $|S_w:T|$ is divisible by $p$, then $\Irr_{p'}(G|\phi)=\varnothing$. 
So we assume that $|S_w:T|$ is prime to $p$ and hence that $Q\le T$ (possibly, after replacing $\phi$ with a 
$G$-conjugate). 
Without loss of generality, 
\[
 \phi = \phi_1^{\times w_1} \times \cdots \times \phi_r^{\times w_r}
\]
where $\phi_1,\ldots,\phi_r\in \Irr(S_p)$ are distinct. If $p$ divides the degree of some $\phi_i$, then $\Irr_{p'}(G|\phi)=\varnothing$, 
so we assume that $\phi_1,\ldots, \phi_r\in \Irr_{p'}(S_p)$. (We denote by $\cl A$ a set of representatives of $G$-orbits on the set of 
characters $\phi$ satisfying all of the above assumptions.)

We have $T=S_{w_1}\times \cdots \times S_{w_r}$
and $Q = Q_1 \times \cdots \times Q_r$ where $Q_l=Q\cap S_{w_i}$ for all $i$ (so that $Q_i\in \Syl_p (S_{w_i})$). 
For each $i\in [1,r]$, by the hypothesis applied to $S_{w_i}$, there 
exists an IRC-compliant signed bijection $\bar{F}_i\colon \pm \Irr_{p'}(S_{w_i}) \to \Irr_{p'}(N_{S_{w_i}}(Q_i) )$. 
 Since $\phi_i^{\times w_i}$ extends to the character $\phi^{\witi{\times} w_i}$  of $N_{S_p}(C_p)\wr S_{w_i}$, Lemma~\ref{lem:normirc} yields an IRC-compliant signed bijection 
$F_i \colon \pm \Irr_{p'} (N_{S_p}(C_p) \wr S_{w_i} \mid \phi_i^{\times w_i} ) \to \pm \Irr_{p'} (N_{S_p}(C_p)\wr N_{S_{w_i}} (Q_i) \mid \phi_i^{\times w_i})$.
 Hence, due to Lemma~\ref{lem:ircprod}, there is an IRC-compliant signed bijection $F\colon \pm \Irr_{p'}(N_{S_p}(C_p) \wr T |\phi)\to \pm \Irr_{p'} (N_{S_p}(C_p) \wr N_T(Q))$.
Finally, by Lemma~\ref{lem:irccliff}, there exists an IRC-compliant signed bijection 
$\tilde{F}\colon \pm \Irr_{p'}(N_{S_p}(C_p) \wr S_w \mid \phi) \to \pm \Irr_{p'}(N_{S_p}(C_p) \wr N_{S_w} (Q) \mid \phi)$. 
Combining such bijections for representatives of all $\phi\in \cl A$, we obtain the result. 
\end{proof}


\begin{lem}\label{lem:resind}
Let $G$ be a finite group. Let $L\le G$ and $N$ be a normal subgroup of $G$.
 Let $\phi\in \Irr(N)$ and $\psi\in \Irr(L\cap N)$. Suppose that $G=LN$ and that the inertia subgroups $L_{\psi}$ and
$G_{\phi}$ satisfy $G_{\phi}=L_{\psi} N$. 
\begin{enumerate}[(i)]
 \item If $\Res^N_{L\cap N} \phi = \psi$, then the map $\Res^G_L$ restricts to a bijection from $\Irr(G|\phi)$ onto $\Irr(L|\psi)$.
\item If $\Ind_{L\cap N}^N \psi = \phi$, then the map $\Ind_L^G$ restricts to a bijection from $\Irr(L|\psi)$ onto $\Irr(G|\phi)$.
\end{enumerate}
\end{lem}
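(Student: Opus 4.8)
The plan is to recognise this as an instance of Clifford theory relative to the normal subgroup $N$, combined with an application of Clifford theory for $L$ relative to $L\cap N$. First I would set up the two Clifford correspondences: for part~(i), the map $\Res^G_N$ identifies $\Irr(G|\phi)$ with the set of irreducible constituents of $\Res^G_N\phi$ lying in a single $G$-orbit; since $G=LN$ and $\Res^N_{L\cap N}\phi=\psi$, this orbit is exactly the $G$-orbit of $\psi$. Similarly $\Irr(L|\psi)$ corresponds to the set of constituents of $\Res^L_{L\cap N}\psi$ in the $L$-orbit of $\psi$. So both sides will be parametrised by $\Irr(\text{inertia group}\mid\text{an extension/constituent})$, and the hypothesis $G_\phi=L_\psi N$ is precisely what is needed to make those two parametrisations match up compatibly with restriction. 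Then I would check that $\Res^G_L$ actually realises this bijection, not merely that the two sets have the same size.

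The key steps, in order, would be: (1) By Clifford's theorem applied to $L\trianglelefteq$ — rather, to $L\cap N\trianglelefteq L$ — and to $N\trianglelefteq G$, reduce to the inertia groups; write $T_0=L_\psi$ and $T=G_\phi$, so by hypothesis $T=T_0 N$ and $T\cap L=T_0(N\cap L)=T_0$ (using $T_0\ge L\cap N$, which holds since $\psi$ is fixed by $L\cap N$). (2) Show $G=LN$ together with $T=T_0N$ forces $|G:T|=|L:T_0|$, so the orbit-counting matches. (3) For $\chi\in\Irr(G|\phi)$: since $\Res^G_N\chi$ is a sum of $G$-conjugates of $\psi$ and $G=LN$ with $L\cap N$ acting trivially on $\psi$, the $N$-constituents of $\Res^G_L\chi$ all lie over $\psi$, so $\Res^G_L\chi\in\mathbb Z_{\ge0}[\Irr(L|\psi)]$; a dimension/degree count via $|G:T|=|L:T_0|$ shows it is a single irreducible character. (4) Injectivity: if $\Res^G_L\chi_1=\Res^G_L\chi_2$ then these common restrictions already contain $\phi$ (as $\phi\in\Irr(L)$ lies over $\psi$ — here I would use $\Res^N_{L\cap N}\phi=\psi$ to see $\phi$ is the Clifford correspondent datum), forcing $\chi_1=\chi_2$ by the Clifford correspondence for $N\trianglelefteq G$. (5) Surjectivity follows by the matching cardinalities from step~(2), or directly: given $\theta\in\Irr(L|\psi)$, its $N$-induced-then-restricted behaviour produces a unique $\chi\in\Irr(G|\phi)$ with $\Res^G_L\chi=\theta$. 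Part~(ii) is then obtained from part~(i) by Frobenius reciprocity / adjunction: the hypothesis $\Ind_{L\cap N}^N\psi=\phi$ is the ``dual'' situation, and $\Ind_L^G$ is the two-sided inverse to the restriction map of part~(i) on the relevant subspaces, so one can either repeat the argument with induction in place of restriction or deduce it formally.

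The main obstacle I expect is step~(3)–(5), namely verifying that $\Res^G_L$ sends an \emph{irreducible} character of $G$ over $\phi$ to an \emph{irreducible} character of $L$ over $\psi$ (and not a proper sum), and conversely. The subtlety is that $\Res^G_L$ of an irreducible need not be irreducible in general; what saves us is the hypothesis $G_\phi=L_\psi N$, which guarantees that the ``twisting'' obstructions (the relevant cohomology classes, or more concretely the multiplicities in Clifford theory) on the $G$-side and the $L$-side agree. I would handle this by passing to the inertia groups and using that $\Irr(T_0|\psi)\to\Irr(T|\phi)$, $\eta\mapsto$ (the character corresponding to $\eta$ under $T=T_0N$, $T\cap N=L\cap N$), is a bijection compatible with $\Res^T_{T_0}$ — essentially the statement that $T_0\hookrightarrow T$ with $T=T_0N$, $T_0\cap N=L\cap N$ induces an equivalence between characters of $T$ over $\phi$ and characters of $T_0$ over $\psi$ — and then transporting this back along the two Clifford correspondences. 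Making that ``inertia-group comparison'' precise, probably via the standard ``going up and down'' lemmas for Clifford theory (e.g.\ the results in Chapter~6 of Isaacs' book on character correspondences), is where the real work lies; everything else is bookkeeping with degrees and orbit sizes.
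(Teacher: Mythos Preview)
First, a point about the statement itself: despite what is written, $\phi$ must lie in $\Irr(N)$, not $\Irr(L)$ (otherwise neither $\Res^N_{L\cap N}\phi$ nor the inertia group $G_\phi$ nor the set $\Irr(G|\phi)$ makes sense). Your write-up slides between the two readings, and step~(4) in particular (``$\phi\in\Irr(L)$ lies over $\psi$\ldots forcing $\chi_1=\chi_2$ by Clifford for $N\trianglelefteq G$'') does not parse once this is corrected.

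For part~(i), your overall architecture---reduce to the inertia groups, then compare $\Irr(T|\phi)$ with $\Irr(T_0|\psi)$---matches the paper's. But your step~(3) contains a real gap: a degree count using $|G:T|=|L:T_0|$ does \emph{not} show that $\Res^G_L\chi$ is a single irreducible rather than a sum of several characters over~$\psi$. After reducing to $G=G_\phi$, one has $\Res^G_N\chi=e\phi$ and hence $\Res^G_{L\cap N}\chi=e\psi$; this constrains only the total multiplicity, not the number of constituents. You correctly flag that the ``twisting obstructions'' must agree, but you do not prove it. The paper handles this by an explicit twisted-group-algebra argument (following Dade): with $A=C_{KG}(KN)e_\phi$ and $B=C_{KL}(K(L\cap N))e_\psi$, both are twisted group algebras for $G/N\cong L/(L\cap N)$, and the map $u\mapsto ue_\phi$ is shown to be an isomorphism $B\to A$; this identifies the primitive central idempotents and hence gives, for each $\chi\in\Irr(G|\phi)$, a \emph{unique} $\theta\in\Irr(L|\psi)$ with $\Res^G_L\chi=m_\chi\theta$. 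Only then does a counting argument (comparing $\sum n_\chi^2=|G:N|=|L:L\cap N|=\sum n_\theta^2$) force all $m_\chi=1$. Your proposal is missing precisely this isomorphism (or an equivalent cocycle comparison).

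For part~(ii), your suggestion that it follows from~(i) ``by adjunction'' is not justified and, as stated, is wrong: the hypotheses are genuinely different (in~(i) one has $\phi(1)=\psi(1)$, in~(ii) one has $\phi(1)=[N:L\cap N]\,\psi(1)$), so $\Ind_L^G$ is not the inverse of the restriction map from part~(i). The paper proves~(ii) independently: after reducing to $G=G_\phi$, it shows $\langle\Ind_L^G\theta,\Ind_L^G\theta'\rangle=\delta_{\theta\theta'}$ directly from the Mackey formula, using that $\Ind_{L\cap N}^N\psi=\phi$ is irreducible to rule out any contribution from a non-trivial double coset. Injectivity is then immediate and surjectivity follows by Frobenius reciprocity. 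Your fallback ``or repeat the argument with induction in place of restriction'' would lead here, but you would need to actually carry out this Mackey computation rather than invoke a non-existent duality with~(i).
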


\begin{proof}
 (i) First, we prove the result in the case when $G_{\phi}=G$. 
We will use the well-known theory of characters of twisted group algebras, referring to~\cite{Dade1994} for terminology and some results. 
Suppose that $\chi\in\Irr(\fr G)$ for a finite group $\fr G$. We write $e_{\chi}$ for the corresponding primitive central idempotent of the group algebra $K\fr G$ and
$M_n (K)$ for the algebra of $n\times n$ matrices over $K$.
Furthermore, we will use the following elementary fact: if $\th\in\Irr(\fr H)$ for some $\fr H\le \fr G$, we have  $e_{\th}e_{\chi} = e_{\chi}$ if and only if $\Res^{\fr G}_{\fr H} \chi$ is a multiple of $\th$. 

In particular, by the hypothesis of (i), we have $e_{\psi} e_{\phi} = e_{\phi}$. Moreover, any  representation affording $\phi$ maps $K(L\cap N)$  onto $M_{\phi(1)}(K)$; that is,
$KNe_{\phi} = K(L\cap N)e_{\phi}$. Following~\cite[Proposition 11.15]{Dade1994}, let $A$ be the $(G/N)$-graded algebra $C_{KG}(KN)e_{\phi}$ where the 
$gN$-component $A_{gN}$ is the intersection of $A$ with the $K$-span of $gN$ (for each $g\in G$). By~\cite[Proposition 11.20]{Dade1994}, $A$ is a totally split twisted group algebra for $G/N$; moreover, by~\cite[Statement (12.15)]{Dade1994}, $\Cal E_{\phi}=\{e_{\chi} \mid \chi\in \Irr(G|\phi) \}$ is precisely the set of primitive central idempotents of $A$. Similarly,
 $B = C_{KL}(K(L\cap N))e_{\psi}$ is a totally split twisted group algebra for $L/(L\cap N)$,
 and $\Cal E_{\psi}=\{e_{\th} \mid \th\in \Irr(L|\psi) \}$ is precisely the set of all primitive central idempotents of 
$B$. Let $W = G/N = L/(L\cap N)$, where the last two groups are identified in the usual way. Then both $A$ and $B$ are twisted group algebras for $W$. Since $G_{\phi}=G$, the idempotent $e_{\phi}$ centralises $KG$ and, in particular, $B$. 
Each $u\in B$ centralises $K(L\cap N)e_{\phi} = KNe_{\phi}$; hence, $ue_{\phi}$ centralises $KN$, so $ue_{\phi}\in A$. Thus we have a homomorphism 
$\iota\colon B\to A$ defined by $\iota(u) = ue_{\phi}$. Moreover, if $g\in L$ and $u_g$ is a homogeneous unit of $B$ of degree $g(L\cap N)$, then
$u_g e_{\phi}$ is a homogeneous unit of $A$ of degree $gN$ because $(u_g e_{\phi})(u_g^{-1} e_{\phi}) = e_{\psi} e_{\phi} = e_{\phi}$. So the image of $\iota$ contains a homogeneous unit in each degree, whence $\iota$ is surjective. Since $\dim_K A = \dim_K B=|W|$, it follows that $\iota$ is an isomorphism.
In particular, 
the map $e_{\th} \mapsto e_{\th} e_{\phi}$, which is the restriction of $\iota$ to 
$\cl E_{\psi}$, 
 is a bijection from
$\Cal E_{\psi}$ onto $\Cal E_{\phi}$. 
By the fact quoted at the end of the previous paragraph, for each $\chi\in \Irr(G|\phi)$ there 
exists $m_{\chi}\in \mN$ such that $\Res^G_{L} \chi = m_{\chi} \th$, where $e_{\th} = \iota^{-1}(e_{\chi})$. For every $\th\in \Irr(L|\psi)$, we have $\Res^L_{L\cap N} \th = n_{\th} \psi$, where
$n_{\chi} = \th(1)/\psi(1)$. Similarly, if $\chi\in \Irr(G|\phi)$ and $e_{\th} = \iota^{-1}(e_{\chi})$,
then 
$\Res^G_N \chi = n_{\chi} \phi$, where $n_{\chi} = \chi(1)/\phi(1)= m_{\chi} \th(1)/\psi(1) = 
m_{\chi} n_{\th}$ (as $\phi(1)=\psi(1)$). Since $G=G_{\phi}$ and $L=L_{\psi}$, we have
\[
\begin{split}
 \sum_{\th\in\Irr(L|\psi)} n_{\th}^2 &= 
\sum_{\th\in\Irr(L|\psi)} \lan \Res^L_{L\cap N} \th, \psi \ran^2 =
\lan \Ind_{L\cap N}^L \psi, \Ind_{L\cap N}^L \psi \ran =
\lan \psi, \Res^L_{L\cap N} \Ind_{L\cap N}^L \psi \ran \\
&=\lan \psi, |L:(L\cap N)| \psi \ran = |L:(L\cap N)| = |G:N| = 
\lan \Ind_N^G \phi, \Ind_N^G \phi \ran  
= \sum_{\chi} n_{\chi}^2.
\end{split}
\]
Due to the identity $n_{\chi} = m_{\chi} n_{\th}$, we deduce that $m_{\chi}=1$ for all 
$\chi\in \Irr(G|\phi)$. So the conclusion of part (i) holds. 

Let us now deduce part (i) of the lemma in the general case. By Clifford theory, induction gives a bijection from $\Irr(G_{\phi} |\phi)$ onto $\Irr(G|\phi)$, and a similar statement holds for $L$. 
Since $\psi= \Res^G_N \phi$, we have $G_{\phi} \cap L \le L_{\psi}$. By the hypothesis, $L_{\psi} \le G_{\phi}$, so $G_{\phi}\cap L = L_{\psi}$.
Thus, by the Mackey formula, if $\chi\in\Irr(G|\phi)$, then $\Res^G_L \Ind_{G_{\phi}}^G \chi = \Ind_{L_{\psi}}^{L} \Res^{G_{\phi}}_{L_{\psi}} \chi$ (as $LG_{\phi}\ge LN=G$). 
This identity means that the conclusion of (i) holds for $G$ and $L$ provided it holds for $G_{\phi}$ and $L_{\psi}$.

(ii)  Using Clifford theory (as in the preceding paragraph), one can easily see that it suffices to prove the result when $G=G_{\phi}$. 
Assuming this, consider $\th,\th'\in \Irr(L|\psi)$. We claim that 
\begin{equation}\label{eq:resind1}
 \lan \Ind_L^G \th, \Ind_L^G \th' \ran = \d_{\th\th'}.
\end{equation}
Suppose that~\eqref{eq:resind1} is false. Then, by Frobenius reciprocity and the Mackey formula, there exists
$g\in N - (L\cap N)$ such that $\lan \Res^{\ls{g}L}_{\ls{g}L\cap L} \ls{g}{\th}, \Res^{L}_{\ls{g}L\cap L} \th' \ran >0$. Then $\Res^{\ls{g}L}_{\ls{g}L\cap L \cap N}\ls{g}\th$ and 
$\Res^L_{\ls{g}L\cap L\cap N} \th$ have a common irreducible constituent. 
However, $\psi$ is the only irreducible constituent of $\Res^L_{L\cap N} \th$, and $\ls{g}\psi$ is the only irreducible constituent of 
$\Res^{\ls{g}L}_{\ls{g}L\cap N} \ls{g}\th$, so we have
$\lan \Res^{\ls{g}L\cap N}_{\ls{g}L\cap L\cap N} \ls{g} \psi, \Res^{L\cap N}_{\ls{g}L\cap L\cap N} \psi \ran >0$. Since $g\notin L\cap N$, 
it follows by the Mackey formula that
$\lan \psi, \Res^L_{L\cap N} \Ind_{L\cap N}^L \psi \ran >1$, whence $\lan \phi, \phi \ran = \lan \Ind_{L\cap N}^N \psi, \Ind_{L\cap N}^N \psi \ran >1$, a contradiction.   

Having proved~\eqref{eq:resind1}, we see immediately that $\Ind_{L}^G$ maps $\Irr(L|\psi)$ into $\Irr(G|\phi)$ and that this map is injective. To prove that it is also surjective, suppose that $\chi\in \Irr(G|\phi)$. Then $\psi$ is a constituent of $\Res^G_{L\cap N} \chi$, so there exists a constituent $\th$ of $\Res^G_L \chi$ such that $\th\in \Irr(L|\psi)$.
Then $\chi$ is a constituent of $\Ind_L^G \th$. However, by~\eqref{eq:resind1}, $\Ind_L^G \th$ is irreducible, whence $\chi = \Ind_L^G \th$. This completes the proof. 
\end{proof}

\begin{lem}\label{lem:wrnorm}
 Let $G$ be a subgroup of $S_w$ that acts transitively on $[1,w]$. Let $B$ be a normal subgroup of a group $A$. Then
\[
 N_{A \wr G} (B\wr G) \cap A^{\times w} = B^{\times w} \cdot \D A,
\]
where $\D A = \{ (x,\ldots,x) \in A^{\times w} \mid x\in A \}$. 
\end{lem}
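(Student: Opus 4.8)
The plan is to reduce the claim to a short computation with the wreath-product multiplication rule of \S\ref{sub:wr}. First I would note that, since $B\trianglelefteq A$, the subgroup $B^{\times w}$ is normal in $A^{\times w}$, so every element $\mbf x=(x_1,\ldots,x_w;1)\in A^{\times w}$ already normalises $B^{\times w}$. As $B\wr G$ is generated by $B^{\times w}$ together with $G$ (identified with $\mbf 1\wr G\le A\wr G$), and conjugation by $\mbf x$ is an automorphism of the finite group $A\wr G$, it follows that $\mbf x$ normalises $B\wr G$ if and only if $\mbf x\,\sigma\,\mbf x^{-1}\in B\wr G$ for every $\sigma\in G$.

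Next I would carry out the conjugation computation. Using $\mbf x^{-1}=(x_1^{-1},\ldots,x_w^{-1};1)$ and the multiplication rule, one obtains, for $\sigma\in G$,
\[
\mbf x\,(1,\ldots,1;\sigma)\,\mbf x^{-1}=\bigl(x_1 x_{\sigma^{-1}(1)}^{-1},\ldots,x_w x_{\sigma^{-1}(w)}^{-1};\sigma\bigr).
\]
This lies in $B\wr G$ precisely when $x_i x_{\sigma^{-1}(i)}^{-1}\in B$ for all $i\in[1,w]$, i.e.\ when $x_iB=x_{\sigma^{-1}(i)}B$. Hence $\mbf x$ normalises $B\wr G$ if and only if the map $[1,w]\to A/B$, $i\mapsto x_iB$, is constant on every $G$-orbit. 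Since $G$ acts transitively on $[1,w]$, this map is constant, say $x_iB=aB$ for all $i$ and some $a\in A$.

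Finally I would assemble the two inclusions. If $\mbf x$ normalises $B\wr G$, writing $x_i=ab_i$ with $b_i\in B$ gives $\mbf x=(a,\ldots,a)(b_1,\ldots,b_w)\in \D A\cdot B^{\times w}$; since $\D A$ normalises $B^{\times w}$ we have $\D A\cdot B^{\times w}=B^{\times w}\cdot\D A$, which yields $\subseteq$. Conversely, if $\mbf x=(ab_1,\ldots,ab_w)$ with $a\in A$ and $b_i\in B$, then $x_iB=aB$ is independent of $i$, so the criterion above shows $\mbf x$ normalises $B\wr G$; this gives $\supseteq$, and the equality follows. I do not expect a genuine obstacle here: the only point requiring care is the bookkeeping in the conjugation formula (keeping $\sigma^{-1}$ versus $\sigma$ straight) and the observation that it suffices to test normalisation on the generating set $B^{\times w}\cup G$ of $B\wr G$.
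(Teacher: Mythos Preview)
Your proof is correct and follows essentially the same approach as the paper: both compute the conjugate $\mbf x\,\sigma\,\mbf x^{-1}=(x_1 x_{\sigma^{-1}(1)}^{-1},\ldots,x_w x_{\sigma^{-1}(w)}^{-1};\sigma)$ and use transitivity of $G$ to conclude that all $x_i$ lie in the same $B$-coset. Your version is slightly more explicit (reducing to generators, phrasing the condition via the map $i\mapsto x_iB$), whereas the paper dispatches the inclusion $\supseteq$ in one line by noting that $\D A$ centralises $G$, but the substance is the same.
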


\begin{proof}
 Since $\D A$ centralises $G$, we have $B^{\times w} \cdot \D A \le N_{A \wr G} (B\wr G) \cap A^{\times w}$. Conversely, suppose that $(x_1,\ldots,x_w) \in A^{\times w}$ 
normalises $B\wr G$. Then for each $\s\in G$ we have
\[
 B\wr G \ni (x_1,\ldots,x_w) \cdot \s \cdot (x_1^{-1}, \ldots, x_w^{-1}) = (x_1 x_{\s^{-1}(1)}^{-1}, \ldots, x_w x_{\s^{-1} (w)}^{-1}; \s).
\]
Since $G$ is transitive on $[1,w]$, we deduce that $x_i x_j^{-1}\in B$ for all $i,j\in [1,w]$. Therefore, $(x_1,\ldots,x_w) \in B^{\times w} \cdot \D A$. 
\end{proof}

\begin{prop}\label{prop:inn}
Let $G= N_{S_p} (C_p) \wr N_{S_w} (Q)\le S_p\wr S_w$ and $H=N_G (P)$.
Then the pair $(G,H)$ satisfies the property (IRC-Syl).
\end{prop}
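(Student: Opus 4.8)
The plan is to prove the proposition by three successive Clifford‑theoretic reductions, using the explicit structure of $G$ and $H$, the whole thing bottoming out in a case where the two groups coincide. Write $N=N_{S_p}(C_p)$, a Frobenius group with normal Sylow $p$‑subgroup $C_p$ and cyclic complement of order $p-1$, so that $G=N^{\times w}\rtimes N_{S_w}(Q)$ and $M:=C_p^{\times w}=O_p(G)$ is normal in $G$ and contained in $P$, hence in $H$. A direct computation of conjugation in $N\wr N_{S_w}(Q)$ shows that $H=N_G(P)$ consists of the elements $(x_1,\dots,x_w;\sigma)$ with $\sigma\in N_{S_w}(Q)$ such that the cosets $x_iC_p\in N/C_p$ are constant on each orbit of $Q$ on $[1,w]$; in particular $N_{S_w}(Q)\le H$, $G=HN^{\times w}$, and $P\in\Syl_p(G)\cap\Syl_p(H)$, so (IRC-Syl) is the correct notion for the pair $(G,H)$. \emph{First reduction (over $M$).} A character in $\Irr_{p'}(G)$ lies over some $\phi\in\Irr(M)$ whose stabiliser contains a conjugate of $P$; replacing $\phi$ by a $G$‑conjugate we may assume $P\le G_\phi$, i.e.\ $\phi$ is $Q$‑invariant, and then $|G:G_\phi|$ is prime to $p$ and $H_\phi=H\cap G_\phi=N_{G_\phi}(P)$. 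Both the $G$‑ and the $H$‑orbits of $Q$‑invariant characters of $M$ are parametrised by the $N_{S_w}(Q)$‑orbits of $Q$‑invariant subsets of $[1,w]$, hence correspond, so by Lemma~\ref{lem:irccliff} (with the normal subgroup $M$) it suffices, for each $Q$‑invariant $\phi$, to produce an IRC-compliant signed bijection $\Irr_{p'}(G_\phi\mid\phi)\to\Irr_{p'}(N_{G_\phi}(P)\mid\phi)$.

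\emph{Second reduction (to $G_\phi/M$).} Grouping the $w$ coordinates according as the $i$‑th component of $\phi$ is trivial, and letting $\Sigma$ be the ($Q$‑invariant) support of $\phi$ and $\xi$ the common value of its nontrivial components, one finds $G_\phi=\big(\prod_{i\notin\Sigma}N\big)\rtimes\big(C_p^{\times\Sigma}\rtimes(N_{S_w}(Q))_\Sigma\big)$. Restricting the character $\xi^{\widetilde{\times}|\Sigma|}$ of $C_p\wr S_{|\Sigma|}$ supplied by Lemma~\ref{lem:wrcharval} (it extends $\prod_{i\in\Sigma}\xi$) to $C_p^{\times\Sigma}\rtimes(N_{S_w}(Q))_\Sigma$ and combining it with the trivial character of $\prod_{i\notin\Sigma}N$ exhibits $\phi$ as the restriction of a linear character $\widehat\phi$ of $G_\phi$. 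By Gallagher's theorem and Lemma~\ref{lem:normirc} (with normal subgroup $M$; the bijections $\chi\mapsto\widehat\phi\cdot\Inf\chi$ on both sides preserve degree as $\widehat\phi(1)=1$), the desired bijection follows from (IRC-Syl) for the pair $(\bar G_\phi,N_{\bar G_\phi}(Q))$, where $\bar G_\phi:=G_\phi/M\cong\bar N^{\times m}\rtimes G_0$ with $\bar N:=N/C_p$ of order $p-1$ (prime to $p$), $m:=w-|\Sigma|$, $G_0:=(N_{S_w}(Q))_\Sigma$, and $Q$ a \emph{normal} Sylow $p$‑subgroup of $G_0$.

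\emph{Third reduction (over the $p'$‑base) and conclusion.} In $\bar G_\phi=\bar N^{\times m}\rtimes G_0$ the normal subgroup $\bar N^{\times m}$ is abelian of order prime to $p$, and $Q\trianglelefteq G_0$ acts on it by permuting coordinates within each $Q$‑orbit. Since $|\bar N|$ is prime to $p$ and each $Q$‑orbit of coordinates has $p$‑power size, every $Q$‑invariant $\mu\in\Irr(\bar N^{\times m})$ — the only ones relevant to $p'$‑characters — restricts to a character $\mu_0$ of $\bar N_0:=\bar N^{\times m}\cap N_{\bar G_\phi}(Q)$ so that $\mu\mapsto\mu_0$ is a bijection on $G_0$‑orbits, $(G_0)_\mu=(G_0)_{\mu_0}$, and both extend to their inertia groups. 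Lemma~\ref{lem:resind}(i) then shows $\Res^{\bar G_\phi}_{N_{\bar G_\phi}(Q)}$ is a degree‑preserving bijection $\Irr(\bar G_\phi\mid\mu)\to\Irr(N_{\bar G_\phi}(Q)\mid\mu_0)$; summing over $\mu$ yields a bijection $\Irr_{p'}(\bar G_\phi)\to\Irr_{p'}(N_{\bar G_\phi}(Q))$ equal to restriction. As $Q\in\Syl_p(N_{\bar G_\phi}(Q))$ the relevant projection $\Proj_Q$ is the identity, so this restriction map — extended oddly — is trivially IRC-compliant, giving (IRC-Syl) for $(\bar G_\phi,N_{\bar G_\phi}(Q))$. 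Unwinding the three reductions, and using Lemmas~\ref{lem:irccliff} and~\ref{lem:normirc} to transport IRC-compliance upward, proves the proposition.

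The main obstacle is not a hard estimate but the structural bookkeeping: one must pin down $H$ explicitly, verify that the $G$‑ and $H$‑orbits of $Q$‑invariant characters of $M$ match up (so that the Clifford pieces glue into global bijections), and — the most delicate point — confirm that $\phi$ really extends to $G_\phi$ and compute $G_\phi/M$ precisely enough to recognise its wreath‑like form $\bar N^{\times m}\rtimes G_0$ with $Q$ normal in $G_0$. Once these facts are secured, each application of Lemmas~\ref{lem:irccliff}, \ref{lem:normirc} and~\ref{lem:resind} is routine and the base case is trivial.
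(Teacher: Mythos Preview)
Your approach is correct and takes a genuinely different route from the paper's proof. The key divergence is the choice of normal subgroup for the initial Clifford reduction: the paper works over the full base $M=N_{S_p}(C_p)^{\times w}$, while you work over the smaller $M=C_p^{\times w}=O_p(G)$. Because your $M$ is abelian of prime exponent, every relevant $\phi\in\Irr(M)$ is linear and extends to $G_\phi$, so you can invoke Lemma~\ref{lem:normirc} and pass to the quotient $\bar G_\phi$, then finish with a single application of Lemma~\ref{lem:resind}(i). The paper, by contrast, must distinguish two cases for $\alpha_i\in\Irr(N_{S_p}(C_p))$ --- the $p-1$ linear characters versus the unique nonlinear character $\gamma$ of degree $p-1$ --- and uses Lemma~\ref{lem:resind}(i) for the former and Lemma~\ref{lem:resind}(ii) (induction rather than restriction) for the latter, before recombining via a product argument (Lemma~\ref{lem:ircprod}). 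Your route is thus more uniform and avoids the case split; the paper's route stays closer to the ``visible'' base of the wreath product and uses both halves of Lemma~\ref{lem:resind}.

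One small point to tighten: in your second reduction you write ``$\xi$ the common value of its nontrivial components'', tacitly assuming all nontrivial components of the $Q$-invariant character $\phi$ agree. This is not automatic --- a $Q$-invariant $\phi$ is only constant on each $Q$-orbit --- but it \emph{can} be arranged after a further $H$-conjugation (by an element of $N^{\times w}$ whose image in $(N/C_p)^{\times w}$ is constant on $Q$-orbits; such elements lie in $H$ by your description of $H$, and $N/C_p\cong C_{p-1}$ acts transitively on the nontrivial characters of $C_p$). With that adjustment your description of $G_\phi$ and your explicit linear extension $\widehat\phi$ are correct. The remaining verifications --- that $H_\phi/M=N_{\bar G_\phi}(Q)$, that $G$- and $H$-orbits of $Q$-invariant characters of $C_p^{\times w}$ coincide, and that $(G_0)_\mu=(G_0)_{\mu_0}$ (using that each $Q$-orbit has $p$-power size coprime to $|\bar N|=p-1$, so $\mu\mapsto\mu_0$ is injective on $Q$-invariant characters) --- are all sound, and the hypotheses of Lemma~\ref{lem:resind}(i) are then met.
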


\begin{proof}
 Let $M=N_{S_p} (C_p)^{\times w}$ be the base subgroup of $G$ and $U=M\cap H$. 
Let $O_1,\ldots, O_r$ be the orbits of the action of $Q$ on $[1,w]$. It follows from Clifford theory that if $\chi\in \Irr_{p'}(G)$, then 
$\chi\in \Irr_{p'}(G |\phi)$ for some $\phi\in \Irr_{p'}(M)$ such that $|G:G_{\phi}|$ is prime to $p$, whence $Q\le \Stab_{S_w} (\phi)$. 
Then $\phi$ is of the form $\phi = \prod_{i=1}^r \a_i^{\times |O_i|}$ for some $\a_i\in \Irr(N_{S_p} (C_p))$ 
(that is, the factors of $\phi$ corresponding to any two points of $[1,w]$ lying in the same $Q$-orbit must be the same). 

Let $\cl A$ be the set  of characters $\phi$ of this form and $\cl B$ be a set of representatives of $G$-orbits on $\cl A$, so that 
$\Irr_{p'} (G)=\sqcup_{\phi\in \cl B} \Irr_{p'} (G|\phi)$. 
The proof will proceed as follows. For each $\phi\in \cl A$ we will define a character $\phi'\in \Irr(U)$, and we will show that
\begin{equation}\label{eq:inn1}
 \Irr_{p'} (H) = \bigsqcup_{\phi\in \cl B} \Irr_{p'} (H|\phi').
\end{equation}
For each $\phi\in \cl B$ we will construct an IRC-compliant signed bijection $F_{\phi} \colon \pm \Irr_{p'} (G|\phi) \to \pm \Irr_{p'} (H|\phi')$. This will suffice, for by 
combining such bijections for all $\phi\in \cl B$ one obtains an IRC-compliant signed bijection from $\pm \Irr_{p'} (G)$ onto $\pm \Irr_{p'}(H)$.

For each $i\in [1,r]$ let 
$M_i$ be the subgroup of $M$ that is the direct product of the $|O_i|$ factors $N_{S_p} (C_p)$ corresponding to the orbit $O_i$, so that $M=\prod_{i=1}^r M_i$.
 Denoting by $Q_i$ the subgroup of $Q$ consisting of the elements that fix $[1,w]-O_i$ pointwise, we see that 
$Q = \prod_{i=1}^r Q_i$.
Since $H= N_G (C_p\wr Q)$, it is clear that $U = \prod_{i=1}^r U_i$ where  $U_i = M_i \cap H$. 

Fix $i\in [1,r]$. 
For each subgroup $A$ of $N_{S_p} (C_p) = C_p\rtimes C_{p-1}$, write 
\[
 \D A = \{ (x,\ldots,x) \in M_i \mid x\in A \}.
\]
By Lemma~\ref{lem:wrnorm}, $U_i = C_p^{\times |O_i|} \cdot \D C_{p-1}$ (this product is semidirect). The subgroup $\D (C_p \rtimes C_{p-1})$ of $U_i$ has a normal complement,
namely the subgroup 
\[
 V_i = \{ (x_1,\ldots,x_{|O_i|}) \in C_p^{\times |O_i|} \mid x_1 \cdots x_{|O_i|} =1 \}.
\]
Let $\b_i$ be the inflation from $U_i/V_i$ to $U_i$ of the character $\a_i\in \Irr(C_p\rtimes C_{p-1})$. Define $\phi' =  \prod_{i=1}^r \b_i \in \Irr(U)$. 
Since $Q_i$ acts transitively on $O_i$, it is easy to see that $V_i$ is contained in the derived subgroup of $C_p\wr Q_i$, and hence $V_i \le [P,P]$. Since $P$ is normal in $H$, the kernel of every
$\chi\in \Irr_{p'}(H)$ must contain $[P,P]\ge \prod_{i=1}^r V_i$, so $\chi\in \Irr_{p'} (H|\phi')$ for some $\phi\in \cl A$. 
Note that $H/U \simeq G/M$ and both these groups may be identified with $N_{S_w} (Q)$ (indeed, the subgroup $N_{S_w} (Q)$ of $S_w$ normalises $P$). 
It is clear that the map $\phi \mapsto \phi'$ is an $N_{S_w} (Q)$-equivariant bijection from $\cl A$ onto the set of irreducible characters of $U$ with kernel containing $\prod_i V_i$. 
Therefore,~\eqref{eq:inn1} holds. 

Now fix $\phi = \prod_{i=1}^r \a_i^{\times |O_i|} \in \cl B$: it remains to prove the existence of an IRC-compliant signed bijection $F_{\phi}$ as above. We begin by showing this in 
two special cases, from which we will then deduce the general result. 
Note that $\Irr(C_p\rtimes C_{p-1})$ consists of $p-1$ linear characters with kernels containing $C_p$ and one character $\g$ of degree $p-1$ with kernel not containing $C_p$.

First, assume that the kernels of $\a_1,\ldots,\a_r\in \Irr(N_{S_p} (C_p))$ all contain $C_p$. 
For each $i\in [1,r]$,
\[
 \Res^{M_i}_{U_i} \a_i^{\times |O_i|} =\Inf_{U_i/V_i}^{U_i} \a_i^{|O_i|} = \Inf_{U_i/V_i}^{U_i} \a_i = \b_i,
\]
where the second equality holds because $|O_i| \equiv 1 \!\! \mod p-1$ (as $|O_i|$ is a power of $p$). 
Hence, $\phi' = \Res^M_U \phi$, and it follows by Lemma~\ref{lem:resind}(i) 
that $\Res^G_H$ restricts to a bijection between
$\pm \Irr_{p'}(G|\phi)$ and $\pm \Irr_{p'}(H|\phi')$. 
This bijection is clearly IRC-compliant, and so may be taken as $F_{\phi}$. 

Secondly, assume that $\a_1=\cdots=\a_r=\g$. 
Note that $\g = \Ind_{C_p}^{N_{S_p} (C_p)} \th$ where $\th$ is any non-trivial linear character of $C_p$. For a fixed $i$, writing $t=|O_i|$,  we have
\[
\Ind_{C_p^{\times t}}^{C_p^{\times t} \cdot \D C_{p-1}} \th^{\times t} = \Ind_{C_p^{\times t}}^{C_p^{\times t} \cdot \D C_{p-1}} (\Inf_{C_p^{\times t}/V_i}^{C_p^{\times t}} \th) 
 = \Inf_{\D (C_p \rtimes C_{p-1})}^{C_p^{\times t} \cdot \D C_{p-1}} (\Ind_{\D C_p}^{\D (C_p\times C_{p-1})} \th) = \b_i,
\]
where $C_p^{\times t}/V_i$ is identified with $C_p$ in the obvious way. Hence,
\[
 \a_i^{\times t} = \g^{\times t} = \Ind_{C_p^{\times t}}^{(C_p\rtimes C_{p-1})^{\times t}} \th^{\times t} = \Ind_{C_p^{\times t} \cdot \D C_{p-1}}^{(C_p\rtimes C_{p-1})^{\times t}} \b_i.
\]
It follows that $\phi = \Ind_U^M \phi'$. Hence, by Lemma~\ref{lem:resind}(ii), the
map $\Ind_H^G$ restricts to a bijection from $\pm \Irr_{p'}(H|\phi')$ onto $\pm \Irr_{p'}(G | \phi)$.
By~\cite[Theorem 1.4]{Evseev2010m}, the inverse of this bijection 
 is IRC-compliant and therefore may be taken as $F_{\phi}$.

Finally, consider the case of arbitrary $\phi\in \cl A$. 
Without loss of generality, the kernels of $\a_1,\ldots, \a_l$ contain $C_p$ and $\a_{l+1}= \cdots = \a_r = \g$.
Let $\Om_1$ be the union of the subsets of $S_{pw}$ associated with the elements of $\cup_{i=1}^l O_i$ of $S_w$ 
(each element $u\in [1,w]$ is associated with the subset $[p(u-1)+1, pu]$ of $[1,pw]$). Let $\Om_2 = [1,pw] - \Om_1$. 
 For $j=1,2$, let $G_j$ be the group consisting of those elements of $G$ that fix every point in $[1,pw]-\Om_j$; the subgroups $H_1$ and $H_2$ of $H$ are defined similarly. 
The decomposition $[1,pw] = \Om_1 \sqcup \Om_2$ leads to the factorisations $\phi = \phi_1 \times \phi_2$ and $\phi' = \phi'_1\times \phi'_2$. By the facts proved in the 
previous two paragraphs, there exist IRC-compliant signed bijections $F_1\colon \pm \Irr_{p'}(G_1|\phi_1) \to \pm \Irr_{p'}(H_1|\phi'_1)$ and 
$F_2 \colon \pm \Irr_{p'} (G_2 |\phi_2) \to \pm \Irr_{p'} (H_2 |\phi'_2)$. 
Therefore, by Lemma~\ref{lem:ircprod}, there is an IRC-compliant signed bijection 
$F\colon \pm \Irr_{p'} (G_1\times G_2 |\phi) \to \pm \Irr_{p'} (H_1\times H_2 |\phi')$

Now $G_{\phi} \le G_1 \times G_2$, so by Clifford theory
the map $\Ind_{G_1\times G_2}^{G}$ restricts to a bijection from $\Irr_{p'}(G_1\times G_2 |\phi)$ onto $\Irr_{p'}(G|\phi)$, and the corresponding statement holds for $H$.
Therefore, the map $F_{\phi}\colon \pm \Irr_{p'} (G|\phi) \to \pm \Irr_{p'}(H|\phi')$, defined by
\begin{equation}\label{eq:inn2}
 F_{\phi}(\Ind_{G_1\times G_2}^G (\chi)) = \Ind_{H_1\times H_2}^{H} F(\chi), \quad \chi\in \Irr_{p'}(G_1\times G_2|\phi),
\end{equation}
is a signed bijection. 
Since $H/U \simeq G/M$, we have $(G_1\times G_2) \cdot H = G$. 
By the Mackey formula, 
for each $\chi\in \Irr_{p'}(G_1\times G_2 |\phi)$, we have
\[
\begin{split}
 F_{\phi}(\Ind_{G_1\times G_2}^G (\chi)) - \Res^{G}_H (\Ind_{G_1\times G_2}^G \chi) &= 
\Ind_{H_1\times H_2}^H (F(\chi) - \Res^{G_1\times G_2}_{H_1\times H_2} \chi) \\
&\in
\Ind_{H_1\times H_2}^H (\Cal I(H_1\times H_2, P, \Cal S (G_1\times G_2, P, H_1\times H_2)).
\end{split}
\]
It is clear that $\cl S (G_1\times G_2, P, H_1\times H_2) \subset \cl S (G,P,H)$, whence
\[
 \Ind_{H_1\times H_2}^H (\Cal I(H_1\times H_2, P, \Cal S (G_1\times G_2, P, H_1\times H_2)) \subset \Cal I(H, P, \Cal S (G,P,H)).
\]
 Therefore, $F_{\phi}$ is IRC-compliant.  
\end{proof}

\begin{proof}[Proof of Theorem~\ref{thm:symirc}]
Let $\rho$ be the $p$-core corresponding to $b\in \Bl(S_m)$. Let $e=|\rho|$ and $w$ be the weight of $b$, so that $m=pw+e$. 
Let $c$ be the block of $S_e$ corresponding to $\rho$, so that $\Irr(S_e, c) = \{ \chi^{\rho} \}$.
Write $G=S_{pw+e}$, $L=S_p\wr S_w$ and $H=N_{S_{pw}}(P)$. 
Note that $P$ is a defect group of $b$ and $N_G (P) = H\times S_e$. Further, 
it follows from Lemma~\ref{lem:blwr} that the principal block is the only block of $L$ with defect group $P$ (for, by~\cite[Theorem 9.26]{NavarroBook}, any block 
of $L$ with defect group $P$ must cover a block of $S_p^{\times w}$ with defect group $C_p^{\times w}$). Let $c'\in \Bl(L\times S_e)$ be the tensor product of this block of $L$ with $c$.
A similar argument shows that the group $H$ has only one block. So the block $c$ of $H\times S_e$ is the Brauer correspondent of $b$. 
 
Arguing by induction, we may assume that the theorem holds for all blocks of all groups $S_{m'}$ with $m'<m$. 
By Corollary~\ref{cor:wrmarcus}, the property (IRC-Syl) holds for the pair $(L, N_{S_p}(C_p) \wr S_w)$. By the inductive hypothesis and Proposition~\ref{prop:cliff}, 
(IRC-Syl) holds for $(N_{S_p}(C_p) \wr S_w, N_{S_p}(C_p)\wr N_{S_w} (Q) )$. By Proposition~\ref{prop:inn}, (IRC-Syl) holds for $(N_{S_p}(C_p) \wr N_{S_w} (Q), H)$. 
By these facts and Lemma~\ref{lem:comp}, there exists an 
IRC-compliant signed bijection
$F_{LH} \colon \pm\Irr_{p'}(L) \to \pm\Irr_{p'}(H)$. 
We define a signed bijection $\tilde{F}_{LH}\colon \pm \Irr_0(L\times S_e, c) \to \pm \Irr_0 (H\times S_e, c)$ by
$\tilde{F}_{LH}(\g\times \chi^{\rho}) = F_{LH}(\g)\times \chi^{\rho}$ for all $\g\in \pm \Irr_{p'}(L)$. This bijection satisfies
\begin{equation}\label{eq:pfirc1}
 \tilde{F}_{LH} (\th) \equiv \Res^{L\times S_e}_{H\times S_e} \th \dmod \Cal I(H\times S_e,P,\Cal S(L\times S_e,P,H\times S_e)) \quad \forall \th\in \pm \Irr_0 (L\times S_e, c'),
\end{equation}
where $c'$ is the unique block of $L\times S_e$ with defect group $P$ such that $c'c=c'$.  

Further, by Theorem~\ref{thm:main}, Proposition~\ref{prop:heights} 
and Theorem~\ref{thm:Iset}, the map $F_{p,w,\rho}$ (see Eq.~\eqref{eq:defF}) 
restricts to a signed bijection from $\pm \Irr_0(G,b)$ onto $\pm \Irr_{p'}(L)$ satisfying
\[
 F_{p,w,\rho} (\chi) \equiv \Res^{S_{pw}}_{L} \tilde{\pi}_{\rho} \Res^{G}_{S_{pw}\times S_e} \chi \dmod \Cal I(L,P,\Cal S(G,P,L)).
\]
Define a signed bijection $F_{GL}\colon \pm \Irr_0 (G,b) \to \pm \Irr_0 (L\times S_e, c')$ by $F_{GL} (\chi) = F_{p,w,\rho}(\chi) \times \chi^{\rho}$. 
Due to the obvious identity
\[
 \Proj_c \Res^G_{L\times S_e} \xi = (\Res^{S_{pw}}_{L} \tilde{\pi}_{\rho} \Res^{G}_{S_{pw}\times S_e} \xi) \times  \chi^{\rho}  \quad \forall \xi\in \Cal C(G),
\]
we have 
\begin{equation}\label{eq:pfirc2}
 F_{GL} (\chi) \equiv \Proj_c \Res^G_{L\times S_e} \chi \dmod \Cal I(L, P, \Cal S(G,P,L \times S_e)) \otimes \lan \chi^{\rho} \ran \quad \forall \chi\in\pm\Irr_{0}(G,b),
\end{equation}
where $\lan \chi^{\rho}\ran$ denotes the $\mZ$-span of $\chi^{\rho}$. 

Let $F = \tilde{F}_{LH} F_{GL}$, so that $F\colon \pm \Irr_0 (G,b) \to \Irr_0 (H\times S_e, c)$ is a signed bijection. Write
$\Cal S = \Cal S(G,P,H\times S_e)$, and let $\chi\in \pm \Irr_0 (G,b)$.
We claim that 
\begin{equation}\label{eq:pfirc3}
F(\chi)-\Proj_c \Res^G_{H\times S_e} \chi \in \Cal I(H\times S_e, P, \Cal S).
\end{equation} 
By~\cite[Proposition 2.11(i)]{Evseev2010m}, this claim is equivalent to the same statement with $\Proj_c$ replaced by $\Proj_P$. So the theorem will follow once we establish~\eqref{eq:pfirc3}.

We have $F(\chi)- \Proj_c \Res^G_{H\times S_e} \chi = \xi_1 + \xi_2$ where 
\[
\begin{split}
\xi_1 &= \tilde{F}_{LH}(F_{GL}(\chi)) - \Res^{L\times S_e}_{H\times S_e} F_{GL}(\chi) \qquad \quad \text{and} \\
\xi_2 &= \Res^{L\times S_e}_{H\times S_e} (F_{GL}(\chi) - \Proj_c \Res^G_{L\times S_e} \chi). 
\end{split}
\]
(Here we use the obvious identity $\Res^{L\times S_e}_{H\times S_e} \Proj_c \Res^{G}_{L\times S_e} =  \Proj_c \Res^{G}_{H\times S_e}$.)
By~\eqref{eq:pfirc1}, $\xi_1\in \Cal I(H\times S_e,P,\cl S)$. By~\eqref{eq:pfirc2}, we have 
\[
\xi_2 \in \Res^{L}_{H} (\Cal I(L,P, \Cal S(G,P,L \times S_e)) )  \otimes \lan \chi^{\rho} \ran.
\]
Now $\Cal S(G,P,L\times S_e) \subset \Cal S$, so 
\[
 \Res^L_H (\Cal I(L,P,\Cal S(G,P,L\times S_e)) ) \subset \Res^L_H  (\Cal I(L,P, \Cal S)) \subset \Cal I(H,P,\Cal S),
\]
where the second containment follows from Lemma~\ref{lem:resI} (note that $\Cal S\supset \Cal S(L, P, H)$). 
Hence, $\xi_2 \in \Cal I(H,P,S) \otimes \lan \chi^{\rho} \ran$.
That is, $\xi_2$ belongs to the $\mZ$-span of virtual characters of the form $\Ind_A^H \a$ where $A\le H$ satisfies $A\cap P \in \Syl_p(H) \cap \cl S$ and $\a \in \cl C(A)$. 
Since the block $c$ of $S_e$ has defect $0$, we have $\chi^{\rho}\in \scr P(S_e)$. Hence, by Theorem~\ref{thm:brproj}, 
$\chi^{\rho}$ lies in the $\mZ$-span of virtual characters of the form $\Ind_B^{S_e} \b$ where 
$B\le S_e$ is a $p'$-subgroup and $\b \in \cl C(B)$. Thus, $\xi_2$ is an integer linear combination of 
virtual characters of the form $\Ind_{A\times B}^{H\times S_e} (\a \times \b)$ where $A,B,\a,\b$ are as above. But we have $(A\times B) \cap P = A\cap P \in\Syl_p(P) \cap \cl S$, so 
$\xi_2\in \Cal I(H\times S_e,P,\Cal S)$. 
This completes the proof of~\eqref{eq:pfirc3} and hence of Theorem~\ref{thm:symirc}. 
\end{proof}

\section{Uniqueness of the isometry}\label{sec:un}

Recall the subgroups $\cl K_s$ of $\cl C(S_p\wr S_w)$ defined by~\eqref{eq:defK}. 

\begin{prop}\label{prop:unique}
 Let $p\in \mN$ and $w,e \in \mZ_{\ge 0}$. Suppose that $\chi_1, \chi_2 \in \Irr(S_{pw}, \varnothing)$. 
\begin{enumerate}[(i)]
 \item\label{pun1} If $\chi_1\ne \chi_2$, then $\Res^{S_{pw}}_{S_p \wr S_w}(\chi_1-\chi_2) \notin \cl K_{w-1}$ and $\Res^{S_{pw}}_{S_p\wr S_w}(\chi_1+\chi_2)\notin \cl K_{w-1}$;
 \item\label{pun2} $\Res^{S_{pw}}_{S_p\wr S_w} \chi_1\notin \cl K_{w}$.
\end{enumerate}
\end{prop}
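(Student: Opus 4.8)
The statement is about restricting irreducible characters $\chi^\lambda$ of $S_{pw}$ with empty $p$-core to the wreath product $S_p\wr S_w$, and detecting them via their values on the subset $\mathcal{U}_s$ of elements $y_{\sigma_1}(u)\cdots y_{\sigma_r}(u)$ (with $u$ a $p$-cycle). By Lemma~\ref{lem:cyctype}, such an element has cycle type $(po(\sigma_1),\dots,po(\sigma_r))$ in $S_{pw}$, so the values of $\Res^{S_{pw}}_{S_p\wr S_w}\chi^\lambda$ on $\mathcal{U}_{w-1}$ are exactly the values of $\chi^\lambda$ on elements of $S_{pw}$ all of whose cycle lengths are divisible by $p$, except possibly one cycle of length $1$; and the values on $\mathcal{U}_w$ are the values of $\chi^\lambda$ on elements with \emph{all} cycle lengths divisible by $p$. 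The plan is to reduce everything to Theorem~\ref{thm:main} (with $e=0$, $\rho=\varnothing$), since $\varnothing$ is circularly non-decreasing with starting point $0$, so the sharp congruence \eqref{eq:stmain2} applies:
\[
F_{p,w,\varnothing}(\chi) \equiv \Res^{S_{pw}}_{S_p\wr S_w}\chi \pmod{\mathcal{K}_{w-1}} \qquad\text{for all }\chi\in\pm\Irr(S_{pw},\varnothing).
\]
Thus $\Res^{S_{pw}}_{S_p\wr S_w}(\chi_1-\chi_2)\in\mathcal{K}_{w-1}$ would force $F_{p,w,\varnothing}(\chi_1)=F_{p,w,\varnothing}(\chi_2)$ (as $\mathcal{K}_{w-1}\subset\mathcal{K}_{w-1}$, and $F$ is an \emph{isomorphism} onto $\mathcal{C}_{\mathrm{pri}}(S_p\wr S_w)$, so the image has no relations modulo $\mathcal{K}_{w-1}$ unless they hold already). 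Since $F_{p,w,\varnothing}$ is a bijection by Theorem~\ref{thm:main}, $\chi_1=\chi_2$, proving the first half of \eqref{pun1}. The analogous argument with $\chi_1+\chi_2$ requires knowing $F_{p,w,\varnothing}(\chi_1)\ne -F_{p,w,\varnothing}(\chi_2)$, which again follows because $F_{p,w,\varnothing}$ lands in actual irreducible characters $\pm\zeta_{\Lambda\Psi_p(\lambda)}$ and $\zeta_{\Lambda\Psi_p(\lambda_1)}\ne -\zeta_{\Lambda\Psi_p(\lambda_2)}$ always (a genuine irreducible character is never the negative of another). More precisely, one checks $\zeta_{\Lambda\Psi}\pmod{\mathcal{K}_{w-1}}$ is nonzero and linearly independent for distinct $\Psi$: this is essentially the content of the uniqueness statement of Theorem~\ref{thm:val}, and here it is being proved, so I must argue it directly.

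**The core linear-independence step.** The real work is showing that the classes $\{\zeta_{\Lambda\Psi}+\mathcal{K}_{w-1}\}$, for $\Psi\in\mathrm{PMap}_w([0,p-1])$, together with their negatives, are all distinct and nonzero in $\mathcal{C}_{\mathrm{pri}}(S_p\wr S_w)/\mathcal{K}_{w-1}$. Equivalently (and this is cleaner), I would show the composite map $\mathcal{C}_{\mathrm{pri}}(S_p\wr S_w)\to\mathcal{K}_{w-1}$-quotient is injective — but that is false in general; what \eqref{eq:stmain2} gives is that $F_{p,w,\varnothing}$ composed with the quotient equals $\Res$ composed with the quotient, and I want to conclude $\Res$ restricted to $\pm\Irr(S_{pw},\varnothing)$ is injective on the quotient. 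So I would instead exploit the maps $d^\alpha_{pw}$ and $\delta^\alpha$ from the proof of Theorem~\ref{thm:main}. For $\alpha\in\mathcal{P}(w)$, $d^\alpha_{pw}(\chi^\lambda) = \delta^\alpha(F_{p,w,\varnothing}(\chi^\lambda))$ by \eqref{eq:main4}, and by Theorem~\ref{thm:shrfar}/Farahat these $d^\alpha$ values over all $\alpha$ of $w$ and $w-1$ (plus smaller, encoded in removing rim hooks) separate $\Irr(S_{pw},\varnothing)$: indeed $d^\alpha_{pw}(\chi^\lambda)$ for $\alpha=(1^w)$ is, up to sign $\varepsilon_p(\lambda/\varnothing)$, the character $\chi^{\lambda(0)}\times\cdots\times\chi^{\lambda(p-1)}$ of $S_p\wr S_w$ restricted to the base group (via $\mathrm{shr}_p$), whose value at the identity recovers $\prod_i\chi^{\lambda(i)}(1)$ and whose full character determines the $p$-quotient $(\lambda(0),\dots,\lambda(p-1))$, hence $\lambda$ by Theorem~\ref{thm:corequot}. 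So: if $\Res^{S_{pw}}_{S_p\wr S_w}(\chi^{\lambda_1}-\chi^{\lambda_2})\in\mathcal{K}_{w-1}$, then all $d^\alpha_{pw}$, $\alpha\in\mathcal{P}(w)\cup\mathcal{P}(w-1)$, agree on $\chi^{\lambda_1}$ and $\chi^{\lambda_2}$; in particular $d^{(1^w)}$ does, forcing $\{\lambda_1(i)\} = \{\lambda_2(i)\}$ as a multiset — and the sign-respecting comparison of the $\varepsilon_p$-weighted $p$-quotient-character pins down $\lambda_1=\lambda_2$. The $\chi_1+\chi_2$ case is handled the same way once one observes $d^{(1^w)}_{pw}(\chi^{\lambda})$ evaluated at the identity is strictly positive, so it can never equal the negative of another such value.

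**Part \eqref{pun2}.** Here $e$ is irrelevant; the claim is $\Res^{S_{pw}}_{S_p\wr S_w}\chi^\lambda\notin\mathcal{K}_w$ for $\chi^\lambda\in\Irr(S_{pw},\varnothing)$, i.e.\ $\chi^\lambda$ does not vanish on all elements of $S_{pw}$ whose cycle lengths are all divisible by $p$. Take $\alpha=(1^w)$: then $d^{(1^w)}_{pw}(\chi^\lambda)$ is the value of $\chi^\lambda$ at an element of cycle type $(p^w)$, and by Farahat's theorem (Theorem~\ref{thm:shrfar}) applied with $m=p$, $\mathrm{shr}_p(\chi^\lambda) = \varepsilon_p(\lambda/\varnothing)\bigl(\chi^{\lambda(0)}\times\cdots\times\chi^{\lambda(p-1)}\bigr)$ (nonzero since $\lambda\supset_p\varnothing$ always), and evaluating the shrunk character at the identity of $S_0^{\times p}$ — or rather tracking the definition of $\mathrm{shr}_p$ via $\omega$ — gives $\pm\prod_i\chi^{\lambda(i)}(1)\ne 0$. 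Hence $\chi^\lambda$ takes a nonzero value on $\mathcal{U}_w$, so its restriction is not in $\mathcal{K}_w$.

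**Main obstacle.** The genuinely delicate point is the bookkeeping in the linear-independence argument: extracting from "all $d^\alpha$ agree, $\alpha\in\mathcal{P}(w)\cup\mathcal{P}(w-1)$" the conclusion $\lambda_1=\lambda_2$ (resp.\ ruling out the sign flip), and doing so cleanly via $\mathrm{shr}_p$, Farahat's formula, the $p$-sign $\varepsilon_p$, and the bijection of Theorem~\ref{thm:corequot}, rather than re-deriving Rouquier's combinatorics. I expect the cleanest write-up routes entirely through the commutative diagram \eqref{eq:main3} and \eqref{eq:main4}: since $F_{p,w,\varnothing}$ is a group isomorphism and the $\delta^\alpha$ for $\alpha\in\mathcal{P}(w)\cup\mathcal{P}(w-1)$ already separate $\Irr_{\mathrm{pri}}(S_p\wr S_w)$ modulo $\mathcal{K}_{w-1}$ (indeed $\mathcal{K}_{w-1}$ is, by definition, the intersection of the kernels of the evaluation-at-$\mathcal{U}_{w-1}$ functionals, which are linear combinations of $\delta^\alpha$, $\alpha\in\mathcal{P}(w)\cup\mathcal{P}(w-1)$), any $\mathbb{Z}$-linear relation among distinct $\pm\zeta_{\Lambda\Psi}$ modulo $\mathcal{K}_{w-1}$ is trivial — this is where I'd need to check that two irreducible characters of $S_p\wr S_w$ lying in the principal block are never congruent modulo $\mathcal{K}_{w-1}$, which reduces to the injectivity of $\Psi\mapsto(\text{values of }\zeta_{\Lambda\Psi}\text{ on }\mathcal{U}_{w-1})$, and that injectivity is exactly Farahat + $p$-quotients as above.
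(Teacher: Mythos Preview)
Your argument for part~\eqref{pun2} is correct and is exactly what the paper does: Farahat's formula (Theorem~\ref{thm:shrfar}) gives $\shr_p(\chi^\lambda)\ne 0$, which is precisely the statement $\Res^{S_{pw}}_{S_p\wr S_w}\chi^\lambda\notin\mathcal K_w$.

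For part~\eqref{pun1}, however, there is a genuine gap. Routing through $F_{p,w,\varnothing}$ and the congruence~\eqref{eq:stmain2} does not gain anything: you correctly observe that you are reduced to showing that distinct $\pm\zeta_{\Lambda\Psi}$ remain distinct modulo $\mathcal K_{w-1}$, which is equivalent to the statement you are trying to prove. In your direct argument, two concrete problems appear. First, from $\shr_p(\chi^{\lambda})=\pm\shr_p(\chi^{\mu})$ you only get an equality $s_{\lambda(0)}\cdots s_{\lambda(p-1)}=\pm s_{\mu(0)}\cdots s_{\mu(p-1)}$ of products of Schur functions; concluding that the $p$-quotients agree as multisets requires a nontrivial input, namely that Schur functions are irreducible as polynomials in the $h_n$ (the paper records this as Lemma~\ref{lem:Schirr}, citing Barekat--Reiner--van Willigenburg). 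You do not invoke this. Second, your claim that $d^{(1^w)}_{pw}(\chi^\lambda)$ evaluated at the identity is \emph{strictly positive} is false: that value is $\varepsilon_p(\lambda)$ times a positive integer, so the sign can be either, and your proposed handling of the $\chi_1+\chi_2$ case collapses.

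The real difficulty, which you flag but do not resolve, is the step from ``the $p$-quotients agree as multisets'' to ``$\lambda=\mu$'' using the extra $\mathcal P(w-1)$ information. The paper does this as follows. It introduces the map $f\colon\mathcal C(S_{pw},\varnothing)\to\mathcal C(S_{w-1})\otimes\mathcal C(S_p)$ given by $f(\xi)(g_\alpha,g_\beta)=\xi(g_{p\alpha\sqcup\beta})$, which packages all the values on $\mathcal U_{w-1}$. Lemma~\ref{lem:f} computes $f(\chi^\lambda)$ explicitly: the $\chi^{(j+1,1^{p-j-1})}$-component is, up to sign, the product $s_{\lambda(0)}\cdots s_{\lambda(j)/(1)}\cdots s_{\lambda(p-1)}$ after passing to symmetric functions. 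Assuming $f(\chi^\lambda)=\pm f(\chi^\mu)$, this yields for every $j$ an equality of such products (Eq.~\eqref{eq:prun1}) in addition to the product equality~\eqref{eq:prun2}. The contradiction is then obtained by a multiplicity-counting argument: order the $\lambda(i)$ by size then lexicographically, find the first index $r$ where $\lambda(i_r)\ne\mu(i_r)$, and compare the multiplicity of the irreducible factor $s_{\lambda(i_r)}$ on the two sides of~\eqref{eq:prun1} for $j=i_r$, using that $s_{\lambda(i_r)/(1)}$ has strictly smaller degree and hence is not divisible by $s_{\lambda(i_r)}$. This is the missing idea in your sketch; nothing in your proposal substitutes for it.
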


When $p$ is prime, the following corollary is equivalent to the uniqueness statement of Theorem~\ref{thm:val}, due to Lemma~\ref{lem:centp}.

\begin{cor}\label{cor:unique}
Let the notation be as in Theorem~\ref{thm:main}, and suppose that $e=0$. Then the map $F_{p,w,\varnothing}$ is the only signed bijection $F$ from $\pm \Irr(S_{pw}, \varnothing)$
onto $\pm \Irr(S_p \wr S_w)$ such that
\[
 F (\chi) \equiv \Res^{S_{pw}}_{S_p \wr S_w} \chi \dmod \Cal K_{w-1}.
\]
\end{cor}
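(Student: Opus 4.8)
The plan is to obtain Corollary~\ref{cor:unique} as a short deduction from Proposition~\ref{prop:unique} together with Theorem~\ref{thm:main}. Existence requires no new work: since $\varnothing$ is circularly non-decreasing (with starting point $0$), the case $e=0$ of the congruence~\eqref{eq:stmain2} in Theorem~\ref{thm:main} says exactly that $F_{p,w,\varnothing}(\chi)\equiv\Res^{S_{pw}}_{S_p\wr S_w}\chi\pmod{\cl K_{w-1}}$ for all $\chi\in\pm\Irr(S_{pw},\varnothing)$, so $F_{p,w,\varnothing}$ is a signed bijection of the desired type, with codomain $\pm\Irr_{\pri}(S_p\wr S_w)$. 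The content is therefore uniqueness, and the idea is: if another signed bijection $F$ also satisfies the congruence, then for every $\chi$ the difference $F(\chi)-F_{p,w,\varnothing}(\chi)$ lies in $\cl K_{w-1}$, and Proposition~\ref{prop:unique} forces this difference to vanish.

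Concretely I would argue by contradiction. Suppose $F\colon\pm\Irr(S_{pw},\varnothing)\to\pm\Irr_{\pri}(S_p\wr S_w)$ is a signed bijection satisfying the congruence and $F(\chi_0)\ne G(\chi_0)$ for some $\chi_0$, where $G:=F_{p,w,\varnothing}$. Write $F(\chi_0)=\delta\eta$ and $G(\chi_0)=\varepsilon\zeta$ with $\delta,\varepsilon\in\{\pm1\}$ and $\eta,\zeta\in\Irr_{\pri}(S_p\wr S_w)$; since $F(\chi_0)\ne G(\chi_0)$ we have $0\ne\delta\eta-\varepsilon\zeta\in\cl K_{w-1}$. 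Using that $G$ is a signed bijection onto $\pm\Irr_{\pri}(S_p\wr S_w)$, I would write $\eta=\varepsilon_1 G(\chi_1)$ and $\zeta=\varepsilon_2 G(\chi_2)$ for uniquely determined $\chi_1,\chi_2\in\Irr(S_{pw},\varnothing)$ and signs $\varepsilon_1,\varepsilon_2$, and then apply the congruence for $G$ to rewrite $\delta\eta-\varepsilon\zeta\in\cl K_{w-1}$ as $\Res^{S_{pw}}_{S_p\wr S_w}(\delta\varepsilon_1\chi_1-\varepsilon\varepsilon_2\chi_2)\in\cl K_{w-1}$. If $\chi_1\ne\chi_2$, this says $\Res^{S_{pw}}_{S_p\wr S_w}(\chi_1\pm\chi_2)\in\cl K_{w-1}$ for one of the two signs, contradicting Proposition~\ref{prop:unique}\eqref{pun1}. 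If $\chi_1=\chi_2$, then $\eta=\pm\zeta$, and since both are genuine irreducibles we get $\eta=\zeta$; as $\delta\eta\ne\varepsilon\zeta=\varepsilon\eta$ this forces $\delta=-\varepsilon$, so $\delta\eta-\varepsilon\zeta=-2\varepsilon\zeta$. Since $\cl K_{w-1}$ consists of the virtual characters vanishing on $\cl U_{w-1}$, it is rationally closed, hence $\zeta\in\cl K_{w-1}$, and then $\Res^{S_{pw}}_{S_p\wr S_w}\chi_1\in\cl K_{w-1}\subseteq\cl K_w$ (the inclusion because $\cl U_w\subseteq\cl U_{w-1}$), contradicting Proposition~\ref{prop:unique}\eqref{pun2}. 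In either case we reach a contradiction, so $F=G$.

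I do not expect any serious obstacle in the corollary itself: essentially all the difficulty is concentrated in Proposition~\ref{prop:unique}, which is established separately, and once that is available the above is pure bookkeeping. The only points needing mild care are the sign bookkeeping (keeping $\delta,\varepsilon,\varepsilon_1,\varepsilon_2$ straight) and the split into the cases $\chi_1\ne\chi_2$ and $\chi_1=\chi_2$; the two elementary facts that make the latter case go through — rational closedness of $\cl K_{w-1}$ and the inclusion $\cl K_{w-1}\subseteq\cl K_w$ — are both immediate from the definition~\eqref{eq:defK} of the subgroups $\cl K_s$.
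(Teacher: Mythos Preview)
Your argument is correct and rests on the same idea as the paper's: both reduce uniqueness to Proposition~\ref{prop:unique}. The paper's write-up is a touch more streamlined --- it shows $F^{-1}=F_{p,w,\varnothing}^{-1}$ by taking any $\chi,\th\in\pm\Irr(S_{pw},\varnothing)$ with $F(\chi)=F_{p,w,\varnothing}(\th)$, observing $\Res^{S_{pw}}_{S_p\wr S_w}(\chi-\th)\in\cl K_{w-1}$, and invoking Proposition~\ref{prop:unique} to conclude $\chi=\th$ --- thereby avoiding the extra step of writing $\eta$ and $\zeta$ and then pulling them back through $G$; but your version, with its explicit case split and sign bookkeeping, is entirely sound and in particular makes the use of both parts of Proposition~\ref{prop:unique} visible.
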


\begin{proof}
 Let $F$ be a signed bijection satisfying the above congruence, and suppose that $F(\chi) = F_{p,w,\varnothing}(\th)$ for some $\chi, \th\in \pm\Irr(S_{pw}, \varnothing)$. 
Then $\Res^{S_{pw}}_{S_p\wr S_w} (\chi - \th) \in \cl K_{w-1}$, which implies, by Proposition~\ref{prop:unique}, that $\chi = \th$. 
So $F$ and $F_{p,w,\varnothing}$ have the same inverses, and
the result follows. 
\end{proof}

We now prove Proposition~\ref{prop:unique}.
In order to do so, we use the standard correspondence between characters of symmetric groups and symmetric functions, which is briefly described below:
the details may be found in~\cite[Chapter I]{Macdonald1995}.
Let $R = \bigoplus_{n\ge 0} \cl C(S_n)$. We endow $R$ with a product $*$ as follows: if $\chi\in \cl C(S_m)$ and $\th\in \cl C(S_n)$, then
$\chi * \th = \Ind_{S_m \times S_n}^{S_{m+n}} (\chi\times \th)$. Thus $R$ becomes a graded commutative ring. 

Let $\Sym$ be the ring of symmetric functions with integer 
coefficients in variables $x_1, x_2,\ldots$. 
There is a canonical ring isomorphism $\ch\colon R\to \Sym$ (see~\cite[\S I.8]{Macdonald1995}). 
If $\lda/\mu$ is any skew partition, then $\ch (\chi^{\lda/\mu})$ is the skew Schur function $s_{\lda/\mu}$. 

We also use the complete symmetric functions $h_n$ defined in~\cite[Section I.2]{Macdonald1995}. 
It is well known that the functions $h_n$, $n\in \mZ_{\ge 0}$, are algebraically independent over $\mZ$ and generate the ring $\Sym$. 
We will use the following result, which is a special case of~\cite[Theorem 1]{BarekatReinervanWilligenburg2009}\footnote{Lemma~\ref{lem:Schirr} is stated as Theorem 1 in~\cite{Farahat1958}, but the proof in~\cite{Farahat1958} appears to be incomplete (the formula for $\partial^{0}\{ \mu \}$ on p.\ 623 
implicitly assumes that $\lda_{k-1}>1$).}.

\begin{lem}\label{lem:Schirr}
 The Schur functions $s_{\lda}$, expressed as polynomials in $h_0, h_1, h_2,\ldots$, are irreducible. 
\end{lem}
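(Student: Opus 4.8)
The plan is to argue by induction on $|\lda|$ using the Jacobi--Trudi identity $s_\lda=\det\bigl(h_{\lda_i-i+j}\bigr)_{1\le i,j\le \ell}$, where $\ell=l(\lda)$, $h_0=1$ and $h_n=0$ for $n<0$; throughout we regard $s_\lda$ as an element of the polynomial ring $\mZ[h_1,h_2,\ldots]$ in the algebraically independent generators $h_n$. (For $\lda=\varnothing$ one has $s_\varnothing=1$, a unit, so assume $\lda\ne\varnothing$.) The base of the induction will be hook shapes, where irreducibility is essentially immediate, and the inductive step will be reduced to a single non-divisibility assertion about a smaller Schur function.

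The first step is to isolate the ``top'' variable. Set $N=\lda_1+\ell-1$. Since $\lda_i-i+j\le \lda_1-1+\ell=N$ with equality only at $(i,j)=(1,\ell)$, the variable $h_N$ occurs in exactly one entry of the Jacobi--Trudi matrix, so $s_\lda$ has degree exactly $1$ in $h_N$, the coefficient being the cofactor of that entry. Deleting the first row and last column of the Jacobi--Trudi matrix of $\lda$ and re-indexing yields the Jacobi--Trudi matrix of $\mu:=(\lda_2-1,\lda_3-1,\ldots,\lda_\ell-1)$ (a partition, as each $\lda_i\ge1$), so that cofactor is $\pm s_\mu$ and
\[
  s_\lda=\e\,s_\mu\,h_N+B,\qquad \e=\pm1,
\]
with $B\in R:=\mZ[h_i:i\ge1,\ i\ne N]$ (indeed $s_\mu\in\mZ[h_1,\ldots,h_{N-2}]$). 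Now suppose $s_\lda=PQ$ with $P,Q$ non-units of $\mZ[h_1,h_2,\ldots]$. As $\deg_{h_N}s_\lda=1$, one factor, say $Q$, has $h_N$-degree $0$, i.e.\ $Q\in R$; comparing the two $h_N$-coefficients of $PQ=s_\lda$ gives $Q\mid s_\mu$ and $Q\mid B$ in $R$. If $\lda$ is a hook then $\mu=\varnothing$, $s_\mu=\pm1$, and $Q$ would be a unit --- contradiction, settling the base case. Otherwise $\mu\ne\varnothing$ and $|\mu|<|\lda|$, so $s_\mu$ is irreducible by the inductive hypothesis; then $Q$ is an associate of $s_\mu$, and it remains to rule out $s_\mu\mid B$, i.e.\ $B=s_\mu C$ with $C\in R$ homogeneous (in the grading $\deg h_n=n$) of degree $|\lda|-|\mu|=N$.

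For this last point I would use leading monomials in the $h$-basis. The transition between the $s$- and $h$-bases is unitriangular for the dominance order, so $s_\nu=h_\nu+(\text{a combination of }h_\pi\text{ with }\pi\text{ strictly dominated by }\nu)$; ordering $h$-monomials $h_\pi$ first by degree and then lexicographically in $\pi$ makes the leading-monomial map multiplicative, in the sense that if $h_\alpha,h_\beta$ are the leading monomials of $f,g$ then $h_{\alpha\sqcup\beta}$ is the leading monomial of $fg$. Hence the leading monomial of $s_\lda$ is $h_\lda$, and that of $s_\mu h_N$ is $h_{\mu\sqcup(N)}$, which is lex-larger than $h_\lda$ because its largest part $N$ exceeds $\lda_1$. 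Therefore no cancellation occurs at the top of $B=s_\lda-\e s_\mu h_N$, and the leading monomial of $B$ is $h_{\mu\sqcup(N)}$. On the other hand $B=s_\mu C$ forces the leading monomial of $B$ to be $h_{\mu\sqcup\gamma}$, where $h_\gamma$ is the leading monomial of $C$; comparing, $\gamma=(N)$, so $h_N$ would appear in $C$. But $C\in R$ involves no $h_N$ (writing $C=\sum_k C_kh_N^k$, the relation $s_\mu C_k=0$ for $k\ge1$ forces $C_k=0$), a contradiction. This completes the induction.

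The step I expect to be the genuine obstacle is this last one --- proving $s_\mu\nmid B$ --- and in particular making the ``leading monomial is multiplicative'' claim fully rigorous: this rests on the unitriangularity recalled above together with the verification that the chosen order on $h$-monomials is a bona fide monomial order (compatible with the operation $\sqcup$ on partitions). In the paper this lemma is not proved afresh but quoted from \cite{BarekatReinervanWilligenburg2009}, where the irreducibility of general skew Schur functions $s_{\lda/\mu}$ is established; the case needed here is the straight-shape specialisation $\mu=\varnothing$. The outline above is the route I would take for a self-contained proof, the delicate part being the combinatorial bookkeeping with dominance and lexicographic leading terms.
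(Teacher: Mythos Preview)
Your inductive framework is sound: isolating $h_N$ via the $(1,\ell)$ cofactor in Jacobi--Trudi, writing $s_\lda=\e s_\mu h_N+B$ with $B$ free of $h_N$, and reducing to $s_\mu\nmid B$ is a natural line of attack, and your base case for hooks is correct. The paper does not prove the lemma but simply cites~\cite{BarekatReinervanWilligenburg2009}, so your attempt is genuinely adding content.

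However, the argument for $s_\mu\nmid B$ has a real error. The unitriangularity of the $(s,h)$ transition runs in the opposite direction to what you state: one has $s_\nu=h_\nu+\sum_{\pi\,\triangleright\,\nu}c_\pi h_\pi$, i.e.\ the extra terms $h_\pi$ have $\pi$ \emph{dominating} $\nu$, not dominated by $\nu$. (For instance $s_{(2,2)}=h_2^2-h_3h_1=h_{(2,2)}-h_{(3,1)}$, and $(3,1)\triangleright(2,2)$.) Consequently the lex-\emph{leading} $h$-monomial of $s_\lda$ is not $h_\lda$; rather $h_\lda$ is the lex-\emph{trailing} monomial. Your conclusion that the leading monomial of $B$ is $h_{\mu\sqcup(N)}$ is in fact self-contradictory: that monomial contains $h_N$, whereas by construction $B\in\mZ[h_i:i\ne N]$. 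What really happens is that the $h_{\mu\sqcup(N)}$ term lives inside $\e s_\mu h_N$ and is present in $s_\lda$ (since $\mu\sqcup(N)\triangleright\lda$), so it cancels in $B=s_\lda-\e s_\mu h_N$.

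Switching to trailing monomials does not rescue the argument uniformly. One gets $TM(B)=h_\lda$ (since $h_\lda$ is lex-smaller than every monomial of $\e s_\mu h_N$), and $s_\mu\mid B$ would force $h_\mu\mid h_\lda$ as monomials; but this \emph{can} hold, e.g.\ $\lda=(3,2,1)$, $\mu=(1)$, where $h_1\mid h_3h_2h_1$. (In that example $B=h_1h_2h_3-h_3^2-h_1^2h_4$, and the actual obstruction to $h_1\mid B$ is the term $-h_3^2$, not the trailing one.) So the leading/trailing monomial bookkeeping, even once corrected, does not close the gap; a different idea is needed for the divisibility step. This is presumably why the paper defers to the literature rather than giving a short proof.
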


 If $m\in \mN$ and $d\ge 0$, define the ``shrinking'' map $\shr_m\colon \CF (S_{md};K) \to \CF(S_d;K)$ by $\shr_m (\xi) (g_{\lda}) = \xi(g_{m\lda})$ for $\xi\in \CF(S_{md};K)$ and 
 $\lda\in\cl P(d)$, where $g_{\lda}$ is as defined before Lemma~\ref{lem:lincomb}. 
This map is described by the following known result. 

\begin{thm}\label{thm:shrfar}
 Suppose that $\lda/\mu$ is a skew partition and $|\lda/\mu| = md$. We have $\shr_m (\chi^{\lda/\mu}) =0$ unless $\lda \supset_m \mu$, in which case
\[
 \shr_m (\chi^{\lda/\mu}) = \e_m (\lda/\mu) 
 \Ind_{\prod_{i=0}^{m-1} S_{|\lda(i)/\mu(i)|}}^{S_d}
 (\chi^{\lda(0)/\mu(0)} \times \cdots \times \chi^{\lda(m-1)/\mu(m-1)} ).
\]
Hence, $\shr_m (\cl C(S_{md})) \subset \cl C(S_d)$. 
\end{thm}

\begin{proof}
 The statements concerning $\chi^{\lda/\mu}$ are due to Farahat~\cite[\S 4]{Farahat1954}. The last statement is an immediate consequence. 
\end{proof}

We fix $p\in \mN$ and $w\ge 0$. 
Let $f\colon \CF(S_{pw};K) \to \CF(S_{w-1}\times S_p; K)$ be the map defined by 
\[
 f(\xi) (g_{\a}, g_{\b})  = \xi(g_{p\a \sqcup \b}).
\]
We adopt the convention that $\chi^{\lda/\mu} =0$ and $s_{\lda/\mu} =0$ whenever $\lda\not\supset \mu$.

\begin{lem}\label{lem:f}
 If $\lda\in \cl P(pw)$ has empty $p$-core, then 
\[
 f(\chi^{\lda}) = \e_p (\lda) \sum_{j=0}^{p-1} (-1)^{p-j-1}  \Ind_{\prod_i {S_{|\lda(i)|-\d_{ij}}}}^{S_{w-1}} 
(\chi^{\lda(0)} \times \cdots \times \chi^{\lda(j)/(1)} \times \cdots \times \chi^{\lda(p-1)} ) \times  
 \chi^{(j+1,1^{p-j-1})}.
\]
\end{lem}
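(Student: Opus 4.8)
The plan is to compute $f(\chi^\lambda)$ directly from the definition by applying the Murnaghan--Nakayama rule to peel off the $p$ extra boxes that distinguish cycle type $p\alpha\sqcup\beta$ from cycle type $p\alpha$ alone. More precisely, $f(\chi^\lambda)(g_\alpha,g_\beta)=\chi^\lambda(g_{p\alpha\sqcup\beta})$, and I would split the element $g_{p\alpha\sqcup\beta}$ into the part $g_{p\alpha}$ of cycle type $p\alpha$ (total size $p(w-1)$) and the part $g_\beta$ consisting of a single $p$-cycle, of total size $p$. Applying the branching/Murnaghan--Nakayama formalism to this decomposition, $\chi^\lambda(g_{p\alpha}g_\beta)=\sum_{\lambda/\nu\text{ a rim }p\text{-hook}}\pm\,\chi^\nu(g_{p\alpha})\cdot(\text{sign of the rim }p\text{-hook removed to pass to }\beta\text{'s part})$. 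But $\chi^\nu(g_{p\alpha})$ is nonzero only when $\nu$ itself has empty $p$-core, i.e.\ only when $\nu\supset_p\varnothing$ with $|\nu/\varnothing|=p(w-1)$; and then by the abacus description (\S\ref{sub:22}) $\nu$ has the same $p$-quotient as $\lambda$ except on one runner, where exactly one box is removed from the corresponding component partition.

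The key combinatorial input is Theorem~\ref{thm:corequot} together with the abacus bookkeeping: removing a single rim $p$-hook from $\lambda$ corresponds on the abacus to moving one bead up one step on some runner $j$, which amounts to deleting one box from $\lambda(j)$, i.e.\ replacing $\lambda(j)$ by some partition $\kappa$ with $|\lambda(j)/\kappa|=1$; and since $\nu$ must again have empty $p$-core, the corresponding $\nu$ is forced to be the partition with $p$-quotient $(\lambda(0),\dots,\kappa,\dots,\lambda(p-1))$. The second removal (to account for the $p$-cycle $g_\beta$) then must take $\nu$ back down by another rim $p$-hook, but since $\chi^\nu(g_{p\alpha})$ already uses up all of $\nu$'s size on $p\alpha$, what really happens is that $\chi^\nu(g_{p\alpha}g_\beta)$ with $|g_\beta|=p$ expands as $\sum_{\nu/\mu\text{ rim }p\text{-hook}}\pm\chi^\mu(g_{p\alpha})$ — wait, I need to be careful about which element has which size. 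The cleaner route: treat $g_{p\alpha\sqcup\beta}$ as $g_{p\alpha}$ (size $p(w-1)$) times a $p$-cycle; then $\chi^\lambda$ evaluated there equals $\sum_{\xi}(-1)^{\mathrm{ht}(\lambda/\xi)}\chi^\xi(g_{p\alpha})$ where $\xi$ runs over partitions of $p(w-1)$ with $\lambda/\xi$ a rim $p$-hook. For $\chi^\xi(g_{p\alpha})\ne0$ we need $\xi$ to have empty $p$-core, hence $\xi$ corresponds under Theorem~\ref{thm:corequot} to a $p$-tuple obtained from $(\lambda(0),\dots,\lambda(p-1))$; by the rim-hook/abacus correspondence this forces $\xi$ to have $p$-quotient $(\lambda(0),\dots,\lambda(j)/(1),\dots,\lambda(p-1))$ for some $j$, and $\lambda(j)/(1)$ must itself be a genuine skew shape (a single removable box). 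The height of the rim $p$-hook $\lambda/\xi$ translates, via \cite[2.7.18--2.7.26]{JamesKerber1981}, into $\e_p(\lambda)$ times a factor recording how far the bead moved, which is exactly the leg length of the removable box in runner $j$; summing over all removable boxes of $\lambda(j)$ reproduces $\chi^{\lambda(j)/(1)}=\sum_{\kappa}\pm\chi^\kappa$ together with the hook character $\chi^{(j+1,1^{p-j-1})}$ encoding the sign pattern $(-1)^{p-j-1}$.

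Concretely, the steps in order: (1) use the Murnaghan--Nakayama rule to write $\chi^\lambda(g_{p\alpha\sqcup\beta})=\sum_\xi(-1)^{\mathrm{ht}(\lambda/\xi)}\chi^\xi(g_{p\alpha})$, the sum over $\xi\in\cl P(p(w-1))$ with $\lambda/\xi$ a rim $p$-hook; (2) observe $\chi^\xi(g_{p\alpha})=0$ unless the $p$-core of $\xi$ is empty, and then use Theorem~\ref{thm:corequot} plus the abacus description of rim-hook removal to identify the surviving $\xi$: they are indexed by a runner $j\in[0,p-1]$ and a removable box of $\lambda(j)$, with $\xi$ having $p$-quotient equal to that of $\lambda$ with the box deleted from the $j$-th component; (3) compute $\mathrm{ht}(\lambda/\xi)$ from the $p$-sign formalism: it equals (a global contribution giving $\e_p(\lambda)$) plus (the leg length of the deleted box in $\lambda(j)$) plus ($p-j-1$), the last term coming from the runner's position; (4) reassemble — summing over removable boxes of $\lambda(j)$ with their leg-length signs yields $\chi^{\lambda(j)/(1)}$ (as a virtual character, a signed sum of $\chi^\kappa$), and summing over $j$ with the $(-1)^{p-j-1}$ and the induction from $\prod_i S_{|\lambda(i)|-\delta_{ij}}$ to $S_{w-1}$ (which records that the remaining $p$-cycles of $p\alpha$ are distributed among the $p$ quotient-components) together with $\chi^{(j+1,1^{p-j-1})}$ gives the stated formula; (5) verify the $\e_p(\lambda)$ prefactor by checking the abacus sign convention from \S\ref{sub:22} on the base case or by tracking the permutation of bead labels explicitly.

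The main obstacle I anticipate is step (3)--(4): pinning down the exact sign, i.e.\ proving that the height of the rim $p$-hook factors as $\e_p(\lambda)$ times $(-1)^{(\text{leg length in }\lambda(j))+(p-j-1)}$ and that the cross-terms assemble correctly into the hook character $\chi^{(j+1,1^{p-j-1})}$ and the induced character over the Young subgroup $\prod_i S_{|\lambda(i)|-\delta_{ij}}$ of $S_{w-1}$. This requires carefully combining the Murnaghan--Nakayama sign with the $p$-sign bookkeeping of \cite[2.7.18--2.7.26]{JamesKerber1981} and the standard identity $\chi^{\lambda(j)/(1)}=\sum_{\kappa:\,\lambda(j)/\kappa\text{ a box}}(-1)^{\mathrm{leg}}\chi^\kappa$ (which is the $1$-box case of Murnaghan--Nakayama applied to $\lambda(j)$). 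The induction-to-$S_{w-1}$ factor comes from the fact that $\chi^\xi(g_{p\alpha})$, with $\xi$ of empty $p$-core, is itself governed by the same $f$-type formalism one level down, i.e.\ $\chi^\xi$ restricted to elements of cycle type $p\alpha$ is computed by the $p$-quotient decomposition — essentially the case $w\mapsto w-1$, $\beta=\varnothing$ of the present statement, so an inductive/bootstrapping phrasing may streamline the argument. I would also double-check the convention-dependent pieces (the ``$\e_p$'' sign, the direction of bead motion) against the small case $w=1$, where the formula must reduce to $f(\chi^\lambda)=f(\chi^\lambda)(1,g_\beta)=\chi^\lambda(g_\beta)$ with $\lambda\in\cl P(p)$ a hook.
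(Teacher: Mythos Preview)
Your approach has a genuine gap. You repeatedly treat $g_\beta$ as ``a single $p$-cycle'', but $\beta$ ranges over all of $\cl P(p)$, so $g_\beta$ is an arbitrary element of $S_p$. The Murnaghan--Nakayama step you write in (1), namely $\chi^\lambda(g_{p\alpha\sqcup\beta})=\sum_\xi(-1)^{\mathrm{ht}(\lambda/\xi)}\chi^\xi(g_{p\alpha})$ with $\lambda/\xi$ a rim $p$-hook, is valid only for $\beta=(p)$. Consequently your calculation determines $f(\chi^\lambda)$ only on the slice $\{(g_\alpha,g_{(p)})\}$, and the factor $\chi^{(j+1,1^{p-j-1})}$ in step (4) is asserted rather than derived: nothing in your outline produces the full $S_p$-character, since removing a single rim $p$-hook tells you nothing about $\chi^\lambda(g_{p\alpha\sqcup\beta})$ when $\beta\ne(p)$. (A secondary error: your formula $\chi^{\lambda(j)/(1)}=\sum_{\kappa}(-1)^{\mathrm{leg}}\chi^\kappa$ is wrong --- the one-box branching rule carries no signs.)

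The paper's remedy is to replace the rim-hook removal by the full branching rule, writing
\[
f(\chi^\lambda)=\sum_{\mu\in\cl P(p)} \shr_p(\chi^{\lambda/\mu})\times\chi^\mu,
\]
and then invoking Farahat's theorem (Theorem~\ref{thm:shrfar}) on $\shr_p(\chi^{\lambda/\mu})$. That theorem kills every $\mu$ with $\lambda\not\supset_p\mu$; since $\lambda$ has empty $p$-core, the surviving $\mu\in\cl P(p)$ must also have empty $p$-core, which forces $\mu$ to be a hook $(j+1,1^{p-j-1})$. This is exactly where the hook characters come from. The sign is then the multiplicativity $\e_p(\lambda/\mu^j)=\e_p(\lambda)\,\e_p(\mu^j)=(-1)^{p-j-1}\e_p(\lambda)$, and Farahat's theorem delivers the induced product of the $\chi^{\lambda(i)}$ (with the genuinely unsigned skew character $\chi^{\lambda(j)/(1)}$ in slot $j$) directly, with no separate sum over removable boxes required.
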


\begin{proof}
It follows from the above definition (together with~\cite[Eq.\ 2.4.16]{JamesKerber1981}) that 
\[
f(\chi^{\lda}) = \sum_{\mu \in \cl P(p)} \shr_p (\chi^{\lda/\mu}) \times \chi^{\mu}.
\]
By Theorem~\ref{thm:shrfar}, $\shr_p (\lda/\mu)=0$ unless $\lda\supset_p \mu$. By considering the abacus, we see that if $\mu\in \cl P(p)$ and $\lda\supset_p \mu$, then
$\mu$ is a hook partition, i.e.\ $\mu = (j+1, 1^{p-j-1})$ for some $j\in [0,p-1]$. Write $\mu^j = (j+1,1^{p-j-1})$ for each $j$. Hence,
\[
 f(\chi^{\lda}) = \sum_{j=0}^{p-1} (\shr_p(\chi^{\lda/\mu^j}) \times \chi^{\mu^j}).
\]
By Theorem~\ref{thm:shrfar},
\[
 \shr_p (\chi^{\lda/\mu^j}) = \e_p (\lda/\mu^j) \Ind_{\prod_i S_{|\lda(i)|-\d_{ij}}}^{S_{w-1}} (\chi^{\lda(0)} \times \cdots \times \chi^{\lda(j)/(1)} \times \cdots \times \chi^{\lda(p-1)}).
\]
For each $j$ we have $\e_p (\lda/\mu^j) = \e_p (\lda) \e_p (\mu^j) = (-1)^{p-j-1} \e_p (\lda)$ as long as $\lda \supset_p \mu^j$. 
The lemma follows from this equality together with the last two displayed equations.
\end{proof}

\begin{proof}[Proof of Proposition~\ref{prop:unique}]
Let $\lda$ and $\mu$ be partitions of $pw$ such that $\chi_1 = \chi^{\lda}$ and $\chi_2 = \chi^{\mu}$. Then $\lda$ and $\mu$ have empty $p$-cores.  By 
Theorem~\ref{thm:shrfar}, $\shr_p (\chi^{\lda})\ne 0$, which means precisely that $\Res^{S_{pw}}_{S_p\wr S_w} \chi_1 \notin \cl K_w$, so~\eqref{pun2} holds. 

To prove~\eqref{pun1}, suppose for contradiction that $\Res^{S_{pw}}_{S_p\wr S_w} (\chi_1 - \e \chi_2) \in \cl K_{w-1}$ for some $\e\in \{ \pm 1\}$. 
Due to the definition of $\cl K_{w-1}$ and Lemma~\ref{lem:cyctype}, this means precisely that $f(\chi_1) = \e f(\chi_2)$.
 Considering the 
coefficient in $\chi^{(j+1, 1^{p-j-1})}$ in Lemma~\ref{lem:f}, we see that there is a fixed $\e'\in \{ \pm 1\}$ such that for all $j\in [0,p-1]$
\[
\begin{split}
 & \quad \Ind_{\prod_i {S_{|\lda(i)|-\d_{ij}}}}^{S_{w-1}} (\chi^{\lda(0)}  \times \cdots \times \chi^{\lda(j)/(1)} \times \cdots \times \chi^{\lda(p-1)} ) \quad \\
 &= \quad \e' \Ind_{\prod_i {S_{|\mu(i)|-\d_{ij}}}}^{S_{w-1}} (\chi^{\mu(0)}  \times \cdots \times \chi^{\mu(j)/(1)} \times \cdots \times \chi^{\mu(p-1)} ). \quad 
\end{split}
\]
Further, the fact that $\Res^{S_{pw}}_{S_p\wr S_w} (\chi_1 - \e \chi_2) \in \cl K_w$ means precisely that $\shr_p(\chi_1) = \e \shr_p (\chi_2)$, so by Theorem~\ref{thm:shrfar} we have
\[
 \Ind_{\prod_i S_{|\lda(i)|}}^{S_w} \prod_{i=0}^{p-1} \chi^{\lda(i)} = \e \Ind_{\prod_i S_{|\mu(i)|}}^{S_w} \prod_{i=0}^{p-1} \chi^{\mu(i)}
\]
Applying the map $\ch$ to the above two displayed equalities, we obtain
\begin{equation}\label{eq:prun1}
 s_{\lda(0)} s_{\lda(1)} \cdots s_{\lda(j)/(1)} \cdots s_{\lda(p-1)} = \e' s_{\mu(0)} s_{\mu(1)} \cdots s_{\mu(j)/(1)} \cdots s_{\mu(p-1)} \quad \text{for all } j\in [0,p-1]
\end{equation}
and
\begin{equation}\label{eq:prun2}
 s_{\lda(0)} s_{\lda(1)} \cdots s_{\lda(p-1)} = \e s_{\mu(0)} s_{\mu(1)} \cdots s_{\mu(p-1)}.
\end{equation}
By Eq.~\eqref{eq:prun2} and  Lemma~\ref{lem:Schirr}, the tuple $(\lda(0),\ldots, \lda(p-1))$ is a permutation of the tuple $(\mu(0),\ldots,\mu(p-1))$. 

Consider the linear order $>$ on the set $\Cal P$ defined as follows: $\k > \nu$ if and only if either $|\k|>|\nu|$ or $|\k|=|\nu|$ and $\k$ is greater than $\nu$ in the 
lexicographic order (as defined e.g.~by~\cite[Eq.~1.4.5]{JamesKerber1981}). Let $(i_1,\ldots, i_p)$ be a permutation of $[0,p-1]$ such that $\lda(i_1)\ge \cdots \ge \lda(i_p)$. 

Since $\lda\ne \mu$, we have $\lda(l) \ne \mu(l)$ for some $l$. Let $r\ge 0$ be the smallest index such that $\lda(i_r) \ne \mu(i_r)$. If $\mu(i_r)>\lda(i_r)$, then 
$\mu(i_r)$ appears more times in the tuple $(\mu(0),\ldots,\mu(p-1))$ than in the tuple $(\lda(0),\ldots,\lda(p-1))$ (because $\lda(i_t)=\mu(i_t)$ for $t<r$), which cannot be the 
case as these two tuples are permutations of each other. So $\mu(i_r)<\lda(i_r)$; in particular, $\lda(i_r) \ne \varnothing$. 
Let $m$ be the multiplicity of the irreducible polynomial $s_{\lda(i_r)}$, written in variables $h_0, h_1,\ldots$, in $s_{\lda(i_1)}  \cdots s_{\lda(i_p)}$
(i.e.\ $m$ is the largest nonnegative integer such that $s_{\lda(i_r)}^m$ divides that product). 
Since the degree of $s_{\lda(i_r)/(1)}$ in variables $x_1,x_2,\ldots$ is one less than the degree of $s_{\lda(i_r)}$,
the former polynomial is not divisible by the latter one (even when written in terms of $h_0,h_1,\ldots$). 
Hence, the multiplicity of $s_{\lda(i_r)}$ in 
$s_{\lda(i_1)} \cdots s_{\lda(i_r)/(1)} \cdots s_{\lda(i_p)}$ is $m-1$.
On the other hand, by~\eqref{eq:prun2}, the multiplicity of $s_{\lda(i_r)}$ in $s_{\mu(i_1)} \cdots s_{\mu(i_p)}$ is $m$. Since $s_{\mu(i_r)}$ is not divisible by $s_{\lda(i_r)}$, 
the multiplicity of $s_{\lda(i_r)}$ in $s_{\mu(i_1)} \cdots s_{\mu(i_r)/(1)} \cdots s_{\mu(i_p)}$ must be at least $m$. We have reached a contradiction to~\eqref{eq:prun1} (for $j=r$).
\end{proof}

Thus, we have completed a proof of Theorem~\ref{thm:val}.

\section{Properties of the isometry}\label{sec:prop}

Let $p$ be a fixed prime.
Let $G$ and $H$ be finite groups and $\mu\in \cl C(G\times H)$. The virtual character $\mu$ is called \emph{perfect} (\cite[D{\'e}finition 1.1]{Broue1990}) if
\begin{enumerate}[(i)]
 \item\label{perf1} $\mu(g,h)/|C_G(g)|\in \cl O$ and $\mu(h)/|C_H (h)|\in \cl O$ for all $g\in G$ and $h\in H$;
 \item\label{perf2} $\mu(g,h)=0$ if either $g\in G_{p'}$ and $h\in H-H_{p'}$ or $g\in G-G_{p'}$ and $h\in H_{p'}$.
\end{enumerate}
As in~\cite{Broue1990} (but using different conventions), 
define the maps $I_{\mu} \colon \CF(H; K) \to \CF(G;K)$ and $R_{\mu}\colon \CF(G;K)\to \CF(H;K)$ 
by 
\[
\begin{split} 
I_{\mu} (\th) (g) &= \frac{1}{|H|} \sum_{h\in H} \mu(g,h) \th(h) \quad \text{and} \\
R_{\mu} (\chi) (h) &= \frac{1}{|G|} \sum_{g\in G} \mu(g,h) \chi(g).
\end{split}
\]

Let $G= S_{pw+e}$ and $H=S_p\wr S_w$, and let $\rho$ be a $p$-core of size $e$. The signed bijection $F=F_{p,w,\rho}$ of Theorem~\ref{thm:main} 
corresponds to a certain $\mu\in \cl C(G\times H)$ (see Eq. \eqref{eq:mu} below), and this virtual character $\mu$ is perfect if $w<p$ by~\cite[Corollaire 2.12]{Rouquier1994}.
If $w\ge p$, this is not the case.
However, in this section we prove Propositions~\ref{prop:sep} and~\ref{prop:perfproj}, which show that analogues of some properties of perfect isometries hold for all $w$. 

More precisely, Proposition~\ref{prop:sep} proves the analogue of condition~\eqref{perf2} above obtained by replacing $H_{p'}$ with a certain larger set $H^{(p')}$.
This subset $H^{(p')}$ of $H$ is defined in \S\ref{sub:71}, 
where we describe a generalisation of modular character theory for wreath products such as $H$, considering values of characters on $H^{(p')}$ rather than on $H_{p'}$.
Proposition~\ref{prop:perfproj} shows that $\mu$ is well-behaved with respect to this generalisation, 
in the same way as perfect isometries are well-behaved in relation to the 
usual modular characters: cf.\ \cite[Proposition 1.3]{Broue1990}. We do not include a generalisation of condition~\eqref{perf1}, which is harder to state.

\begin{remark}
A generalised version of a perfect isometry defined by K{\"u}lshammer, Olsson, and Robinson~\cite{KuelshammerOlssonRobinson2003} is well suited to the present situation. 
In fact, it is possible to deduce from Proposition~\ref{prop:type} 
and~\cite[Lemma 3.3]{KuelshammerOlssonRobinson2003} 
that $F$ yields a perfect isometry between
$\Irr(G,\rho)$ and $\Irr_{\pri}(H)$ with respect to the subsets $G_{p'}$ and $H^{(p')}$ in the sense of~\cite[Section 1]{KuelshammerOlssonRobinson2003}.
Moreover, $F$ is related to the isometry between $\Irr(G,\rho)$ and $\Irr(C_p\wr S_w)$ constructed in \emph{op.\ cit.} (see~\cite[Proposition 5.11]{KuelshammerOlssonRobinson2003}).
Gramain has obtained a similar result in the case when $w<p$ but $p$ is not necessarily a prime and one considers $N_{S_p}(C_p)\wr S_w$ instead of $S_p\wr S_w$:
see~\cite[Theorem 4.1]{Gramain2008}.
\end{remark}

In the case of abelian defect groups, perfect isometries between blocks are often viewed as character-theoretic shadows of derived equivalences 
at the level of module categories (see~\cite{Broue1990} and, for symmetric groups,~\cite{ChuangKessar2002, ChuangRouquier2008}). The question of whether there
is a deeper phenomenon of some sort underpinning the isometry $F$ when $w\ge p$ seems to be very much open.

\subsection{Generalised Cartan--Brauer triangle for wreath products}\label{sub:71}

Let $w\in \mZ_{\ge 0}$ and $L$ be a finite group. Consider the group $H=L\wr S_w$. 
We construct an analogue of the modular character theory of $H$, which for $w<p$ coincides with the standard theory of Brauer characters. 
In particular, 
Proposition~\ref{prop:orth} and 
Corollaries~\ref{cor:Brbasis} and~\ref{cor:decnos} below generalise classical results on modular characters (cf.\ \cite[\S 18]{CRI}).

Define $H^{(p')}$ to be the set of all elements $h\in H$ such that $h$ is $H$-conjugate to an element of the form
$y_{\s_1} (x_1) \cdots y_{\s_r} (x_r)$ where $\s_1,\ldots,\s_r$ are disjoint marked cycles in $S_w$ and $x_1,\ldots,x_r\in L_{p'}$.
Let $\IBr(L)$ be the set of irreducible Brauer characters of $L$, so that each element of $\IBr(L)$ is a class function defined on $L_{p'}$ 
(i.e.\ a map from $L_{p'}$ to $K$ that is constant on $L$-conjugacy classes). 
For $\Psi\in \PMap_w (\IBr(L))$ we define an $\cl O$-valued class function $\zeta_{\Psi}$ on $H^{(p')}$ by generalising~\eqref{eq:defzeta} in the obvious way. (Formally, 
let $\Psi' = ((\phi', \chi^{\Psi(\phi)} ) \mid \phi \in \IBr(L) )$ where $\phi'$ is an arbitrary extension of $\phi$ to a class function on the whole of $L$. Then $\zeta_{\Psi}$ is 
the restriction of $\zeta_{\Psi'}$ to $H^{(p')}$.)
We define the set $\GIBr(H)$ of ``generalised irreducible Brauer characters'' of $H$ by
\[
 \GIBr(H) = \{ \zeta_{\Psi} \mid \Psi\in \PMap_w (\IBr(L)) \}.
\]
If $w<p$, then $\GIBr(H) = \IBr(H)$ by~\cite[Theorem 4.3.34]{JamesKerber1981}. 
Where the context requires it, we view $\zeta_{\Psi}$ as a class function on 
all of $H$, setting it to be $0$ on $H-H^{(p')}$.

For each $\phi\in \IBr(L)$ let $\wh{\phi}$ be the character afforded by the projective indecomposable $\cl OL$-module corresponding to $\phi$ (see e.g.\ \cite[Chapter 2]{NavarroBook}).
For $\Psi\in \PMap_w(\IBr(L))$, define
\[
 \wh{\Psi} = ( (\wh{\phi}, \Psi(\phi)) \mid \phi\in \IBr(L) ).
\]
Since the characters $\wh{\phi}$ vanish outside $L_{p'}$, we have $\zeta_{\wh{\Psi}} (h) =0$ for all $h\in H-H^{(p')}$ (due to~\eqref{eq:wrcharval}).

\begin{lem}\label{lem:orth}
 Let $\phi,\psi\in \IBr(L)$ and $\lda,\mu\in \cl P(w)$. Then $\lan \zeta_{(\phi,\chi^{\lda})}, \zeta_{(\wh \psi,\chi^{\mu})} \ran = \d_{\phi\psi}\d_{\lda\mu}$.
\end{lem}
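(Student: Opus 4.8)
The plan is to compute both sides of the claimed identity directly using the known formulas for character values on wreath products and the standard orthogonality relations for $\IBr$ and $\Irr(S_w)$. Recall that $\zeta_{(\phi,\chi^\lda)} = \Ind_{L\wr S_w}^{L\wr S_w}(\phi^{\witi\times w}\Inf_{S_w}^{L\wr S_w}\chi^\lda)$ is simply $\phi^{\witi\times w}\cdot\Inf_{S_w}^{L\wr S_w}\chi^\lda$ (no induction is needed since $w_1 = w$), and similarly $\zeta_{(\wh\psi,\chi^\mu)} = \wh\psi^{\witi\times w}\cdot\Inf_{S_w}^{L\wr S_w}\chi^\mu$, viewed as a class function on $H$ vanishing off $H^{(p')}$. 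So
\[
\lan \zeta_{(\phi,\chi^\lda)},\zeta_{(\wh\psi,\chi^\mu)}\ran = \frac{1}{|H|}\sum_{h\in H^{(p')}} \phi^{\witi\times w}(h)\,\ol{\wh\psi^{\witi\times w}(h)}\,\chi^\lda(\pi(h))\,\ol{\chi^\mu(\pi(h))},
\]
where $\pi\colon H\to S_w$ is the natural projection. The idea is to organise this sum over conjugacy classes of $H$ meeting $H^{(p')}$, which by \S\ref{sub:wr} are parametrised by tuples of pairs (cycle length, $L$-conjugacy class of a $p'$-element), and to recognise the resulting sum as a product of an $\IBr(L)$-orthogonality contribution and an $\Irr(S_w)$-orthogonality contribution.

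Concretely, first I would reduce to the case $L = S_n$-free combinatorics by just using the description of conjugacy classes: every element of $H^{(p')}$ is conjugate to a standard form $y_{\s_1}(x_1)\cdots y_{\s_r}(x_r)$ with the $\s_j$ disjoint marked cycles of orders $l_j$ summing to $w$ and $x_j\in L_{p'}$, and two such are conjugate iff the multisets of pairs $(l_j, \text{class of }x_j)$ agree. The size of the centraliser of such an element is a product over cycles: grouping cycles by length, if there are $m_l$ cycles of length $l$ with the $x_j$'s in specified classes, the centraliser order is $\prod_l \big(\prod (l\cdot |C_L(x_j)|)\big)\cdot(\text{permutation factors among equal cycles})$ — this is exactly the standard centraliser formula for $L\wr S_w$. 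Then by Lemma~\ref{lem:wrcharval}, $\phi^{\witi\times w}$ and $\wh\psi^{\witi\times w}$ evaluate on the standard form as $\prod_j\phi(x_j)$ and $\prod_j\wh\psi(x_j)$, and $\chi^\lda(\pi(h)) = \chi^\lda(\s_1\cdots\s_r)$ depends only on the cycle type $(l_1,\ldots,l_r)$.

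Substituting everything and summing over conjugacy classes, the sum factors: fixing a cycle type $\alpha = (l_1,\ldots,l_r)\vdash w$ (equivalently a partition of $w$), the inner sum over the choices of $L$-classes for the $x_j$'s factors across cycles and each factor is $\sum_{x\in L_{p'}/\sim}\phi(x)\ol{\wh\psi(x)}/|C_L(x)| = \lan\phi,\wh\psi\ran^0 = \d_{\phi\psi}$ by the orthogonality of irreducible Brauer characters against projective indecomposable characters (\cite[\S 18]{CRI} or \cite[Chapter 2]{NavarroBook}). The remaining sum over cycle types, weighted by the $S_w$-centraliser factors, collapses to $\frac{1}{w!}\sum_{\sigma\in S_w}\chi^\lda(\sigma)\ol{\chi^\mu(\sigma)} = \d_{\lda\mu}$ by the ordinary orthogonality relations for $S_w$. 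Care must be taken that the bookkeeping of permutation factors among equal cycles matches between the centraliser formula and the $S_w$-sum; once the factorisation is set up correctly this is automatic.

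The main obstacle is purely organisational: correctly matching the combinatorial weights (centraliser orders in $H^{(p')}$ against the product of per-cycle $|C_L(x_j)|$ factors and the $S_w$-centraliser factors) so that the sum cleanly separates into the $\IBr(L)$ part and the $\Irr(S_w)$ part. A clean way to avoid error-prone case analysis is to phrase the sum using the map $\Res\colon \CF(H)\to \CF(L^{\times r})$ type decompositions already developed in \S\ref{sub:wr} (the class functions $\om_\lda$), or more simply to observe that $\zeta_{(\phi,\chi^\lda)}$ and $\zeta_{(\wh\psi,\chi^\mu)}$ are, up to the restriction to $H^{(p')}$, honest virtual characters whose inner product is computed by the ordinary formula, and then note that restricting the summation to $H^{(p')}$ is harmless because $\wh\psi^{\witi\times w}$ already vanishes off $H^{(p')}$. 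I would present the factorisation as a single displayed computation rather than grinding each weight individually.
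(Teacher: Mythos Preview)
Your approach is correct and essentially the same as the paper's: factor the inner product into a per-cycle $\lan\phi,\wh\psi\ran=\d_{\phi\psi}$ contribution and an $S_w$-orthogonality contribution. The paper, however, sidesteps exactly the bookkeeping you flag as the main obstacle: rather than summing over $H$-conjugacy classes with their centraliser weights, it sums over the fibres $X_\s=\pi^{-1}(\s)$ of the projection $\pi\colon H\to S_w$, observing that within each fibre every standard form $y_{\s_1}(x_1)\cdots y_{\s_r}(x_r)$ is hit by exactly $|L|^{w-r}$ elements of $X_\s$, so that $|H|^{-1}\sum_{h\in X_\s}(\cdots)$ reduces immediately to $(w!)^{-1}\chi^\lda(\s)\chi^\mu(\s)\prod_{j=1}^r |L|^{-1}\sum_{x\in L}\phi(x)\wh\psi(x^{-1})$, after which one sums over $\s\in S_w$. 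This is precisely the ``single displayed computation'' you were aiming for, and it avoids any explicit centraliser formula for $L\wr S_w$.
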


\begin{proof}
 Consider $\s\in S_w$ and let $\s=\s_1\cdots \s_r$ be its decomposition into disjoint marked cycles with orders summing to $w$. Let $X_{\s}$ be the preimage of $\s$ in $L\wr S_w$.
Consider the equivalence relation on $X_{\s}$ defined by the rule that $(z_1,\ldots,z_w; \s) \sim y_{\s_1} (x_1)\cdots y_{\s_r} (x_r)$ (where $z_i,x_j\in L$) if and only if
$x_j = z_{t} z_{\s_j^{-1}(t)} \cdots z_{\s_j^{-(o(\s_j)-1)} (t)}$, where $t$ is the smallest element of $\Supp(\s_j)$, for all $j$. 
Then the elements of each equivalence class are $H$-conjugate to each other (by~\cite[Theorem 4.2.8]{JamesKerber1981}), and each equivalence class contains $|L|^{w-r}$ elements. 
Hence, by~\eqref{eq:wrcharval},
\[
\begin{split}
& \; |H|^{-1} \sum_{h\in X_{\s}} \zeta_{(\phi, \chi^{\lda})}(h) \zeta_{(\wh\psi, \chi^{\mu})} (h^{-1}) = \\
&= (w! |L|^r)^{-1} \chi^{\lda} (\s) \chi^{\mu} (\s) \sum_{x_1,\ldots,x_r\in L} \phi(x_1) \wh{\psi}(x_1^{-1}) \cdots \phi(x_r) \wh{\psi}(x_r^{-1}) = (w!)^{-1} \chi^{\lda} (\s) \chi^{\mu}(\s) \d_{\phi\psi},
\end{split}
\]
where the last equality follows from the fact that $\lan \phi, \wh \psi \ran = \d_{\phi\psi}$ (see e.g.\ \cite[Theorem 2.13]{NavarroBook}).
The lemma follows after summing over all $\s\in S_w$ and using the standard orthogonality relation. 
\end{proof}

\begin{prop}\label{prop:orth}
 If $\Phi, \Psi\in \GIBr(H)$, then $\lan \zeta_{\wh \Psi}, \zeta_{\Phi} \ran =\d_{\Psi\Phi}$.
\end{prop}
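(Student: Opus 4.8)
The plan is to reduce Proposition~\ref{prop:orth} to Lemma~\ref{lem:orth} by expanding the generalised Brauer characters in terms of the building blocks $\zeta_{(\phi,\chi^\lda)}$. First I would fix $\Phi = \zeta_{\Psi_1}$ and $\Psi = \zeta_{\Psi_2}$ for $\Psi_1,\Psi_2\in \PMap_w(\IBr(L))$, and recall that, by the definition of $\zeta_\Psi$ in~\eqref{eq:defzeta} applied with the arguments $(\phi,\chi^{\Psi(\phi)})$, each $\zeta_\Psi$ is an induced class function of the shape $\Ind_{\prod_i(L\wr S_{w_i})}^{L\wr S_w}\prod_i(\phi_i^{\witi\times w_i}\Inf\chi^{\Psi(\phi_i)})$ where $\phi_1,\ldots,\phi_s$ are the distinct elements of $\IBr(L)$ with $\Psi(\phi_i)\neq\varnothing$ and $w_i=|\Psi(\phi_i)|$. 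The corresponding projective version $\zeta_{\wh\Psi}$ has the same shape with each $\phi_i$ replaced by $\wh{\phi_i}$.

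The key computational step is then an inner product of two induced characters, which I would handle via Frobenius reciprocity together with the Mackey formula, or more directly by expanding both $\chi^{\Psi_1(\phi)}$ and $\chi^{\Psi_2(\psi)}$ in the basis of class functions of $S_{w_i}$ indexed by cycle type and reducing to a sum over $\s\in S_w$ of products of terms of the form $|L|^{-1}\sum_{x\in L}\phi(x)\wh\psi(x^{-1}) = \lan\phi,\wh\psi\ran = \d_{\phi\psi}$. This is exactly the mechanism already used in the proof of Lemma~\ref{lem:orth}; the only new feature is bookkeeping across several distinct Brauer characters and partitions of possibly different sizes. Concretely, I expect that the cleanest route is: expand $\lan\zeta_{\wh\Psi},\zeta_\Phi\ran$ as a $\mZ$-linear combination of terms $\lan\zeta_{(\wh\psi,\chi^\mu)},\zeta_{(\phi,\chi^\lda)}\ran$ (using bilinearity, the fact that $\zeta_{\wh\Psi}$ vanishes off $H^{(p')}$, and the standard identity $\Ind_{A\times B}^{G}(\alpha\times\beta)$ combined with multiplicativity of characters under the wreath construction from Lemma~\ref{lem:lincomb}), apply Lemma~\ref{lem:orth} to each term, and observe that the surviving terms force $\Psi_1=\Psi_2$, whence the sum collapses to $1$.

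The main obstacle — really the only non-routine point — is justifying the reduction to the single-Brauer-character, single-partition case cleanly, i.e.\ controlling the cross terms between different $\phi_i$ and different partition components when one opens up the induction product $\prod_i(\phi_i^{\witi\times w_i}\Inf\chi^{\Psi(\phi_i)})$ against $\prod_j(\wh{\psi_j}^{\witi\times w'_j}\Inf\chi^{\Psi_2(\psi_j)})$. One must check that a term with $\phi_i\neq\psi_j$ somewhere contributes $0$ (this follows from $\lan\phi_i,\wh{\psi_j}\ran=0$ appearing as a factor) and that a term with matching Brauer characters but $\Psi_1(\phi)\neq\Psi_2(\phi)$ for some $\phi$ also vanishes (this follows from the orthogonality $\d_{\lda\mu}$ of symmetric-group characters appearing as a factor, after one sums over $\s$). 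Granting Lemma~\ref{lem:orth} and the description of conjugacy classes of $L\wr S_w$ from \S\ref{sub:wr}, these are straightforward verifications; I would present them by setting $\Psi_1=((\phi_i,\chi^{\lda^i}))$, $\Psi_2=((\phi_i,\chi^{\mu^i}))$ over a common index set (padding with $\varnothing$), computing $\lan\zeta_{\wh\Psi},\zeta_\Phi\ran = \prod_i\d_{\lda^i\mu^i}$ via Lemma~\ref{lem:orth} and the Mackey/Littlewood--Richardson-free bookkeeping, and noting this equals $\d_{\Psi\Phi}$.
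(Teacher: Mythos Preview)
Your approach is essentially the same as the paper's: both reduce to Lemma~\ref{lem:orth} via the Mackey formula applied to the two induced characters $\zeta_{\wh\Psi}$ and $\zeta_{\Phi}$ coming from the Young subgroups $\prod_\phi (L\wr S_{|\Psi(\phi)|})$ and $\prod_\phi (L\wr S_{|\Phi(\phi)|})$, with the cross terms killed by $\lan\phi,\wh\psi\ran=\d_{\phi\psi}$ and the matching terms handled by orthogonality of symmetric-group characters. The paper's write-up is just a bit tighter (and your citation of Lemma~\ref{lem:lincomb} is not really needed here).
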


\begin{proof}
 Throughout the proof, $\phi$ and $\psi$ are assumed to run over $\IBr(L)$. 
Let $A=\prod_{\phi} A_{\phi}$ and $B=\prod_{\phi} B_{\phi}$ be fixed Young subgroups of $S_w$ such that
$A_{\phi} \simeq S_{|\Psi(\phi)|}$ and $B_{\phi} \simeq S_{|\Phi(\phi)|}$ for all $\phi$. Let $\s\in S_w\le H$.  
Then $\lsb{\s}{A} \cap B = \prod_{\phi,\psi} (\lsb{\s}A_{\phi} \cap B_{\psi})$. It follows from Lemma~\ref{lem:orth} that 
\[
 \lan \Res^{H}_{ \,\ls{\s}{(L\wr A)}} \ls{\s}{\zeta_{\Phi}}, \Res^H_{L\wr B} \zeta_{\wh \Psi}\ran =0 
\]
unless $\lsb{\s}{A_{\phi}} \cap B_{\psi} = \mbf 1$ for all $\phi,\psi$ such that $\phi\ne \psi$, in which case we have $\lsb{\s}{A_{\phi}} = B_{\phi}$ for all $\phi$. 
Assuming the last statement holds 
and applying Lemma~\ref{lem:orth} again, we see that the above inner product is $0$ unless $\Phi(\phi) = \Psi(\phi)$ for all $\phi$, in which case it is equal to $1$.
The proposition now follows by the Mackey formula. 
\end{proof}

If $w<p$, this proposition and the fact that $\GIBr(H) =\IBr(H)$ imply that $\zeta_{\wh \Psi}$ is the character afforded by the projective indecomposable $\cl OH$-module corresponding
to $\zeta_{\Psi}$ for each $\Psi\in \PMap_w(\IBr(L))$. 

\begin{cor}\label{cor:Brbasis}
 The set $\GIBr(H)$ is a basis of the space of $K$-valued class functions on $H^{(p')}$.
\end{cor}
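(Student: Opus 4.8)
The plan is to show that $\GIBr(H)$ is linearly independent and has the right cardinality, and then conclude it is a basis.

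First I would establish linear independence. Suppose $\sum_{\Psi} a_{\Psi} \zeta_{\Psi} = 0$ as a class function on $H^{(p')}$, where the sum is over $\Psi \in \PMap_w(\IBr(L))$ and $a_{\Psi}\in K$. Since each $\zeta_{\wh{\Phi}}$ vanishes on $H - H^{(p')}$, we may extend both sides to class functions on all of $H$ (setting $\zeta_{\Psi} = 0$ on $H - H^{(p')}$ as in the paragraph preceding the statement) and pair with $\zeta_{\wh{\Phi}}$ for each $\Phi \in \PMap_w(\IBr(L))$. By Proposition~\ref{prop:orth}, $\lan \zeta_{\wh{\Phi}}, \sum_{\Psi} a_{\Psi}\zeta_{\Psi}\ran = a_{\Phi}$, so $a_{\Phi} = 0$ for all $\Phi$. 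Hence $\GIBr(H)$ is linearly independent in the space $\CF(H^{(p')}; K)$ of $K$-valued class functions on $H^{(p')}$.

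Next I would count dimensions. The dimension of $\CF(H^{(p')}; K)$ equals the number of $H$-conjugacy classes contained in $H^{(p')}$. By the description of conjugacy classes of $L\wr S_w$ recalled in \S\ref{sub:wr}, an $H$-conjugacy class of an element $y_{\s_1}(x_1)\cdots y_{\s_r}(x_r)$ is determined by the multiset of pairs $(o(\s_j), [\text{$L$-class of } x_j])$; the class lies in $H^{(p')}$ precisely when every $x_j$ may be taken in $L_{p'}$, i.e.\ is $p$-regular. Thus the number of such classes is the number of ways to choose a function from $\IBr(L)$ (equivalently, from the set of $p$-regular $L$-classes, which has the same cardinality) to partitions with total size $w$ — that is, $|\PMap_w(\IBr(L))| = |\GIBr(H)|$. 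Since $\GIBr(H)$ is a linearly independent set of size equal to $\dim_K \CF(H^{(p')};K)$, it is a basis.

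I do not expect a serious obstacle here: the argument is the standard ``orthogonality forces independence, then dimension count'' pattern, and both ingredients (Proposition~\ref{prop:orth} and the combinatorial description of conjugacy classes) are already in place. The one point requiring a little care is the bijection between $p$-regular $L$-classes and $\IBr(L)$, together with checking that distinct $\Psi$ give distinct $\zeta_{\Psi}$ as functions on $H^{(p')}$ — but this last fact is itself an immediate consequence of the linear independence just proved, so no circularity arises.
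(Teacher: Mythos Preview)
Your proof is correct and follows essentially the same approach as the paper: linear independence from Proposition~\ref{prop:orth}, and equality of cardinalities via the description of conjugacy classes of $L\wr S_w$ together with $|\IBr(L)|$ equalling the number of $p$-regular $L$-classes. The paper's version is terser but the argument is identical.
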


\begin{proof}
 It follows immediately from Proposition~\ref{prop:orth} that the set $\GIBr(H)$ is linearly independent over $K$. Also, it is clear that the number of $H$-conjugacy classes contained in $H^{(p')}$ is 
equal to $\PMap_w (X)$, where $X$ is a set of size $|\IBr(L)|$, and therefore is equal to $|\GIBr(H)|$. The result follows. 
\end{proof}

For any class function $\xi$ on $H$, let $\xi^{(p')}$ be the restriction of $\xi$ to the set $H^{(p')}$. By Corollary~\ref{cor:Brbasis}, for each $\th\in \Irr(H)$, we have
\[
 \th^{(p')} = \sum_{\Psi\in\PMap_w(\IBr(L))} d_{\th\Psi} \zeta_{\Psi}
\]
for uniquely determined \emph{decomposition numbers} $d_{\th\Psi}\in K$.

\begin{cor}\label{cor:decnos}
 For every $\th\in \Irr(H)$ and every $\zeta_{\Psi}\in \GIBr(H)$, we have 
 $d_{\th\Psi} = \lan \th, \zeta_{\wh{\Psi}} \ran$. In particular, $d_{\th\Psi}\in \mZ$. 
\end{cor}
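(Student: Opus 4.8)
The plan is to compute the inner product $\lan \th, \zeta_{\wh\Psi}\ran$ directly and identify it with the decomposition coefficient $d_{\chi\Psi}$. First I would recall that $\zeta_{\wh\Psi}$ vanishes on $H - H^{(p')}$, since each $\wh\phi$ vanishes outside $L_{p'}$ and hence, by the character formula~\eqref{eq:wrcharval} for class functions of the form $\phi^{\witi\times w}$ (and the induction formula~\eqref{eq:defzeta}), $\zeta_{\wh\Psi}$ is supported on $H^{(p')}$. Therefore, for any $\xi\in\CF(H;K)$ we have $\lan \xi,\zeta_{\wh\Psi}\ran = \lan \xi^{(p')}, \zeta_{\wh\Psi}\ran$, where the right-hand inner product is taken over the set $H^{(p')}$ (i.e.\ $|H|^{-1}\sum_{h\in H^{(p')}} \xi(h)\overline{\zeta_{\wh\Psi}(h)}$, which equals the full sum over $H$ because the extra terms vanish).

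Next I would apply this to $\xi = \th\in\Irr(H)$ and expand $\th^{(p')} = \sum_{\Phi} d_{\chi\Phi}\,\zeta_{\Phi}$ using Corollary~\ref{cor:Brbasis}. By linearity,
\[
\lan \th,\zeta_{\wh\Psi}\ran = \lan \th^{(p')}, \zeta_{\wh\Psi}\ran = \sum_{\Phi\in\PMap_w(\IBr(L))} d_{\chi\Phi}\,\lan \zeta_{\Phi}, \zeta_{\wh\Psi}\ran.
\]
Now invoke Proposition~\ref{prop:orth}, which gives $\lan \zeta_{\wh\Psi},\zeta_{\Phi}\ran = \d_{\Psi\Phi}$; one should note $\GIBr(H)$ consists of real-valued (indeed $\cl O$-valued after the obvious extension) class functions coming from Brauer characters, so the inner product is symmetric here and $\lan \zeta_{\Phi},\zeta_{\wh\Psi}\ran = \d_{\Phi\Psi}$ as well — alternatively one just reads Proposition~\ref{prop:orth} with the arguments in the needed order. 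Hence the sum collapses to $d_{\chi\Psi}$, proving the first assertion.

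For the integrality statement, I would argue that $\lan \th, \zeta_{\wh\Psi}\ran \in \mZ$ because $\zeta_{\wh\Psi}$ lies in $\cl C(H)$, the ring of genuine virtual characters of $H$. Indeed, $\zeta_{\wh\Psi}$ is obtained from the honest characters $\wh\phi$ (afforded by projective indecomposable $\cl OL$-modules) via the construction~\eqref{eq:defzeta}, i.e.\ inducing products of the form $\wh\phi^{\witi\times w_i}\Inf\chi^{\Psi(\phi)}$; by Lemma~\ref{lem:wrextgen} each $\wh\phi^{\witi\times w_i}\in\cl C(L\wr S_{w_i})$ (since $\wh\phi$ is a character of $L$), inflation of $\chi^{\Psi(\phi)}$ is a character, products of characters are characters, and induction preserves $\cl C$. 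So $\zeta_{\wh\Psi}\in\cl C(H)$, and consequently $\lan \th,\zeta_{\wh\Psi}\ran\in\mZ$ by the standard fact that inner products of virtual characters are integers. The only mild subtlety, and the step I expect to require the most care, is making sure the ``extension to all of $H$ by zero'' used in defining $\zeta_{\wh\Psi}$ genuinely coincides with this virtual character built from the $\wh\phi$'s — but this is exactly the observation recorded just before Lemma~\ref{lem:orth} that $\zeta_{\wh\Psi}(h)=0$ for $h\in H-H^{(p')}$, so the two agree and the argument goes through.
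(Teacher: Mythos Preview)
Your proof is correct and follows the same approach as the paper: expand $\th^{(p')}$ in the basis $\GIBr(H)$ and apply the orthogonality of Proposition~\ref{prop:orth} to collapse the sum. The paper's proof is the one-line computation $\lan \th, \zeta_{\wh\Psi}\ran = \sum_{\Phi} d_{\chi\Phi}\lan \zeta_{\Phi},\zeta_{\wh\Psi}\ran = d_{\chi\Psi}$, leaving the integrality (via $\zeta_{\wh\Psi}\in\cl C(H)$) implicit; your more detailed justification of that point is fine.
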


\begin{proof}
 We have
\[
 \lan \th, \zeta_{\wh{\Psi}} \ran = \sum_{\zeta_{\Phi}\in\GIBr(H)} d_{\th\Phi} \lan \zeta_{\Phi}, \zeta_{\wh \Psi} \ran = d_{\th\Psi}
\]
by Proposition~\ref{prop:orth}.
\end{proof}

\subsection{Generalised properties of perfect isometries}\label{sub:72}

Let the notation be 
as in the beginning of Section~\ref{sec:prop}; in particular,
$G=S_{pw+e}$ and $H=S_p\wr S_w$. Denote by $f_{\rho}$ the block of $G$ corresponding to $\rho$ and by  
$b_0$ the principal block of $H$.
The signed bijection $F=F_{p,w,\rho}$ extends to an isometry $F\colon \cl C(G,\rho)\to \cl C(H,b_0)$.
Then $F$ corresponds to the virtual character $\mu\in\cl C(G\times H)$ defined by
\begin{equation}\label{eq:mu}
 \mu = \sum_{\chi\in \Irr(G,\rho)} \chi \times \ol{F(\chi)}.
\end{equation}
That is, $I_{\mu}|_{\cl C(H,b_0)} =F^{-1}$ and $R_{\bar{\mu}}|_{\cl C(G,\rho)}  = F$.
Also, $R_{\bar{\mu}} (\xi)=0$ and $I_{\mu}(\th)=0$ for all $\xi\in \CF(G, 1-f_{\rho}; K)$ and $\th\in \CF(H, 1-b_0; K)$.
(We note that, in fact, $\bar{\mu} =\mu$ since all the values of $\mu$ are integers.)

If $\lda\in \cl P$ and $|\lda|\le w$, define
\[
\begin{split}
 \cl L_{\lda} (G,\rho) &= \{ \xi \in \CF(G, f_{\rho}; K) \mid \xi(g)=0 \text{ for all } g\in G \text{ such that } \tp_p (g) \ne \lda \} \qquad \text{and} \\
 \cl L_{\lda} (H, b_0) &= \{ \xi \in \CF(H, b_0; K) \mid \xi(h)=0 \text{ for all } h\in H \text{ such that } \tp_p^{\wre} (h) \ne \lda \}.
\end{split}
\]
In what follows, we will use the notation of the proof of Theorem~\ref{thm:main}, in particular, the maps $d^{\lda} = d^{\lda}_n$ and $\d^{\lda}$. 

\begin{prop}\label{prop:type}
Let $\lda$ be a partition with $|\lda|\le w$.
 Then $F (\cl L_{\lda} (G,\rho)) = \cl L_{\lda} (H, b_0)$. 
\end{prop}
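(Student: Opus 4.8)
The plan is to deduce Proposition~\ref{prop:type} from the commutativity of the diagram~\eqref{eq:main3}, exactly as in the proof of Theorem~\ref{thm:main}, but now exploiting \emph{all} of the maps $d^{\a}$ and $\d^{\a}$ rather than just reading off the congruence modulo $\cl K_w$. First I would record the following linear-algebraic reformulation of the subspaces $\cl L_{\lda}(G,\rho)$ and $\cl L_{\lda}(H,b_0)$: both are cut out inside $\CF(G,f_\rho;K)$, respectively $\CF(H,b_0;K)$, by vanishing conditions indexed by $p$-types, and the maps $d^{\a}_{pw}$ (composed with $\tilde\pi_\rho\Res$) and $\d^{\a}$ precisely read off the values of a class function on the elements of $p$-type $\a$ (for $\d^{\a}$, this uses Lemma~\ref{lem:cyctype} as in the proof of Theorem~\ref{thm:main}). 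More precisely, an element $\xi\in\CF(G,f_\rho;K)$ lies in $\cl L_\lda(G,\rho)$ iff $d^{\a}_{pw}\tilde\pi_\rho\Res^{G}_{S_{pw}\times S_e}\xi=0$ for every $\a\in\cl P(w)$ with $\a\ne\lda$, together with the requirement that $\xi$ vanish on all $g$ with $p\nmid$ enough cycles, i.e.\ on $g$ whose $p$-type has size $<|\lda|$ --- but the latter is automatic once we know $\xi$ is supported on $p$-type $\lda$. So in fact $\cl L_\lda(G,\rho)=\bigcap_{\a\ne\lda}\ker\bigl(d^{\a}_{pw}\tilde\pi_\rho\Res\bigr)\cap\bigl(\bigcap_{|\a|<|\lda|}\ker(d^\a_{pw}\tilde\pi_\rho\Res)\bigr)$ intersected with the condition that forces support; I would state this cleanly as: $\cl L_\lda(G,\rho)$ is the set of $\xi$ with $d^\a_{pw}\tilde\pi_\rho\Res\xi=0$ for all $\a\ne\lda$, and similarly $\cl L_\lda(H,b_0)=\{\eta : \d^\a\eta=0\text{ for all }\a\ne\lda\}$, after checking the support statements carefully.

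Next I would invoke Eq.~\eqref{eq:main4}, namely $d^{\a}_{pw}\tilde\pi_\rho\Res^{G}_{S_{pw}\times S_e}=\d^{\a}F_{p,w,\rho}$, which was established in the proof of Theorem~\ref{thm:main} for all $\a\in\cl P(w)$. Intertwining via this identity immediately gives, for any partition $\a$ of size $\le w$,
\[
\d^{\a}\bigl(F(\xi)\bigr)=d^{\a}_{pw}\tilde\pi_\rho\Res^{G}_{S_{pw}\times S_e}\xi\qquad\text{for all }\xi\in\cl C(G,\rho),
\]
and the same for $\a$ of size $<w$ obtained by pre- and post-composing with the vertical maps of~\eqref{eq:main3} for $m=w-|\a|$ as in the proof of Theorem~\ref{thm:main}; alternatively one extends~\eqref{eq:main4} directly to all $\a$ with $|\a|\le w$ by the same diagram-chase, since $\d^\a$ and $d^\a$ are defined for any such $\a$. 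Granting this, $F$ carries the condition ``$d^\a_{pw}\tilde\pi_\rho\Res\,\xi=0$ for all $\a\ne\lda$'' to ``$\d^\a F(\xi)=0$ for all $\a\ne\lda$'', so $F(\cl L_\lda(G,\rho))\subseteq\cl L_\lda(H,b_0)$. Since $F$ is a bijection $\pm\Irr(G,\rho)\to\pm\Irr_{\pri}(H)$ extending to an isometry of the ambient spaces, and since the vanishing conditions on both sides are indexed by the same set of partitions, a dimension count (the number of $p$-types equal to $\lda$ among $\Irr(G,\rho)$ versus among $\Irr_{\pri}(H)$, which match because $\tp_p$ and $\tp_p^{\wre}$ both correspond under $F$ to the same combinatorial invariant of the $p$-quotient) gives the reverse inclusion, hence equality.

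The one genuinely delicate point --- and the step I expect to be the main obstacle --- is pinning down that the two families of ``test functionals'' $\{d^\a_{pw}\tilde\pi_\rho\Res\}$ and $\{\d^\a\}$ really \emph{separate} the respective subspaces, i.e.\ that a class function in $\CF(G,f_\rho;K)$ supported on $p$-singular elements of $p$-type $\lambda$ is detected exactly by $d^\lambda$ and killed by $d^\alpha$ for $\alpha\neq\lambda$, with no loss of information coming from the $S_e$-factor or from $p$-singular parts of cycles. For the $G$-side this is essentially the content of the Murnaghan--Nakayama computation underlying~\cite[Th\'eor\`eme 2.6]{Rouquier1994} together with the definition of $\tilde\pi_\rho$; for the $H$-side it is Corollaire~2.10 of \emph{loc.\ cit.}\ together with the description of conjugacy classes of $S_p\wr S_w$ from~\S\ref{sub:wr}. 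I would handle this by first reducing, on each side, to showing that if all the relevant $d^\alpha$ (resp.\ $\delta^\alpha$) vanish on $\xi$ then $\xi=0$ on the union of the classes under consideration, which follows because the values $\d^{\a}\xi$ (resp.\ $d^\a_{pw}\tilde\pi_\rho\Res\,\xi$) literally record the values of $\xi$ on a set of class representatives of the elements whose $p$-type has size $|\a|$, by Lemma~\ref{lem:cyctype} and the explicit formula~\eqref{eq:wrcharval}. Once that separation statement is in place, the rest is the formal intertwining argument above.
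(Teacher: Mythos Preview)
Your strategy matches the paper's: transfer the vanishing conditions through diagram~\eqref{eq:main3}, then use that $F$ is an isomorphism to upgrade the inclusion to equality. The paper does this in five lines via the right-hand square of~\eqref{eq:main3}.

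There is, however, a genuine error in your execution. You want to invoke Eq.~\eqref{eq:main4}, i.e.\ $d^{\a}_{pw}\tilde\pi_\rho\Res=\d^{\a}F$, and ``extend it to all $\a$ with $|\a|\le w$ by the same diagram-chase''. That identity is only established for $|\a|=w$ (and for $|\a|=w-1$ under an extra hypothesis on $\rho$), and it \emph{cannot} be extended further: for $|\a|\le w-2$ the left-hand side lands in $\CF(S_{p(w-|\a|)};K)$ and the right-hand side in $\CF(S_p\wr S_{w-|\a|};K)$, which are different vector spaces. What the paper actually uses is the right-hand square of~\eqref{eq:main3} itself,
\[
\d^{\a}\circ F_{p,w,\rho}=F_{p,w-|\a|,\rho}\circ d^{\a}_{pw+e},
\]
valid for every $\a$ with $|\a|\le w$. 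Since each $F_{p,w-|\a|,\rho}$ is an isomorphism, this immediately gives $d^{\a}_{pw+e}(\xi)=0\Leftrightarrow\d^{\a}(F(\xi))=0$; the intermediate isometry $F_{p,w-|\a|,\rho}$ is precisely what reconciles the two target spaces, and no detour through $\tilde\pi_\rho\Res$ is needed.

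A smaller issue (shared by the paper's terse argument): the characterisation $\cl L_\lda=\bigcap_{\a\ne\lda}\ker d^{\a}$ is false as written, since for $\a\subsetneq\lda$ the class function $d^{\a}\xi$ is supported on elements of $p$-type $\lda\setminus\a$, not identically zero. What one genuinely obtains from intersecting kernels is $F\bigl(\cl L_{\subseteq\lda}(G,\rho)\bigr)=\cl L_{\subseteq\lda}(H,b_0)$ for every $\lda$, where $\cl L_{\subseteq\lda}$ denotes the subspace of functions supported on $p$-types contained in $\lda$; the paper's closing line (``since $F$ is an isomorphism, we have equality for all $\lda$'') then recovers $\cl L_\lda$ from this family, implicitly using that the $\cl L_\mu$ span $\CF(G,f_\rho;K)$.
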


\begin{proof}
Suppose that $\xi\in \cl L_{\lda} (G,\rho)$. Let $\a$ be a partition such that $|\a|\le w$ and $\a\ne \lda$. 
 Then $d^{\a} (\xi) =0$ and, considering the right-hand square of the commutative diagram~\eqref{eq:main3}, we see that $\d^{\a} (F (\xi))=0$.  
Hence, $F (\xi) \in \cl L_{\lda} (H, b_0)$. So $F (\cl L_{\lda} (G,\rho)) \subset \cl L_{\lda} (H, b_0)$.
Since $F$ is an isomorphism, we have equality for all $\lda$.
\end{proof}

If $g\in G$, let $g^{(p)}$ be the product of all the cycles in the cycle decomposition of $g$ that have orders divisible by $p$ and $g^{(p')}$ be the product of the other cycles, 
so that $g= g^{(p)} g^{(p')}$.
If $h\in H$, we may choose $z\in H$ such that $\ls{z}h = y_{\s_1} (x_1) \cdots y_{\s_r} (x_r)$ for some disjoint marked cycles $\s_1,\ldots,\s_r$ in $S_w$ and $x_1,\ldots,x_r\in S_p$.
Then we define $h^{(p')} = \ls{z^{-1}}{(\prod_j y_{\s_j} (x_j))}$ where $j$ runs over all the elements of $[1,r]$ such that $x_r$ is not a $p$-cycle. 
In particular, $h\in H^{(p')}$ if and only if $h=h^{(p')}$.


\begin{lem}\label{lem:vanish}
Let $\lda\in \cl P$.
 If $\xi\in \CF(G; K)$ vanishes on all $g\in G$ such that $\tp_p (g)=\lda$, then so does $\Proj_{f_{\rho}} \xi$. 
\end{lem}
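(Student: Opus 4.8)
The plan is to show that the projection $\Proj_{f_\rho}$ does not move a class function into the set of $p$-type-$\lambda$ elements, provided it vanished there to begin with. The key point is that $\Proj_{f_\rho}\xi$ is a $K$-linear combination of the characters $\chi^\kappa$ with $\kappa$ running over partitions of $pw+e$ having $p$-core $\rho$, so its value at any $g\in G$ is governed entirely by the way $g$ interacts with the block $f_\rho$. First I would recall that $\Proj_{f_\rho}$ is given by multiplication by the block idempotent $f_\rho\in Z(\mathcal O G)$, i.e.\ $(\Proj_{f_\rho}\xi)(g)=\xi(g f_\rho)$ in the notation of \S\ref{subsec:gennot}. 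Since $f_\rho$ is central, it can be expressed as a $K$-linear combination $\sum_{g'} a_{g'}\, g'$ of group elements where $a_{g'}$ depends only on the conjugacy class of $g'$, and moreover $a_{g'}=0$ unless $g'_{p'}$ (the $p'$-part) lies in the appropriate set — this is the content of Brauer's characterisation of block idempotents via $p$-sections. Thus $(\Proj_{f_\rho}\xi)(g)$ is a $K$-linear combination of values $\xi(gg')$ where $g'$ is a $p$-element commuting with $g$; and for such $gg'$, since $g'$ is a product of $p$-power-order cycles, attaching $g'$ to $g$ only affects the non-$p'$-part of the cycle structure, while the genuine $p'$-cycles of $g$ (those of order prime to $p$) survive unchanged.

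More precisely, the plan is as follows. Suppose $\xi$ vanishes on all $g$ with $\tp_p(g)=\lambda$. I want to show $(\Proj_{f_\rho}\xi)(g)=0$ for all such $g$. Fix $g$ with $\tp_p(g)=\lambda$ and write $g=g_p g_{p'}$. The block idempotent $f_\rho$ lies in the span of $p$-regular classes times $p$-elements: concretely there is a classical formula (Brauer, Osima) expressing a block idempotent in $Z(\mathcal O G)$ supported on elements $g'$ whose $p'$-part $g'_{p'}$ is $G$-conjugate to $g_{p'}$-type data — but the cleanest route is to use that $(\Proj_{f_\rho}\xi)(g)$ depends only on $\Res$ of $\xi$ to the $p$-section of $g_{p'}$, and on that section $\Proj_{f_\rho}$ acts as a $K$-linear operator that preserves the set of class functions vanishing where $\tp_p=\lambda$. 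Indeed, for $g'$ in the $p$-section of $g_{p'}$ we have $g'=g_{p'}^{x} u$ for some $x\in G$ and some $p$-element $u\in C_G(g_{p'}^x)$; then the cycles of $g'$ of order prime to $p$ are exactly the $p'$-cycles of $g_{p'}^x$ together with whatever $p'$-cycles of $u$ there are — but $u$ has $p$-power order so has no cycles of order prime to $p$, hence the order-prime-to-$p$ cycle structure of $g'$ equals that of $g_{p'}$. Consequently every such $g'$ appearing with nonzero coefficient in $g f_\rho$ has the property that its $p$-type is still a partition $\mu$ with $\mu\supseteq\lambda$ in the refinement sense dictated by how $u$'s $p$-cycles merge with $g_{p'}$'s fixed points; in particular when $g$ has $\tp_p(g)=\lambda$ and we unpack $g f_\rho = g_{p} g_{p'} f_\rho$, every contributing term $gg'$ (with $g'$ a $p$-element commuting with $g$) has $\tp_p(gg')$ some partition obtained from $\lambda$ by the $p$-adic merging, and one checks these all give $\tp_p = \lambda$ exactly when the coefficient is computed correctly. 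So $\xi$ vanishes on every such term, giving $(\Proj_{f_\rho}\xi)(g)=0$.

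The main obstacle I expect is making the combinatorial bookkeeping precise: one must argue that multiplying $g$ (with $\tp_p(g)=\lambda$) by a commuting $p$-element $g'$ never produces an element of $p$-type different from $\lambda$ \emph{among the terms that actually contribute to $gf_\rho$}, or more safely, that the only contributions to $(\Proj_{f_\rho}\xi)(g)$ come from values of $\xi$ at elements of $p$-type $\lambda$. The slick way around this is to avoid cycle-structure arguments entirely and instead use that $\Proj_{f_\rho}$ commutes with the linear functionals ``evaluate the generalised decomposition map $d^\alpha$'' in an appropriate sense — but that risks circularity with the proof of Theorem~\ref{thm:main}. A cleaner alternative, which I would pursue, is: observe that the space $V_\lambda=\{\xi\in\CF(G;K):\xi(g)=0 \text{ whenever }\tp_p(g)=\lambda\}$ is a sum of $p$-sections' worth of class functions, and that $\Proj_{f_\rho}$, being a $Z(\mathcal O G)$-module projection, preserves the direct sum decomposition of $\CF(G;K)$ by $p$-sections \emph{refined} by the value of $\tp_p$; this is because $f_\rho$, as a block idempotent, lies in the subalgebra spanned by $\{g'f_\rho : g' \text{ of }p\text{-power order}\}$ tensored over the $p'$-part — equivalently $\Proj_{f_\rho}$ is induced by a bimodule supported on the diagonal of $p$-sections. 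Granting that structural fact (which follows from the theory of $p$-sections and block idempotents, e.g.\ Broué's framework already invoked in the paper), $\Proj_{f_\rho}$ maps $V_\lambda$ into $V_\lambda$, which is precisely the assertion.

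Finally, I would note the place of this lemma in the surrounding argument: it is the technical input needed to transfer the vanishing behaviour of $\xi\in\mathcal L_\lambda(G,\rho)$-type class functions through the block projection, en route to establishing Proposition~\ref{prop:sep}, and so the cleanest write-up keeps the proof short — one paragraph citing the $p$-section decomposition of $Z(\mathcal O G)$ and the fact that block idempotents respect it, followed by the one-line deduction that $\Proj_{f_\rho}$ preserves each $V_\lambda$. If the $p$-section structure of $f_\rho$ is not convenient to cite directly, the fallback is the explicit Brauer formula $f_\rho=\sum_{g_{p'}}\sum_{u}\,c_{g_{p'},u}\,(g_{p'}u)$ with $c_{g_{p'},u}=0$ unless $u\in C_G(g_{p'})$ is a $p$-element, combined with the elementary observation that cycles of order prime to $p$ in $gu$ (for $u$ a $p$-element commuting with $g$) coincide with those in $g$, so $\tp_p(gu)=\tp_p(g)=\lambda$ whenever $\tp_p(g)=\lambda$ — wait, that last equality is false in general since $u$ can join fixed points of $g$ into new $p$-cycles, \emph{increasing} $|\tp_p|$; the correct statement is that then $gu\notin$ the support of cases we evaluate because $\xi$ is only assumed to vanish at $\tp_p=\lambda$, not at larger types. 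To close this gap I would instead run the argument with $g$ replaced throughout by a \emph{general} element and track that $(\Proj_{f_\rho}\xi)(g)$ only involves $\xi(g')$ for $g'$ with $\tp_p(g')=\tp_p(g)$ — which is exactly the content of ``$\Proj_{f_\rho}$ preserves the $\tp_p$-graded $p$-section decomposition'' — so the whole proof reduces to that one structural fact about block idempotents, and that is the step to get right.
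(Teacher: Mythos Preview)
Your proposal has a genuine gap at the crucial step. You reduce the lemma to the ``structural fact'' that $\Proj_{f_\rho}$ preserves the decomposition of $\CF(G;K)$ by $p$-type, and you claim this follows from the standard theory of $p$-sections and block idempotents. It does not. The $p$-type decomposition is strictly \emph{finer} than the $p$-section decomposition: a $2p$-cycle $g$ and a product $g'$ of two disjoint $p$-cycles have conjugate $p$-parts (both equal a product of two $p$-cycles), hence lie in the same $p$-section, but $\tp_p(g)=(2)$ while $\tp_p(g')=(1,1)$. So knowing that block projection respects $p$-sections tells you nothing about whether it respects $p$-types. Your self-correction (``wait, that last equality is false'') was on target, but the proposed fix --- appealing to Brou\'e's $p$-section framework --- does not close the gap; it is simply too coarse an invariant.

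The paper's proof takes an entirely different, and much shorter, route. For $\th\in\CF(G;K)$ write $\th^{(\lda)}$ for the class function agreeing with $\th$ on elements of $p$-type $\lda$ and vanishing elsewhere. The key input is \cite[Lemma 3.3]{KuelshammerOlssonRobinson2003}, a result specific to symmetric groups (part of the generalised block theory developed there), which gives that if $\th\in\Irr(G,f_\rho)$ then $\th^{(\lda)}$ is orthogonal to every irreducible character outside the block $f_\rho$, i.e.\ $\th^{(\lda)}\in\CF(G,f_\rho;K)$. Applying this blockwise, one gets $(\Proj_{f_\rho}\xi)^{(\lda)}\in\CF(G,f_\rho;K)$ and $(\Proj_{1-f_\rho}\xi)^{(\lda)}\in\CF(G,1-f_\rho;K)$; since their sum is $\xi^{(\lda)}=0$ and the two subspaces intersect trivially, both pieces vanish. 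The whole argument is three lines once the KOR lemma is cited. The moral: the fact you need is genuinely a theorem about symmetric groups, not a formal consequence of block theory.
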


\begin{proof}
For each $\th\in \CF(G; K)$ let $\th^{(\lda)}\in \CF(G;K)$ be defined by
\[
 \th^{(\lda)} (g) =
\begin{cases}
 \th(g) & \text{if } \tp_p (g) = \lda, \\
 0 & \text{otherwise.}
\end{cases}
\]
By~\cite[Lemma 3.3]{KuelshammerOlssonRobinson2003}, if $\th\in \Irr(G,\rho)$, then $\th^{(\lda)}$ is orthogonal to 
$\Irr(G)-\Irr(G,f_{\rho})$, and hence $\th^{(\lda)} \in \CF(G,f_{\rho};K)$.
Therefore, $(\Proj_{f_{\rho}} \xi)^{(\lda)} \in \CF(G,f_{\rho}; K)$. Similarly, $(\Proj_{1-f_{\rho}} \xi)^{(\lda)} \in \CF(G,1-f_{\rho};K)$.
Further,
\[
 0= \xi^{(\lda)} = (\Proj_{f_{\rho}} \xi)^{(\lda)} +
 (\Proj_{1-f_{\rho}} (\xi))^{(\lda)}.
\]
Since $\CF(G, f_{\rho}; K)\cap \CF(G,1-f_{\rho}; K)=0$, this implies that 
$(\Proj_{f_{\rho}} \xi)^{(\lda)} =0$, as claimed.
\end{proof}

\begin{prop}\label{prop:sep}
 Let $g\in G$ and $h\in H$. If $\tp_p (g)\ne \tp_p^{\wre} (h)$, then $\mu(g,h)=0$. 
\end{prop}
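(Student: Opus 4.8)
The statement is a ``separation'' property of the perfect-isometry virtual character $\mu$: it vanishes on pairs $(g,h)$ whose $p$-types differ. I would reduce this to the combinatorial identity~\eqref{eq:main4} from the proof of Theorem~\ref{thm:main}. The starting point is to expand $\mu(g,h)$ using~\eqref{eq:mu}: for fixed $g$, the function $h\mapsto \mu(g,h)$ is (the complex conjugate of) $F$ applied to the class function $\xi_g \in \CF(G;K)$ which is dual to evaluation at $g$ on $\Irr(G,\rho)$ — more precisely, $\mu(g,-) = \overline{F(\eta_g)}$ where $\eta_g = \sum_{\chi\in\Irr(G,\rho)} \overline{\chi(g)}\,\chi = \Proj_{f_\rho}(\mathbf{1}_g^{\,G})$ and $\mathbf 1_g^{\,G}$ is the indicator-type class function supported on the conjugacy class of $g$ (up to the factor $|C_G(g)|$). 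Since $F$ extends $\mathbb Z$-linearly to the isometry $\cl C(G,\rho)\to\cl C(H,b_0)$, it suffices to show that $F(\eta)(h)=0$ for every $\eta\in\cl C(G,\rho)$ that vanishes on all $g\in G$ with $\tp_p(g)=\tp_p^{\wre}(h)$, evaluated at the given $h$.

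\textbf{Key steps.} First I would fix $h\in H=S_p\wr S_w$ and set $\lda = \tp_p^{\wre}(h)$. Write $h$, up to $H$-conjugacy, as $y_{\s_1}(x_1)\cdots y_{\s_r}(x_r)$ with $x_i\in S_p$; the values $F(\eta)(h)$ for varying $h$ of a fixed $p$-type $\lda$ are governed by the maps $\d^{\a}$ with $|\a|=|\lda|$ (this is exactly how $\cl K_s$ and $\cl L_\lda(H,b_0)$ were analysed in Theorem~\ref{thm:main} and Proposition~\ref{prop:type}). By Proposition~\ref{prop:type}, $F(\cl L_\lda(G,\rho)) = \cl L_\lda(H,b_0)$ for every $\lda$ with $|\lda|\le w$, and similarly $F$ maps the orthogonal complement of $\cl L_\lda(G,\rho)$ inside $\cl C(G,\rho)$ onto that of $\cl L_\lda(H,b_0)$ inside $\cl C(H,b_0)$. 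So I would decompose $\eta = \eta' + \eta''$ where $\eta'\in\cl L_\lda(G,\rho)$ is the part of $\eta$ supported on $\{g : \tp_p(g)=\lda\}$ (projected into $\cl C(G,\rho)$, which is legitimate by Lemma~\ref{lem:vanish}) and $\eta''$ is the complementary part. By hypothesis on the original $\eta_g$, the relevant $\eta'$ is zero — more carefully: if $\tp_p(g)\ne\lda$, then $\eta_g$ vanishes on all elements of $p$-type $\lda$ by construction, so after projecting, $\Proj_{f_\rho}\eta_g$ still vanishes on such elements (Lemma~\ref{lem:vanish}), i.e.\ $\eta_g\perp\cl L_\lda(G,\rho)$ in $\cl C(G,\rho)$. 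Hence $F(\eta_g)\in\cl C(H,b_0)$ is orthogonal to $\cl L_\lda(H,b_0)$. Finally, $F(\eta_g)(h)$ is determined by the values of $\d^{\a}F(\eta_g)$ for $|\a|=\lda$ (since $h$ has $p$-type $\lda$, written via $p$-cycles $u$ exactly as in the definition of $\d^{\a}$ after conjugating away the $p'$-parts), and being orthogonal to $\cl L_\lda(H,b_0)$ forces precisely these values to vanish on the subset of $h$ of $p$-type $\lda$. Combining: $\mu(g,h) = \overline{F(\eta_g)(h)} = 0$.

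\textbf{Anticipated obstacle.} The crux is matching ``$F(\eta_g)$ orthogonal to $\cl L_\lda(H,b_0)$'' with ``$F(\eta_g)(h)=0$ for this particular $h$ of $p$-type $\lda$''. Orthogonality to $\cl L_\lda(H,b_0)$ does not immediately give pointwise vanishing on elements of $p$-type $\lda$ — it gives vanishing of the $\cl L_\lda$-component. I would resolve this by observing that $\cl C(H,b_0)\cap \cl L_\lda(H,b_0)$ together with its orthogonal complement in $\cl C(H,b_0)$ give an orthogonal decomposition, so $F(\eta_g)$, lying in the orthogonal complement, has zero $\cl L_\lda$-component; and the $\cl L_\lda$-component of any $\xi\in\CF(H,b_0;K)$ is precisely $\xi$ restricted (and zero-extended) to $\{h : \tp_p^{\wre}(h)=\lda\}$ — indeed one needs $\cl L_\lda(H,b_0)$ to be the image of the ``support on $p$-type $\lda$'' projection applied to $\CF(H,b_0;K)$, which is exactly the content of~\cite[Lemma 3.3]{KuelshammerOlssonRobinson2003} adapted to $H$ via Proposition~\ref{prop:type} (this is where the assumption that $p$ is prime and $b_0$ is a genuine block enters). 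Granting this, $F(\eta_g)$ vanishes identically on $\{h:\tp_p^{\wre}(h)=\lda\}$, so in particular $F(\eta_g)(h)=0$. The secondary nuisance is bookkeeping with complex conjugation and the factor $|C_G(g)|$, but since all values of $\mu$ are rational integers and~\eqref{eq:mu} is symmetric enough, this is routine.
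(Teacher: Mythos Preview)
Your approach is correct, and the ingredients (Lemma~\ref{lem:vanish}, Proposition~\ref{prop:type}, the isometry property of $F$) are exactly the ones the paper uses. However, your choice $\lda=\tp_p^{\wre}(h)$ forces you through an unnecessary detour: you pass from ``$\eta_g$ vanishes on $p$-type $\lda$'' to ``$\eta_g\perp \cl L_\lda(G,\rho)$'', transport orthogonality via $F$, and then must \emph{recover} pointwise vanishing on the $H$-side, which requires the $H$-analogue of Lemma~\ref{lem:vanish} (equivalently, the orthogonal decomposition $\CF(H,b_0;K)=\bigoplus_\nu \cl L_\nu(H,b_0)$). Your proposed fix---pulling the decomposition back from $G$ via Proposition~\ref{prop:type}---does work, so there is no gap.

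The paper avoids this loop entirely by the dual choice $\lda=\tp_p(g)$. Then the indicator $\xi$ of the class of $g$ is supported on $p$-type $\lda$; Lemma~\ref{lem:vanish} (applied to each $\nu\ne\lda$) gives $\Proj_{f_\rho}\xi\in\cl L_\lda(G,\rho)$ directly; Proposition~\ref{prop:type} then gives $R_\mu(\xi)\in\cl L_\lda(H,b_0)$, and membership in $\cl L_\lda(H,b_0)$ \emph{by definition} means vanishing at any $h$ with $\tp_p^{\wre}(h)\ne\lda$. No orthogonality argument, no $H$-side lemma. Since $R_\mu(\xi)(h)$ is a nonzero scalar multiple of $\mu(g,h)$, this finishes the proof in one line. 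The moral: working with \emph{membership} in $\cl L_\lda$ rather than \emph{orthogonality} to it lets you use the definition of $\cl L_\lda(H,b_0)$ directly at the end, sidestepping your anticipated obstacle altogether.
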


\begin{proof}
Let $\xi\in \CF(G;K)$ be the class function defined by $\xi(g')=1$ if $g'$ is $G$-conjugate to $g$ and $\xi(g')=0$ otherwise. Let $\lda=\tp_p (g)$.
By Lemma~\ref{lem:vanish}, $\Proj_{f_{\rho}}(\xi)(x)=0$ for all $x\in G$ with $\tp_p(x)\ne \lda$. This means that 
$\Proj_{f_{\rho}}\xi \in \cl L_{\lda} (G,\rho)$, whence by Proposition~\ref{prop:type} we have $R_{\mu}(\Proj_{f_{\rho}}\xi) \in \cl L_{\lda} (H, b_0)$. 
Since $R_{\mu}(\xi)=R_{\mu}(\Proj_{f_{\rho}} \xi)$, we deduce that 
$R_{\mu}(\xi)(h)=0$.
However, by the definition of $\xi$, we have $R_{\mu}(\xi)(h) = |G|^{-1} \mu(g,h)$. Hence, $\mu(g,h)=0$. 
\end{proof}

Let $\cl G$ be the $\mZ$-span of characters of $G$ afforded by the projective indecomposable $\cl OG$-modules belonging to the block $f_{\rho}$. 
Let $\cl H$ be the $\mZ$-span of the characters $\zeta_{\wh\Psi}$ where $\Psi$ runs over the elements of $\PMap_w (\IBr(S_p))$ such that $\Psi(\phi) = \varnothing$ whenever
$\phi\in \IBr(S_p)$ does not belong to the principal block of $S_p$. (If $w<p$, then $\cl H$ is simply the span of characters afforded by the projective indecomposable modules belonging
to the principal block of $H$.)

\begin{prop}\label{prop:perfproj}
We have $R_{\mu} (\cl G) = \cl H$.
\end{prop}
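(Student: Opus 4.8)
I would prove $R_{\mu}(\cl G) = \cl H$ by showing the two spans are carried to each other under restriction to $p'$-type elements and by exploiting the duality between projective indecomposable characters and irreducible Brauer characters on both sides. The key observation is that $R_{\mu}$ is the extension of $F$ to class functions (after the obvious projection), so $R_{\mu}$ is an isometry between $\cl C(G,\rho)$ and $\cl C(H,b_0)$ which sends $\Irr(G,\rho)$ to $\pm\Irr_{\pri}(H)$. Thus it suffices to understand how $F$ and its transpose interact with the decomposition maps. Concretely, for $\chi\in\Irr(G,f_\rho)$ the projective indecomposable character $\wh\chi$ can be written (via the usual Cartan–Brauer triangle for $G$, i.e.\ restriction to $G_{p'}$) as $\sum_{\psi\in\IBr(G,f_\rho)} d_{\chi\psi}\,\wh\psi$; and on the wreath side Corollary~\ref{cor:decnos} gives the analogous statement with $\GIBr(H)$ and the characters $\zeta_{\wh\Psi}$. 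So the real content is to match the decomposition matrix of $f_\rho$ (ordinary irreducibles versus $p$-regular classes) with the ``generalised'' decomposition matrix of $b_0$ (ordinary irreducibles versus $H^{(p')}$-classes), via $F$.

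The main step is the identity $d^{\lda}\,\tilde\pi_\rho\Res^{G}_{S_{pw}\times S_e} = \d^{\lda}\,F$ for all $\lda$ (this is Eq.~\eqref{eq:main4}, established in the proof of Theorem~\ref{thm:main} for $\lda\in\cl P(w)$), together with a statement identifying the images of $\cl G$ and $\cl H$ under the relevant restrictions. First I would observe that $\cl G$ is characterised as the set of $\xi\in\cl C(G,f_\rho)$ vanishing on all $p$-singular elements of $G$ (Brauer's characterisation of the projective span, Theorem~\ref{thm:brproj} applied blockwise); equivalently $\cl G$ is the span of those $\xi\in\cl C(G,f_\rho)$ with $\tp_p(g)$ unrestricted but $\xi(g)=0$ whenever $g\ne g^{(p)}g^{(p')}$ with $g^{(p)}=1$ — i.e., $\xi$ is supported on $p$-singular-free, er, on $p$-regular elements. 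Dually, by Proposition~\ref{prop:orth} and the definitions in \S\ref{sub:71}, $\cl H$ is precisely the span of those $\xi\in\cl C(H,b_0)$ that vanish outside $H^{(p')}$. So the claim $R_{\mu}(\cl G)=\cl H$ reduces to: $F$ (extended to class functions on $\cl C(G,f_\rho)$) sends the subspace of functions vanishing on $p$-singular elements onto the subspace of functions vanishing off $H^{(p')}$.

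For that reduction I would use Proposition~\ref{prop:type}: $F$ respects the decomposition by $p$-type, $F(\cl L_\lda(G,\rho)) = \cl L_\lda(H,b_0)$ for each $\lda$. A virtual character $\xi\in\cl C(G,f_\rho)$ vanishes on all $p$-singular elements iff $\xi\in\cl L_{\varnothing}(G,\rho)$ plus the part detected only on $p$-regular classes — more precisely one should argue that the condition ``$\xi$ vanishes on $p$-singular elements'' is equivalent, after using Proposition~\ref{prop:type} and the surjectivity of the maps $d^\lda$ onto $\cl C(S_{p(w-|\lda|)})$ restricted appropriately, to the condition ``$F(\xi)$ vanishes off $H^{(p')}$''. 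The cleanest route: for $\xi\in\cl C(G,f_\rho)$ and $h\in H$ with $\tp_p^{\wre}(h)=\lda$, write $h$ up to conjugacy as $y_{\s_1}(u)\cdots y_{\s_{l(\lda)}}(u)\cdot z$ with $z\in S_p\wr S_{w-|\lda|}$ having $p'$ base entries; then by the definition of $\d^\lda$ and Eq.~\eqref{eq:main4}, $F(\xi)(h)$ is a value of $d^{\lda}(\tilde\pi_\rho\Res\,\xi)$ on the element $z$ (viewed in $S_{p(w-|\lda|)}$). Now $z$ is $p$-regular, and the element $g\in G$ of cycle type $p\lda$ times $z$ is $p$-singular precisely when $\lda\ne\varnothing$; so ``$\xi$ vanishes on $p$-singular $g$ for all $g$ with $\tp_p(g)\ne\varnothing$'' translates exactly into ``$d^\lda(\tilde\pi_\rho\Res\,\xi)$ vanishes for all $\lda\ne\varnothing$'', i.e.\ $F(\xi)$ vanishes on $h$ with $\tp_p^{\wre}(h)\ne\varnothing$, i.e.\ $F(\xi)$ is supported on $\cl L_\varnothing(H,b_0)\subset$ functions on $H^{(p')}$. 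Finally one checks that on $\cl L_\varnothing$, ``vanishing on remaining $p$-singular elements of $G$'' corresponds to ``vanishing off $H^{(p')}$'' — both conditions become conditions on $\tilde\pi_\rho\Res\,\xi \in\cl C(S_{pw},\varnothing)$ restricted to $p$-regular classes versus the corresponding wreath classes, and the two sets of classes are matched by Lemma~\ref{lem:cyctype} exactly as in the proof of Theorem~\ref{thm:main}.

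**The main obstacle.** The delicate point is the bookkeeping when $\lda\ne\varnothing$: I must be careful that ``$\xi$ vanishes on $p$-singular elements'' is not merely ``$\xi\in\cl L_\varnothing$'' — an element of $\cl L_\lda(G,\rho)$ with $\lda\ne\varnothing$ supported on elements $g$ whose $p'$-part $g^{(p')}$ is itself $p$-regular need not vanish, because such $g$ is $p$-singular only through its $p$-part $g^{(p)}$. So the condition defining $\cl G$ genuinely mixes all the $\cl L_\lda$'s: $\xi\in\cl G$ iff for every $\lda$, $d^\lda(\xi)$ (which lives on $S_{p(w-|\lda|)}$) vanishes on all elements whose own cycle type has a part divisible by $p$ — that is, $d^\lda(\xi)\in\scr P$-like span on the smaller symmetric group, recursively. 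Unwinding this recursion and matching it, term by term in $\lda$, with the analogous characterisation of $\cl H$ via the nested structure of $H^{(p')}$ and the maps $\d^\lda$, is where the real work lies; I expect to need an induction on $w$ (or on $|\lda|$) and to invoke Corollary~\ref{cor:shrinC}, Lemma~\ref{lem:shr} and the commutativity of~\eqref{eq:main3} at each level, exactly paralleling the inductive structure already used to prove~\eqref{eq:main4} for all $\lda\in\cl P(w)$. Given that scaffolding, the equality $R_\mu(\cl G)=\cl H$ follows because $R_\mu|_{\cl C(G,\rho)}=F$ is an isomorphism and both sides are the preimage/image of the same ``vanishing'' subspace under it.
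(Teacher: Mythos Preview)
Your overall shape is right --- use Proposition~\ref{prop:type} with $\lda=\varnothing$ to carry one vanishing set to the other --- but you have inverted where the difficulty lies, and as a result your proposal has a real gap on the $H$-side while manufacturing a non-existent obstacle on the $G$-side.

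On the $G$-side: in $G=S_{pw+e}$ an element $g$ is $p$-singular if and only if its cycle decomposition contains a cycle of length divisible by $p$, i.e.\ if and only if $\tp_p(g)\ne\varnothing$. Hence ``$\xi\in\cl C(G,f_\rho)$ vanishes on all $p$-singular elements'' is \emph{exactly} the condition $\xi\in\cl L_{\varnothing}(G,\rho)$; there is no recursion over $\lda$, no mixing of the $\cl L_\lda$'s, and your entire ``main obstacle'' paragraph is chasing a phantom. The paper simply writes $\cl G=\scr P(G)\cap\cl C(G,\rho)=\cl L_{\varnothing}(G,\rho)\cap\cl C(G)$ and moves on.

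On the $H$-side: you assert that ``by Proposition~\ref{prop:orth} and the definitions in \S\ref{sub:71}, $\cl H$ is precisely the span of those $\xi\in\cl C(H,b_0)$ that vanish outside $H^{(p')}$''. The forward inclusion is easy (each $\zeta_{\wh\Psi}$ vanishes off $H^{(p')}$ and lies in $b_0$), but the reverse inclusion does \emph{not} follow from Proposition~\ref{prop:orth} alone --- that proposition gives orthogonality over $K$, not a statement that an arbitrary integral virtual character supported on $H^{(p')}$ has integral coefficients in the $\zeta_{\wh\Psi}$. The paper's argument for this reverse inclusion is the actual content of the proof: given $\xi\in\cl L_{\varnothing}(H,b_0)\cap\cl C(H)$, one observes $\om_\lda(\xi)\in(\scr P(S_p)\cap\cl C(S_p,c_0))^{\otimes l(\lda)}$ for every $\lda$, applies the induction theorem for wreath products (Theorem~\ref{thm:wrind}) to write $\xi$ as an integer combination of $\zeta_\Theta$ with all base factors in $\scr P(S_p)\cap\cl C(S_p,c_0)$, and then uses Lemma~\ref{lem:lincomb} to expand into the $\zeta_{\wh\Psi}$. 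You never invoke Theorem~\ref{thm:wrind}, and nothing in your sketch replaces it.
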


\begin{proof}
 By a well-known general result (cf.\ \cite[Corollary 2.17]{NavarroBook}), $\cl G = \scr P(G) \cap \cl C(G,\rho) = \cl L_{\varnothing}(G,\rho) \cap \cl C(G)$.
Let $\cl H' = \cl L_{\varnothing}(H,b_0)\cap \cl C(H)$. Since $R_{\mu}$ maps $\cl L_{\varnothing}(G,\rho)$ onto $\cl L_{\varnothing}(H,b_0)$
 (by Proposition~\ref{prop:type}) and $\cl C(G, f_{\rho})$ onto $\cl C(H,b_0)$, we have $R_{\mu}(\cl G)= \cl H'$.

Therefore, it remains to prove that $\cl H = \cl H'$. Let $\xi\in \cl H$; without loss of generality, $\xi =\zeta_{\wh\Psi}$ for an appropriate $\Psi$. 
As we observed just before Lemma~\ref{lem:orth}, $\zeta_{\wh \Psi}$ vanishes outside $H^{(p')}$. Further, it is easy to see (using Lemma~\ref{lem:wrindconv}) 
that the above condition imposed on $\Psi$ means that $\zeta_{\wh\Psi}\in \CF(H, b_0; K)$. Hence, $\zeta_{\wh\Psi}\in \cl H'$.

Conversely, suppose that $\xi\in \cl H'$. Let $c_0$ be the principal block of $S_p$. 
Then for each $\lda\in \cl P(w)$ we have $\om_{\lda} (\xi) \in (\scr P(S_p) \cap \cl C(S_p, c_0))^{\otimes l(\lda)}$. 
By Theorem~\ref{thm:wrind}, it follows that $\xi$ is a $\mZ$-linear combination of virtual characters of the form $\zeta_{\Theta}$ where $\Theta$ runs through tuples  
$((\a_1,\chi_1), \ldots (\a_r,\chi_r))\in \Tup_w (L)$ such that $\a_i\in \cl C(S_p, c_0)\cap \scr P(S_p)$ for each $i$. Each $\a_i$ therefore belongs to the $\mZ$-span of the characters $\wh{\phi}$ with $\phi\in \IBr(S_p, c_0)$. Using Lemma~\ref{lem:lincomb}, we deduce that every such $\zeta_{\Theta}$ lies in $\cl H$.  Therefore, $\xi \in \cl H$. 
\end{proof}


\begin{thebibliography}{10}

\bibitem{Alperin1976}
J.L.\ Alperin, The main problem of block theory. \emph{Proceedings of the
  Conference on Finite Groups (Univ.\ Utah, Park City, Utah, 1975)}, Academic
  Press, New York, 1976, pp.~341--356.

\bibitem{BarekatReinervanWilligenburg2009}
F.\ Barekat, V.\ Reiner, and S.\ van Willigenburg. Corrigendum to
  {"}{C}oincidences among skew {S}chur functions{"}. \emph{Adv.\ Math.}\ \textbf{220}
  (2009), no.~5, 1655--1656.

\bibitem{Brauer1947}
R.\ Brauer. Applications of induced characters. 
\emph{Amer.\ J.\ Math.}\ \textbf{69} (1947), 709--716.

\bibitem{Broue1990}
M.\ Brou{\'e}. Isom{\'e}tries parfaites, types de blocs, cat{\'e}gories
  deriv{\'e}es. \emph{Ast{\'e}risque} \textbf{181--182} (1990), 61--92.

\bibitem{ChuangKessar2002}
J.\ Chuang and R.\ Kessar. Symmetric groups, wreath products, {M}orita
  equivalences, and {B}rou{\'e}'s abelian defect group conjecture. 
\emph{Bull.\ London Math.\ Soc.}\ \textbf{34} (2002), no.~2, 174--184.

\bibitem{ChuangRouquier2008}
J.\ Chuang and R.\ Rouquier. Derived equivalences for symmetric groups
  and {$\mathfrak{sl}_2$}-categorification. \emph{Ann.\ Math.\ (2)}\ \textbf{167} (2008),
  245--298.

\bibitem{ChuangTan2003}
J.\ Chuang and K.M.\ Tan.
Filtrations in~Rouquier blocks of~symmetric groups and Schur algebras.
  \emph{Proc.\ London Math.\ Soc.\ (3)} \textbf{86}
  (2003), no.~3, 685--706.

\bibitem{CRI}
C.W.\ Curtis and I.\ Reiner. \emph{Methods of representation theory, vol.\ 1}.
  (John Wiley {\&} Sons, New York, 1981.)

\bibitem{CRII}
C.W.\ Curtis and I.\ Reiner. \emph{Methods of representation theory, vol.\ 2}. (John Wiley {\&}
  Sons, New York, 1987.)

\bibitem{Dade1994}
E.C.\ Dade. Counting characters in blocks. {II}. \emph{J.\ Reine Angew.\ Math.}\
  \textbf{448} (1994), 97--190.

\bibitem{Enguehard1990}
M.\ Enguehard. Isom{\'e}tries parfaites entre blocs de groupes
  sym{\'e}triques. \emph{Ast{\'e}risque} \textbf{181--182} (1990), 157--171.

\bibitem{Evseev2010m}
A.\ Evseev. The {M}c{K}ay conjecture and {B}rauer's induction theorem. 
\emph{Proc.\ London Math.\ Soc.} \textbf{106} (2013), 1248--1290.


\bibitem{Farahat1954}
H.K.\ Farahat. On the representations of the symmetric group. 
\emph{Proc.\ London Math.\ Soc.\ (3)} \textbf{4} (1954), 303--316.

\bibitem{Farahat1958}
H.K.\ Farahat. On {S}chur functions. \emph{Proc.\ London Math.\ Soc.\ (3)} 
\textbf{8} (1958), 621--630.

\bibitem{Fong2003}
P.\ Fong. The {I}saacs--{N}avarro conjecture for symmetric groups. 
\emph{J.\ Algebra} \textbf{260} (2003), no.~1, 154--161.

\bibitem{Gramain2008}
J.-B.\ Gramain. On defect groups for generalized blocks of the symmetric group.
\emph{J.\ London Math.\ Soc.\ (2)} \textbf{78} (2008), 155--171. 

\bibitem{IsaacsBook}
I.M.\ Isaacs. \emph{Character theory of finite groups}. (Dover Publications, New
  York, 1994.)

\bibitem{IsaacsNavarro2002}
I.M.\ Isaacs and G.\ Navarro. New refinements of the {M}c{K}ay conjecture
  for arbitrary finite groups. \emph{Ann.\ Math. (2)} \textbf{156} (2002), 333--344.

\bibitem{JamesKerber1981}
G.D.\ James and A.\ Kerber. \emph{The representation theory of the symmetric
  group}. (Addison-Wesley Publishing Co., Reading, Mass., 1981.)

\bibitem{KuelshammerOlssonRobinson2003}
B.\ K{\"u}lshammer, J.B.\ Olsson, and G.R.\ Robinson. Generalized blocks
  for symmetric groups. \emph{Invent.\ Math.}\ \textbf{151} (2003), no.~3, 513--552.

\bibitem{Macdonald1971}
I.G.\ Macdonald. On the degrees of the irreducible representations of
  symmetric groups. \emph{Bull.\ London Math.\ Soc.}\ \textbf{3} (1971), 189--192.

\bibitem{Macdonald1995}
I.G.\ Macdonald. \emph{Symmetric functions and {H}all polynomials}, 2nd ed. (Oxford
  University Press, Oxford, 1995.)

\bibitem{Marcus1996}
A.\ Marcus. On equivalences between blocks of group algebras: reduction
  to the simple components. \emph{J.\ Algebra} \textbf{184} (1996), no.~2, 372--396.

\bibitem{NarasakiUno2009}
R.\ Narasaki and K.\ Uno. Isometries and extra special {S}ylow groups of
  order $p^3$. \emph{J.\ Algebra} \textbf{322} (2009), no.~6, 2027--2068.

\bibitem{NavarroBook}
G.\ Navarro. \emph{Characters and blocks of finite groups}. (Cambridge
  University Press, Cambridge, 1998.)

\bibitem{Olsson1976}
J.B.\ Olsson. Mc{K}ay numbers and heights of characters. \emph{Math.\ Scand.}\
  \textbf{38} (1976), 25--42.

\bibitem{Rouquier1994}
R.\ Rouquier. Isom{\'e}tries parfaites dans les blocs {\`a} d{\'e}faut
  ab{\'e}lien des groupes sym{\'e}triques et sporadiques. 
\emph{J.\ Algebra} \textbf{168} (1994), 648--694.


\end{thebibliography}


\providecommand{\href}[2]{#2}

\medskip
School of Mathematics,
University of Birmingham,
Edgbaston,
Birmingham B15 2TT, UK 

\texttt{a.evseev@bham.ac.uk}

\end{document}